\theoremstyle{plain}
\newtheorem{thm}{Theorem}[section]
\newtheorem{prop}[thm]{Proposition}
\newtheorem{lemma}[thm]{Lemma}
\newtheorem{cor}[thm]{Corollary}
\theoremstyle{definition}
\newtheorem{defn}[thm]{Definition}
\theoremstyle{remark}
\newtheorem{rem}[thm]{Remark}
\numberwithin{equation}{section}
\renewcommand\labelenumi{(\alph{enumi})}
\renewcommand\theenumi\labelenumi
\newcommand{\R}{\mathbb{R}} 
\newcommand{\N}{\mathbb{N}}
\newcommand{\Z}{\mathbb{Z}}
\newcommand{\Grad}{\nabla}  
\newcommand{\Div}{{\rm div}\,} 
\newcommand{\Divtx}{{\rm div}_{t,\vx}\,} 
\newcommand{\parthree}[3]{\partial_{#1#2#3}^3}
\newcommand{\dx}{\,{\rm d}\vx}
\newcommand{\dt}{\,{\rm d}t}  
\newcommand{\dr}{\,{\rm d}r} 
\newcommand{\dS}{\,{\rm d}S}
\renewcommand{\vec}[1]{{\bf #1}}
\newcommand{\vx}{\vec{x}}
\newcommand{\vz}{\vec{z}} 
\newcommand{\vzero}{\vec{0}}
\newcommand{\vn}{\vec{n}}
\newcommand{\va}{\vec{a}}
\newcommand{\vb}{\vec{b}} 
\newcommand{\vf}{\vec{f}}
\newcommand{\vp}{\vec{p}}
\newcommand{\vq}{\vec{q}}
\newcommand{\vs}{\vec{s}}
\newcommand{\veta}{\boldsymbol{\eta}}
\newcommand{\valpha}{\boldsymbol{\alpha}}
\newcommand{\vsigma}{\boldsymbol{\sigma}}
\newcommand{\vF}{\vec{F}}
\newcommand{\vu}{\vec{u}} 
\newcommand{\vv}{\vec{v}}
\newcommand{\vm}{\vec{m}} 
\newcommand{\vphi}{\boldsymbol{\varphi}} 
\newcommand{\mU}{\mathbb{U}} 
\newcommand{\id}{\mathbb{I}}
\newcommand{\mA}{\mathbb{A}}
\newcommand{\mM}{\mathbb{M}}
\newcommand{\mzero}{\mathbb{O}} 
\newcommand{\sym}[1]{{\rm Sym}(#1)}
\newcommand{\symz}[1]{{\rm Sym}_0(#1)}
\newcommand{\sz}{\symz{2}}
\newcommand{\tr}{{\rm tr}\,} 
\newcommand{\trans}{\top} 
\newcommand{\closure}[1]{\ov{#1}} 
\newcommand{\interior}[1]{{#1}^\circ} 
\renewcommand{\rho}{\varrho}
\newcommand{\half}{\tfrac{1}{2}}  
\newcommand{\supp}{{\rm supp}} 
\newcommand{\loc}{{\rm loc}} 
\newcommand{\glob}{{\rm glob}} 
\newcommand{\new}{{\rm new}} 
\newcommand{\Cc}{C^\infty_{\rm c}} 
\newcommand{\Cb}{C_{\rm b}} 
\newcommand{\ov}[1]{\overline{#1}}  
\newcommand{\ep}{\varepsilon} 
\newcommand{\ext}{{\rm ext}} 
\newcommand{\co}{{\rm co}} 
\newcommand{\pert}{{\rm pert}}
\newcommand{\dist}{{\rm dist}}
\let\subsetnotused\subset 
\renewcommand{\subset}{\subseteq}
\renewcommand{\supset}{\supseteq}
\newcommand{\sU}{\mathcal{U}}
\newcommand{\sV}{\mathcal{V}}
\newcommand{\sW}{\mathcal{W}}
\newcommand{\sK}{\mathcal{K}}
\newcommand{\sB}{\mathcal{B}}
\newcommand{\phase}{\mathcal{PH}}
\renewcommand{\setminus}{\smallsetminus}
\newcommand{\opL}{\mathscr{L}}
\newcommand{\subsetcomp}{\subsetnotused\joinrel\subsetnotused}
\newcommand{\ts}{{\rm s}}
\newcommand{\tf}{{\rm f}}
\renewcommand{\emptyset}{\varnothing}
\newcommand{\name}[1]{\textsc{#1}} 
\newcommand{\I}{I}
\begin{document} 

\title{A New Convex Integration Approach for the Compressible Euler Equations and Failure of the Local Maximal Dissipation Criterion}

\author{Simon Markfelder\footnote{Email: simon.markfelder@uni-konstanz.de}} 

\date{October 25, 2024}

\maketitle

\bigskip

\centerline{University of Konstanz, Department of Mathematics and Statistics,} 

\centerline{78457 Konstanz, Germany}

\bigskip

\begin{abstract} 
	In this paper we establish a new convex integration approach for the barotropic compressible Euler equations in two space dimensions. In contrast to existing literature, our new method generates not only the momentum for given density, but also the energy and the energy flux. This allows for a simple way to construct admissible solutions, i.e. solutions which satisfy the energy inequality. 
	
	Moreover using the convex integration method developed in this paper, we show that the local maximal dissipation criterion fails in the following sense: There exist wild solutions which beat the self-similar solution of the one-dimensional Riemann problem extended to two dimensions. Hence the local maximal dissipation criterion rules out the self-similar solution.
	
	The convex integration machinery itself is carried out in a very general way. Hence this paper provides a universal framework for convex integration which not even specifies the form of the partial differential equations under consideration. Therefore this general framework is applicable in many different situations. 
\end{abstract}

\bigskip

\noindent\textbf{Keywords:} Convex Integration, Compressible Euler Equations, Barotropic Euler Equations, Maximal Dissipation Criterion, Entropy Rate Admissibility Criterion, Non-Uniqueness, Weak Solutions 

\bigskip

\noindent\textbf{MSC (2020) codes:} 35F50 (primary), 35A02, 76N10, 35Q31 (secondary)

\bigskip

\tableofcontents

\section{Introduction} \label{sec:intro} 

\subsection{The Barotropic Compressible Euler System and Maximal Dissipation} \label{subsec:intro-euler}

One aim of this paper is to consider the \emph{barotropic compressible Euler system}
\begin{align}
	\partial_t \rho + \Div \vm &= 0, \label{eq:euler-d} \\
	\partial_t \vm + \Div \left(\frac{\vm \otimes \vm}{\rho}\right) + \Grad p(\rho) &= \vzero, \label{eq:euler-m}
\end{align}
with unknown density $\rho$ and momentum $\vm$, both of which are functions of time $t$ and position in space $\vx$. We look at the two-dimensional case and search for global-in-time solutions on the whole space, i.e. $t\in [0,\infty)$, $\vx\in \R^2$. The solutions considered in this paper will not contain vacuum, i.e. $\rho(t,\vx)>0$ for any $(t,\vx)\in [0,\infty)\times \R^2$. This way the term $\frac{\vm \otimes \vm}{\rho}$ does not cause any troubles. 

\begin{rem}
	We decided to write the Euler equations in terms of momentum $\vm$ rather than velocity $\vu$. However due to the fact that we exclude vacuum, it is straightforward to switch between the momentum formulation and the velocity formulation via the identity $\vu=\frac{\vm}{\rho}$. 
\end{rem}

The pressure $p$ which appears in \eqref{eq:euler-m} is a given function of $\rho$, and we suppose $p\in C^1(\R^+_0;\R^+_0)$. We want to solve the initial value problem, whose solutions $(\rho,\vm)$ shall satisfy the initial condition 
\begin{equation} \label{eq:init}
	(\rho,\vm)(0,\cdot) = (\rho_0,\vm_0),
\end{equation}
with given functions $\rho_0\in L^\infty(\R^2)$ and $\vm_0\in L^\infty(\R^2;\R^2)$, in addition to the partial differential equations (PDEs) \eqref{eq:euler-d}, \eqref{eq:euler-m}.

\subsubsection{Admissible Weak Solutions} \label{subsubsec:intro-euler-adm}

Since it is well-known that smooth solutions to problem \eqref{eq:euler-d}, \eqref{eq:euler-m}, \eqref{eq:init} do not exist globally in time, no matter how smooth the initial data are, we shall deal with weak solutions.

\begin{defn} \label{defn:weak-sol} 
	A pair $(\rho,\vm) \in L^\infty((0,\infty) \times \R^2; \R^+ \times \R^2)$ is a \emph{weak solution} of the initial value problem \eqref{eq:euler-d}, \eqref{eq:euler-m}, \eqref{eq:init} if the following equations are satisfied for all test functions $(\phi,\vphi) \in \Cc([0,\infty) \times \R^2; \R\times \R^2)$:
	\begin{align}
		\int_0^\infty \int_{\R^2} \Big[\rho \partial_t \phi + \vm\cdot\Grad \phi\Big]\dx\dt + \int_{\R^2} \rho_0\phi(0,\cdot) \dx &= 0 ; \label{eq:euler-weak-d} \\
		\int_0^\infty \int_{\R^2} \left[\vm \cdot\partial_t \vphi + \frac{\vm\otimes\vm}{\rho}:\Grad \vphi + p(\rho)\Div \vphi\right]\dx\dt + \int_{\R^2} \vm_0\cdot\vphi(0,\cdot) \dx &= 0. \label{eq:euler-weak-m}
	\end{align}
\end{defn}

It is also well-known that weak solutions are highly non-unique and that many of them are not physically relevant in the sense that their energy increases. We exclude those unphysical solutions by imposing the energy inequality as an admissibility criterion. Here the energy inequality reads 
\begin{equation} \label{eq:euler-E}
	\partial_t \left( \frac{|\vm|^2}{2\rho} + P(\rho) \right) + \Div\left[\left(\frac{|\vm|^2}{2\rho} + P(\rho) + p(\rho) \right) \frac{\vm}{\rho} \right] \leq 0 ,
\end{equation}
where the \emph{pressure potential} $P$ is given by 
\begin{equation} \label{eq:pressure-potential}
	P(\rho) = \rho \int_{\rho^\ast}^{\rho} \frac{p(r)}{r^2}\dr,
\end{equation}
and the number $\rho^\ast>0$ is arbitrary. 

Admissible weak solutions are weak solutions which satisfy the energy inequality \eqref{eq:euler-E} in the weak form: 

\begin{defn} \label{defn:adm-sol} 
	A weak solution $(\rho,\vm)$ is called \emph{admissible} if
	\begin{align} 
		\int_0^\infty \int_{\R^2} \left[\bigg(\frac{|\vm|^2}{2\rho} + P(\rho)\bigg) \partial_t \psi + \bigg(\frac{|\vm|^2}{2\rho} + P(\rho) + p(\rho)\bigg)\frac{\vm}{\rho}\cdot\Grad \psi \right]\dx\dt \qquad & \notag \\ 
		+ \int_{\R^2} \bigg(\frac{|\vm_0|^2}{2\rho_0} + P(\rho_0)\bigg)\psi(0,\cdot) \dx &\geq 0 \label{eq:euler-weak-E}
	\end{align}
	for all $\psi \in \Cc([0,\infty) \times \R^2;\R^+_0)$.
\end{defn}

\subsubsection{Global Maximal Dissipation Criterion} \label{subsubsec:intro-euler-global-max-diss}

Using the technique of convex integration as adopted from \name{Gromov} (see e.g. \cite{Gromov}) by \name{De~Lellis}-\name{Sz{\'e}kelyhidi}~\cite{DelSze09,DelSze10} one is able to show that there exist infinitely many admissible weak solutions\footnote{As it is a common practice in the literature, we will sometimes denote the solutions generated by convex integration as \emph{wild solutions}.} to \eqref{eq:euler-d}, \eqref{eq:euler-m}, \eqref{eq:init} for some initial data, see \cite{DelSze10,Chiodaroli14,Feireisl14,AkrWie21,ChiDelKre15,ChiKre14,KliMar18_1,ChiKre18,KliMar18_2,BreChiKre18,CKMS21} among others. Due to this lack of uniqueness and because some of the solutions constructed by convex integration seem to by unphysical, one studied additional criteria. In particular the \emph{global maximal dissipation criterion}, which was proposed by \name{Dafermos}~\cite{Dafermos73} under the name \emph{entropy rate admissibility criterion}, was considered by \name{Feireisl}~\cite{Feireisl14} and also by \name{Chiodaroli}-\name{Kreml}~\cite{ChiKre14}.

\begin{defn} \label{defn:global-max-diss}
	\begin{itemize}
		\item For a weak solution $(\rho,\vm) \in L^\infty((0,\infty) \times \Omega; \R^+ \times \R^2)$ of the Euler system \eqref{eq:euler-d}, \eqref{eq:euler-m} we define the \emph{energy dissipation rate} as the right derivative of the total energy with respect to time\footnote{We suppose that the solution $(\rho,\vm)$ has enough regularity to ensure that this right derivative exists.} 
		, i.e.
		\begin{equation*} 
			D_\glob[\rho,\vm](t) := \frac{{\rm d}_+}{\dt} \int_\Omega \left( \frac{|\vm|^2}{2\rho} + P(\rho) \right) \dx .
		\end{equation*}
		Here the domain $\Omega\subset\R^2$ is supposed to be bounded\footnote{As mentioned above, in this paper we are going to consider the domain to be the whole space, i.e. $\Omega=\R^2$. In this case the total energy does not need to be finite and consequently the energy dissipation rate is no longer well-defined. However this problem can be overcome, see Remark~\ref{rem:finite-energy}.} 
		which makes the total energy 
		$$
			\int_\Omega \left( \frac{|\vm|^2}{2\rho} + P(\rho) \right) \dx
		$$ 
		finite. 
		
		\item A weak solution $(\rho,\vm)$ of \eqref{eq:euler-d}, \eqref{eq:euler-m}, \eqref{eq:init} satisfies the \emph{global maximal dissipation criterion} if there is no other weak solution $(\overline{\rho},\overline{\vm})$ with the property that for some time $\tau\in [0,\infty)$, $(\rho,\vm)=(\overline{\rho},\overline{\vm})$ on $[0,\tau]\times \Omega$ and $D_\glob[\overline{\rho},\overline{\vm}](\tau)<D_\glob[\rho,\vm](\tau)$. 
	\end{itemize}
\end{defn}

So simply speaking, for a solution which satisfies the global maximal dissipation criterion, the total energy decreases with maximal rate.

\name{Feireisl}~\cite{Feireisl14} showed that the admissible weak solutions of problem \eqref{eq:euler-d}, \eqref{eq:euler-m}, \eqref{eq:init} which are generated by convex integration do not comply with the global maximal dissipation criterion. In particular for any such solution, one can construct another admissible weak solution, which beats the first one in the sense described in Defn.~\ref{defn:global-max-diss}, i.e. which has a smaller dissipation rate. \name{Feireisl}'s result hints that the global maximal dissipation criterion might be able to restore uniqueness of solutions. 

On the other hand \name{Chiodaroli}-\name{Kreml}~\cite{ChiKre14} gave a particular example where the global maximal dissipation criterion discards the solution which intuitively seems to be the physical one. In particular in \cite{ChiKre14} initial data of the form\footnote{We call such data \emph{Riemann data}. Moreover we use the notation $\vx=(x,y)^\trans$.} 
\begin{equation} \label{eq:riemann-init}
	(\rho_0,\vm_0)(\vx) := \left\{ \begin{array}{cc} (\rho_-,\vm_-) & \text{ if } y<0, \\ (\rho_+,\vm_+) & \text{ if } y>0, \end{array}\right.
\end{equation}
with constant $\rho_\pm\in \R^+$, $\vm_\pm\in \R^2$, are considered, together with the pressure law 
\begin{equation} \label{eq:pressure}
	p(\rho) = \rho^\gamma, \qquad \text{ where } \gamma \geq 1 .
\end{equation}
Using classical methods, one is able to solve the initial value problem for the Euler equations \eqref{eq:euler-d}, \eqref{eq:euler-m}, \eqref{eq:init} with initial data \eqref{eq:riemann-init} which leads to a one-dimensional self-similar solution. We refer to \cite[Sect.~2]{ChiKre14}, \cite[Prop.~1.3]{KliMar18_1} or \cite[Prop.~7.1.1]{Markfelder} for more details. One of the main results in \cite{ChiKre14} is the following theorem.

\begin{thm}[{\name{Chiodaroli}-\name{Kreml}~\cite[Thm.~2]{ChiKre14}}] \label{thm:chikre14}
	Let $p$ as in \eqref{eq:pressure} with $1\leq \gamma < 3$. There exist Riemann data (i.e. data of the form \eqref{eq:riemann-init}) for which the one-dimensional self-similar solution to \eqref{eq:euler-d}, \eqref{eq:euler-m}, \eqref{eq:init} does not satisfy the global maximal dissipation criterion.
\end{thm}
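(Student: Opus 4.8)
The plan is to exhibit concrete Riemann data for which the self-similar solution is beaten, using the convex integration framework of this paper to produce a competitor. The overall strategy follows the template of \name{Chiodaroli}-\name{Kreml}: one builds a \emph{fan subsolution}, i.e. a piecewise-constant (in self-similar variables) function which solves a relaxed version of \eqref{eq:euler-d}, \eqref{eq:euler-m} outside a wedge $\{|x|<\nu y\}$ emanating from the origin and satisfies a subsolution inequality inside it, and then applies the convex integration machinery of this paper to fill the wedge with genuine admissible weak solutions. The point is that the fan subsolution can be arranged to dissipate strictly more energy at some time $\tau$ than the self-similar solution, so that each of the resulting wild solutions has $D_\glob < D_\glob[\text{self-similar}]$.

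First I would reduce to the one-dimensional Riemann problem in the $y$-variable, recalling from \cite[Sect.~2]{ChiKre14} (or \cite[Prop.~7.1.1]{Markfelder}) the explicit structure of the self-similar solution: for the relevant data it consists of two shocks (or a shock and a rarefaction) with an intermediate constant state, and one computes its total energy on a bounded $\Omega$ (say a ball or a strip) and hence its dissipation rate $D_\glob$ at each $t$. Second, I would set up the algebraic system for a fan subsolution with one intermediate wedge: unknowns are the intermediate density $\rho_1$, the averaged momentum, the speeds $\nu_-<\nu_+$ of the wedge boundaries, and the "turbulence" parameters encoding the gap between $\frac{\vm\otimes\vm}{\rho}$ and the relaxed momentum flux, subject to the Rankine–Hugoniot-type conditions across the two admissible fronts, the subsolution inequality inside the wedge, and — crucially — the energy admissibility in the weak sense of Defn.~\ref{defn:adm-sol}. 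Here I would use the new feature advertised in the abstract: the convex integration scheme of this paper produces energy and energy flux as well, so admissibility is built in rather than checked a posteriori. Third, I would compute the energy dissipation rate of the subsolution-based wild solutions and compare it with $D_\glob$ of the self-similar solution; the wedge region, being "turbulent", destroys kinetic energy faster, and one chooses the data so that the strict inequality holds at some $\tau\in[0,\infty)$. Finally, invoking the relevant existence theorem of this paper on the wedge (with the subsolution as starting point) yields infinitely many genuine admissible weak solutions coinciding with the self-similar one up to time $\tau$ and strictly beating it, which is exactly the assertion; the restriction $\gamma<3$ enters through the range of $\gamma$ for which the algebraic system for a one-wedge fan subsolution is solvable with admissible intermediate state.

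The main obstacle is the middle step: producing data for which the fan-subsolution system is simultaneously solvable, the two fronts are admissible (entropy) fronts, the subsolution inequality holds strictly inside the wedge, and the energy dissipated by the wild solution exceeds that of the self-similar profile. This is a constrained system of polynomial (in)equalities in several real parameters, and the delicate point — as in \cite{ChiKre14} — is balancing the requirement "beat the self-similar solution's dissipation" against "remain an admissible weak solution": pushing the wedge to dissipate more energy tends to violate either the subsolution inequality or the front admissibility. I expect this to be handled by a perturbative/continuity argument starting from a degenerate configuration (e.g. data close to those producing two shocks of equal strength, where the intermediate state and speeds are explicit) and then opening the wedge slightly, using the $C^1$ dependence of $p$ and an open-ended interval of free parameters; the $\gamma<3$ hypothesis is precisely what keeps this perturbation nondegenerate. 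The remaining steps — the self-similar energy computation and the invocation of the paper's convex integration theorem — are essentially bookkeeping once the framework is in place.
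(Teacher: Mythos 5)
First, a point of orientation: this paper does not prove Thm.~\ref{thm:chikre14} at all --- it is quoted from \cite{ChiKre14}, and the surrounding text only recalls the strategy (fan subsolutions in the sense of \cite{ChiDelKre15}, convex integration inside the fan, and the observation that the dissipation rate of the wild solutions is determined by the subsolution). Your outline reproduces that strategy at a high level, but it does not constitute a proof: the entire difficulty of \cite[Thm.~2]{ChiKre14} is the explicit solvability of the algebraic fan-subsolution system (Rankine--Hugoniot-type relations at the interfaces, the subsolution conditions, front admissibility, and the energy inequality for the subsolution) \emph{together with} the strict inequality on the dissipation rate, for suitable Riemann data and for every $1\leq\gamma<3$. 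You defer precisely this step to an unspecified ``perturbative/continuity argument starting from a degenerate configuration,'' with no equations, no choice of data, and no indication of why the constraint $\gamma<3$ emerges. That is the gap; everything else in your sketch is framing.

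Two further points are substantively off. The fan geometry is a partition of space-time by the lines $y=\mu_i t$ (a one-dimensional fan in $(t,y)$, cf. Defn.~\ref{defn:fan-part}), not a spatial wedge $\{|x|<\nu y\}$. More importantly, your claim that admissibility is ``built in'' by the energy-producing convex integration of this paper is both anachronistic for the cited theorem and counterproductive for the \emph{global} criterion: in \cite{ChiKre14} the dissipation rate of the wild solutions is computable exactly because the classical construction pins the kinetic-energy density to a quantity determined by the fan subsolution, so $D_\glob$ depends only on the subsolution. The scheme developed in the present paper only yields the one-sided comparison \eqref{eq:energyofsol-vs-energyofsubsol}, which is tailored to the \emph{local} criterion (Thm.~\ref{thm:local-max-diss}); with only that inequality you would still have to show that the right time-derivative of the total energy of the wild solutions exists and is strictly smaller than that of the self-similar solution at some time $\tau$, which your proposal does not address.
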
 

\begin{rem} \label{rem:finite-energy}
	Note that in the setting described above (i.e. $\Omega=\R^2$), the total energy does not need to be finite and hence the definition of the global maximal dissipation criterion (Defn.~\ref{defn:global-max-diss}) does not make sense any more. This can be overcome by restricting to a square $(-L,L)^2\subset \R^2$ when measuring the total energy and the energy dissipation rate. We refer to \cite[Sect.~1.1]{ChiKre14} for more details. See also \cite[Sect.~6]{ChiKre14} for an alternative approach. 
\end{rem}

The proof of Thm.~\ref{thm:chikre14} is built upon ideas that were orginally provided in \cite{ChiDelKre15}. One applies convex integration in a ``wedge'' in space-time which leads to the notion of a \emph{fan subsolution}, see \cite[Sect.~3]{ChiDelKre15}. As shown in \cite[Prop.~3.6]{ChiDelKre15} the existence of a fan subsolution for given data of the form \eqref{eq:riemann-init} implies existence of infinitely many admissible weak solutions. In \cite{ChiKre14} one observed that the energy dissipation rate of these solutions only depends on the fan subsolution. Then the proof of Thm.~\ref{thm:chikre14} results from proving that there exists a fan subsolution which guarantees that the energy dissipation rate of the wild solutions beats the energy dissipation rate of the one-dimensional self-similar solution. 

As a consequence of Thm.~\ref{thm:chikre14}, if we expect the one-dimensional self-similar solution to problem \eqref{eq:euler-d}, \eqref{eq:euler-m}, \eqref{eq:init} with initial data \eqref{eq:riemann-init} to be the physical relevant one, we have to give up the global maximal dissipation criterion. 

\subsubsection{Local Maximal Dissipation Criterion} \label{subsubsec:intro-euler-local-max-diss}

In this paper we will prove a result that is analoguous to Thm.~\ref{thm:chikre14}, with the difference that we study the \emph{local} rather than the global maximal dissipation criterion. 

\begin{defn} \label{defn:local-max-diss}
	\begin{itemize}
		\item For an admissible weak solution $(\rho,\vm) \in L^\infty((0,\infty) \times \R^2; \R^+ \times \R^2)$ of the Euler equations \eqref{eq:euler-d}, \eqref{eq:euler-m} we define the \emph{energy dissipation measure} as the left-hand side of \eqref{eq:euler-E}, i.e.
		\begin{equation*}
			D_\loc[\rho,\vm]:=\partial_t \left( \frac{|\vm|^2}{2\rho} + P(\rho) \right) + \Div\left[\left(\frac{|\vm|^2}{2\rho} + P(\rho) + p(\rho) \right) \frac{\vm}{\rho} \right]. 
		\end{equation*}
		
		\item An admissible weak solution $(\rho,\vm)$ of \eqref{eq:euler-d}, \eqref{eq:euler-m}, \eqref{eq:init} complies with the \emph{local maximal dissipation criterion} if there is no other admissible weak solution $(\overline{\rho},\overline{\vm})$ emanating from the same initial data with the property
		\begin{equation} \label{eq:D-ineq}
			D_\loc[\overline{\rho},\overline{\vm}]< D_\loc[\rho,\vm].
		\end{equation}
	\end{itemize} 
\end{defn}

\begin{rem} \label{rem:understanding-of-measure-inequality} 
	Inequality \eqref{eq:D-ineq} has to be understood in the weak sense. In particular \eqref{eq:D-ineq} means that 
	\begin{align} 
		&\int_0^\infty \int_{\R^2} \left[\bigg(\frac{|\ov{\vm}|^2}{2\ov{\rho}} + P(\ov{\rho})\bigg) \partial_t \psi + \bigg(\frac{|\ov{\vm}|^2}{2\ov{\rho}} + P(\ov{\rho}) + p(\ov{\rho})\bigg)\frac{\ov{\vm}}{\ov{\rho}}\cdot\Grad \psi \right]\dx\dt \notag \\
		&\geq \int_0^\infty \int_{\R^2} \left[\bigg(\frac{|\vm|^2}{2\rho} + P(\rho)\bigg) \partial_t \psi + \bigg(\frac{|\vm|^2}{2\rho} + P(\rho) + p(\rho)\bigg)\frac{\vm}{\rho}\cdot\Grad \psi \right]\dx\dt . \label{eq:D-ineq-weak}
	\end{align}
	for all test functions $\psi\in \Cc((0,\infty)\times \R^2;\R^+_0)$, and there exists a test function for which the inequality \eqref{eq:D-ineq-weak} holds strictly.
\end{rem}

\begin{rem} \label{rem:total-order}
	We would like to remark that the order defined in \eqref{eq:D-ineq} is not a total order. In particular there might be two admissible weak solutions $(\rho_1,\vm_1)$, $(\rho_2,\vm_2)$ which are not comparable, i.e. with 
	$$
		D_\loc[\rho_1,\vm_1]\not\leq D_\loc[\rho_2,\vm_2] \qquad \text{ and } \qquad D_\loc[\rho_1,\vm_1]\not\geq D_\loc[\rho_2,\vm_2].
	$$
\end{rem}

We will prove the following, which is one of our main theorems.

\begin{thm} \label{thm:local-max-diss}
	Let $p$ as in \eqref{eq:pressure} with $\gamma=2$. There exist Riemann data (i.e. data of the form \eqref{eq:riemann-init}) for which the one-dimensional self-similar solution to \eqref{eq:euler-d}, \eqref{eq:euler-m}, \eqref{eq:init} does not satisfy the local maximal dissipation criterion.
\end{thm}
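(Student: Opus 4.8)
The plan is to follow the architecture of the proof of Theorem~\ref{thm:chikre14}, with the global energy dissipation rate replaced by the dissipation measure $D_\loc$, and to exploit the decisive new feature of the convex integration scheme of this paper: because it generates the energy density and the energy flux together with the momentum, the quantity $D_\loc[\bar\rho,\bar\vm]$ of a wild solution is completely under control. (In the classical constructions the energy flux $\tfrac{|\vm|^2}{2\rho}\tfrac{\vm}{\rho}$ is cubic in the momentum, so its weak limit is not determined by the quadratic energy density, and $D_\loc[\bar\rho,\bar\vm]$ cannot even be identified.) Here instead the wild solution inherits exactly the energy density $E$ and energy flux $\vec Q$ of the underlying subsolution, so $D_\loc[\bar\rho,\bar\vm]=\partial_t E+\Div\vec Q$ as distributions, and the whole problem reduces to choosing Riemann data and a subsolution whose prescribed dissipation beats that of the self-similar solution. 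Note moreover that, in contrast to the global criterion (cf.\ Remark~\ref{rem:finite-energy}), no truncation to a bounded domain is needed, $D_\loc$ being purely local.

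First I would fix $\gamma=2$ and pick Riemann data~\eqref{eq:riemann-init} whose one-dimensional self-similar solution $(\rho_{ss},\vm_{ss})$ is explicitly known and contains at least one genuine shock — for instance data whose Riemann solution is a shock together with a rarefaction (or a pair of shocks). Since all waves emanate from the origin, in the self-similar variable $\xi=y/t$ the solution is piecewise smooth, and $D_\loc[\rho_{ss},\vm_{ss}]$ is readily identified: it vanishes on the constant regions and on the (energy-conservative) rarefaction fan, while on each admissible shock ray it is the non-positive measure produced by the entropy jump relation for the energy, carried by the one-dimensional Hausdorff measure on that ray.

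Next I would construct a fan-type subsolution emanating from the same Riemann data, in the sense required by the general convex integration theorem established earlier in the paper — i.e.\ one carrying, besides density and momentum, a prescribed energy profile $E$ and energy flux $\vec Q$ — arranged so that: (i) the ray that carries the shock of $(\rho_{ss},\vm_{ss})$ also carries a jump of $(E,\vec Q)$ dissipating at least as much energy, so the singular parts of the two measures obey the desired inequality there; (ii) every other bounding ray of the fan carries only an admissible (energy-decreasing) jump, so no dissipation of the wrong sign appears; and (iii) in the interior of the fan, positioned to overlap a region where $D_\loc[\rho_{ss},\vm_{ss}]=0$ (e.g.\ the interior of the self-similar rarefaction), one has $\partial_t E+\Div\bigl[(E+p(\rho))\tfrac{\vm}{\rho}\bigr]\le 0$ with strict inequality on a set of positive measure — or else strictness is arranged on the shock ray itself by letting $(E,\vec Q)$ jump there strictly more than across the self-similar shock. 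Feeding such a subsolution into the paper's convex integration theorem yields infinitely many weak solutions $(\bar\rho,\bar\vm)$ from the same data with energy density $E$ and flux $\vec Q$; since $\partial_t E+\Div\vec Q\le 0$ throughout, they are automatically admissible, and $D_\loc[\bar\rho,\bar\vm]=\partial_t E+\Div\vec Q$.

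It then remains to compare the two measures, which is a short case analysis: both vanish on constant regions; on the shock ray of $(\rho_{ss},\vm_{ss})$ the inequality holds by~(i); where the fan is already constant while $(\rho_{ss},\vm_{ss})$ is still inside a rarefaction both vanish; on the other bounding rays $D_\loc[\bar\rho,\bar\vm]\le 0=D_\loc[\rho_{ss},\vm_{ss}]$ by~(ii); and in the fan interior $D_\loc[\bar\rho,\bar\vm]<0=D_\loc[\rho_{ss},\vm_{ss}]$ on a set of positive measure by~(iii). Hence $D_\loc[\bar\rho,\bar\vm]\le D_\loc[\rho_{ss},\vm_{ss}]$ in the weak sense of Remark~\ref{rem:understanding-of-measure-inequality}, with strict inequality for any non-negative test function supported in the fan interior — an admissible competitor beating $(\rho_{ss},\vm_{ss})$, so the latter fails the local maximal dissipation criterion. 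I expect the real difficulty to be the construction of this subsolution: one must at the same time connect the prescribed outer states $(\rho_\pm,\vm_\pm)$ through an admissible fan, reproduce (at least) the self-similar shock's dissipation on one ray, obtain a strictly energy-decreasing contribution somewhere, and still satisfy all the structural ``subsolution'' inequalities the general machinery requires — a rigid system of algebraic (in)equalities for the fan speeds, the intermediate states and the generated energy and flux. It is presumably the value $\gamma=2$ that keeps the pressure law, the pressure potential and the wave curves explicit enough for such data and subsolution to be written down; one also has to verify that the construction can keep the density bounded away from zero, as the framework demands.
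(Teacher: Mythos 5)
Your plan reproduces the architecture of the paper's proof: reduce the theorem to the existence of an admissible fan subsolution in the new sense (energy and energy flux generated by convex integration, Defn.~\ref{defn:fan-subs}), feed it into Prop.~\ref{prop:fan-subs=>inf-sol}, place one fan interface exactly on the self-similar shock ray with at least as much dissipation there, keep all other interfaces dissipative, and compare the two dissipation measures test function by test function. That matching of one interface with the shock ray (your point~(i)) is indeed the decisive structural observation, and your comparison logic is the one used in \eqref{eq:final-ineq}. However, there is a genuine gap: the entire content of the theorem is the \emph{existence} of such a subsolution, and you explicitly defer it (``I expect the real difficulty to be the construction of this subsolution''). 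Nothing in your argument shows that the rigid system you describe --- the Rankine--Hugoniot relations \eqref{eq:alg-rh1}--\eqref{eq:alg-rh4}, the subsolution conditions \eqref{eq:alg-subs1}--\eqref{eq:alg-subs2}, the ordering \eqref{eq:alg-orderofspeeds}, one speed pinned to $\sigma$, and a strictly larger dissipation coefficient on that ray --- is actually solvable; a priori it could be overdetermined, which is exactly the obstruction the author reports for the classical fan-subsolution approach. The paper closes this gap by an explicit construction: $\gamma=2$, $\rho_-=1$, $\rho_+=4$, $\vm_-=(0,\tfrac32\sqrt5)^\trans$, $\vm_+=\vzero$ (a single shock with $\sigma=-\tfrac{\sqrt5}{2}$), a three-state fan with $\mu_1=\sigma$ and explicitly listed $(\rho_i,\vm_i,\mU_i,q_i,\vF_i)$, verification of Prop.~\ref{prop:algebraic}, and the numerical comparison $\tfrac{83033-8050\sqrt5}{4300}>\tfrac{27}{4}\sqrt5$ in \eqref{eq:alg-additional}/\eqref{eq:final-ineq}. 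Without some such construction (or an abstract solvability argument, which you do not give), the proof is not complete.

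Two smaller points. First, Prop.~\ref{prop:fan-subs=>inf-sol} does not give $D_\loc[\ov\rho,\ov\vm]=\partial_t E+\Div\vec Q$ of the subsolution; it only gives the one-sided bound \eqref{eq:energyofsol-vs-energyofsubsol}, i.e.\ the wild solutions dissipate \emph{at least} as much as the subsolution prescribes. This happens to be the direction you need, so your comparison survives, but the equality you assert is not established by the machinery. Second, your suggested alternative of producing the strict inequality ``in the interior of the fan'' is unavailable for subsolutions in the sense of Defn.~\ref{defn:fan-subs}: these are piecewise constant, so $\partial_t E+\Div\vec Q$ vanishes identically inside each $\Gamma_i$ and all prescribed dissipation is carried by the interfaces; strictness must therefore be arranged on a ray, as in the paper (on $S_1=S_{\rm s}$), or you would have to work with genuinely non-constant $C^1$ subsolutions and redo the bookkeeping of Prop.~\ref{prop:algebraic}.
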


Similarly to \name{Chiodaroli}-\name{Kreml}'s proof of Thm.~\ref{thm:chikre14}, the strategy to show Thm.~\ref{thm:local-max-diss} is to construct a wild solution $(\ov{\rho},\ov{\vm})$ which beats the one-dimensional self-similar solution $(\rho,\vm)$ in the sense of Defn.~\ref{defn:local-max-diss}, i.e. \eqref{eq:D-ineq} is satisfied. However with the convex integration approach as in \cite{ChiDelKre15,ChiKre14} we did not manage to succeed. The problem is that the energy inquality which has to be satisfied by fan subsolutions as in \cite[Defn.~3.5]{ChiDelKre15} is an obstacle in constructing wild solutions which are comparable with the one-dimensional self-similar solution in the sense of the order defined in \eqref{eq:D-ineq}, cf. Remark~\ref{rem:total-order}. We overcome this with a new convex integration approach which also constructs the energy and the energy flux, see Sect.~\ref{subsubsec:intro-ci-with-energy} below for more details. 

Analogously to the global maximal dissipation criterion above, Thm.~\ref{thm:local-max-diss} has the following consequence. If we expect the one-dimensional self-similar solution to problem \eqref{eq:euler-d}, \eqref{eq:euler-m}, \eqref{eq:init} with initial data \eqref{eq:riemann-init} to be the physically relevant one, we have to discard the local maximal dissipation criterion.

\subsection{The Convex Integration Approach Developed in This Paper} \label{subsec:intro-ci} 

\subsubsection{Convex Integration for Compressible Euler with Energy Inequality} \label{subsubsec:intro-ci-with-energy} 

Most convex integration approaches used in the literature in the context of the compressible Euler equations \eqref{eq:euler-d}, \eqref{eq:euler-m} start with a reduction of the equations to some kind of incompressible system, e.g. in \cite{DelSze10,Chiodaroli14,Feireisl14,Feireisl16_1,AkrWie21,ChiDelKre15,ChiKre14,KliMar18_1,ChiKre18,KliMar18_2,BreChiKre18,CKMS21}. This way, in order to construct solutions to the latter system, one only has to modify the incompressible convex integration method by \name{De~Lellis}-\name{Szek{\'e}lyhidi}~\cite{DelSze09,DelSze10}. In other words the convex integration approach generates a momentum $\vm$ for a given density $\rho$ which remains untouched. Consequently we will call such convex integration approaches ``incompressible'' even if they are applied to a compressible model. \name{Feireisl}~\cite{Feireisl16_1} provided a general framework for such an incompressible convex integration approach, which works for many systems appearing in mathematical fluid mechanics, like the Euler-Fourier system, quantum fluids or binary mixtures of compressible fluids.

Note that a genuinely compressible convex integration approach does not lead to a larger hull, see e.g. \cite{Markfelder} or \cite{DebSkiWie23}. This means that the use of an incompressible method in the context of the compressible Euler equations \eqref{eq:euler-d}, \eqref{eq:euler-m} is not a severe restriction.

The common feature of most convex integration methods that are used in the literature so far in order to show non-uniqueness, is that one introduces the notion of a subsolution and proves that existence of a subsolution implies existence of infinitely many solutions. The latter is achieved by ``deforming'' the subsolution in order to obtain solutions. Admissibility, i.e. the validity of the energy inequality (see Defn.~\ref{defn:adm-sol}), is then achieved by the observation that energy and energy flux of the solutions can be estimated by an expression which only depends on the corresponding subsolution. This leads to an inequality on the subsolution which we also call \emph{energy inequality} even if it is not exactly the same as inequality \eqref{eq:euler-E}. Consequently the proof of non-uniquness of admissible weak solutions boils down to finding a suitable subsolution which fulfills the energy inequality. 

It seems however that the energy inequality is quite restrictive when trying to find a subsolution. This motivated us to design a convex integration approach which not only generates the momentum $\vm$ for a given density $\rho$, but also the energy $\frac{|\vm|^2}{2\rho} + P(\rho)$ and the energy flux $\left(\frac{|\vm|^2}{2\rho} + P(\rho) + p(\rho)\right)\frac{\vm}{\rho}$. For this reason we call our approach ``convex integration for the compressible Euler equations \emph{with} energy inequality''. Note again that also the methods in the existing literature yield admissible solutions, but the validity of the energy inequality is not achieved by convex integration but by a suitable choice of the subsolution.

In our method we also get a restriction on the subsolution which emanates from the energy inequality. Note however that this constraint is less restrictive than the aforementioned energy inequality for subsolutions. This is the key point which allows to prove Thm.~\ref{thm:local-max-diss}. Moreover we would like to point out that our approach is incompressible in the sense explained above, i.e. we do not add oscillations in $\rho$. Like in the case of convex integration approaches which do not produce energy and energy flux, the use of an incompressible method is not a restriction, see Remark~\ref{rem:incompressible-approach} below.

Finally we would like to mention that a similar approach (i.e. also energy and energy flux are constructed by convex integration) has already been developed in the context of the incompressible Euler equations by \name{Gebhard}-\name{Kolumb{\'a}n}~\cite{GebKol22}.

\subsubsection{General Framework}\label{subsubsec:intro-ci-gf} 

The convex integration machinery follows more or less the same lines as soon as the $\Lambda$-convex hull (see Defn.~\ref{defn:app-Lconvex}) of the so-called constitutive set $\sK$, see Sect.~\ref{subsubsec:gen-prelim-pde+K} below, coincides with its convex hull. For this reason we have decided to write up a general convex integration framework. This framework is not specified to the form of the PDEs, which was the case in \name{Feireisl}'s framework \cite{Feireisl16_1}, see Sect.~\ref{subsubsec:intro-ci-with-energy} above. Moreover our general framework includes what is done in many papers on convex integration in fluid mechanics, e.g. \cite{DelSze09,Chiodaroli14,ChiDelKre15,ChiKre14,KliMar18_1,ChiKre18,KliMar18_2,BreChiKre18,Markfelder}. We are convinced that it is also applicable in other contexts.


\subsection{Outline of This Paper} \label{subsec:intro-outline}

The paper is organized as follows. In Sect.~\ref{sec:general} we present our general convex integration framework. Subsequently in Sect.~\ref{sec:euler-in-framework} we show that the Euler equations \eqref{eq:euler-d}, \eqref{eq:euler-m} with energy inequality \eqref{eq:euler-E} fit into the framework established in Sect.~\ref{sec:general}. Moreover we identify a subset $\sW_{\rho,Q}$ of $\sU_{\rho,Q}$ which will be needed in the proof of Thm.~\ref{thm:local-max-diss}. Finally in Sect.~\ref{sec:riemann-ci} we deal with Riemann initial data and prove Thm.~\ref{thm:local-max-diss}.

\section{A General Framework for Convex Integration} \label{sec:general} 

When we carry out convex integration, we follow \cite{Markfelder}. As mentioned in Sect.~\ref{subsubsec:intro-ci-gf} our general framework contains many works on convex integration in fluid mechanics as special cases, e.g. \cite{DelSze09,Chiodaroli14,ChiDelKre15,ChiKre14,KliMar18_1,ChiKre18,KliMar18_2,BreChiKre18,Markfelder}. For this reason, analogue versions of most statements in the current section can be found in these papers. To ease reading, especially for the unexperienced reader, we refer to \cite[Sect.~4.1]{Markfelder} for a sketch of the essential ideas behind convex integration. 

\subsection{Preliminaries} \label{subsec:general-prelim}

\subsubsection{The Linear PDE-System and the Family of Constitutive Sets $(\sK_\vb)_{\vb\in \sB}$} \label{subsubsec:gen-prelim-pde+K}

We begin by introducing some notation. Let $\Gamma\subset\R^n$ be a Lipschitz domain (not necessarily bounded), where $n\in \N$ is the space dimension. The unknown is a vector-valued function $\vz: \Gamma \to \R^M$, where $M\in \N$ is the ``number of scalar unknowns''. Let furthermore $\mA: \R^M \to \R^{m\times n}$ a linear map, where $m\in \N$ is the ``number of scalar inequalities''. Finally let\footnote{As commonly used in the literature, $\Cb$ denotes the set of all continuous and bounded functions.} $\vb\in \Cb(\closure{\Gamma};\R^m)$ and set $\sB:= \closure{\vb(\closure{\Gamma})}$. Note that $\sB$ is a compact subset of $\R^m$. 

We study a general linear, first order and not necessarily homogeneous system of partial differential inequalities
\begin{equation} \label{eq:lin-eq}
	\Div \mA (\vz(\vx)) \leq \vb(\vx).
\end{equation}
Here the divergence is meant row-wise, i.e.\footnote{We use the following notation. The $k$-th component of a vector $\vv$ is denoted by $v_k$ or $[\vv]_k$. The $k\ell$-entry of a matrix $\mM$ is denoted by $[\mM]_{k\ell}$.} 
$$
	[\Div \mA (\vz)]_i = \sum_{j=1}^n \partial_j [\mA(\vz)]_{ij},
$$
where $\partial_j:= \frac{\partial}{\partial x_j}$, and moreover the order $\leq$ for vectors is meant component-wise, i.e. for any $\va,\vb\in \R^M$ we have
\begin{equation} \label{eq:vector-order}
	\va \leq \vb \quad \Leftrightarrow \quad a_i \leq b_i \text{ for all }i=1,...,M . 
\end{equation}

Note that the linearity of $\vz\mapsto \mA(\vz)$ implies 
$$
	[\Div \mA (\vz)]_i = \sum_{j=1}^n \partial_j [\mA(\vz)]_{ij} = \sum_{j=1}^n [\mA(\partial_j\vz)]_{ij} ,
$$
for all $i=1,...,m$.

\begin{rem} \label{rem:equation-vs-inequality}
	The reader should notice that any equation can be written equivalently as two inequalities. Hence the system of partial differential equations 
	$$
		\Div \mA (\vz) = \vb
	$$
	can be equivalently written in the form \eqref{eq:lin-eq} by setting $m^\new=2m$, 
	$$
		[\vb^\new]_i = \left\{ \begin{array}{ll} \hphantom{-} [\vb]_i & \text{ if } i\in\{1,...,m\}, \\ -[\vb]_{i-m} & \text{ if } i\in\{m+1,...,2m\}, \end{array} \right.
	$$
	and 
	$$
		[\mA^\new(\vz)]_{ij} = \left\{ \begin{array}{ll} \hphantom{-} [\mA(\vz)]_{ij} & \text{ if } i\in\{1,...,m\}, \\ -[\mA(\vz)]_{i-m,j} & \text{ if } i\in\{m+1,...,2m\}. \end{array} \right. 
	$$
\end{rem}

In addition to system \eqref{eq:lin-eq} we consider for each\footnote{Here we overload notation and use the letter $\vb$ both for the given function $\vb\in \Cb(\closure{\Gamma};\R^m)$ and for an element in $\sB$.} $\vb\in \sB$ the constitutive set $\sK_{\vb}\subset \R^M$. Our goal is to find weak solutions $\vz\in L^\infty(\Gamma;\R^M)$ of the linear system \eqref{eq:lin-eq} which satisfy 
\begin{equation} \label{eq:UinK}
	\vz(\vx)\in \sK_{\vb(\vx)}\qquad \text{ for a.e. }\vx\in \Gamma. 
\end{equation}

\begin{rem} \label{rem:tartars-framework}
	The usual application considers a non-linear system of PDEs (and possibly inequalities). Then one replaces all non-linearities by new unknowns in order to obtain a linear system of the form \eqref{eq:lin-eq}. In addition, this replacement defines the constitutive sets $\sK_\vb$. This procedure goes back to \name{Tartar}~\cite{Tartar79} and is therefore called \emph{Tartar's framework}. This way a solution of the linear system \eqref{eq:lin-eq} which satisfies \eqref{eq:UinK} is also a solution of the original non-linear system of PDEs. Examples of this procedure can be found in \cite[Sect.~2]{DelSze09}, \cite[Sect.~4.1.2]{Markfelder} and also in Sect.~\ref{subsec:eif-prelim} below. 
\end{rem}

We assume the family of sets $(\sK_\vb)_{\vb\in\sB}$ to be suitable in the following sense.

\begin{defn} \label{defn:suitable-K}
	We call the family of constitutive sets $(\sK_\vb)_{\vb\in\sB}$ \emph{suitable} if it satisfies the following three properties:
	\begin{enumerate}
		\item \label{item:suitable-K-compact} For all $\vb\in \sB$, $\sK_\vb$ is compact in $\R^m$.
		
		\item \label{item:suitable-K-continuity} The dependence of $\sK_{\vb}$ on $\vb$ is uniformly continuous in the following sense: For all $\ep>0$, there exists $\delta>0$ such that for all $\vb_1,\vb_2\in \sB$ with $|\vb_1-\vb_2|<\delta$ and all $\vz_1\in \sK_{\vb_1}$ there exists $\vz_2\in \sK_{\vb_2}$ with $|\vz_1-\vz_2|<\ep$. 
		
		\item \label{item:suitable-K-boundary} We have $\interior{\big((\sK_{\vb})^\co\big)} \cap \sK_{\vb} = \emptyset$ for all $\vb\in \sB$, where $(\sK_{\vb})^\co$ denotes the convex hull of $\sK_{\vb}$, see Appendix~\ref{app:convex}.
	\end{enumerate}
\end{defn}

We observe that suitability implies uniform boundedness:

\begin{lemma} \label{lemma:uniform-boundedness-K}
	Let $(\sK_\vb)_{\vb\in\sB}$ a suitable family of constitutive sets. Then there exists\footnote{In particular $c$ is does not depend on $\vb$.} $c>0$ such that 
	$$
		|\vz|\leq c \qquad \text{ for all }\vb\in \sB \text{ and all } \vz\in \sK_{\vb} .
	$$
\end{lemma}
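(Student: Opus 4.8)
~The statement to prove is Lemma~\ref{lemma:uniform-boundedness-K}: a suitable family $(\sK_\vb)_{\vb\in\sB}$ is uniformly bounded by a constant $c>0$ independent of $\vb$.

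\medskip

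\textbf{Plan of proof.} The idea is to combine the fibrewise compactness (property~\ref{item:suitable-K-compact}), which gives a bound depending on $\vb$, with the uniform continuity of $\vb\mapsto\sK_\vb$ (property~\ref{item:suitable-K-continuity}) and the compactness of the index set $\sB$, to upgrade this to a uniform bound. A clean way to run the argument is by contradiction: suppose no such $c$ exists. Then for every $k\in\N$ there are $\vb_k\in\sB$ and $\vz_k\in\sK_{\vb_k}$ with $|\vz_k|>k$. Since $\sB$ is compact (as noted in the text, $\sB=\closure{\vb(\closure{\Gamma})}$ is a compact subset of $\R^m$), after passing to a subsequence we may assume $\vb_k\to\vb_\infty$ for some $\vb_\infty\in\sB$. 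Now apply property~\ref{item:suitable-K-continuity} with, say, $\ep=1$: there is $\delta>0$ so that whenever $|\vb_1-\vb_2|<\delta$ and $\vz_1\in\sK_{\vb_1}$, there is $\vz_2\in\sK_{\vb_2}$ with $|\vz_1-\vz_2|<1$. For $k$ large enough we have $|\vb_k-\vb_\infty|<\delta$, so there exists $\vz_k'\in\sK_{\vb_\infty}$ with $|\vz_k-\vz_k'|<1$, hence $|\vz_k'|>k-1$. But $\sK_{\vb_\infty}$ is compact by property~\ref{item:suitable-K-compact}, so it is bounded, contradicting $|\vz_k'|\to\infty$. Therefore the desired $c$ exists.

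\medskip

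A slightly more hands-on (non-contradiction) variant runs as follows, and I would probably present this one since it also yields an explicit $c$. Fix $\ep=1$ in property~\ref{item:suitable-K-continuity} to obtain the corresponding $\delta>0$. By compactness of $\sB$, cover it by finitely many balls $B(\vb^{(1)},\delta),\dots,B(\vb^{(N)},\delta)$ with centres $\vb^{(i)}\in\sB$. Each $\sK_{\vb^{(i)}}$ is compact, hence bounded: $|\vz|\leq c_i$ for all $\vz\in\sK_{\vb^{(i)}}$. Set $c:=\max_i c_i+1$. Now take any $\vb\in\sB$ and any $\vz\in\sK_\vb$; pick $i$ with $|\vb-\vb^{(i)}|<\delta$. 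By property~\ref{item:suitable-K-continuity} there is $\vz'\in\sK_{\vb^{(i)}}$ with $|\vz-\vz'|<1$, so $|\vz|\leq|\vz'|+1\leq c_i+1\leq c$. This establishes the claim.

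\medskip

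\textbf{Main obstacle.} There is essentially no deep obstacle here; the only subtlety is to notice that one must invoke the compactness of the parameter space $\sB$ in order to pass from the $\vb$-dependent bounds furnished by property~\ref{item:suitable-K-compact} to a uniform one — pointwise compactness of each fibre alone is not enough, and it is property~\ref{item:suitable-K-continuity} that glues nearby fibres together so that finitely many of them control all of $\sB$. Property~\ref{item:suitable-K-boundary} plays no role in this particular lemma. One should also take a moment to confirm the direction in which property~\ref{item:suitable-K-continuity} is phrased matches what is needed: it guarantees that every point of $\sK_{\vb}$ is within $\ep$ of some point of $\sK_{\vb^{(i)}}$, which is exactly what the estimate $|\vz|\leq|\vz'|+\ep$ requires.
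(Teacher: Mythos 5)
Your proof is correct, and both of your variants rest on exactly the same ingredients as the paper's argument: fibrewise compactness from property~\ref{item:suitable-K-compact}, the uniform continuity property~\ref{item:suitable-K-continuity} applied with a fixed $\ep$, and compactness of $\sB$. The paper merely packages this slightly differently, showing that $c(\vb):=\max_{\vz\in\sK_\vb}|\vz|$ defines a continuous function on the compact set $\sB$ and is therefore bounded, which is interchangeable with your finite-cover (or sequential contradiction) argument.
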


\begin{proof}
We know from item \ref{item:suitable-K-compact} in Defn.~\ref{defn:suitable-K} that for each $\vb\in \sB$, $\sK_{\vb}$ is bounded by 
$$
	c(\vb) := \max_{\vz\in \sK_\vb} |\vz|,
$$
which defines a function $c:\sB \to \R^+_0$. Let us next show that this function is continuous. For $\ep>0$ given, there exists $\delta>0$ such that for all $\vb_1,\vb_2\in \sB$ with $|\vb_1-\vb_2|<\delta$ we obtain the following. Let $\vz_1\in \sK_{\vb_1}$ such that $c(\vb_1) = |\vz_1|$. Then item \ref{item:suitable-K-continuity} in Defn.~\ref{defn:suitable-K} guarantees existence of $\vz_2\in \sK_{\vb_2}$ with $|\vz_1-\vz_2|<\ep$. This yields
$$
	c(\vb_1) - c(\vb_2) = |\vz_1| - \max_{\vz\in \sK_{\vb_2}} |\vz| \leq |\vz_1| - |\vz_2| \leq |\vz_1 - \vz_2| < \ep. 
$$
Interchanging $\vb_1$ and $\vb_2$ we obtain analogously $c(\vb_2) - c(\vb_1) < \ep$, i.e. the map $c:\sB \to \R^+_0$ is indeed continuous. As $\sB$ is compact, the function $c:\sB \to \R^+_0$ is bounded which finishes the proof. 
\end{proof}

Moreover, a statement that is analogous to item \ref{item:suitable-K-continuity} of Defn.~\ref{defn:suitable-K} also holds for the convex hulls $(\sK_{\vb})^\co$ and their boundaries $\partial(\sK_{\vb})^\co$. More precisely we have the following:

\begin{lemma} \label{lemma:continuity-Kco}
	Let $(\sK_\vb)_{\vb\in\sB}$ a suitable family of constitutive sets. Then the following statements hold:
	\begin{itemize}
		\item Let $\ep>0$, and $\delta>0$ the $\delta$ from item \ref{item:suitable-K-continuity} of Defn.~\ref{defn:suitable-K} which corresponds to $\ep$. Let furthermore $\vb_1,\vb_2\in\sB$ with $|\vb_1-\vb_2|<\delta$ and $\vz_1\in(\sK_{\vb_1})^\co$. Then there exists $\vz_2\in(\sK_{\vb_2})^\co$ with $|\vz_1 - \vz_2| < \ep$. 
		
		\item Let $\ep>0$, and $\delta>0$ the $\delta$ from item \ref{item:suitable-K-continuity} of Defn.~\ref{defn:suitable-K} which corresponds to $\frac{\ep}{3}$. Let furthermore $\vb_1,\vb_2\in\sB$ with $|\vb_1-\vb_2|<\delta$ and $\vz_1\in\partial(\sK_{\vb_1})^\co$. Then there exists $\vz_2\in\partial (\sK_{\vb_2})^\co$ with $|\vz_1 - \vz_2| < \ep$. 
	\end{itemize}
\end{lemma}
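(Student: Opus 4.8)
The plan is to prove the two bullets in sequence, using the first to establish the second. For the first bullet, let $\ep>0$ be given and take the corresponding $\delta$ from item \ref{item:suitable-K-continuity} of Defn.~\ref{defn:suitable-K}. Let $\vb_1,\vb_2\in\sB$ with $|\vb_1-\vb_2|<\delta$ and let $\vz_1\in(\sK_{\vb_1})^\co$. By Carathéodory's theorem (see Appendix~\ref{app:convex}) we may write $\vz_1 = \sum_{k=1}^{m+1}\lambda_k \vw_k$ with $\vw_k\in\sK_{\vb_1}$, $\lambda_k\geq 0$, $\sum_k\lambda_k=1$. Applying item \ref{item:suitable-K-continuity} to each $\vw_k$ produces $\vw_k'\in\sK_{\vb_2}$ with $|\vw_k-\vw_k'|<\ep$. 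Set $\vz_2:=\sum_{k=1}^{m+1}\lambda_k\vw_k'\in(\sK_{\vb_2})^\co$; then by the triangle inequality $|\vz_1-\vz_2|\leq\sum_k\lambda_k|\vw_k-\vw_k'|<\ep\sum_k\lambda_k=\ep$. This proves the first bullet.

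For the second bullet, fix $\ep>0$ and let $\delta$ be the one associated to $\frac{\ep}{3}$ via item \ref{item:suitable-K-continuity}; note the first bullet then also holds with $\ep$ replaced by $\frac{\ep}{3}$ for this same $\delta$. Let $\vb_1,\vb_2\in\sB$ with $|\vb_1-\vb_2|<\delta$ and $\vz_1\in\partial(\sK_{\vb_1})^\co$. By the first bullet there is $\vz_2'\in(\sK_{\vb_2})^\co$ with $|\vz_1-\vz_2'|<\frac{\ep}{3}$. The point $\vz_2'$ may lie in the interior of $(\sK_{\vb_2})^\co$, so it must be pushed to the boundary. The natural device is a supporting hyperplane: since $\vz_1\in\partial(\sK_{\vb_1})^\co$, pick a unit vector $\vn$ with $\vn\cdot\vz_1 = \max_{\vw\in(\sK_{\vb_1})^\co}\vn\cdot\vw =: h_1$. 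One then considers, in direction $\vn$, the support value $h_2:=\max_{\vw\in(\sK_{\vb_2})^\co}\vn\cdot\vw$ and defines $\vz_2$ to be the point of $\partial(\sK_{\vb_2})^\co$ obtained by translating $\vz_2'$ in the direction $\pm\vn$ until it hits the boundary (or more simply, the maximizer of $\vn\cdot\vw$ over $(\sK_{\vb_2})^\co$ combined with control on the transverse displacement). Using the first bullet in both directions ($\vb_1\to\vb_2$ and $\vb_2\to\vb_1$) one gets $|h_1-h_2|<\frac{\ep}{3}$, and combining the three estimates $|\vz_1-\vz_2'|<\frac{\ep}{3}$, the gap $|h_1-h_2|<\frac{\ep}{3}$, and a final $\frac{\ep}{3}$ for the projection step yields $|\vz_1-\vz_2|<\ep$.

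The main obstacle I anticipate is the boundary-projection step: making precise how to move from the interior point $\vz_2'$ to a boundary point $\vz_2$ of $(\sK_{\vb_2})^\co$ while keeping the total displacement under $\ep$. The cleanest route is to reduce everything to the support functions $h_{\vb}(\vn):=\max_{\vw\in(\sK_{\vb})^\co}\vn\cdot\vw$. The first bullet immediately gives $|h_{\vb_1}(\vn)-h_{\vb_2}(\vn)|<\frac{\ep}{3}$ uniformly in unit vectors $\vn$, i.e. the support functions are uniformly close. One must then argue that $\partial(\sK_{\vb_1})^\co$ and $\partial(\sK_{\vb_2})^\co$ are Hausdorff-close given that the support functions are close and the sets are uniformly bounded (Lemma~\ref{lemma:uniform-boundedness-K}) — this is a standard but slightly delicate fact about convex bodies (it can fail badly without the bound, e.g. for thin sets, but uniform boundedness plus a lower thickness bound, or just a direct estimate, suffices). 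Alternatively one avoids support functions entirely: choose $\vn$ supporting $(\sK_{\vb_1})^\co$ at $\vz_1$, let $\vz_2$ maximize $\vn\cdot\vw$ over $(\sK_{\vb_2})^\co$, and estimate $|\vz_1-\vz_2|$ directly by writing $\vz_1 = \vz_2' + (\vz_1-\vz_2')$ and bounding the component of $\vz_2-\vz_2'$ along $\vn$ by $|h_1-h_2|+\frac{\ep}{3}$ and the transverse component using that $\vz_2'$ is already within $\frac{\ep}{3}$ of the supporting hyperplane of $(\sK_{\vb_1})^\co$ — though the transverse bound genuinely requires some uniform roundness/boundedness input and is where the proof needs the most care.
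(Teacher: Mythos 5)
The first bullet is fine and matches the paper verbatim: Carath\'eodory decompose $\vz_1$, move each extreme point using item \ref{item:suitable-K-continuity}, reassemble. For the second bullet your proof has a genuine gap, and you flag it yourself: the ``transverse component'' in the supporting-hyperplane route is not controlled, and contrary to what you suggest, no ``uniform roundness/boundedness'' hypothesis will rescue that particular route, because a supporting hyperplane can touch along a whole face and the maximizer of $\vn\cdot\vw$ over $(\sK_{\vb_2})^\co$ can then lie arbitrarily far from $\vz_2'$ in the tangential directions. So the ``maximizer'' version of the argument, as written, does not close.

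The missing idea --- which is what the paper does --- is to apply the first bullet in the \emph{reverse} direction $\vb_2\to\vb_1$, not just $\vb_1\to\vb_2$. Concretely: pick any $\widetilde{\vz}_1\notin(\sK_{\vb_1})^\co$ with $\dist(\widetilde{\vz}_1,(\sK_{\vb_1})^\co)=|\widetilde{\vz}_1-\vz_1|=\frac{\ep}{3}$ (take $\widetilde{\vz}_1=\vz_1+\frac{\ep}{3}\vn$ with $\vn$ an outer unit normal of a supporting hyperplane at $\vz_1$; convexity guarantees the distance is then realized at $\vz_1$). If $\widetilde{\vz}_1$ were in $(\sK_{\vb_2})^\co$, the first bullet applied from $\vb_2$ to $\vb_1$ would produce a point of $(\sK_{\vb_1})^\co$ at distance $<\frac{\ep}{3}$ from $\widetilde{\vz}_1$, contradicting $\dist(\widetilde{\vz}_1,(\sK_{\vb_1})^\co)=\frac{\ep}{3}$. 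Hence $\widetilde{\vz}_1\notin(\sK_{\vb_2})^\co$. Since your $\vz_2'\in(\sK_{\vb_2})^\co$ and $\widetilde{\vz}_1\notin(\sK_{\vb_2})^\co$, the segment $[\vz_2',\widetilde{\vz}_1]$ crosses $\partial(\sK_{\vb_2})^\co$, so $\dist(\vz_2',\partial(\sK_{\vb_2})^\co)\le|\vz_2'-\widetilde{\vz}_1|\le|\vz_2'-\vz_1|+|\vz_1-\widetilde{\vz}_1|<\frac{2\ep}{3}$, and the nearest boundary point $\vz_2$ satisfies $|\vz_1-\vz_2|\le|\vz_1-\vz_2'|+|\vz_2'-\vz_2|<\ep$. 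Your ``option (a)'' (slide $\vz_2'$ along $+\vn$ until you exit $(\sK_{\vb_2})^\co$) is a viable variant --- the exit time is bounded by $h_2(\vn)-\vn\cdot\vz_2'<\frac{2\ep}{3}$ --- but you did not actually carry out that estimate, pivoting instead to the maximizer route where the gap sits. The decisive ingredient you are missing in either case is the reverse application of bullet one to certify that a point slightly outside $(\sK_{\vb_1})^\co$ is also outside $(\sK_{\vb_2})^\co$.
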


\begin{proof}
\begin{itemize}
	\item As $\vz_1\in(\sK_{\vb_1})^\co$, there exist $N\in \N$ and $(\tau_i,\vz_{1,i})\in \R^+\times \sK_{\vb_1}$ for all $i=1,...,N$ with $\sum_{i=1}^N \tau_i = 1$ and $\sum_{i=1}^N \tau_i \vz_{1,i} = \vz_1$, see Prop.~\ref{prop:app-caratheodory}. Applying item \ref{item:suitable-K-continuity} of Defn.~\ref{defn:suitable-K}, for each $i=1,...,N$ there exists $\vz_{2,i}\in \sK_{\vb_2}$ such that $|\vz_{1,i} - \vz_{2,i}|<\ep$ for all $i=1,...,N$. Then $\vz_2 := \sum_{i=1}^N \tau_i \vz_{2,i} \in (\sK_{\vb_2})^\co$ satisfies 
	$$
		|\vz_1 - \vz_2| \leq \sum_{i=1}^N \tau_i |\vz_{1,i} - \vz_{2,i}| < \ep \sum_{i=1}^N \tau_i = \ep.
	$$
	
	\item From item \ref{item:suitable-K-compact} of Defn.~\ref{defn:suitable-K} we deduce that $(\sK_{\vb_1})^\co$ is compact as well. Thus $\vz_1\in\partial(\sK_{\vb_1})^\co \subset (\sK_{\vb_1})^\co$ and the first bullet point of Lemma~\ref{lemma:continuity-Kco} yields $\widetilde{\vz}_2 \in(\sK_{\vb_2})^\co$ with $|\vz_1 - \widetilde{\vz}_2|<\frac{\ep}{3}$. Set $\vz_2\in\partial (\sK_{\vb_2})^\co$ such that $\dist(\widetilde{\vz}_2, \partial (\sK_{\vb_2})^\co) = |\widetilde{\vz}_2 - \vz_2|$.
	
	Next choose $\widetilde{\vz}_1\in \R^M \setminus (\sK_{\vb_1})^\co$ such that $\dist(\widetilde{\vz}_1,(\sK_{\vb_1})^\co)= |\widetilde{\vz}_1 - \vz_1 |=\frac{\ep}{3}$.
	
	Assume that $\widetilde{\vz}_1\in (\sK_{\vb_2})^\co$, then according to the first bullet point, there exists $\widehat{\vz}_1\in (\sK_{\vb_1})^\co$ with $|\widetilde{\vz}_1 - \widehat{\vz}_1| < \frac{\ep}{3}$. Thus $\frac{\ep}{3} =\dist(\widetilde{\vz}_1,(\sK_{\vb_1})^\co) \leq |\widetilde{\vz}_1 - \widehat{\vz}_1| < \frac{\ep}{3}$, a contradiction, and hence $\widetilde{\vz}_1\in \R^M \setminus (\sK_{\vb_2})^\co$.
	
	Therefore we may estimate as follows 
	$$
		|\widetilde{\vz}_2 - \vz_2| = \dist(\widetilde{\vz}_2, \partial (\sK_{\vb_2})^\co) \leq |\widetilde{\vz}_2 - \widetilde{\vz}_1| \leq |\widetilde{\vz}_2 - \vz_1| + |\vz_1 - \widetilde{\vz}_1| < \frac{2}{3} \ep,
	$$
	and thus $|\vz_1 - \vz_2| \leq |\vz_1 -\widetilde{\vz}_2 | + |\widetilde{\vz}_2 - \vz_2| < \ep$. 
\end{itemize}
\end{proof}

\subsubsection{Plane Waves and the Wave Cone $\Lambda$} \label{subsubsec:gen-prelim-pl.waves+Lambda}

We will relax the constitutive sets $\sK_{\vb}$ to larger sets $\sU_{\vb}$, see Sect.~\ref{subsec:general-geometric} below for details. A solution $\vz$ of the linear system \eqref{eq:lin-eq}, which satisfies 
\begin{equation*}
	\vz(\vx)\in \sU_{\vb(\vx)}
\end{equation*}
for all $\vx$ is then called \emph{subsolution}. Our goal will be to add an oscillation to a given subsolution in order to obtain a new subsolution which is closer to being a solution. This oscillation will be built upon plane wave solutions of the linear system 
\begin{equation} \label{eq:lin-eq-homo}
	\Div \mA(\vz) = \vzero .
\end{equation}
Note in contrast to \eqref{eq:lin-eq}, that \eqref{eq:lin-eq-homo} is a system of equations rather than inequalities, and it is homogeneous. 

A plane wave solution $\widetilde{\vz}$ of \eqref{eq:lin-eq-homo} is a solution of the form 
\begin{equation} \label{eq:plane-wave}
	\widetilde{\vz}(\vx) = \vz \, h(\vx\cdot \veta),
\end{equation} 
where $\vz\in \R^M$ is a constant vector, $h:\R\to\R$ is a profile and $\veta\in \R^n\setminus\{\vzero\}$ is a direction in space. 

The sets $\sU_{\vb}$ have to be chosen in such a way that they are compatible with plane waves. In order to define them, we have to study the wave cone $\Lambda'$, i.e. the cone of all constants $\vz\in \R^M$ with the property that there exists a direction $\veta\in \R^n\setminus\{\vzero\}$ such that every profile $h:\R\to\R$ yields a plane wave. 

\begin{defn} \label{defn:wave-cone} 
	The wave cone $\Lambda'$ is defined by 
	\begin{equation*}
		\Lambda' := \left\{ \vz\in \R^M\,\Big|\,\exists\veta\in \R^n\setminus\{\vzero\}\text{ such that } \mA(\vz) \cdot \veta = \vzero \right\}.
	\end{equation*} 
\end{defn}

It is simple to see that if $\vz\in \Lambda'$ and $h\in C^1(\R)$, then the plane wave defined in \eqref{eq:plane-wave} is a solution of \eqref{eq:lin-eq-homo}. Indeed we compute for any $i=1,...,m$
\begin{align*}
	[\Div \mA (\widetilde{\vz})]_i &= \sum_{j=1}^n \partial_j [\mA(\widetilde{\vz})]_{ij} = \sum_{j=1}^n [\mA(\partial_j\widetilde{\vz})]_{ij} = \sum_{j=1}^n [\mA(\vz h'(\vx\cdot \veta) \eta_j)]_{ij} \\
	&= h'(\vx\cdot \veta) \sum_{j=1}^n [\mA(\vz)]_{ij} \eta_j = h'(\vx\cdot \veta) [\mA(\vz) \cdot \veta]_{i} = 0 .
\end{align*}

For technical reasons, we work with a cone $\Lambda$ which is a subset of the wave cone, i.e. $\Lambda\subset \Lambda'$. This procedure is caused by the fact that suitable differential operators (see Defn.~\ref{defn:suitable-operator} below) may not exist for all $\vz\in\Lambda'$, see also Remark~\ref{rem:subset-of-wavecone} below.

\subsubsection{Suitable Differential Operator $\opL_\vz$} \label{subsubsec:gen-prelim-operator}

In order to implement convex integration, we will need a suitable differential operator $\opL_\vz$. It will be used to cut the plane waves off.

\begin{defn} \label{defn:suitable-operator} 
	Let $\vz\in\Lambda$ and $\ell\in \N$. An $\ell$-th order homogeneous differential operator 
	$$
		\opL_\vz : C^\infty(\R^{n}) \to C^\infty(\R^{n};\R^M) 
	$$
	is called \emph{suitable} if it satisfies the following two properties:
	\begin{enumerate}
		\item \label{item:suitable-operator-a} For any function $g\in C^\infty(\R^{n})$, $\opL_\vz[g]$ solves the linear system \eqref{eq:lin-eq-homo}, i.e.
		$$
			\Div \mA(\opL_\vz[g]) = \vzero.
		$$

		\item \label{item:suitable-operator-b} If we set $g(\vx):= h(\vx\cdot \veta)$ with an arbitrary function $h\in C^\infty(\R)$ and where $\veta$ corresponds to $\vz\in \Lambda\subset \Lambda'$, we obtain
		$$
			\opL_\vz [g](\vx) = \vz \,h^{(\ell)}(\vx\cdot \veta) ,
		$$
		where $h^{(\ell)}$ denotes the $\ell$-th derivative of $h$. 
	\end{enumerate}
\end{defn}

\subsection{Geometric Setup: The Family of Relaxed Sets $(\sU_\vb)_{\vb\in \sB}$ and Their Properties} \label{subsec:general-geometric}

Next we're going to define the relaxed sets $\sU_{\vb}$ for $\vb\in \sB$. As mentioned above, the relaxed sets $\sU_{\vb}$ have to be compatible with plane waves. To this end we set $\sU_{\vb}:= \interior{\big((\sK_{\vb})^\Lambda\big)}$ for all $\vb\in \sB$, i.e. the interior of the $\Lambda$-convex hull of the set $\sK_{\vb}$, see Appendix~\ref{app:Lconvex} for the definition of the $\Lambda$-convex hull and its properties. 

For our study we will assume that 
\begin{equation} \label{eq:KbLambda=KbCo}
	(\sK_{\vb})^\Lambda = (\sK_{\vb})^\co \qquad \text{ for all } \vb\in \sB,
\end{equation}
i.e. the $\Lambda$-convex hull of $\sK_{\vb}$ coincides with its convex hull.

\begin{rem} 
	Note that the fact \eqref{eq:KbLambda=KbCo} is true in most applications, see \cite[Proof of Lemma~4.3]{DelSze09} in the case of incompressible Euler, or \cite[Prop.~4.3.2]{Markfelder} in the case of compressible Euler, or Sect.~\ref{subsubsec:eif-sa-KLambda=Kco} below in the case of compressible Euler with energy inequality. Note further that \eqref{eq:KbLambda=KbCo} can be shown by proving that the wave cone $\Lambda$ is complete with respect to $\sK_\vb$, see Defn.~\ref{defn:app-complete-wc} and Prop.~\ref{prop:app-complete-wc}.
\end{rem}

Before we move on, let us note the following simple fact.

\begin{lemma} \label{lemma:uniform-boundedness-U}
	Let $(\sK_\vb)_{\vb\in\sB}$ a family of constitutive sets which is suitable in the sense of Defn.~\ref{defn:suitable-K}. Then
	$$
	|\vz|\leq c \qquad \text{ for all }\vb\in \sB \text{ and all } \vz\in (\sK_{\vb})^\co ,
	$$
	with $c>0$ from Lemma~\ref{lemma:uniform-boundedness-K}.
\end{lemma}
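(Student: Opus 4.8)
The plan is to reduce the claim for the convex hull to the already-established bound on $\sK_\vb$ itself (Lemma~\ref{lemma:uniform-boundedness-K}) via Carath\'eodory's theorem, which is available in the appendix as Prop.~\ref{prop:app-caratheodory} and was already invoked in the proof of Lemma~\ref{lemma:continuity-Kco}. First I would fix $\vb\in\sB$ and an arbitrary point $\vz\in(\sK_\vb)^\co$. By Carath\'eodory (Prop.~\ref{prop:app-caratheodory}) there exist $N\in\N$ and pairs $(\tau_i,\vz_i)\in\R^+\times\sK_\vb$ for $i=1,\dots,N$ with $\sum_{i=1}^N\tau_i=1$ and $\sum_{i=1}^N\tau_i\vz_i=\vz$.

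Then the estimate is a one-line application of the triangle inequality and convexity of the norm: using Lemma~\ref{lemma:uniform-boundedness-K}, which gives $|\vz_i|\leq c$ for every $i$ (with $c$ independent of $\vb$), we get
$$
	|\vz| = \left|\sum_{i=1}^N \tau_i \vz_i\right| \leq \sum_{i=1}^N \tau_i |\vz_i| \leq c \sum_{i=1}^N \tau_i = c.
$$
Since $\vb\in\sB$ and $\vz\in(\sK_\vb)^\co$ were arbitrary, this is exactly the asserted bound, with the same constant $c$ as in Lemma~\ref{lemma:uniform-boundedness-K}.

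There is essentially no obstacle here; the only points to be careful about are that one uses the finite-combination characterisation of the convex hull (as opposed to a closure operation) so that the sum is genuinely finite and the triangle inequality applies termwise, and that the constant $c$ from Lemma~\ref{lemma:uniform-boundedness-K} is already uniform in $\vb$, so no new uniformity argument is needed. If one prefers to avoid Carath\'eodory altogether, an equally short alternative is to note that the closed ball $\overline{B_c(\vzero)}\subset\R^M$ is convex and contains $\sK_\vb$ by Lemma~\ref{lemma:uniform-boundedness-K}, hence contains $(\sK_\vb)^\co$ by definition of the convex hull as the smallest convex superset; this also yields $|\vz|\leq c$ for all $\vz\in(\sK_\vb)^\co$ and all $\vb\in\sB$.
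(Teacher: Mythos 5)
Your proof is correct and follows exactly the paper's argument: the paper also deduces the bound from Prop.~\ref{prop:app-caratheodory}, Lemma~\ref{lemma:uniform-boundedness-K} and the triangle inequality. The alternative remark about the closed ball being a convex superset is fine but not needed.
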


\begin{proof} 
The claim follows immediately from Prop.~\ref{prop:app-caratheodory}, Lemma~\ref{lemma:uniform-boundedness-K} and triangle inequality.
\end{proof}

The assumption \eqref{eq:KbLambda=KbCo} implies the following fact. Its proof can be also found in a less general form in \cite{Markfelder}, cf. Lemmas~5.2.2 and 5.2.3 therein. For the sake of completeness we recall this proof below.

\begin{prop} \label{prop:geom-property-U} 
	Consider a family of constitutive sets $(\sK_\vb)_{\vb\in \sB}$ which is suitable in the sense of Defn.~\ref{defn:suitable-K}, and assume that \eqref{eq:KbLambda=KbCo} holds. Let $\vb\in \sB$ fixed, $\vz\in \sU_{\vb}$ and $\ep>0$. Then there exist $N\in \N$ with $N\geq 2$, and $(\tau_i,\vz_i)\in \R^+\times \R^M$ for $i=1,...,N$ with the following properties\footnote{We refer to Appendix~\ref{app:Lconvex} for the definition and basic properties of the $H_N$-condition and the barycenter.}.
	\begin{enumerate}
		\item \label{item:geom-propU-a} The family $\{(\tau_i,\vz_i)\}_{i=1,...,N}$ satisfies the $H_N$-condition.
		
		\item \label{item:geom-propU-b} All endpoints lie in $\sU_{\vb}$ and their distance to $\partial \sU_{\vb}$ can be estimated by the distance of $\vz$ to $\partial \sU_{\vb}$, more precisely
		\begin{align*} 
			\vz_i &\in \sU_{\vb} & &\text{ for all }i=1,...,N, \\
			\dist ( \vz_i , \partial \sU_{\vb}) &\geq \frac{\ep}{2c} \dist(\vz,\partial \sU_{\vb}) & &\text{ for all }i=1,...,N,
		\end{align*}
		where $c$ is the bound from Lemma~\ref{lemma:uniform-boundedness-K}.
		
		\item \label{item:geom-propU-c} All endpoints are close to $\sK_{\vb}$, i.e.
		$$
			\dist(\vz_i,\sK_{\vb})\leq \ep \qquad \text{ for all }i=1,...,N.
		$$ 
		
		\item \label{item:geom-propU-d} The barycenter of the family $\{(\tau_i,\vz_i)\}_{i=1,...,N}$ is given by $\vz$, i.e. 
		$$
			\sum_{i=1}^N \tau_i \vz_i = \vz .
		$$
	\end{enumerate} 
\end{prop}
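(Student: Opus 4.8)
The plan is to take an $H_N$-representation of $\vz$ whose endpoints lie in $\sK_\vb$ and to shrink it slightly towards its barycenter $\vz$; this classical ``dilation towards the barycenter'' preserves the combinatorial $H_N$-structure while pushing the endpoints off $\sK_\vb$ into the open set $\sU_\vb$ in a controlled way, so that all four items fall out at once.

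First I would collect the consequences of the hypotheses. By \eqref{eq:KbLambda=KbCo} we have $\sU_{\vb}=\interior{\big((\sK_{\vb})^\co\big)}$, and by Lemma~\ref{lemma:uniform-boundedness-U} together with item~\ref{item:suitable-K-compact} of Defn.~\ref{defn:suitable-K} the set $C:=(\sK_{\vb})^\co$ is convex and compact; we may assume it has nonempty interior, since otherwise $\sU_\vb=\emptyset$ and the claim is vacuous, so that $\closure{\sU_\vb}=C$ and $\partial\sU_\vb=\partial C$. Moreover $\vz\in\sU_\vb$ forces $\vz\notin\sK_\vb$ by item~\ref{item:suitable-K-boundary} of Defn.~\ref{defn:suitable-K}. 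Since $\vz\in\sU_\vb\subset(\sK_\vb)^\Lambda$, the definition of the $\Lambda$-convex hull in Appendix~\ref{app:Lconvex} yields $N\in\N$ and a family $\{(\tau_i,\vq_i)\}_{i=1,\dots,N}$ satisfying the $H_N$-condition with $\vq_i\in\sK_\vb$ for all $i$ and barycenter $\sum_i\tau_i\vq_i=\vz$. As $\vz\notin\sK_\vb$, this representation cannot reduce to a single point, so $N\geq 2$, which already gives the index bound in the statement.

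Next I would fix $s:=\frac{\ep}{2c}$ (we may assume $\ep\leq 2c$, which is all that is needed for the applications, so that $s\in(0,1]$) and set $\vz_i:=(1-s)\vq_i+s\vz$. Property~\ref{item:geom-propU-d} is then immediate, $\sum_i\tau_i\vz_i=(1-s)\vz+s\vz=\vz$. Property~\ref{item:geom-propU-c} follows from the uniform bound $|\vz|\leq c$, $|\vq_i|\leq c$ (Lemmas~\ref{lemma:uniform-boundedness-K} and~\ref{lemma:uniform-boundedness-U}): $\dist(\vz_i,\sK_{\vb})\leq|\vz_i-\vq_i|=s\,|\vz-\vq_i|\leq 2cs=\ep$. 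For property~\ref{item:geom-propU-b} I would use the standard fact that, for a convex set, the segment joining a point of its closure to a point of its interior lies in the interior except possibly at the first endpoint; applied with $r:=\dist(\vz,\partial\sU_\vb)$, the open ball $B_r(\vz)\subset\interior C=\sU_\vb$ and the point $\vq_i\in\sK_\vb\subset C=\closure{\sU_\vb}$, this yields $B_{sr}(\vz_i)=(1-s)\vq_i+sB_r(\vz)\subset\sU_\vb$, hence $\vz_i\in\sU_\vb$ and $\dist(\vz_i,\partial\sU_\vb)\geq sr=\frac{\ep}{2c}\dist(\vz,\partial\sU_\vb)$.

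The remaining and, I expect, most delicate point is property~\ref{item:geom-propU-a}, that the dilated family still satisfies the $H_N$-condition. I would prove this by induction on $N$, unwinding the recursive definition of the $H_N$-condition from Appendix~\ref{app:Lconvex}: the map $T:\vv\mapsto(1-s)\vv+s\vz$ is affine with linear part $(1-s)\,\id$, so it fixes the global barycenter $\vz$, and being affine it commutes with the partial barycenters taken at each reduction step; crucially, since $\Lambda$ is a cone and $1-s>0$, it sends every splitting direction $\vq_k-\vq_l\in\Lambda$ to $(1-s)(\vq_k-\vq_l)\in\Lambda$. Hence applying $T$ entrywise turns the given $H_N$-family into another $H_N$-family with the same weights $\tau_i$ and the same barycenter, which is precisely $\{(\tau_i,\vz_i)\}_{i=1,\dots,N}$. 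This establishes all four items; apart from the bookkeeping in this last induction, the only care needed is the harmless restriction to small $\ep$ noted above.
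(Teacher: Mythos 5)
Your proposal is correct and takes essentially the same route as the paper: extract an $H_N$-laminate with endpoints in $\sK_{\vb}$ (Prop.~\ref{prop:app-laminates}), note $N\geq 2$ from item~\ref{item:suitable-K-boundary} of Defn.~\ref{defn:suitable-K}, contract it toward the barycenter $\vz$, and verify the four items — the only cosmetic differences being that you fix the contraction weight $s=\frac{\ep}{2c}$ where the paper allows a range, you obtain item~\ref{item:geom-propU-b} by a direct ball-inclusion instead of the concavity of $\dist(\cdot,\partial\sU_\vb)$ (Lemma~\ref{lemma:app-distance-concave}), and you prove the $H_N$-preservation under the affine contraction by induction rather than citing the reference. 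One tiny adjustment: your argument for item~\ref{item:geom-propU-a} uses $1-s>0$, so you should require $\ep<2c$ (i.e.\ $s\in(0,1)$) rather than $\ep\leq 2c$, which is exactly the harmless smallness restriction the paper records in a footnote.
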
 

\begin{proof} 
	We proceed in a similar way as in \cite{Markfelder}, cf. Lemmas~5.2.2 and 5.2.3 therein. Since $\vz\in \sU_{\vb} = \interior{\big((\sK_{\vb})^\Lambda\big)} \subset (\sK_{\vb})^\Lambda$, we can apply Prop.~\ref{prop:app-laminates} to find $N\in \N$ and $(\tau_i,\widehat{\vz}_i)\in \R^+\times \R^M$ for $i=1,...,N$ with the following properties: 
	\begin{itemize}
		\item The family $\{ ( \tau_i,\widehat{\vz}_i) \}_{i=1,...,N}$ satisfies the $H_N$-condition, 
		
		\item $\widehat{\vz}_i\in \sK_{\vb}$ for all $i=1,...,N$ and
		
		\item the barycenter of the family $\{ ( \tau_i,\widehat{\vz}_i) \}_{i=1,...,N}$ is given by $\vz$. 
	\end{itemize}
	Assume $N=1$, then $\vz=\widehat{\vz}_1 \in \sK_{\vb}$. Using \eqref{eq:KbLambda=KbCo} this means that $\vz\in \interior{\big((\sK_{\vb})^\Lambda\big)} \cap \sK_{\vb}$ which contradicts item \ref{item:suitable-K-boundary} of Defn.~\ref{defn:suitable-K}. Hence $N\geq 2$. 
	
	Now choose $\tau\in (0,1)$ such that 
	\begin{equation} \label{eq:aux05}
		\frac{\ep}{2c}\leq 1-\tau \leq \frac{\ep}{\max\limits_{i=1,...,N} | \vz - \widehat{\vz}_i | }.
	\end{equation} 
	Note that this is possible because\footnote{Note that actually we also need $\frac{\ep}{2c}<1$ to make the choice in \eqref{eq:aux05} possible. However when we apply Prop.~\ref{prop:geom-property-U} in the proof of the Perturbation Property (Prop.~\ref{prop:pert-prop}) below, we may assume that $\ep>0$ is suitably small. Alternatively, we may increase the universal constant $c$ in order to achieve $\frac{\ep}{2c}<1$.}
	$$
		| \vz - \widehat{\vz}_i | \leq |\vz| + |\widehat{\vz}_i | \leq 2c \qquad \text{ for all } i=1,...,N,
	$$
	due to Lemmas~\ref{lemma:uniform-boundedness-K} and \ref{lemma:uniform-boundedness-U}.
	
	Next we set 
	$$
		\vz_i:= \tau \widehat{\vz}_i + (1-\tau) \vz \qquad \text{ for } i=1,...,N.
	$$
	
	Now we prove that the desired conditions \ref{item:geom-propU-a}-\ref{item:geom-propU-d} hold.
	
	\begin{enumerate}
		\item The fact that the family $\{(\tau_i,\vz_i)\}_{i=1,...,N}$ satisfies the $H_N$-condition follows from the circumstance that 
		$$
			\widehat{\vz}_2 - \widehat{\vz}_1 \in \Lambda \quad \Rightarrow \quad \vz_2 - \vz_1 \in \Lambda .
		$$
		For more details we refer to \cite[Lemma~5.2.3]{Markfelder}. 
	
		\item We infer $\vz_i \in \sU_{\vb}$ for $i=1,...,N$ from Lemma~\ref{lemma:app-interior-convex} with $S:= (\sK_{\vb})^\co$ and assumption \eqref{eq:KbLambda=KbCo}. Furthermore note that according to Cor.~\ref{cor:app-boundary-vs-boundaryofinterior} we have 
		$$
			\dist(\cdot, \partial\sU_{\vb}) = \dist(\cdot, \partial(\sK_{\vb})^\co),
		$$
		which is concave on $(\sK_{\vb})^\co$ due to Lemma~\ref{lemma:app-distance-concave}. Hence we obtain
		$$
			\dist(\vz_i, \partial \sU_\vb) \geq \tau \dist(\widehat{\vz}_i, \partial \sU_\vb) + (1-\tau) \dist(\vz, \partial \sU_\vb).
		$$
		Item \ref{item:suitable-K-boundary} of Defn.~\ref{defn:suitable-K} implies $\sK_\vb\subset \partial (\sK_{\vb})^\co$ and therefore $\dist(\widehat{\vz}_i, \partial \sU_\vb)=0$. Together with \eqref{eq:aux05} this yields 
		$$
			\dist(\vz_i, \partial \sU_\vb) \geq \frac{\ep}{2c} \dist(\vz, \partial \sU_\vb)
		$$
		as desired.

		\item Our choice of $\tau$ (i.e. \eqref{eq:aux05}) ensures that for all $i\in\{1,...,N\}$ we have
		\begin{equation*}
			\dist(\vz_i,\sK_{\vb}) \leq | \vz_i - \widehat{\vz}_i | = (1-\tau) | \vz - \widehat{\vz}_i | \leq \ep.
		\end{equation*}
	
		\item Again analogously to \cite[Lemma~5.2.3]{Markfelder}, we obtain for the barycenter
		$$
			\sum_{i=1}^N \tau_i \vz_i = \tau \sum_{i=1}^N \tau_i \widehat{\vz}_i + (1-\tau) \vz = \vz.
		$$
	\end{enumerate}
\end{proof}

\subsection{Functional Setup} \label{subsec:general-functional}

\subsubsection{Convex Integration Theorem}

Next we state our convex integration theorem. 

\begin{thm} \label{thm:conv-int}
	Let $\Gamma\subset\R^n$ be a Lipschitz domain (open but not necessarily bounded), $\mA:\R^M\to \R^{m\times n}$ linear, $\vb\in \Cb(\closure{\Gamma};\R^m)$, and $(\sK_{\vb})_{\vb\in\sB}$ a family of constitutive sets, where $\sB:=\closure{\vb(\closure{\Gamma})}$ and $\sK_\vb\subset \R^M$ for all $\vb\in \sB$. Moreover let $\Lambda$ a cone with\footnote{See Defn.~\ref{defn:wave-cone} for the definition of the wave cone $\Lambda'$.} $\Lambda\subset \Lambda'$. \\ 
	Suppose that the following structural assumptions hold:
	\begin{itemize} 
		\item The family of constitutive sets $(\sK_{\vb})_{\vb\in\sB}$ is suitable in the sense of Defn.~\ref{defn:suitable-K}.
		
		\item For any $\vz\in \Lambda$ there exists $\ell\in \N$ and an $\ell$-th order homogeneous differential operator 
		$$
			\opL_\vz : C^\infty(\R^{n}) \to C^\infty(\R^{n};\R^M) 
		$$ 
		which is suitable in the sense of Defn.~\ref{defn:suitable-operator}.
		
		\item For any $\vb\in \sB$ the $\Lambda$-convex hull of $\sK_{\vb}$ coincides with its convex hull, i.e. 
		\begin{equation} \label{eq:ass-KbLambda=KbCo}
			(\sK_{\vb})^\Lambda = (\sK_{\vb})^\co \qquad \text{ for all } \vb\in \sB.
		\end{equation}
	\end{itemize}
	Finally assume there exists $\ov{\vz}\in C^1(\closure{\Gamma};\R^M)$ with the following properties:
	\begin{itemize}
		\item It satisfies the linear system of inequalities \eqref{eq:lin-eq}, i.e. 
		$$
			\Div \mA (\ov{\vz}) \leq \vb \qquad \text{ holds pointwise for all }\vx\in \Gamma;
		$$
		
		\item It takes values in $\sU_{\vb(\vx)}$ where $\sU_\vb:= \interior{\big((\sK_{\vb})^\Lambda\big)}$ for all $\vb\in \sB$, i.e. 
		$$ 
			\ov{\vz}(\vx) \in \sU_{\vb(\vx)}\qquad \text{ for all }\vx\in \Gamma . 
		$$ 
	\end{itemize}
	Then there exist infinitely many solutions $\vz\in L^\infty(\Gamma; \R^M)$ in the following sense:
	\begin{enumerate}
		\item \label{item:main-thm-sol1} They satisfy \eqref{eq:lin-eq} in the sense of distributions with boundary data given by $\ov{\vz}$, more precisely 
		\begin{equation*}
			-\int_\Gamma \mA(\vz) : \Grad \vphi \dx + \int_{\partial \Gamma} (\mA(\ov{\vz})\cdot \vn)\cdot \vphi \dS \leq \int_\Gamma \vb \cdot \vphi \dx
		\end{equation*}
		for all test functions $\vphi\in \Cc(\closure{\Gamma};\R^m)$ with $\vphi\geq \vzero$ in the sense of \eqref{eq:vector-order}. Here $\vn$ denotes the outward pointing normal vector on $\partial \Gamma$. 
		
		\item \label{item:main-thm-sol2} They take values in $\sK_{\vb(\vx)}$, i.e.
		\begin{equation*}
			\vz(\vx)\in \sK_{\vb(\vx)}\qquad \text{ for a.e. }\vx\in \Gamma. 
		\end{equation*}
	\end{enumerate}
\end{thm}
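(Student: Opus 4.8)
The plan is to run the Baire-category convex integration scheme of De~Lellis--Sz\'ekelyhidi in this abstract setting. Since $\Gamma$ need not be bounded, I would first fix a weight $\omega\in L^1(\Gamma)$ with $\omega>0$ a.e., and recall the uniform bound $c$ from Lemmas~\ref{lemma:uniform-boundedness-K} and \ref{lemma:uniform-boundedness-U}, so that all functions in play take values in the closed ball $\closure{B_c}\subset\R^M$; on the set of $L^\infty(\Gamma;\R^M)$ functions bounded by $c$ the weak-$*$ topology is metrizable, and I write $d$ for a compatible metric. I would then let $X_0$ be the set of \emph{subsolutions}: functions $\vz\in C^1(\closure\Gamma;\R^M)$ such that $\vz-\ov\vz$ has compact support in $\Gamma$, $\Div\mA(\vz)\leq\vb$ holds pointwise on $\Gamma$, and $\vz(\vx)\in\sU_{\vb(\vx)}$ for all $\vx\in\Gamma$. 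This set is nonempty since $\ov\vz\in X_0$ by hypothesis, and I let $X$ be the closure of $X_0$ with respect to $d$, a complete metric space.

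The analytic heart is the \emph{Perturbation Property} (Prop.~\ref{prop:pert-prop}): there is $\kappa\colon(0,\infty)\to(0,\infty)$ such that, whenever $\vz\in X_0$ satisfies $\int_\Gamma\dist\bigl(\vz(\vx),\sK_{\vb(\vx)}\bigr)^2\,\omega(\vx)\dx\geq\alpha$ and $\beta>0$ is given, there is $\vz'\in X_0$ with $d(\vz,\vz')<\beta$ and $\int_\Gamma|\vz'-\vz|^2\,\omega\dx\geq\kappa(\alpha)$. To prove this I would: (i) cover the region where $\vz$ is far from $\sK_{\vb}$ by finitely many small balls on which both $\vz$ and $\vb$ are almost constant; (ii) on each such ball, apply the geometric decomposition of Prop.~\ref{prop:geom-property-U} at the nearly constant value $\vz(\vx_0)\in\sU_{\vb(\vx_0)}$, obtaining a family $\{(\tau_i,\vz_i)\}_{i=1,\dots,N}$ which satisfies the $H_N$-condition, has barycenter $\vz(\vx_0)$ (item~\ref{item:geom-propU-d}), endpoints within $\ep$ of $\sK_{\vb(\vx_0)}$ (item~\ref{item:geom-propU-c}), and the quantitative distance bound $\dist(\vz_i,\partial\sU_{\vb(\vx_0)})\geq\frac{\ep}{2c}\dist(\vz(\vx_0),\partial\sU_{\vb(\vx_0)})$ (item~\ref{item:geom-propU-b}); (iii) realise this laminate by iterated, compactly supported, high-frequency plane waves: the $H_N$-condition produces directions in $\Lambda$, and for each such direction property~\ref{item:suitable-operator-a} of the suitable operator $\opL_\vz$ guarantees that the resulting correction solves $\Div\mA(\cdot)=\vzero$ \emph{exactly}, so that $\Div\mA(\vz')=\Div\mA(\vz)\leq\vb$ is automatic and the boundary data is untouched; property~\ref{item:suitable-operator-b} then ensures that, as the frequency $\lambda\to\infty$, the correction converges weak-$*$ to $0$ (so $d(\vz,\vz')<\beta$) while its squared $L^2$-norm remains comparable to $\sum_i\tau_i|\vz_i-\vz(\vx_0)|^2$, which a standard quadratic estimate for $H_N$-families bounds below by a multiple of $\dist(\vz(\vx_0),\sK_{\vb(\vx_0)})^2$; (iv) finally, using the uniform continuity $\vb\mapsto\sU_\vb$ from Lemma~\ref{lemma:continuity-Kco} together with the bound in item~\ref{item:geom-propU-b}, take the balls small and $\lambda$ large enough that $\vz'(\vx)\in\sU_{\vb(\vx)}$ still holds for every $\vx$, hence $\vz'\in X_0$.

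Granting the Perturbation Property, set $I(\vz):=\int_\Gamma|\vz|^2\,\omega\dx$. It is weak-$*$ lower semicontinuous on $X$, hence of Baire class one, so Baire's theorem gives a residual set $R\subset X$ of its continuity points. I claim each $\vz\in R$ is a solution. If not, then $\int_\Gamma\dist(\vz,\sK_{\vb})^2\,\omega\dx>0$; choosing $\vz_j\in X_0$ with $d(\vz_j,\vz)\to0$, we get $\int_\Gamma\dist(\vz_j,\sK_{\vb})^2\,\omega\dx\geq\alpha$ for some fixed $\alpha>0$ and all large $j$, and the Perturbation Property yields $\vz_j'\in X_0$ with $d(\vz_j',\vz_j)\to0$ and $I(\vz_j')\geq I(\vz_j)+\kappa(\alpha)$. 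Then $\vz_j'\to\vz$ in $X$ while $\limsup_j I(\vz_j')\geq I(\vz)+\kappa(\alpha)>I(\vz)$, contradicting continuity of $I$ at $\vz$. Hence $\vz(\vx)\in\sK_{\vb(\vx)}$ a.e.\ (using that $\sK_{\vb}$ is compact, item~\ref{item:suitable-K-compact}), which is item~\ref{item:main-thm-sol2}. Item~\ref{item:main-thm-sol1} holds for every $\vz\in X_0$ by testing $\Div\mA(\vz)\leq\vb$ against $\vphi\geq\vzero$ and integrating by parts, using that $\vz-\ov\vz$ is compactly supported; since that relation is linear in $\vz$ it is weak-$*$ stable and passes to all of $X\supseteq R$. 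Infinitely many solutions: the Perturbation Property (with $\beta$ arbitrarily small) shows no point of $X_0$ is isolated in $X$, so $X$ is perfect, and a residual subset of a perfect complete metric space is uncountable.

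The main obstacle is steps (iii)--(iv) of the Perturbation Property, i.e.\ converting the purely geometric laminate of Prop.~\ref{prop:geom-property-U} into an honest, compactly supported, high-frequency correction that simultaneously (a) solves the homogeneous linear system \eqref{eq:lin-eq-homo} exactly, (b) keeps all values inside the \emph{open} relaxed sets $\sU_{\vb(\vx)}$ as $\vb$ varies with $\vx$ --- which is exactly where Lemma~\ref{lemma:continuity-Kco} and the quantitative distance estimate of item~\ref{item:geom-propU-b} are indispensable --- and (c) produces a quantitatively nontrivial $L^2$-gain $\kappa(\alpha)$. The unboundedness of $\Gamma$, the localisation in $\vx$ via a finite (Vitali-type) covering, and the weight $\omega$ used to keep $I$ finite are additional bookkeeping but raise no essential new difficulty once Lemmas~\ref{lemma:uniform-boundedness-K}--\ref{lemma:continuity-Kco} and assumption \eqref{eq:ass-KbLambda=KbCo} are available.
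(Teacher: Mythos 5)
Your overall architecture (the subsolution set $X_0$, its weak-$\ast$ metrizable closure $X$, a Baire category argument, perturbations built from the laminates of Prop.~\ref{prop:geom-property-U} realized by the suitable operators $\opL_\vz$ and localized by a covering) coincides with the paper's. Where you genuinely diverge is the functional-analytic skeleton: you follow the classical De~Lellis--Sz\'ekelyhidi route, taking the weighted energy $I(\vz)=\int_\Gamma|\vz|^2\,\omega\dx$ as the Baire-1 functional together with a perturbation property carrying a quantitative $L^2(\omega)$-gain $\kappa(\alpha)$. The paper instead works directly with $\I_{\Gamma_0}(\vz)=\int_{\Gamma_0}\dist\big(\vz(\vx),\sK_{\vb(\vx)}\big)\dx$ on an exhaustion of $\Gamma$ by bounded open sets, proves these functionals are Baire-1 by mollification (no semicontinuity is used), and proves a perturbation property that merely makes $\I_{\Gamma_0}$ small while staying $d$-close; at a common continuity point of all $\I_{\Gamma_j}$ one then gets $\I_{\Gamma_j}=0$ at once. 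In this abstract setting, where no structural identity ties the energy to the constraint sets, the paper's variant is the cheaper one: it needs neither the weight $\omega$ nor any quantitative gain.

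As written, your Baire step has a genuine gap. From ``$\vz\in R$ is not a solution'' and $d(\vz_j,\vz)\to 0$ with $\vz_j\in X_0$ you infer $\int_\Gamma\dist(\vz_j,\sK_{\vb})^2\omega\dx\geq\alpha$. The functional $\vz\mapsto\int\dist(\vz,\sK_\vb)^2\omega\dx$ is not weak-$\ast$ lower semicontinuous -- the sets $\sK_\vb$ are highly non-convex (by Defn.~\ref{defn:suitable-K}~\ref{item:suitable-K-boundary} they avoid the interior of their convex hull), and the whole method lives on sequences whose distance functional tends to zero while the weak-$\ast$ limit sits deep inside $\sU_\vb$ -- so this inference does not follow from $\int\dist(\vz,\sK_\vb)^2\omega\dx>0$ alone. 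It can be repaired, but only by exploiting that $\vz$ is a continuity point of $I$: then $\|\vz_j\|_{L^2(\omega)}\to\|\vz\|_{L^2(\omega)}$, and since uniformly bounded weak-$\ast$ convergence in $L^\infty$ implies weak convergence in $L^2(\omega)$ (for $g\in L^2(\omega)$ one has $g\omega\in L^1$), the convergence upgrades to strong $L^2(\omega)$ convergence, hence a.e.\ convergence of a subsequence and, by dominated convergence with the bound $2c$ of Lemma~\ref{lemma:properties-dist}, $\int\dist(\vz_j,\sK_\vb)^2\omega\dx\to\int\dist(\vz,\sK_\vb)^2\omega\dx>0$. You do not give this argument, and without it the proof collapses exactly where the general geometry of $\sK_\vb$ matters. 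Two further points in step (iii) need to be made explicit before your perturbation property is usable: the gain you construct is in $\int|\vz'-\vz|^2\omega\dx$, so to conclude $I(\vz')\geq I(\vz)+\kappa(\alpha)$ you must also make the cross term $2\int\vz\cdot(\vz'-\vz)\,\omega\dx$ small (achievable at high frequency, but it must be part of the statement); and the uniform lower bound $\kappa(\alpha)$ requires volume-fraction control of the sets on which the oscillation attains the laminate values (roughly $|\Gamma_i|\gtrsim\tau_i|\Gamma^\ast|$), which the paper's inductive lemma does not provide and you would have to prove. None of these issues arises in the paper's formulation of the perturbation property.
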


\begin{rem} \label{rem:subset-of-wavecone}
	Note that the wave cone $\Lambda'$ could be too large to ensure that there is a suitable differential operator $\opL_\vz$ for all $\vz\in \Lambda'$, see e.g. Remark~\ref{rem:wave-cone-euler} below. In this case we have to shrink it to a strict subset $\Lambda\subsetneq \Lambda'$. Note that on the other hand $\Lambda$ must be sufficiently large in order to guarantee \eqref{eq:ass-KbLambda=KbCo}. 
\end{rem}

The remainder of this subsection is devoted to the proof of Thm.~\ref{thm:conv-int}. It works similar to existing literature on convex integration. We will follow \cite{Markfelder}.

\subsubsection{The Functionals $\I_{\Gamma_0}$ and Their Properties} 

Let $\Gamma\subset\R^n$, $\mA:\R^M\to \R^{m\times n}$ linear, $\vb\in \Cb(\closure{\Gamma};\R^m)$, $(\sK_{\vb})_{\vb\in \sB}$ and $\ov{\vz}\in C^1(\closure{\Gamma};\R^M)$ be given such that the assumptions of Thm.~\ref{thm:conv-int} hold. 

\begin{defn} \label{defn:X0andX}
	We define the set $X_0$ by
	$$
		X_0 := \left\{ \vz\in C^1(\closure{\Gamma};\R^M) \, \Big|\, \text{Properties \eqref{eq:defn-X0-eq}-\eqref{eq:defn-X0-boundary} hold} \right\} ,
	$$
	where the properties \eqref{eq:defn-X0-eq}-\eqref{eq:defn-X0-boundary} read as follows:
	\begin{align}
		\Div \mA (\vz) &\leq \vb & & \text{ holds pointwise for all }\vx\in \Gamma; \label{eq:defn-X0-eq} \\
		\vz(\vx) &\in \sU_{\vb(\vx)} & & \text{ for all }\vx\in \Gamma ; \label{eq:defn-X0-subs} \\ 
		\vz(\vx) &= \ov{\vz}(\vx) & & \text{ for all }\vx\in \partial\Gamma. \label{eq:defn-X0-boundary}
	\end{align}
	Moreover we denote the closure of $X_0$ with respect to the $L^\infty$ weak-$\ast$ topology by $X$. 
\end{defn}

In other words $\vz\in X_0$ satisfies the linear system \eqref{eq:lin-eq}, takes values in $\sU_{\vb(\vx)}$ and coincides on the boundary with $\ov{\vz}$.

\begin{prop} \label{prop:X-metric-space}
	There exists a metric $d$ on $X$ which induces the $L^\infty$ weak-$\ast$ topology, and furthermore the metric space $(X,d)$ is compact and complete. 
\end{prop}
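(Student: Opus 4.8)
The claim is a standard fact in convex-integration theory: the weak-$\ast$ closure of a bounded subset of $L^\infty$ is metrizable, compact, and complete. The plan is to first establish uniform boundedness of $X_0$, then invoke metrizability of bounded sets in the weak-$\ast$ topology, and finally deduce compactness and completeness. First I would show that every $\vz\in X_0$ is bounded in $L^\infty(\Gamma;\R^M)$ by a single constant independent of $\vz$: by \eqref{eq:defn-X0-subs} we have $\vz(\vx)\in\sU_{\vb(\vx)}=\interior{((\sK_{\vb(\vx)})^\Lambda)}\subset(\sK_{\vb(\vx)})^\co$ (using \eqref{eq:ass-KbLambda=KbCo}), and by Lemma~\ref{lemma:uniform-boundedness-U} this forces $|\vz(\vx)|\leq c$ for a.e.\ $\vx\in\Gamma$, with $c$ the constant from Lemma~\ref{lemma:uniform-boundedness-K}. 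Hence $X_0$ is contained in the closed ball $B$ of radius $c$ in $L^\infty(\Gamma;\R^M)$, and so is its weak-$\ast$ closure $X$.

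Next I would use the fact that $L^\infty(\Gamma;\R^M)$ is the dual of the separable Banach space $L^1(\Gamma;\R^M)$; therefore, by the standard functional-analytic result (see, e.g., Brezis), the weak-$\ast$ topology restricted to the bounded set $B$ is metrizable. Concretely, fixing a countable dense subset $\{\vf_k\}_{k\in\N}$ of the unit ball of $L^1(\Gamma;\R^M)$, the metric
\[
	d(\vz_1,\vz_2) := \sum_{k=1}^\infty 2^{-k} \left| \int_\Gamma (\vz_1-\vz_2)\cdot \vf_k \dx \right|
\]
induces the weak-$\ast$ topology on $B$, and a fortiori on $X\subset B$. By the Banach--Alaoglu theorem, $B$ is weak-$\ast$ compact; since $X$ is by definition a weak-$\ast$ closed subset of $B$, it is weak-$\ast$ compact as well, hence $(X,d)$ is a compact metric space. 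Finally, a compact metric space is automatically complete, which gives the last assertion.

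The only genuine content here is the uniform bound coming from Lemmas~\ref{lemma:uniform-boundedness-K} and \ref{lemma:uniform-boundedness-U} together with assumption \eqref{eq:ass-KbLambda=KbCo}; everything else is textbook functional analysis. Thus I do not expect any real obstacle: the one point that deserves a word of care is that $X$ is defined as the weak-$\ast$ closure of $X_0$ \emph{within} the bounded set (equivalently, within all of $L^\infty$), so that $X$ is genuinely a closed subset of the weak-$\ast$ compact ball $B$ and the argument closes. One should also note in passing that $X_0$ is nonempty, since $\ov{\vz}\in X_0$ by the hypotheses of Thm.~\ref{thm:conv-int}, so that $X$ is a nonempty compact complete metric space and the Baire category argument used later in the proof of Thm.~\ref{thm:conv-int} applies.
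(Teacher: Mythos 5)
Your proof is correct and takes essentially the same approach as the paper: establish uniform $L^\infty$-boundedness of $X_0$ via Lemma~\ref{lemma:uniform-boundedness-U}, \eqref{eq:ass-KbLambda=KbCo} and \eqref{eq:defn-X0-subs}, then invoke the standard metrizability/compactness facts for the weak-$\ast$ topology on bounded subsets of $L^\infty$ (the paper delegates these to a reference, whereas you spell out the separability of $L^1(\Gamma;\R^M)$, the explicit metric, and Banach--Alaoglu). Your additional remark that $X_0\ni\ov{\vz}$ is nonempty is a useful sanity check for the later Baire argument, though it is not logically needed for the proposition itself.
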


\begin{proof} 
	First of all, we observe that $X_0$ is bounded with respect to the $L^\infty$ norm. Indeed according to Lemma~\ref{lemma:uniform-boundedness-U} each $(\sK_{\vb})^\co$ is bounded and its bound is independent on $\vb\in \sB$. Hence assumption \eqref{eq:ass-KbLambda=KbCo} and condition \eqref{eq:defn-X0-subs} yield that $X_0$ is bounded with respect to the $L^\infty$ norm as desired.
	
	As a consequence, well-known facts yield that the weak-$\ast$ topology on $X$ is metrizable, and that the resulting metric space $(X,d)$ is compact and complete, see e.g. \cite[Proof of Prop.~5.1.6]{Markfelder} and references therein for more details. 
\end{proof}

Now we define the functionals $\I_{\Gamma_0}$.

\begin{defn} \label{defn:I}
	For any open and bounded subset $\Gamma_0 \subset \Gamma$ we define the functional $\I_{\Gamma_0}:L^\infty(\Gamma_0;\R^M) \to \R$ by 
	\begin{equation*} 
		\vz \mapsto \I_{\Gamma_0} (\vz) := \int_{\Gamma_0} \dist\big(\vz(\vx),\sK_{\vb(\vx)}\big) \dx .
	\end{equation*}
\end{defn}

Next we summarize some important properties of $\dist(\vz,\sK_{\vb})$ and the functionals $\I_{\Gamma_0}$ respectively. We begin with the properties of $\dist(\vz,\sK_{\vb})$.

\begin{lemma} \label{lemma:properties-dist}
	The following statements hold.
	\begin{enumerate}
		\item We have 
		\begin{equation} \label{eq:dist-bounded} 
			0\leq \dist (\vz, \sK_{\vb}) \leq 2c \qquad \text{ for all } \vb\in \sB \text{ and all } \vz\in (\sK_\vb)^\co,
		\end{equation}
		where $c>0$ is the bound from Lemma~\ref{lemma:uniform-boundedness-K}. 
		
		\item For all $\vz_1,\vz_2\in \R^M$ and $\vb\in \sB$ it holds that 
		\begin{equation} \label{eq:dist-cont-z}
			\Big| \dist(\vz_1, \sK_{\vb}) - \dist(\vz_2, \sK_{\vb}) \Big| \leq |\vz_1 - \vz_2|.
		\end{equation}
		Estimate \eqref{eq:dist-cont-z} also holds if one replaces $\sK_\vb$ by\footnote{In fact one can replace $\sK_\vb$ by any other compact set $S\subset \R^M$ and \eqref{eq:dist-cont-z} still holds.} $(\sK_\vb)^\co$ or $\partial(\sK_\vb)^\co$.
		
		\item Let $\ep>0$ and $\delta>0$ the $\delta$ from item \ref{item:suitable-K-continuity} of Defn.~\ref{defn:suitable-K} which corresponds to $\ep$. Then for all $\vz\in \R^M$ and $\vb_1,\vb_2\in \sB$ with $|\vb_1 - \vb_2 |<\delta$ it holds that 
		\begin{equation} \label{eq:dist-cont-b}
			\Big| \dist(\vz, \sK_{\vb_1}) - \dist(\vz, \sK_{\vb_2}) \Big| \leq \ep .
		\end{equation}
		Estimate \eqref{eq:dist-cont-b} also holds if one replaces $\sK_\vb$ by $(\sK_\vb)^\co$ or $\partial(\sK_\vb)^\co$, where one has to use the corresponding $\delta$ from Lemma~\ref{lemma:continuity-Kco}.
	\end{enumerate}
\end{lemma}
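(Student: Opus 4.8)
The plan is to verify the three items essentially by elementary properties of the distance function to a compact set, combined with the uniform-continuity hypotheses already established. For item (a), I would observe that $\dist(\vz,\sK_\vb)\geq 0$ is immediate, and for the upper bound I would pick any $\vz_0\in\sK_\vb$ (nonempty since $\sK_\vb$ is compact) and estimate $\dist(\vz,\sK_\vb)\leq|\vz-\vz_0|\leq|\vz|+|\vz_0|\leq 2c$ using Lemma \ref{lemma:uniform-boundedness-K} for $\vz_0$ and Lemma \ref{lemma:uniform-boundedness-U} for $\vz\in(\sK_\vb)^\co$. This is the easy part.

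For item (b), I would use the standard fact that $\vz\mapsto\dist(\vz,S)$ is $1$-Lipschitz for any nonempty set $S\subset\R^M$: given $\vz_1,\vz_2$, choose $\vs\in S$ (or a minimizing sequence) with $|\vz_2-\vs|$ close to $\dist(\vz_2,S)$, then $\dist(\vz_1,S)\leq|\vz_1-\vs|\leq|\vz_1-\vz_2|+|\vz_2-\vs|$, and symmetrize. Applying this with $S=\sK_\vb$, $S=(\sK_\vb)^\co$ (compact by Lemma \ref{lemma:uniform-boundedness-U} together with Carath\'eodory's theorem, Prop.~\ref{prop:app-caratheodory}), and $S=\partial(\sK_\vb)^\co$ (closed subset of a compact set, hence compact and nonempty) gives \eqref{eq:dist-cont-z} in all three cases. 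No genuine obstacle here either.

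Item (c) is the only one requiring the suitability hypothesis. Fix $\ep>0$, let $\delta>0$ be the modulus from item \ref{item:suitable-K-continuity} of Defn.~\ref{defn:suitable-K} for this $\ep$, and take $\vb_1,\vb_2\in\sB$ with $|\vb_1-\vb_2|<\delta$. Pick $\vz_1^\ast\in\sK_{\vb_1}$ realizing $\dist(\vz,\sK_{\vb_1})=|\vz-\vz_1^\ast|$ (compactness). By item \ref{item:suitable-K-continuity} there is $\vz_2^\ast\in\sK_{\vb_2}$ with $|\vz_1^\ast-\vz_2^\ast|<\ep$, whence $\dist(\vz,\sK_{\vb_2})\leq|\vz-\vz_2^\ast|\leq|\vz-\vz_1^\ast|+\ep=\dist(\vz,\sK_{\vb_1})+\ep$. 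Interchanging the roles of $\vb_1$ and $\vb_2$ gives the reverse inequality, hence \eqref{eq:dist-cont-b}. For the variant with $(\sK_\vb)^\co$ and $\partial(\sK_\vb)^\co$ I would run the same argument verbatim but invoke the first, resp.\ second, bullet of Lemma \ref{lemma:continuity-Kco} in place of item \ref{item:suitable-K-continuity} of Defn.~\ref{defn:suitable-K} to produce the nearby point in the other hull; this explains why one must use ``the corresponding $\delta$'' from Lemma \ref{lemma:continuity-Kco} (namely the one attached to $\ep$, resp.\ $\ep/3$, there). The mild subtlety to keep in mind—the closest thing to an obstacle—is merely bookkeeping the direction of the quantified statement in item \ref{item:suitable-K-continuity}: it only guarantees, for each point of $\sK_{\vb_1}$, a nearby point of $\sK_{\vb_2}$, so one must start from the \emph{minimizer} in $\sK_{\vb_1}$ and transport it to $\sK_{\vb_2}$, not the other way around; symmetry of the final estimate is then recovered by swapping $\vb_1\leftrightarrow\vb_2$.
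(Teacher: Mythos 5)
Your proposal is correct and follows essentially the same route as the paper: the same elementary bound via an arbitrary point of $\sK_\vb$ for (a), the standard $1$-Lipschitz triangle-inequality argument with minimizers for (b), and for (c) transporting the distance-realizing point via item \ref{item:suitable-K-continuity} of Defn.~\ref{defn:suitable-K} (resp.\ Lemma~\ref{lemma:continuity-Kco} for the hulls and boundaries) and then symmetrizing in $\vb_1,\vb_2$. The only cosmetic difference is that the paper transports the minimizers in both directions at once rather than arguing one direction and swapping, which changes nothing of substance.
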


\begin{proof}
\begin{enumerate} 
	\item Lemmas~\ref{lemma:uniform-boundedness-K} and \ref{lemma:uniform-boundedness-U} lead to
	$$
		0\leq \dist (\vz, \sK_{\vb}) \leq |\vz - \widehat{\vz}| \leq |\vz| + |\widehat{\vz}| \leq 2c ,
	$$
	where $\widehat{\vz}\in \sK_{\vb}$ arbitrary. Hence \eqref{eq:dist-bounded} is valid.
	
	\item Let $S\subset \R^M$ a compact set and we set $\widehat{\vz}_1,\widehat{\vz}_2\in S$ such that 
	$$
		\dist (\vz_i , S) = |\vz_i - \widehat{\vz}_i| \qquad \text{ for } i=1,2.
	$$ 
	The triangle inequality yields 
	\begin{align*} 
		\dist(\vz_1, S) &\leq |\vz_1 - \widehat{\vz}_2| \leq |\vz_1 - \vz_2| + |\vz_2 - \widehat{\vz}_2| \leq |\vz_1 - \vz_2| + \dist(\vz_2, S) , \\
		\dist(\vz_2, S) &\leq |\vz_2 - \widehat{\vz}_1| \leq |\vz_2 - \vz_1| + |\vz_1 - \widehat{\vz}_1| \leq |\vz_2 - \vz_1| + \dist(\vz_1, S) ,
	\end{align*}
	and thus \eqref{eq:dist-cont-z}.
	
	\item Similar to above, we set $\widehat{\vz}_1\in \sK_{\vb_1}$ and $\widehat{\vz}_2\in \sK_{\vb_2}$ such that 
	$$
		\dist (\vz , \sK_{\vb_i}) = |\vz - \widehat{\vz}_i| \qquad \text{ for } i=1,2.
	$$
	Then item \ref{item:suitable-K-continuity} of Defn.~\ref{defn:suitable-K} yields $\widetilde{\vz}_1\in \sK_{\vb_1}$ and $\widetilde{\vz}_2\in \sK_{\vb_2}$ with 
	$$
		|\widetilde{\vz}_1 - \widehat{\vz}_2 | < \ep \qquad \text{ and } \qquad |\widetilde{\vz}_2 - \widehat{\vz}_1 | < \ep.
	$$
	Using triangle inequality we find 
	\begin{align*} 
		\dist(\vz, \sK_{\vb_1}) &\leq |\vz - \widetilde{\vz}_1| \leq |\vz - \widehat{\vz}_2| + |\widehat{\vz}_2 - \widetilde{\vz}_1| < \dist(\vz, \sK_{\vb_2}) + \ep , \\
		\dist(\vz, \sK_{\vb_2}) &\leq |\vz - \widetilde{\vz}_2| \leq |\vz - \widehat{\vz}_1| + |\widehat{\vz}_1 - \widetilde{\vz}_2| < \dist(\vz, \sK_{\vb_1}) + \ep ,
	\end{align*}
	and hence \eqref{eq:dist-cont-b}. The fact that \eqref{eq:dist-cont-b} still holds if one replaces $\sK_\vb$ by $(\sK_\vb)^\co$ or $\partial(\sK_\vb)^\co$, follows in the same fashion using Lemma~\ref{lemma:continuity-Kco}.
\end{enumerate}
\end{proof}

Now we summarize some properties of the functionals $\I_{\Gamma_0}$.

\begin{lemma} \label{lemma:properties-I}
	The following claims hold.
	\begin{enumerate} 
		\item \label{item:prop-I-Baire1} For all open and bounded sets $\Gamma_0\subset \Gamma$ the map $\I_{\Gamma_0} : X\to \R$ is a Baire-1 function with respect to the metric $d$, i.e. $\I_{\Gamma_0}$ is the pointwise limit of a sequence of continuous functionals. 
		
		\item \label{item:prop-I-X0} For all non-empty, open and bounded sets $\Gamma_0\subset \Gamma$ and all $\vz\in X_0$ we have $\I_{\Gamma_0}(\vz) > 0$. 
		
		\item \label{item:prop-I-sol} If $\vz\in X$ with $\I_{\Gamma_0}(\vz)=0$ for all open and bounded sets $\Gamma_0 \subset \Gamma$, then $\vz$ is a solution in the sense specified in Thm.~\ref{thm:conv-int}, i.e. items \ref{item:main-thm-sol1} and \ref{item:main-thm-sol2} hold. 
	\end{enumerate}
\end{lemma}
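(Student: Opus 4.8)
The plan is to treat the three items in turn, since they are of rather different character. For item~\ref{item:prop-I-Baire1} (the Baire-1 property), the natural approach is to exhibit $\I_{\Gamma_0}$ as a pointwise limit of functionals that are continuous with respect to the weak-$\ast$ topology on the bounded set $X$. The obstruction is that $\vz\mapsto \int_{\Gamma_0}\dist(\vz(\vx),\sK_{\vb(\vx)})\dx$ involves a nonlinear (merely Lipschitz) integrand, so it is only weak-$\ast$ \emph{lower} semicontinuous, not continuous. I would mollify: set $\I_{\Gamma_0}^{(k)}(\vz):=\int_{\Gamma_0}\dist\big((\vz*\phi_{1/k})(\vx),\sK_{\vb(\vx)}\big)\dx$ where $\phi_{1/k}$ is a standard mollifier (extending $\vz$ by $\ov{\vz}$ or by $0$ off $\Gamma$ so the convolution makes sense near $\partial\Gamma$). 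Since weak-$\ast$ convergence in $L^\infty$ implies $\vz_j*\phi_{1/k}\to \vz*\phi_{1/k}$ uniformly on compacts for fixed $k$, each $\I_{\Gamma_0}^{(k)}$ is weak-$\ast$ continuous on the bounded set $X$; and for fixed $\vz\in L^\infty$, $\vz*\phi_{1/k}\to\vz$ in $L^1_{\rm loc}$, so by the Lipschitz bound \eqref{eq:dist-cont-z} and the uniform bound \eqref{eq:dist-bounded} together with dominated convergence, $\I_{\Gamma_0}^{(k)}(\vz)\to\I_{\Gamma_0}(\vz)$ pointwise on $X$. This is the main technical point of the lemma; everything else is comparatively soft.

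For item~\ref{item:prop-I-X0}, if $\vz\in X_0$ then $\vz$ is continuous on $\closure\Gamma$ and, by \eqref{eq:defn-X0-subs}, takes values in the open set $\sU_{\vb(\vx)}=\interior{\big((\sK_{\vb(\vx)})^\Lambda\big)}=\interior{\big((\sK_{\vb(\vx)})^\co\big)}$ using \eqref{eq:ass-KbLambda=KbCo}. By item~\ref{item:suitable-K-boundary} of Defn.~\ref{defn:suitable-K}, $\interior{\big((\sK_{\vb})^\co\big)}\cap\sK_{\vb}=\emptyset$, so $\vz(\vx)\notin\sK_{\vb(\vx)}$ for every $\vx\in\Gamma$; hence $\dist(\vz(\vx),\sK_{\vb(\vx)})>0$ pointwise. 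Pick any point $\vx_0\in\Gamma_0$; by continuity of $\vx\mapsto\vz(\vx)$ and of $\vx\mapsto\vb(\vx)$ together with the continuity estimates \eqref{eq:dist-cont-z}, \eqref{eq:dist-cont-b}, the map $\vx\mapsto\dist(\vz(\vx),\sK_{\vb(\vx)})$ is continuous, hence bounded below by some $\kappa>0$ on a small ball around $\vx_0$ contained in $\Gamma_0$. Integrating gives $\I_{\Gamma_0}(\vz)\geq\kappa\,|B|>0$.

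For item~\ref{item:prop-I-sol}, suppose $\vz\in X$ with $\I_{\Gamma_0}(\vz)=0$ for all open bounded $\Gamma_0\subset\Gamma$. Taking $\Gamma_0$ to exhaust $\Gamma$ gives $\dist(\vz(\vx),\sK_{\vb(\vx)})=0$ for a.e.\ $\vx\in\Gamma$, and since $\sK_{\vb(\vx)}$ is compact (item~\ref{item:suitable-K-compact} of Defn.~\ref{defn:suitable-K}) this yields $\vz(\vx)\in\sK_{\vb(\vx)}$ a.e., which is property~\ref{item:main-thm-sol2}. For property~\ref{item:main-thm-sol1}: $\vz$ is a weak-$\ast$ limit of a sequence $\vz_j\in X_0$, and each $\vz_j$ satisfies $\Div\mA(\vz_j)\leq\vb$ pointwise with $\vz_j=\ov{\vz}$ on $\partial\Gamma$; integrating against a test function $\vphi\in\Cc(\closure\Gamma;\R^m)$ with $\vphi\geq\vzero$ and using integration by parts (the boundary term produces $\int_{\partial\Gamma}(\mA(\ov{\vz})\cdot\vn)\cdot\vphi\dS$ since $\vz_j|_{\partial\Gamma}=\ov{\vz}|_{\partial\Gamma}$) gives
\[
-\int_\Gamma \mA(\vz_j):\Grad\vphi\dx + \int_{\partial\Gamma}(\mA(\ov{\vz})\cdot\vn)\cdot\vphi\dS \leq \int_\Gamma \vb\cdot\vphi\dx .
\]
Since $\mA$ is linear and $\vz_j\rightharpoonup^\ast\vz$ in $L^\infty$, the left-hand integral passes to the limit, and the inequality is preserved, giving exactly the inequality in item~\ref{item:main-thm-sol1}. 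The only mild subtlety is justifying the boundary integration by parts on a general (possibly unbounded) Lipschitz domain $\Gamma$ against a compactly supported $\vphi$, which localizes the computation to a bounded piece of $\Gamma$ and is standard.
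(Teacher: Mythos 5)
Your proof is correct and follows essentially the same strategy as the paper: mollification to exhibit $\I_{\Gamma_0}$ as a pointwise limit of weak-$\ast$ continuous functionals for item~\ref{item:prop-I-Baire1}, pointwise positivity of the distance via $\sU_{\vb(\vx)}\cap\sK_{\vb(\vx)}=\emptyset$ for item~\ref{item:prop-I-X0}, and approximation by $X_0$ plus ``zero integral implies zero a.e.'' for item~\ref{item:prop-I-sol}. The only cosmetic differences are that you argue via uniform convergence of $\vz_j\ast\phi_{1/k}$ on compacts whereas the paper factors everything through an auxiliary $L^1$-continuity claim, and you spell out the continuity argument and the boundary integration by parts in items~\ref{item:prop-I-X0} and~\ref{item:prop-I-sol} that the paper treats more tersely.
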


\begin{proof} 
\begin{enumerate}
	\item Let $\Gamma_0\subset \Gamma$ open and bounded. We want to show that $\I:=\I_{\Gamma_0}$ is a Baire-1 function. To this end we first show the following auxiliary claim: 
	\begin{equation} \label{eq:aux01}
		\text{If }\vz_k\to \vz \text{ in } L^1(\Gamma_0;\R^M), \text{ then } \lim\limits_{k\to \infty} \I(\vz_k) = \I(\vz).
	\end{equation} 
	Lemma~\ref{lemma:properties-dist} yields for any fixed $\vx\in \Gamma_0$, that
	$$
		\Big| \dist\big(\vz_k(\vx), \sK_{\vb(\vx)}\big) - \dist\big(\vz(\vx), \sK_{\vb(\vx)}\big) \Big| \leq |\vz_k(\vx) - \vz(\vx)|,
	$$
	see \eqref{eq:dist-cont-z}. Hence 
	\begin{align*}
		\Big| \I(\vz_k) - \I(\vz) \Big| &\leq \int_{\Gamma_0} \Big| \dist\big(\vz_k(\vx),\sK_{\vb(\vx)}\big) - \dist\big(\vz(\vx),\sK_{\vb(\vx)}\big) \Big| \dx \\
		&\leq \int_{\Gamma_0} |\vz_k(\vx) - \vz(\vx)| \dx = \| \vz_k - \vz \|_{L^1(\Gamma_0;\R^M)}
	\end{align*}
	which gives \eqref{eq:aux01}.
	
	Let furthermore $\phi_\ep$ a standard mollifier and define $\I_\ep : L^\infty(\Gamma_0;\R^M) \to \R $ for $\ep>0$ by 
	$$
		\vz \mapsto \I_{\ep} (\vz) := \I(\vz\ast \phi_\ep) = \int_{\Gamma_0} \dist\big((\vz\ast \phi_\ep)(\vx),\sK_{\vb(\vx)}\big) \dx .
	$$
	
	Now for fixed $\vz\in L^\infty(\Gamma_0;\R^M)$, we know that $\vz\ast \phi_\ep \to \vz$ in $L^1$ as $\ep \to 0$, see e.g. \cite[Appendix~C.5]{Evans}. Hence we deduce from \eqref{eq:aux01} that the $\I_\ep$ converge pointwise to $\I$ as $\ep\to 0$. 
	
	It remains to show that for any fixed $\ep>0$, the map $\I_\ep$ is continuous with respect to $d$ or equivalently with respect to the $L^\infty$ weak-$\ast$ topology. Since $\vz_k \mathop{\rightharpoonup}\limits^{\ast} \vz$ in $L^\infty$ implies that $\vz_k\ast \phi_\ep \to \vz\ast \phi_\ep$ in $L^1$ as $k\to \infty$, the desired continuity again follows from \eqref{eq:aux01}. 
	
	\item From property \eqref{eq:defn-X0-subs} and Defn.~\ref{defn:suitable-K} \ref{item:suitable-K-compact} and \ref{item:suitable-K-boundary}, we infer that 
	$$
		\dist(\vz(\vx),\sK_{\vb(\vx)} ) > 0 \qquad \text{ for all } \vx\in \Gamma,
	$$
	which immediately leads to $\I_{\Gamma_0}(\vz)> 0$.
	
	\item It is easy to see from the definition of the set $X_0$ (Defn.~\ref{defn:X0andX}), that any $\vz\in X_0$ satisfies item \ref{item:main-thm-sol1} in Thm.~\ref{thm:conv-int}. Now let $\vz\in X$. Then by defintion of the set $X$, there is a sequence $(\vz_k)_{k\in \N}\subset X_0$ with $\vz_k \mathop{\rightharpoonup}\limits^\ast \vz$. So item \ref{item:main-thm-sol1} in Thm.~\ref{thm:conv-int} holds for any $\vz_k$, $k\in \N$, and by taking the limit we obtain item \ref{item:main-thm-sol1} even for $\vz\in X$. 
	
	It remains to prove that item \ref{item:main-thm-sol2} in Thm.~\ref{thm:conv-int} holds. The fact that $\I_{\Gamma_0}(\vz)=0$ for all open and bounded sets $\Gamma_0\subset \Gamma$ implies that
	$$
		\dist(\vz(\vx),\sK_{\vb(\vx)}) = 0 \qquad \text{ for a.e. } \vx\in \Gamma,
	$$
	a detailed proof of which is given in \cite[Lemma~A.4.1]{Markfelder}\footnote{Here we have to make use that the map $\vx\mapsto \dist(\vz(\vx),\sK_{\vb(\vx)})$ is essentially bounded according to Lemma~\ref{lemma:properties-dist}, see \eqref{eq:dist-bounded}.}. Due to Defn.~\ref{defn:suitable-K} \ref{item:suitable-K-compact}, this in turn means that item \ref{item:main-thm-sol2} in Thm.~\ref{thm:conv-int} is true.
\end{enumerate}
\end{proof}

\subsubsection{Perturbation Property and Proof of the Convex Integration Theorem (Thm.~\ref{thm:conv-int})}

Let us next state the Perturbation Property, which we prove in Sect.~\ref{subsec:general-pert-prop} below. It is the core in the proof of Thm.~\ref{thm:conv-int}.

\begin{prop}[The Perturbation Property] \label{prop:pert-prop}
	Let $\ep,\ov{\ep}>0$ and $\Gamma_0\subset \Gamma$ open and bounded. For all $\vz \in X_0$ there exists a perturbation $\vz_\pert\in X_0$ with the following properties
	\begin{align}
		d(\vz_\pert, \vz) &\leq \ov{\ep} ; \label{eq:pert-prop1} \\
		\I_{\Gamma_0}(\vz_\pert) &\leq \ep \label{eq:pert-prop2} .
	\end{align}
\end{prop}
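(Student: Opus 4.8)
The plan is to obtain $\vz_\pert$ from $\vz$ by adding a finite sum of highly oscillatory, spatially localized corrections of the form $\opL_{\vz_1-\vz_2}[\,\cdot\,]$ coming from the suitable differential operators, organized according to the laminate structure furnished by Proposition~\ref{prop:geom-property-U}. First I would fix a small parameter $\ep'>0$ (to be quantified only at the very end). Since $\vz\in C^1(\closure\Gamma;\R^M)$ and $\vb$ is continuous, I would partition $\Gamma_0$, up to a set of arbitrarily small measure, into finitely many small open pieces $\Omega_j$ (e.g. cubes) with base points $\vx_j\in\Omega_j$, chosen so fine that on $\Omega_j$ one has $|\vz(\vx)-\vz(\vx_j)|<\ep'$ and, by Lemma~\ref{lemma:properties-dist} (estimate \eqref{eq:dist-cont-b}), $\dist(\cdot,\sK_{\vb(\vx)})$ and $\dist(\cdot,\sK_{\vb(\vx_j)})$ differ by at most $\ep'$ for all $\vx\in\Omega_j$. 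On each $\Omega_j$ I would apply Proposition~\ref{prop:geom-property-U} to $\vz(\vx_j)\in\sU_{\vb(\vx_j)}$ with parameter $\ep'$, getting $N_j\ge 2$ and a family $\{(\tau^j_i,\vz^j_i)\}_{i=1,\dots,N_j}$ satisfying the $H_{N_j}$-condition, with barycenter $\vz(\vx_j)$, with $\dist(\vz^j_i,\sK_{\vb(\vx_j)})\le\ep'$, and with each $\vz^j_i\in\sU_{\vb(\vx_j)}$ at distance at least $\rho_j:=\tfrac{\ep'}{2c}\dist(\vz(\vx_j),\partial\sU_{\vb(\vx_j)})>0$ from $\partial\sU_{\vb(\vx_j)}$.

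\textbf{The building block.} The core is a single $\Lambda$-splitting. Suppose $\vv=\lambda\vv_1+(1-\lambda)\vv_2$ with $\vv_1-\vv_2\in\Lambda$, $\lambda\in(0,1)$, where $\vv_1,\vv_2$ lie in the convex open set $\sU$ with a definite distance to $\partial\sU$. Let $\opL_{\vv_1-\vv_2}$ be a suitable operator of some order $k$, with associated direction $\veta$, and pick a $1$-periodic profile $\Phi\in C^\infty(\R)$ whose $k$-th derivative $\phi:=\Phi^{(k)}$ has zero mean, satisfies $|\phi|\le 1$, equals $1-\lambda$ on a subinterval of length close to $\lambda$ and $-\lambda$ on a subinterval of length close to $1-\lambda$, with transition zones of arbitrarily small total length. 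For a cutoff $\chi\in\Cc(\Omega;[0,1])$ I would replace $\vv$ by
$$
	\vv+\opL_{\vv_1-\vv_2}\big[N^{-k}\,\chi(\vx)\,\Phi(N\,\vx\cdot\veta)\big].
$$
By item \ref{item:suitable-operator-a} of Definition~\ref{defn:suitable-operator} this correction solves $\Div\mA(\cdot)=\vzero$ exactly for every $N$, so it never disturbs \eqref{eq:lin-eq}; by the scaling of a $k$-th order homogeneous operator together with item \ref{item:suitable-operator-b}, the correction equals $\chi(\vx)(\vv_1-\vv_2)\,\phi(N\,\vx\cdot\veta)+O(N^{-1})$, the remainder collecting the terms in which a derivative hits $\chi$. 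On $\{\chi=1\}$ the leading part moves $\vv$ to values within the transition tolerance of $\vv_1$ and $\vv_2$ on subsets of relative measure close to $\lambda$ and $1-\lambda$; since $[\vv_1,\vv_2]\subset\sU$ and $\sU$ is convex and open, for $N$ large enough the $\chi$-interpolation and the $O(N^{-1})$ error are absorbed into the room around $\vv_1,\vv_2$, so the perturbed function still takes values in $\sU$, it is supported in $\Omega$, and it converges weak-$\ast$ to $\vv$ as $N\to\infty$.

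\textbf{Iterating along the laminate tree.} On each $\Omega_j$ I would realize the family $\{(\tau^j_i,\vz^j_i)\}$ by induction on $N_j$, following the recursive definition of the $H_{N_j}$-condition: first realize the $H_{N_j-1}$-family obtained by merging the two endpoints joined by a $\Lambda$-direction, and then apply the building block on the (large-measure) region where the current perturbation is close to that merged endpoint, subdividing it along the $\Lambda$-direction. Performing this relative to the shift $\vz(\vx)-\vz(\vx_j)$, so that the barycenter identity holds pointwise up to the $\ep'$-error from Step~1, yields on $\Omega_j$ — outside a bad set of measure $<\delta_j$ (cutoff collars, profile transition zones, the excluded small-measure set) — a function whose values lie within $C\ep'$ of the points $\vz^j_i$, hence within $C\ep'$ of $\sK_{\vb(\vx_j)}$ and therefore within $(C+1)\ep'$ of $\sK_{\vb(\vx)}$. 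All cutoffs are compactly supported in the $\Omega_j\subset\Gamma_0\subset\Gamma$, which are pairwise disjoint, so summing the corrections produces a $C^1$ function $\vz_\pert$ with $\vz_\pert=\ov\vz$ near $\partial\Gamma$, with $\Div\mA(\vz_\pert)=\Div\mA(\vz)\le\vb$ (linearity of $\mA$ plus item \ref{item:suitable-operator-a}), and with $\vz_\pert(\vx)\in\sU_{\vb(\vx)}$ for every $\vx$; thus $\vz_\pert\in X_0$ in the sense of Definition~\ref{defn:X0andX}.

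\textbf{Closing the estimates and the main obstacle.} On the good part of $\Gamma_0$ one has $\dist(\vz_\pert,\sK_{\vb(\cdot)})\le C'\ep'$, and on the complementary bad part only the universal bound $\dist(\vz_\pert,\sK_{\vb(\cdot)})\le 2c$ from \eqref{eq:dist-bounded}, so
$$
	\I_{\Gamma_0}(\vz_\pert)\le C'\ep'\,|\Gamma_0|+2c\,|\text{bad part}|.
$$
Choosing first $\ep'$ with $C'\ep'|\Gamma_0|\le\tfrac\ep2$, then the partition and the cutoffs/transitions so that the total bad part has measure $\le\tfrac{\ep}{4c}$, gives $\I_{\Gamma_0}(\vz_\pert)\le\ep$; and since the oscillation frequencies — finitely many of them — enter only through the $O(N^{-1})$ terms and the weak-$\ast$ displacement, choosing them large enough last, together with the fact (Proposition~\ref{prop:X-metric-space}) that $d$ metrizes the $L^\infty$ weak-$\ast$ topology on the bounded set $X$, yields $d(\vz_\pert,\vz)\le\ov\ep$. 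I expect the genuine difficulty to be the quantitative bookkeeping in the last two steps: keeping the perturbed function inside the \emph{open} sets $\sU_{\vb(\vx)}$ at \emph{every} point, not merely at the base points, through all $N_j-1$ nested rank-one steps, while simultaneously controlling the total measure of the bad set and the weak-$\ast$ displacement. This forces a strict order of choices ($\ep'$, then the partition, then the profiles and cutoffs, then the frequencies) and a careful quantitative use of the continuity of $\vb\mapsto\sU_{\vb}$ (Lemma~\ref{lemma:continuity-Kco}), the $C^1$-regularity of $\vz$, and the room $\rho_j>0$ around the endpoints from Proposition~\ref{prop:geom-property-U}\ref{item:geom-propU-b}.
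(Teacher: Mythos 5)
Your proposal follows essentially the same route as the paper's proof: freeze $\vz$ and $\vb$ on a fine partition of $\Gamma_0$, apply Prop.~\ref{prop:geom-property-U} at the base points, realize the resulting $H_N$-families by induction over the $H_N$-condition using the suitable operators $\opL$ applied to rescaled periodic (mollified step) profiles with cutoffs, and then close with the uniform-continuity/bad-set/frequency bookkeeping --- which is exactly the content of Lemmas~\ref{lemma:pert-prop-step1} and \ref{lemma:pert-prop-step2} and the final gluing argument in Sect.~\ref{subsec:general-pert-prop}. The quantitative step you single out as the main difficulty (keeping the perturbed function inside the open sets $\sU_{\vb(\vx)}$ at \emph{every} point) is handled in the paper by precisely the tools you name: the room $\dist(\vz_i,\partial\sU_\vb)\geq \frac{\ep}{2c}\dist(\vz,\partial\sU_\vb)$ from Prop.~\ref{prop:geom-property-U}, concavity of the distance to $\partial(\sK_\vb)^\co$, and the uniform-continuity estimates \eqref{eq:unif-cont-K}, \eqref{eq:unif-cont-boundaryU}.
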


Now in order to prove Thm.~\ref{thm:conv-int} we can proceed as in \cite{Markfelder}, see Sect.~5.1.4 therein. For the sake of completeness we repeat the main ideas.

We consider an exhausting sequence of open and bounded subsets of $\Gamma$ denoted by $(\Gamma_j)_{j\in \N}$. Moreover we define
\begin{align*}
	\Xi_j &:= \left\{\vz \in X \, \Big| \, \I_{\Gamma_j} \text{ is continuous at } \vz \text{ with respect to } d\right\} , \\
	\Xi \ &:= \bigcap_{j\in \N} \Xi_j .
\end{align*}

Thm.~\ref{thm:conv-int} is a consequence of the following two lemmas, both of which are in turn shown using the Perturbation Property (Prop.~\ref{prop:pert-prop}).

\begin{lemma} \label{lemma:Xi-inf-elements}
	The sets $X_0$ and $\Xi$ contain infinitely many elements.
\end{lemma}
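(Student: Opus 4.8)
The plan is to run a Baire-category argument on the complete metric space $(X,d)$ of Prop.~\ref{prop:X-metric-space}, fed by repeated applications of the Perturbation Property. First I would record that $X_0\neq\emptyset$: the hypotheses of Thm.~\ref{thm:conv-int} on $\ov{\vz}$ are exactly \eqref{eq:defn-X0-eq} and \eqref{eq:defn-X0-subs}, and \eqref{eq:defn-X0-boundary} is trivial, so $\ov{\vz}\in X_0$. Fixing once and for all a non-empty open bounded $\Gamma_0\subset\Gamma$, to see that $X_0$ is infinite I would iterate Prop.~\ref{prop:pert-prop}: set $\vz^{(0)}:=\ov{\vz}$ and, given $\vz^{(k)}\in X_0$, apply the Perturbation Property with $\ov{\ep}=1$ and $\ep:=\tfrac12\I_{\Gamma_0}(\vz^{(k)})$ — which is positive by Lemma~\ref{lemma:properties-I}\ref{item:prop-I-X0} — to obtain $\vz^{(k+1)}\in X_0$ with $\I_{\Gamma_0}(\vz^{(k+1)})\le\tfrac12\I_{\Gamma_0}(\vz^{(k)})$. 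The sequence of values is then strictly decreasing and positive, so the iterates $\vz^{(k)}$ are pairwise distinct and $X_0$ is infinite.

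Next I would show that $(X,d)$ has no isolated points. For $\vz\in X_0$ and any $\epsilon>0$, a single application of Prop.~\ref{prop:pert-prop} with $\ov{\ep}=\epsilon$ and $\ep=\tfrac12\I_{\Gamma_0}(\vz)>0$ yields $\vz_\pert\in X_0\subset X$ with $d(\vz_\pert,\vz)\le\epsilon$ and $\I_{\Gamma_0}(\vz_\pert)<\I_{\Gamma_0}(\vz)$, hence $\vz_\pert\neq\vz$; so no point of $X_0$ is isolated in $X$. If instead $\vz\in X\setminus X_0$, then since $X$ is the $d$-closure of $X_0$ (Defn.~\ref{defn:X0andX}, Prop.~\ref{prop:X-metric-space}) every $d$-neighborhood of $\vz$ contains a point of $X_0$, which is $\neq\vz$. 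Thus $X$ has no isolated points, and being also complete it is a perfect complete metric space, in particular uncountable.

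Finally I would invoke that by Lemma~\ref{lemma:properties-I}\ref{item:prop-I-Baire1} each $\I_{\Gamma_j}:X\to\R$ is a Baire-1 function on the Baire space $X$, so by the classical theorem its set of continuity points $\Xi_j$ is a dense $G_\delta$ in $X$; hence $\Xi=\bigcap_{j\in\N}\Xi_j$ is again a dense $G_\delta$, i.e. residual. A residual subset of a perfect complete metric space is necessarily uncountable: were $\Xi$ countable, then — singletons being nowhere dense in a perfect space — $\Xi$ would be meager, while $X\setminus\Xi$ is meager by construction, so $X$ itself would be meager, contradicting the Baire category theorem. Therefore $\Xi$ is uncountable, in particular infinite, and together with the first paragraph this proves that both $X_0$ and $\Xi$ are infinite.

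The only nontrivial ingredients here are the Perturbation Property (Prop.~\ref{prop:pert-prop}, established later in Sect.~\ref{subsec:general-pert-prop}) and the classical fact that Baire-1 functions on a Baire space have a residual set of continuity points; the rest is routine bookkeeping. Consequently I do not expect a real obstacle in this lemma — the genuine difficulty of the whole scheme is concentrated in the Perturbation Property itself.
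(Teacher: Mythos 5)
Your proposal is correct and follows essentially the same route as the paper: $\ov{\vz}\in X_0$, iterated use of the Perturbation Property to produce pairwise distinct elements of $X_0$, and Lemma~\ref{lemma:properties-I}~\ref{item:prop-I-Baire1} plus the classical residuality of continuity points of Baire-1 functions together with the Baire Category Theorem to handle $\Xi$. The only (harmless) deviation is the endgame for $\Xi$: the paper (following Markfelder) concludes from density of $\Xi$ in the infinite set $X$, while you conclude from uncountability of residual subsets of the perfect complete space $X$ — a routine variant of the same argument.
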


\begin{lemma} \label{lemma:Xi-I=0}
	If $\vz\in \Xi$, then $I_{\Gamma_0}(\vz)=0$ for all open and bounded $\Gamma_0\subset \Gamma$. 
\end{lemma}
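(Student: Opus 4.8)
The plan is to argue directly from the Perturbation Property (Prop.~\ref{prop:pert-prop}) together with the density of $X_0$ in $(X,d)$, without invoking Baire category (which is only needed afterwards, for Lemma~\ref{lemma:Xi-inf-elements}). It suffices to prove: for every $j\in\N$ and every $\vz\in\Xi_j$ one has $\I_{\Gamma_j}(\vz)=0$. So fix such $j$ and $\vz$, let $\eta>0$, and use continuity of $\I_{\Gamma_j}$ at $\vz$ (with respect to $d$) to choose $\delta>0$ with $|\I_{\Gamma_j}(\vw)-\I_{\Gamma_j}(\vz)|<\eta$ whenever $\vw\in X$ and $d(\vw,\vz)<\delta$. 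Since $X$ is the $d$-closure of $X_0$ by Defn.~\ref{defn:X0andX} and Prop.~\ref{prop:X-metric-space}, pick $\vz'\in X_0$ with $d(\vz',\vz)<\delta/2$. Now apply Prop.~\ref{prop:pert-prop} to $\vz'$ with $\Gamma_0=\Gamma_j$, $\ov{\ep}=\delta/2$ and an arbitrary $\ep>0$: this yields $\vz_\pert'\in X_0\subset X$ with $d(\vz_\pert',\vz')\leq\delta/2$ and $\I_{\Gamma_j}(\vz_\pert')\leq\ep$. The triangle inequality gives $d(\vz_\pert',\vz)<\delta$, hence $\I_{\Gamma_j}(\vz)<\I_{\Gamma_j}(\vz_\pert')+\eta\leq\ep+\eta$. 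Letting $\ep\to0$ and then $\eta\to0$, and recalling that $\I_{\Gamma_j}\geq0$, we conclude $\I_{\Gamma_j}(\vz)=0$.

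It remains to pass from the exhausting sets $\Gamma_j$ to an arbitrary open and bounded $\Gamma_0\subset\Gamma$. Since $\vz\in\Xi=\bigcap_{j\in\N}\Xi_j$, the previous paragraph gives $\I_{\Gamma_j}(\vz)=0$ for every member $\Gamma_j$ of the exhausting sequence. The integrand $\vx\mapsto\dist(\vz(\vx),\sK_{\vb(\vx)})$ is nonnegative, measurable (it is the composition of the measurable map $\vx\mapsto(\vz(\vx),\vb(\vx))$ with the jointly continuous function $\dist(\cdot,\sK_{\cdot})$, continuity in the second slot being \eqref{eq:dist-cont-b} of Lemma~\ref{lemma:properties-dist} and continuity of $\vb$ being part of the hypotheses), and essentially bounded by \eqref{eq:dist-bounded}. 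Hence $\I_{\Gamma_j}(\vz)=0$ forces $\dist(\vz(\vx),\sK_{\vb(\vx)})=0$ for a.e.\ $\vx\in\Gamma_j$; as $\Gamma=\bigcup_{j\in\N}\Gamma_j$, this holds for a.e.\ $\vx\in\Gamma$, so $\I_{\Gamma_0}(\vz)=0$ for every open and bounded $\Gamma_0\subset\Gamma$, as claimed.

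Concerning difficulties: essentially all the substantive content is concentrated in the Perturbation Property, whose proof is deferred to Sect.~\ref{subsec:general-pert-prop}; granting it, the present lemma is a short soft-analysis deduction. The only minor points to keep track of are the density of $X_0$ in $(X,d)$ (immediate from Defn.~\ref{defn:X0andX} and Prop.~\ref{prop:X-metric-space}), the measurability and essential boundedness of the integrand (from Lemma~\ref{lemma:properties-dist}), and the elementary fact that a nonnegative measurable function with vanishing integral vanishes almost everywhere (a careful version is given in \cite[Lemma~A.4.1]{Markfelder}). No new estimates are required.
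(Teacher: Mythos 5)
Your proof is correct and follows essentially the same route the paper takes (via the reference to [Markfelder, Lemma 5.1.13]): first use continuity of $\I_{\Gamma_j}$ at $\vz\in\Xi_j$, density of $X_0$ in $(X,d)$, and the Perturbation Property to force $\I_{\Gamma_j}(\vz)=0$, and then pass to arbitrary open bounded $\Gamma_0\subset\Gamma$ by observing that the nonnegative, bounded, measurable integrand must vanish a.e. You also correctly note that Baire category plays no role in this lemma; it enters only through Lemma~\ref{lemma:Xi-inf-elements}.
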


\begin{proof}[Proof of Thm.~\ref{thm:conv-int}] 
Now Thm.~\ref{thm:conv-int} follows from Lemmas~\ref{lemma:properties-I}~\ref{item:prop-I-sol}, \ref{lemma:Xi-inf-elements} and \ref{lemma:Xi-I=0}, see \cite[Proof of Thm.~5.1.2]{Markfelder} for more details.
\end{proof}

It remains to prove Lemmas~\ref{lemma:Xi-inf-elements} and \ref{lemma:Xi-I=0}.

\begin{proof}[Proof of Lemma~\ref{lemma:Xi-inf-elements}] 
Let us first note that $\ov{\vz}$ satisfies \eqref{eq:defn-X0-eq}-\eqref{eq:defn-X0-boundary} and hence $\ov{\vz}\in X_0$, in particular $X_0\neq \emptyset$. Similarly to \cite[Proof of Lemma~5.1.12]{Markfelder} one can use the Perturbation Property (Prop.~\ref{prop:pert-prop}) to construct a sequence of pairwise different $\vz_k\in X_0$, which shows that $X_0$ contains infinitely many elements.

Due to Lemma~\ref{lemma:properties-I}~\ref{item:prop-I-Baire1} and \cite[Prop.~7.3.2]{Waldmann} the sets $\Xi_j$ are residual for all $j\in \N$. Following \cite[Proof of Lemma~5.1.14]{Markfelder} we can deduce using the Baire Category Theorem that $\Xi$ is dense in $X$, and that $X$ contains infinitely many elements too.
\end{proof} 

\begin{proof}[Proof of Lemma~\ref{lemma:Xi-I=0}] 
Analogously to \cite[Proof of Lemma~5.1.13]{Markfelder} one can prove using the Perturbation Property (Prop.~\ref{prop:pert-prop}) that if $\vz\in \Xi_j$ for some $j\in \N$, then $\I_{\Gamma_j}(\vz)=0$. This implies the claim, see \cite[Proof of Lemma~5.1.13]{Markfelder} for more details.
\end{proof}

\subsection{Proof of the Perturbation Property (Prop.~\ref{prop:pert-prop})} \label{subsec:general-pert-prop}

The only thing that remains is to prove the Perturbation Property (Prop.~\ref{prop:pert-prop}). 

\subsubsection{Step 1: Inductive Lemma} 

We begin with the following lemma, cf. \cite[Lemma~5.2.4]{Markfelder} for a less general version.

\begin{lemma} \label{lemma:pert-prop-step1} 
	Let $\Gamma^\ast\subset \R^n$ an open and bounded set (not necessarily connected), $\ep>0$, $\mu\in (0,1)$, $\vb^\ast\in \sB$ and $\vz^\ast\in \sU_{\vb^\ast}$. Let furthermore $N\in\N$ with $N\geq 2$, and $(\tau_i,\vz_i)\in \R^+ \times \sU_{\vb^\ast}$ for all $i=1,...,N$ such that 
	\begin{itemize} 
		\item the family $\{(\tau_i,\vz_i)\}_{i=1,...,N}$ satisfies the $H_N$-condition and 
		\item its barycenter is given by $\vz^\ast$, i.e. $\sum_{i=1}^N \tau_i \vz_i = \vz^\ast$. 
	\end{itemize}
	Then there exists a sequence of oscillations $(\widetilde{\vz}_k)_{k\in \N} \subset \Cc(\Gamma^\ast;\R^M)$ with the following properties:
	\begin{enumerate}
		\item \label{item:pert-prop-step1-a} The sequence $(\widetilde{\vz}_k)_{k\in \N}$ converges to $\vzero$ with respect to $d$, i.e. $\widetilde{\vz}_k \mathop{\to}\limits^d \vzero$ as $k\to \infty$. 
	\end{enumerate}	
	For each fixed $k\in \N$ the following statements hold:
	\begin{enumerate} \setcounter{enumi}{1}
		\item \label{item:pert-prop-step1-b} The linear system of PDEs $\ \Div \mA(\widetilde{\vz}_k) = \vzero\ $ holds pointwise for all $\vx\in \Gamma^\ast$.
		
		\item \label{item:pert-prop-step1-c} There exist open sets $\Gamma_i \subsetcomp \Gamma^\ast$ for all $i=1,...,N$ such that
		\begin{itemize}
			\item the $\closure{\Gamma_i}$ are pairwise disjoint and 
			\item $\vz^\ast+\widetilde{\vz}_k(\vx) = \vz_i$ for all $\vx\in \Gamma_i$.
		\end{itemize} 
		
		\item \label{item:pert-prop-step1-d} The sum $\vz^\ast + \widetilde{\vz}_k$ takes values in $\sU_{\vb^\ast}$ and its distance to $\partial\sU_{\vb^\ast}$ can be estimated by the distance of the $\vz_i$ to $\partial\sU_{\vb^\ast}$, more precisely 
		\begin{align}
			\vz^\ast + \widetilde{\vz}_k(\vx) &\in \sU_{\vb^\ast} & &\text{ for all }\vx\in \Gamma^\ast, \label{eq:pert-prop-step1-d1} \\
			\dist (\vz^\ast + \widetilde{\vz}_k(\vx), \partial\sU_{\vb^\ast}) &\geq \mu \min_{i=1,...,N} \dist(\vz_i,\partial\sU_{\vb^\ast}) & & \text{ for all }\vx\in \Gamma^\ast. \label{eq:pert-prop-step1-d2} 
		\end{align}
		
		\item \label{item:pert-prop-step1-e} The following estimate holds
		$$ 
			\int_{\Gamma^\ast} \dist\big(\vz^\ast + \widetilde{\vz}_k(\vx),\sK_{\vb^\ast}\big) \dx < \ep + \sum_{i=1}^N \int_{\Gamma_i} \dist(\vz_i,\sK_{\vb^\ast}) \dx.
		$$
	\end{enumerate}
\end{lemma}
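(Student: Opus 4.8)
The plan is to build the oscillation $\widetilde{\vz}_k$ by a finite iteration over the $H_N$-condition, adding one "elementary" plane-wave oscillation at each step of the hierarchy, and then to localize. Recall that the $H_N$-condition produces a chain in which at each level two of the current endpoints differ by a vector in the wave cone $\Lambda$; for such a $\Lambda$-segment $[\vz_-,\vz_+]$ with a splitting parameter, one can build, on a prescribed open set, a function of the form $\opL_{\vz_+-\vz_-}[g_k]$ for a carefully chosen $g_k\in\Cc$, using a suitable differential operator from Defn.~\ref{defn:suitable-operator}. By property \ref{item:suitable-operator-a} this solves the homogeneous system $\Div\mA(\cdot)=\vzero$ pointwise, which will give \ref{item:pert-prop-step1-b} once we sum the finitely many pieces (note their supports can be taken disjoint or at least the operator is linear, so the homogeneous equation is preserved). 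By property \ref{item:suitable-operator-b}, on the region where $g_k$ behaves like a one-dimensional profile in the direction $\veta$ with a plateau, the oscillation attains the exact value $\vz_+-\vz_-$ (or $\vz_--\vz_+$), so that the running sum $\vz^\ast+\widetilde{\vz}_k$ equals, on a large-measure subset of the current cell, one or the other endpoint; iterating down the $H_N$-tree we end with the pieces $\Gamma_i$ on which the sum equals exactly $\vz_i$, giving \ref{item:pert-prop-step1-c}. A standard cutoff (the "$\Cc$" part) confines everything to $\Gamma^\ast$, and by choosing the profiles to oscillate faster and faster in $k$ while staying bounded, the resulting sequence is bounded in $L^\infty$ and converges weakly-$\ast$ (hence in $d$, using Prop.~\ref{prop:X-metric-space}) to $\vzero$, which is \ref{item:pert-prop-step1-a}.

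The two quantitative claims \ref{item:pert-prop-step1-d} and \ref{item:pert-prop-step1-e} are then obtained by controlling the transition layers. For \ref{item:pert-prop-step1-d1}: at each level of the iteration the intermediate values of $\vz^\ast+\widetilde{\vz}_k$ lie on the segment between the two current endpoints (plus lower-order cutoff errors which can be absorbed by shrinking the amplitude slightly), and since all endpoints $\vz_i$ and $\vz^\ast$ lie in the \emph{open} convex set $\sU_{\vb^\ast}=\interior((\sK_{\vb^\ast})^\co)$ — here I use assumption \eqref{eq:ass-KbLambda=KbCo} to identify $\sU_{\vb^\ast}$ with the interior of the convex hull — the whole segment, and a neighborhood of it, stays inside $\sU_{\vb^\ast}$; concavity of $\dist(\cdot,\partial\sU_{\vb^\ast})$ on $(\sK_{\vb^\ast})^\co$ (Lemma~\ref{lemma:app-distance-concave}, combined with Cor.~\ref{cor:app-boundary-vs-boundaryofinterior}) gives the lower bound \eqref{eq:pert-prop-step1-d2} with the factor $\mu$, provided the cutoff errors are taken small enough (this is where $\mu<1$ is spent). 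For \ref{item:pert-prop-step1-e}: on each $\Gamma_i$ the integrand is exactly $\dist(\vz_i,\sK_{\vb^\ast})$, which contributes the sum on the right; on the complement $\Gamma^\ast\setminus\bigcup_i\Gamma_i$, which is a union of thin transition layers and cutoff regions whose total measure tends to $0$ as $k\to\infty$, the integrand is bounded (by $2c$, via \eqref{eq:dist-bounded} and uniform boundedness, Lemmas~\ref{lemma:uniform-boundedness-K}, \ref{lemma:uniform-boundedness-U}), so its contribution is $<\ep$ for $k$ large.

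The main obstacle — the genuinely delicate point — is the \emph{simultaneous} control of the transition layers for both \ref{item:pert-prop-step1-d2} and \ref{item:pert-prop-step1-e}: one needs the layers to be thin (small measure, for the $\ep$ in \ref{item:pert-prop-step1-e}) while keeping the oscillation amplitude essentially equal to the full segment length (so that the endpoints are reached exactly in \ref{item:pert-prop-step1-c}) and yet never letting the intermediate values, inflated by the layer-induced overshoot of the cutoff, escape the region where $\dist(\cdot,\partial\sU_{\vb^\ast})\geq\mu\min_i\dist(\vz_i,\partial\sU_{\vb^\ast})$. This is handled by the usual two-scale construction: first fix the geometry (the cells $\Gamma_i$, the cutoffs, the directions $\veta$) to make the amplitude and the $\mu$-estimate work with room to spare, then send a second parameter (the oscillation frequency in $k$) to infinity to shrink the layer measure; the amplitude correction needed to keep the cutoff product inside $\sU_{\vb^\ast}$ can be made arbitrarily small uniformly in the frequency, so it does not interfere with \ref{item:pert-prop-step1-c}. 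Everything else — the $\Div\mA=\vzero$ identity, the weak-$\ast$ convergence, the disjointness of the $\closure{\Gamma_i}$ — is then routine bookkeeping over the finite $H_N$-tree, following \cite[Lemma~5.2.4]{Markfelder}.
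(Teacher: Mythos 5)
Your proposal is correct and follows essentially the same route as the paper's proof: an induction/iteration over the $H_N$-condition, elementary plane waves built from the suitable operators $\opL_{\vz_2-\vz_1}$ applied to plateau profiles, a cutoff whose commutator error is $O(1/k)$, concavity of $\dist(\cdot,\partial\sU_{\vb^\ast})$ (with Cor.~\ref{cor:app-boundary-vs-boundaryofinterior}) for item~\ref{item:pert-prop-step1-d}, and thin transition layers plus the uniform bound $2c$ for item~\ref{item:pert-prop-step1-e}. Only a small wording slip: on the plateaus the oscillation takes the values $-\tau_2(\vz_2-\vz_1)$ and $\tau_1(\vz_2-\vz_1)$ (not $\pm(\vz_2-\vz_1)$), which is exactly what makes $\vz^\ast+\widetilde{\vz}_k$ hit the endpoints $\vz_1,\vz_2$.
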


\begin{proof} 
The proof works in the same fashion as \cite[Proof of Lemma~5.2.4]{Markfelder}. We proceed by induction over $N\geq 2$. 

\textbf{Induction basis:} For $N=2$ we have by assumption $\tau_1+\tau_2=1$, $\vz^\ast=\tau_1 \vz_1 + \tau_2 \vz_2$ and $\vz_2 - \vz_1 \in \Lambda$. Defn.~\ref{defn:wave-cone} yields $\veta\in \R^n\setminus \{\vzero\}$ with $\mA(\vz_2-\vz_1) \cdot \veta = \vzero$. Fix an open set $\widetilde{\Gamma}\subsetcomp \Gamma^\ast$ such that the volume of the ``frame'' $\Gamma_\tf:= \Gamma^\ast \setminus \widetilde{\Gamma}$ is sufficiently small, more precisely 
\begin{equation} \label{eq:choice-Gamma_f}
	|\Gamma_\tf| < \frac{\ep}{4c}, 
\end{equation}
where $c$ is the constant from Lemma~\ref{lemma:uniform-boundedness-K}. 

Let $Q\subset \R^n$ be an open cube such that $\widetilde{\Gamma}\subsetcomp Q$, one edge of $Q$ is parallel to $\veta$, and the edge length of $Q$ is a natural multiple of $\frac{1}{|\veta|}$. Next we fix 
\begin{equation} \label{eq:choice-delta}
	0< \delta < \min\left\{ \frac{\ep}{16c |Q|} , \frac{\tau_1}{2} , \frac{\tau_2}{2} \right\}, 
\end{equation} 
and let $\Phi\in \Cc(\Gamma^\ast;[0,1])$ be a smooth cutoff function with $\Phi \equiv 1$ on $\widetilde{\Gamma}$. 

Let $f:\R\to \R$, 
$$
	f(t) := \left\{ \begin{array}{rl} -\tau_2 & \text{ if } t\in [0,\tau_1) + \Z, \\ \tau_1 & \text{ if } t\in [\tau_1,1) + \Z. \end{array} \right.
$$
Mollify $f$ to obtain $f_\delta\in C^\infty (\R)$, which is periodic with zero mean, takes values in $[-\tau_2,\tau_1]$ and satisfies 
\begin{align}
	f_\delta (t) &= - \tau_2 & &\text{ for all } t\in [\delta,\tau_1-\delta] + \Z , \label{eq:f_delta-case1} \\
	f_\delta (t) &= \tau_1 & &\text{ for all } t\in [\tau_1+\delta, 1-\delta] + \Z , \label{eq:f_delta-case2} 
\end{align}
see \cite[Lemma~A.4.7]{Markfelder}. By assumption there exists $\ell\in \N$ and an $\ell$-th order homogeneous differential operator $\opL_{\vz_2-\vz_1}: C^\infty(\R^{n}) \to C^\infty(\R^{n};\R^M)$ which is suitable. 

Moreover there exists $h\in C^\infty(\R)$ such that $h^{(\ell)}= f_\delta$, and $h$ and all derivatives of $h$ up to $\ell$-th order are bounded, see \cite[Lemma~A.4.8]{Markfelder}. We define $g_k\in C^\infty(\R^n)$ by 
$$
	g_k (\vx) := \frac{1}{k^\ell} h( k \, \vx\cdot \veta) \qquad \text{ for } k\in \N.
$$
and $\widetilde{\vz}_k := \opL_{\vz_2 - \vz_1} [g_k \Phi]$. Note that $\widetilde{\vz}_k \in \Cc(\Gamma^\ast;\R^M)$ because $\Phi\in \Cc(\Gamma^\ast)$.

The reader should notice that according to Defn.~\ref{defn:suitable-operator} we have 
\begin{equation} \label{eq:aux02}
	\opL_{\vz_2 - \vz_1} [g_k](\vx) = (\vz_2 - \vz_1) h^{(\ell)} (k\,\vx\cdot \veta ) = (\vz_2 - \vz_1) f_\delta (k\,\vx\cdot \veta ).
\end{equation}
Moreover we may estimate that 
\begin{equation} \label{eq:commutator-opL-phi}
	\Big\| \opL_{\vz_2 - \vz_1} [g_k \Phi] - \opL_{\vz_2 - \vz_1} [g_k] \Phi \Big\|_\infty \lesssim \frac{1}{k}.
\end{equation}

Now we look at properties \ref{item:pert-prop-step1-a}-\ref{item:pert-prop-step1-e} in Lemma~\ref{lemma:pert-prop-step1}.

\begin{enumerate}
	\item It is clear\footnote{It is a well-known fact that if $f$ is periodic, then $f_k:= f(k\cdot)$ converges weakly-$\ast$ to the mean of $f$ as $k\to \infty$. The reader might want to consult \cite[Lemma~A.7.2]{Markfelder} for a proof of this fact.} from \eqref{eq:aux02} that $\opL_{\vz_2 - \vz_1} [g_k] \Phi$ converges weakly-$\ast$ to $\vzero$ as $k\to \infty$. Due to \eqref{eq:commutator-opL-phi}, even $\widetilde{\vz}_k = \opL_{\vz_2 - \vz_1} [g_k \Phi] \mathop{\to}\limits^d \vzero$. 

	\item The fact that $\ \Div \mA(\widetilde{\vz}_k) = \vzero\ $ holds pointwise for all $\vx\in \Gamma^\ast$ follows immediately from Defn.~\ref{defn:suitable-operator}.
	
	\item On $\widetilde{\Gamma}$ it holds that $\Phi\equiv 1$ and due to \eqref{eq:f_delta-case1}, \eqref{eq:f_delta-case2} and \eqref{eq:aux02} we obtain $\vz^\ast + \widetilde{\vz}_k(\vx) = \vz_i$ for $\vx\in \Gamma_i$, where 
	\begin{align*}
		\Gamma_1 &:= \widetilde{\Gamma} \cap \left\{ \vx\in \R^n \, \Big|\, k\,\vx\cdot \veta \in (\delta, \tau_1 - \delta) +\Z \right\}, \\ 
		\Gamma_2 &:= \widetilde{\Gamma} \cap \left\{ \vx\in \R^n \, \Big|\, k\,\vx\cdot \veta \in (\tau_1 + \delta , 1-\delta) +\Z \right\}.
	\end{align*}
	Note that $\closure{\Gamma_1} \cap \closure{\Gamma_2} = \emptyset$ due to the choice of $\delta$, see \eqref{eq:choice-delta}.
	
	\item Because of \eqref{eq:aux02}, for each $\vx\in \Gamma^\ast$ there exists $\tau\in [-\tau_2,\tau_1]$ such that 
	\begin{equation} \label{eq:aux04}
		\vz^\ast + \opL_{\vz_2 - \vz_1} [g_k] \Phi = (\tau_1-\tau) \vz_1 + (\tau_2 + \tau) \vz_2 \ \in \ \{\vz_1,\vz_2\}^\co.
	\end{equation}
	Hence Lemma~\ref{lemma:app-interior-convex} yields that 
	$$
		\vz^\ast + \opL_{\vz_2 - \vz_1} [g_k](\vx) \Phi(\vx) \in \sU_{\vb^\ast} \qquad \text{ for all }\vx\in \Gamma^\ast. 
	$$
	Together with \eqref{eq:commutator-opL-phi} this implies \eqref{eq:pert-prop-step1-d1} as soon as $k$ is sufficiently large. 
	
	It remains to show \eqref{eq:pert-prop-step1-d2}. Note that due to Cor.~\ref{cor:app-boundary-vs-boundaryofinterior} we have 
	$$
		\dist(\cdot, \partial\sU_{\vb^\ast}) = \dist(\cdot, \partial(\sK_{\vb^\ast})^\co),
	$$
	which is concave on $(\sK_{\vb^\ast})^\co$ according to Lemma~\ref{lemma:app-distance-concave}. Hence \eqref{eq:aux04} yields 
	\begin{align*}
		\dist (\vz^\ast + \opL_{\vz_2 - \vz_1} [g_k](\vx) \Phi(\vx), \partial\sU_{\vb^\ast}) &\geq (\tau_1-\tau) \dist (\vz_1, \partial\sU_{\vb^\ast}) + (\tau_2+\tau) \dist (\vz_2, \partial\sU_{\vb^\ast}) \\
		&\geq \min_{i=1,2} \dist(\vz_i,\partial\sU_{\vb^\ast}) \qquad \text{ for all }\vx\in \Gamma^\ast .
	\end{align*} 
	Together with 
	\begin{align*}
		&\Big| \dist (\vz^\ast + \opL_{\vz_2 - \vz_1} [g_k \Phi](\vx), \partial\sU_{\vb^\ast}) - \dist (\vz^\ast + \opL_{\vz_2 - \vz_1} [g_k](\vx) \Phi(\vx), \partial\sU_{\vb^\ast}) \Big| \\
		&\leq \Big| \opL_{\vz_2 - \vz_1} [g_k \Phi](\vx) - \opL_{\vz_2 - \vz_1} [g_k](\vx) \Phi(\vx) \Big|
	\end{align*} 
	which holds according to Lemma~\ref{lemma:properties-dist} (see \eqref{eq:dist-cont-z}), we end up with \eqref{eq:pert-prop-step1-d2} for $k$ sufficiently large, see \eqref{eq:commutator-opL-phi}.
	
	\item Define the ``slices'' 
	$$
		\Gamma_\ts := \widetilde{\Gamma} \cap \left\{ \vx\in \R^n \, \Big|\, k\,\vx\cdot \veta \in \Big( [0,\delta] \cup [\tau_1 - \delta \tau_1+\delta] \cup [1-\delta, 1) \Big) +\Z \right\}.
	$$
	Then we have 
	\begin{align*}
		\int_{\Gamma^\ast} \dist\big(\vz^\ast + \widetilde{\vz}_k(\vx),\sK_{\vb^\ast}\big) \dx &= \int_{\Gamma_\tf} \dist \big(\vz^\ast + \widetilde{\vz}_k(\vx) , \sK_{\vb^\ast} \big) \dx \\
		&\qquad + \int_{\Gamma_\ts} \dist \big(\vz^\ast + \widetilde{\vz}_k(\vx) , \sK_{\vb^\ast} \big) \dx \\
		&\qquad + \sum_{i=1}^2 \int_{\Gamma_i} \dist\big(\vz^\ast + \widetilde{\vz}_k(\vx),\sK_{\vb^\ast}\big) \dx,
	\end{align*}
	which leads to 
	$$ 
		\int_{\Gamma^\ast} \dist\big(\vz^\ast + \widetilde{\vz}_k(\vx),\sK_{\vb^\ast}\big) \dx \leq 2c \Big( |\Gamma_\tf| + |\Gamma_\ts| \Big) + \sum_{i=1}^2 \int_{\Gamma_i} \dist(\vz_i,\sK_{\vb^\ast}) \dx
	$$
	by using items \ref{item:pert-prop-step1-c} and \ref{item:pert-prop-step1-d} and estimate \eqref{eq:dist-bounded}. The volume of the the slices $\Gamma_\ts$ can be estimated as
	$$
		|\Gamma_\ts| \leq 4\delta |Q|,
	$$
	see \cite[Proof of Lemma~5.2.4]{Markfelder} for more details. Thus we obtain from \eqref{eq:choice-Gamma_f} and \eqref{eq:choice-delta} that $2c ( |\Gamma_\tf| + |\Gamma_\ts| ) < \ep$ which finishes the proof for $N=2$. 
\end{enumerate}

\textbf{Induction step:} Let $N\geq 3$. We know according to Defn.~\ref{defn:app-hn} (after relabelling if necessary) that $\vz_2-\vz_1 \in \Lambda$ and 
\begin{equation} \label{eq:hn-iteration}
	\left\{ \left( \tau_1 + \tau_2 , \frac{\tau_1}{\tau_1 + \tau_2} \vz_1 + \frac{\tau_2}{\tau_1 + \tau_2} \vz_2\right) \right\} \cup \big\{(\tau_i,\vz_i)\big\}_{i=3,...,N}
\end{equation}
satisfies the $H_{N-1}$-condition. An obvious calculation shows that the family in \eqref{eq:hn-iteration} and the original family $\{(\tau_i,\vz_i)\}_{i=1,...,N}$ have the same barycenter $\vz^\ast$. Define $\vz_a := \frac{\tau_1}{\tau_1 + \tau_2} \vz_1 + \frac{\tau_2}{\tau_1 + \tau_2} \vz_2$. By means of Lemma~\ref{lemma:app-interior-convex} we obtain $\vz_a\in \sU_{\vb^\ast}$. Hence we may apply the induction hypothesis (where $\frac{\ep}{2}$ plays the role of $\ep$) to obtain a sequence of oscillations $(\widetilde{\vz}_{A,k})_{k\in \N} \subset \Cc(\Gamma^\ast;\R^M)$ with the following properties:
\begin{enumerate}
	\item \label{item:pert-prop-step1-IH-a} The sequence $(\widetilde{\vz}_{A,k})_{k\in \N}$ converges to $\vzero$ with respect to $d$, i.e. $\widetilde{\vz}_{A,k} \mathop{\to}\limits^d \vzero$ as $k\to \infty$. 
\end{enumerate}	
For each fixed $k\in \N$ the following statements hold:
\begin{enumerate} \setcounter{enumi}{1}
	\item \label{item:pert-prop-step1-IH-b} The linear system of PDEs $\ \Div \mA(\widetilde{\vz}_{A,k}) = \vzero\ $ holds pointwise for all $\vx\in \Gamma^\ast$.
	
	\item \label{item:pert-prop-step1-IH-c} There exist open sets $\Gamma_i \subsetcomp \Gamma^\ast$ for all $i=a,3,...,N$ such that
	\begin{itemize}
		\item the $\closure{\Gamma_i}$ are pairwise disjoint and 
		\item $\vz^\ast+\widetilde{\vz}_{A,k}(\vx) = \vz_i$ for all $\vx\in \Gamma_i$.
	\end{itemize} 
	
	\item \label{item:pert-prop-step1-IH-d} The sum $\vz^\ast + \widetilde{\vz}_{A,k}$ takes values in $\sU_{\vb^\ast}$ and its distance to $\partial\sU_{\vb^\ast}$ can be estimated by the distance of the $\vz_i$ to $\partial\sU_{\vb^\ast}$, more precisely 
	\begin{align}
		\vz^\ast + \widetilde{\vz}_{A,k}(\vx) &\in \sU_{\vb^\ast} & &\text{ for all }\vx\in \Gamma^\ast, \label{eq:pert-prop-step1-IH-d1} \\
		\dist (\vz^\ast + \widetilde{\vz}_{A,k}(\vx), \partial\sU_{\vb^\ast}) &\geq \mu \min_{i=a,3,...,N} \dist(\vz_i,\partial\sU_{\vb^\ast}) & & \text{ for all }\vx\in \Gamma^\ast. \label{eq:pert-prop-step1-IH-d2} 
	\end{align}
	
	\item \label{item:pert-prop-step1-IH-e} The following estimate holds
	$$ 
		\int_{\Gamma^\ast} \dist\big(\vz^\ast + \widetilde{\vz}_{A,k}(\vx),\sK_{\vb^\ast}\big) \dx < \frac{\ep}{2} + \int_{\Gamma_a} \dist(\vz_a,\sK_{\vb^\ast}) \dx + \sum_{i=3}^N \int_{\Gamma_i} \dist(\vz_i,\sK_{\vb^\ast}) \dx.
	$$
\end{enumerate}

Next we apply\footnote{Note that we already proved Lemma~\ref{lemma:pert-prop-step1} for $N=2$, see the induction basis above.} Lemma~\ref{lemma:pert-prop-step1} with $N=2$ to the situation where $\Gamma_a$, $\frac{\ep}{2}$, $\vz_a$ and $\left\{\left(\frac{\tau_1}{\tau_1+\tau_2},\vz_1\right) \cup \left(\frac{\tau_2}{\tau_1+\tau_2},\vz_2\right)\right\}$ play the role of $\Gamma^\ast$, $\ep$ and $\vz^\ast$, $\{(\tau_i,\vz_i)\}_{i=1,2}$, respectively. This way we find a sequence of oscillations $(\widetilde{\vz}_{B,\ell})_{\ell\in \N} \subset \Cc(\Gamma_a;\R^M)$ for each fixed $k\in \N$ with the following properties:
\begin{enumerate}
	\item \label{item:pert-prop-step1-2-a} The sequence $(\widetilde{\vz}_{B,\ell})_{\ell\in \N}$ converges to $\vzero$ with respect to $d$, i.e. $\widetilde{\vz}_{B,\ell} \mathop{\to}\limits^d \vzero$ as $\ell\to \infty$. 
\end{enumerate}	
For each fixed $\ell\in \N$ the following statements hold:
\begin{enumerate} \setcounter{enumi}{1}
	\item \label{item:pert-prop-step1-2-b} The linear system of PDEs $\ \Div \mA(\widetilde{\vz}_{B,\ell}) = \vzero\ $ holds pointwise for all $\vx\in \Gamma_a$.
	
	\item \label{item:pert-prop-step1-2-c} There exist open sets $\Gamma_1,\Gamma_2 \subsetcomp \Gamma_a$ such that
	\begin{itemize}
		\item $\closure{\Gamma_1} \cap \closure{\Gamma_2} = \emptyset$ and 
		\item $\vz_a+\widetilde{\vz}_{B,\ell}(\vx) = \vz_i$ for all $\vx\in \Gamma_i$, $i=1,2$.
	\end{itemize} 
	
	\item \label{item:pert-prop-step1-2-d} The sum $\vz_a + \widetilde{\vz}_{B,\ell}$ takes values in $\sU_{\vb^\ast}$ and its distance to $\partial\sU_{\vb^\ast}$ can be estimated by the distance of the $\vz_i$ to $\partial\sU_{\vb^\ast}$, more precisely 
	\begin{align}
		\vz_a + \widetilde{\vz}_{B,\ell}(\vx) &\in \sU_{\vb^\ast} & &\text{ for all }\vx\in \Gamma_a, \label{eq:pert-prop-step1-2-d1} \\
		\dist (\vz_a + \widetilde{\vz}_{B,\ell}(\vx), \partial\sU_{\vb^\ast}) &\geq \mu \min_{i=1,2} \dist(\vz_i,\partial\sU_{\vb^\ast}) & & \text{ for all }\vx\in \Gamma_a. \label{eq:pert-prop-step1-2-d2} 
	\end{align}
	
	\item \label{item:pert-prop-step1-2-e} The following estimate holds
	$$ 
		\int_{\Gamma_a} \dist\big(\vz_a + \widetilde{\vz}_{B,\ell}(\vx),\sK_{\vb^\ast}\big) \dx < \frac{\ep}{2} + \int_{\Gamma_1} \dist(\vz_1,\sK_{\vb^\ast}) \dx + \int_{\Gamma_2} \dist(\vz_2,\sK_{\vb^\ast}) \dx.
	$$
\end{enumerate}

Due to property~\ref{item:pert-prop-step1-2-a}, we find $\ell(k)\in \N$ for each $k\in \N$ such that $d(\widetilde{\vz}_{B,\ell(k)},\vzero) \leq \frac{1}{k}$. Now we can finally define 
$$
	(\widetilde{\vz}_k)_{k\in \N} := (\widetilde{\vz}_{A,k})_{k\in \N} + (\widetilde{\vz}_{B,\ell(k)})_{k\in \N}.
$$

The desired properties \ref{item:pert-prop-step1-a}-\ref{item:pert-prop-step1-e} of $(\widetilde{\vz}_k)_{k\in \N}$ can be readily checked using the properties \ref{item:pert-prop-step1-IH-a}-\ref{item:pert-prop-step1-IH-e} of $(\widetilde{\vz}_{A,k})_{k\in \N}$ and \ref{item:pert-prop-step1-2-a}-\ref{item:pert-prop-step1-2-e} of $(\widetilde{\vz}_{B,\ell(k)})_{k\in \N}$. Indeed properties \ref{item:pert-prop-step1-a} and \ref{item:pert-prop-step1-b} follow immediately. 

In order to verify \ref{item:pert-prop-step1-c}, we note that the $\closure{\Gamma_i}$, $i=1,...,N$, are pairwise disjoint. Moreover we observe that 
$$
	\vz^\ast + \widetilde{\vz}_k(\vx) = \vz^\ast + \widetilde{\vz}_{A,k}(\vx) + \widetilde{\vz}_{B,\ell(k)}(\vx) = \vz^\ast + \widetilde{\vz}_{A,k}(\vx) = \vz_i
$$
if $\vx\in \Gamma_i$ and $i=3,...,N$ because in this case $\vx\notin \Gamma_a$ and thus $\widetilde{\vz}_{B,\ell(k)}(\vx)= \vzero$. If $\vx\in \Gamma_i$ and $i=1,2$, then $\vx\in \Gamma_a$, and hence 
$$
	\vz^\ast + \widetilde{\vz}_k(\vx) = \vz^\ast + \widetilde{\vz}_{A,k}(\vx) + \widetilde{\vz}_{B,\ell(k)}(\vx) = \vz_a + \widetilde{\vz}_{B,\ell(k)}(\vx) =\vz_i.
$$ 

Property \ref{item:pert-prop-step1-d} can be shown in a similar fashion. Let us first consider $\vx\in \Gamma_a$. Then 
$$
	\vz^\ast + \widetilde{\vz}_k(\vx) = \vz_a + \widetilde{\vz}_{B,\ell(k)}(\vx) \in \sU_{\vb^\ast},
$$
see \eqref{eq:pert-prop-step1-2-d1}. Moreover we obtain 
\begin{align*}
	\dist (\vz^\ast + \widetilde{\vz}_k(\vx), \partial\sU_{\vb^\ast}) &= \dist (\vz_a + \widetilde{\vz}_{B,\ell(k)}(\vx), \partial\sU_{\vb^\ast}) \\
	&\geq \mu \min_{i=1,2} \dist(\vz_i,\partial\sU_{\vb^\ast}) \geq \mu \min_{i=1,...,N} \dist(\vz_i,\partial\sU_{\vb^\ast}),
\end{align*}
where we made use of \eqref{eq:pert-prop-step1-2-d2}. Next we look at the case $\vx\notin\Gamma_a$. Then 
$$
	\vz^\ast + \widetilde{\vz}_k(\vx) = \vz^\ast + \widetilde{\vz}_{A,k}(\vx) \in \sU_{\vb^\ast},
$$
according to \eqref{eq:pert-prop-step1-IH-d1}, and 
\begin{equation} \label{eq:pf-step1-temp1}
	\dist (\vz^\ast + \widetilde{\vz}_k(\vx), \partial\sU_{\vb^\ast}) = \dist (\vz^\ast + \widetilde{\vz}_{A,k}(\vx), \partial\sU_{\vb^\ast}) \geq \mu \min_{i=a,...,N} \dist(\vz_i,\partial\sU_{\vb^\ast}),
\end{equation}
due to \eqref{eq:pert-prop-step1-IH-d2}. From Lemma~\ref{lemma:app-distance-concave} (and Cor.~\ref{cor:app-boundary-vs-boundaryofinterior}) we infer 
\begin{equation} \label{eq:pf-step1-temp2}
	\dist(\vz_a,\partial\sU_{\vb^\ast}) \geq \frac{\tau_1}{\tau_1+\tau_2} \dist(\vz_1,\partial\sU_{\vb^\ast}) + \frac{\tau_2}{\tau_1+\tau_2} \dist(\vz_2,\partial\sU_{\vb^\ast}) \geq \min_{i=1,2} \dist(\vz_i,\partial\sU_{\vb^\ast}).
\end{equation}
The desired property \eqref{eq:pert-prop-step1-d2} follows from plugging \eqref{eq:pf-step1-temp2} into \eqref{eq:pf-step1-temp1}. 

Finally we check \ref{item:pert-prop-step1-e}. To this end, we compute 
\begin{align*}
	&\int_{\Gamma^\ast} \dist\big(\vz^\ast + \widetilde{\vz}_k(\vx),\sK_{\vb^\ast}\big) \dx \\
	&= \int_{\Gamma^\ast \setminus \Gamma_a} \dist\big(\vz^\ast + \widetilde{\vz}_{A,k}(\vx),\sK_{\vb^\ast}\big) \dx + \int_{\Gamma_a} \dist\big(\vz_a + \widetilde{\vz}_{B,\ell(k)}(\vx),\sK_{\vb^\ast}\big) \dx \\
	&= \int_{\Gamma^\ast} \dist\big(\vz^\ast + \widetilde{\vz}_{A,k}(\vx),\sK_{\vb^\ast}\big) \dx - \int_{\Gamma_a} \dist\big(\vz_a,\sK_{\vb^\ast}\big) \dx + \int_{\Gamma_a} \dist\big(\vz_a + \widetilde{\vz}_{B,\ell(k)}(\vx),\sK_{\vb^\ast}\big) \dx\\
	&< \ep + \sum_{i=1}^N \int_{\Gamma_i} \dist(\vz_i,\sK_{\vb^\ast}) \dx,
\end{align*}
where we used properties \ref{item:pert-prop-step1-IH-c} and \ref{item:pert-prop-step1-IH-e} of $\widetilde{\vz}_{A,k}$ and property \ref{item:pert-prop-step1-2-e} of $\widetilde{\vz}_{B,\ell(k)}$.
\end{proof}

\subsubsection{Step 2: Perturbations around Fixed Vectors $\vz^\ast$ and $\vb^\ast$} 

We continue with the following lemma which corresponds to \cite[Lemma~5.2.1]{Markfelder}. 

\begin{lemma} \label{lemma:pert-prop-step2} 
	Let $\Gamma^\ast\subset \R^n$ be an open and bounded domain and $\ep>0$. Let furthermore $\vb^\ast\in \sB$ and $\vz^\ast\in \sU_{\vb^\ast}$. Then there exists a sequence of oscillations $(\widetilde{\vz}_k)_{k\in \N} \subset \Cc(\Gamma^\ast;\R^M)$ with the following properties:
	\begin{enumerate}
		\item \label{item:pert-prop-step2-a} The sequence $(\widetilde{\vz}_k)_{k\in \N}$ converges to $\vzero$ with respect to $d$, i.e. $\widetilde{\vz}_k \mathop{\to}\limits^d \vzero$ as $k\to \infty$. 
	\end{enumerate}	
	For each fixed $k\in \N$ the following statements hold:
	\begin{enumerate} \setcounter{enumi}{1}
		\item \label{item:pert-prop-step2-b} The linear system of PDEs $\ \Div \mA(\widetilde{\vz}_k) = \vzero\ $ holds pointwise for all $\vx\in \Gamma^\ast$;
		
		\item \label{item:pert-prop-step2-c} The sum $\vz^\ast + \widetilde{\vz}_k$ takes values in $\sU_{\vb^\ast}$ and its distance to $\partial\sU_{\vb^\ast}$ can be estimated by the distance of the $\vz_i$ to $\partial\sU_{\vb^\ast}$, more precisely 
		\begin{align}
			\vz^\ast + \widetilde{\vz}_k(\vx) &\in \sU_{\vb^\ast} & &\text{ for all }\vx\in \Gamma^\ast, \label{eq:pert-prop-step2-d1} \\
			\dist (\vz^\ast + \widetilde{\vz}_k(\vx), \partial\sU_{\vb^\ast}) &\geq \frac{\ep}{8c |\Gamma^\ast|} \dist(\vz^\ast, \partial\sU_{\vb^\ast}) & & \text{ for all }\vx\in \Gamma^\ast, \label{eq:pert-prop-step2-d2} 
		\end{align}
		where $c$ is the bound from Lemma~\ref{lemma:uniform-boundedness-K}; 
		
		\item \label{item:pert-prop-step2-d} The following estimate holds
		$$ 
			\int_{\Gamma^\ast} \dist\big(\vz^\ast + \widetilde{\vz}_k(\vx),\sK_{\vb^\ast}\big) \dx < \ep .
		$$
	\end{enumerate}
\end{lemma}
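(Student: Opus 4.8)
The plan is to deduce Lemma~\ref{lemma:pert-prop-step2} directly from the inductive Lemma~\ref{lemma:pert-prop-step1}: one just has to manufacture, around the given vector $\vz^\ast\in\sU_{\vb^\ast}$, an admissible family $\{(\tau_i,\vz_i)\}_{i=1,\dots,N}$ to feed into it, and this is exactly what Prop.~\ref{prop:geom-property-U} provides (the standing assumptions of Thm.~\ref{thm:conv-int}, in particular suitability and $(\sK_\vb)^\Lambda=(\sK_\vb)^\co$, are in force). Since $\Gamma^\ast$ is a nonempty open set we have $|\Gamma^\ast|>0$, so I would fix the auxiliary parameter $\ep':=\tfrac{\ep}{2|\Gamma^\ast|}>0$; if necessary I enlarge the universal constant $c$ from Lemma~\ref{lemma:uniform-boundedness-K} so that $\tfrac{\ep'}{2c}<1$ (this is harmless, as it only makes the lower bound in \eqref{eq:pert-prop-step2-d2} smaller), exactly as in the footnote to the proof of Prop.~\ref{prop:geom-property-U}.

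First I would apply Prop.~\ref{prop:geom-property-U} with $\vb:=\vb^\ast$, $\vz:=\vz^\ast$ and $\ep'$ in place of $\ep$. This yields $N\geq 2$ and $(\tau_i,\vz_i)\in\R^+\times\R^M$, $i=1,\dots,N$, such that $\{(\tau_i,\vz_i)\}$ satisfies the $H_N$-condition, $\sum_i\tau_i\vz_i=\vz^\ast$, and for all $i$
$$
	\vz_i\in\sU_{\vb^\ast}, \qquad \dist(\vz_i,\partial\sU_{\vb^\ast})\geq\tfrac{\ep'}{2c}\dist(\vz^\ast,\partial\sU_{\vb^\ast}), \qquad \dist(\vz_i,\sK_{\vb^\ast})\leq\ep'.
$$
Then I would apply Lemma~\ref{lemma:pert-prop-step1} to $\Gamma^\ast$, with $\tfrac{\ep}{4}$ playing the role of the parameter $\ep$ there, with $\mu:=\tfrac12$, and with $\vb^\ast$, $\vz^\ast$ and this family. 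This produces a sequence $(\widetilde{\vz}_k)_{k\in\N}\subset\Cc(\Gamma^\ast;\R^M)$ together with open sets $\Gamma_i\subsetcomp\Gamma^\ast$, $i=1,\dots,N$, having pairwise disjoint closures.

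It then remains to read off the conclusions. Properties \ref{item:pert-prop-step2-a}, \ref{item:pert-prop-step2-b} and the inclusion \eqref{eq:pert-prop-step2-d1} are verbatim items \ref{item:pert-prop-step1-a}, \ref{item:pert-prop-step1-b} and \eqref{eq:pert-prop-step1-d1} of Lemma~\ref{lemma:pert-prop-step1}. For the distance estimate \eqref{eq:pert-prop-step2-d2} I would chain \eqref{eq:pert-prop-step1-d2} with the bound above:
$$
	\dist(\vz^\ast+\widetilde{\vz}_k(\vx),\partial\sU_{\vb^\ast}) \geq \tfrac12\min_{i}\dist(\vz_i,\partial\sU_{\vb^\ast}) \geq \tfrac12\cdot\tfrac{\ep'}{2c}\dist(\vz^\ast,\partial\sU_{\vb^\ast}) = \tfrac{\ep}{8c|\Gamma^\ast|}\dist(\vz^\ast,\partial\sU_{\vb^\ast}),
$$
the last equality using $\ep'=\tfrac{\ep}{2|\Gamma^\ast|}$. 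Finally, for \ref{item:pert-prop-step2-d} I would combine item \ref{item:pert-prop-step1-e} of Lemma~\ref{lemma:pert-prop-step1}, the bound $\dist(\vz_i,\sK_{\vb^\ast})\leq\ep'$, and $\sum_i|\Gamma_i|\leq|\Gamma^\ast|$ (the $\Gamma_i$ being disjoint subsets of $\Gamma^\ast$):
$$
	\int_{\Gamma^\ast}\dist\big(\vz^\ast+\widetilde{\vz}_k(\vx),\sK_{\vb^\ast}\big)\dx < \tfrac{\ep}{4} + \ep'\sum_{i=1}^N|\Gamma_i| \leq \tfrac{\ep}{4} + \tfrac{\ep}{2|\Gamma^\ast|}\,|\Gamma^\ast| = \tfrac{3\ep}{4} < \ep.
$$

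I do not expect a genuine obstacle here, since the real work sits in Lemma~\ref{lemma:pert-prop-step1}; the only thing requiring care is the bookkeeping of constants. The single parameter in Prop.~\ref{prop:geom-property-U} simultaneously controls how close the $\vz_i$ lie to $\sK_{\vb^\ast}$ — which must survive integration over $\Gamma^\ast$, hence the factor $|\Gamma^\ast|^{-1}$ — and how far they lie from $\partial\sU_{\vb^\ast}$ — which propagates, with the extra factor $\mu$ from Lemma~\ref{lemma:pert-prop-step1}, into the required lower bound $\tfrac{\ep}{8c|\Gamma^\ast|}\dist(\vz^\ast,\partial\sU_{\vb^\ast})$. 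The choices $\ep'=\tfrac{\ep}{2|\Gamma^\ast|}$, $\mu=\tfrac12$ and Lemma~\ref{lemma:pert-prop-step1}-parameter $\tfrac{\ep}{4}$ reconcile both, and one should double-check that $\tfrac{\ep'}{2c}<1$ may indeed be assumed (enlarging $c$ if necessary). This mirrors \cite[Lemma~5.2.1]{Markfelder}.
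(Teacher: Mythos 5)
Your proposal is correct and follows essentially the same route as the paper: apply Prop.~\ref{prop:geom-property-U} with parameter $\tfrac{\ep}{2|\Gamma^\ast|}$, then Lemma~\ref{lemma:pert-prop-step1} with $\mu=\tfrac12$, and chain the distance and integral estimates exactly as you do (the paper uses $\tfrac{\ep}{2}$ instead of your $\tfrac{\ep}{4}$ in Lemma~\ref{lemma:pert-prop-step1}, a purely cosmetic difference). Your handling of the requirement $\tfrac{\ep'}{2c}<1$ by enlarging $c$ also mirrors the paper's footnote to Prop.~\ref{prop:geom-property-U}.
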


\begin{proof}
Again the proof is analogous to \cite[Proof of Lemma~5.2.1]{Markfelder} and thus we only sketch the proof here.

We begin by applying Prop.~\ref{prop:geom-property-U} where $\vb^\ast$, $\vz^\ast$ and $\frac{\ep}{2|\Gamma^\ast|}$ play the role of $\vb$, $\vz$ and $\ep$ in Prop.~\ref{prop:geom-property-U}, respectively. This way we obtain $N\in \N$ with $N\geq 2$, and $(\tau_i,\vz_i)\in \R^+\times \sU_{\vb^\ast}$ for each $i=1,...,N$ which satisfy conditions \ref{item:geom-propU-a}-\ref{item:geom-propU-d} stated in Prop.~\ref{prop:geom-property-U}. 

Thus we can apply Lemma~\ref{lemma:pert-prop-step1}, where $\half\ep$ and $\half$ play the role of $\ep$ and $\mu$ therein, respectively. As a result we obtain a sequence of oscillations $(\widetilde{\vz}_k)_{k\in \N} \subset \Cc(\Gamma^\ast;\R^M)$ with properties \ref{item:pert-prop-step1-a}-\ref{item:pert-prop-step1-e} stated in Lemma~\ref{lemma:pert-prop-step1}. 

As items \ref{item:pert-prop-step2-a} and \ref{item:pert-prop-step2-b} of Lemma~\ref{lemma:pert-prop-step2} follow immediately from items \ref{item:pert-prop-step1-a} and \ref{item:pert-prop-step1-b} of Lemma~\ref{lemma:pert-prop-step1}, it remains to prove items \ref{item:pert-prop-step2-c} and \ref{item:pert-prop-step2-d}.

\begin{enumerate} \setcounter{enumi}{2}
	\item The fact that \eqref{eq:pert-prop-step2-d1}, follows immediately from \eqref{eq:pert-prop-step1-d1}. From \eqref{eq:pert-prop-step1-d2} and item \ref{item:geom-propU-b} in Prop.~\ref{prop:geom-property-U} we deduce 
	$$
		\dist (\vz^\ast + \widetilde{\vz}_k(\vx), \partial\sU_{\vb^\ast}) \geq \frac{1}{2} \min_{i=1,...,N} \dist(\vz_i,\partial\sU_{\vb^\ast}) \geq \frac{\ep}{8c |\Gamma^\ast|} \dist(\vz^\ast,\partial\sU_{\vb^\ast})
	$$ 
	for all $\vx\in \Gamma^\ast$ and thus \eqref{eq:pert-prop-step2-d2} holds. 
	
	\item We obtain from Lemma~\ref{lemma:pert-prop-step1}~\ref{item:pert-prop-step1-c} and \ref{item:pert-prop-step1-e}, and Prop.~\ref{prop:geom-property-U}~\ref{item:geom-propU-c} that
	$$
		\int_{\Gamma^\ast} \dist\big(\vz^\ast + \widetilde{\vz}_k(\vx),\sK_{\vb^\ast}\big) \dx < \frac{\ep}{2} + \sum_{i=1}^N \int_{\Gamma_i} \dist(\vz_i,\sK_{\vb^\ast}) \dx \leq \frac{\ep}{2} + \frac{\ep}{2|\Gamma^\ast|} \sum_{i=1}^N |\Gamma_i| \leq \ep.
	$$ 
\end{enumerate}
\end{proof}

\subsubsection{Step 3: Conclusion} 

Now we are ready to prove the Perturbation Property (Prop.~\ref{prop:pert-prop}) using Lemma~\ref{lemma:pert-prop-step2}.

\begin{proof}[Proof of the Perturbation Property (Prop.~\ref{prop:pert-prop})]
The proof is very similar to \cite[Proof of Prop.~5.1.11]{Markfelder} which is again the reason for skipping some details here. 

We observe that the case $\Gamma_0=\emptyset$ is trivial, so we may assume $\Gamma_0\neq \emptyset$. 

Let us first introduce a grid in $\R^n$ with size $h>0$: We consider open cubes $Q_{\valpha,h} \subset \R^n$ for $\valpha\in \Z^n$ with mid points $h\valpha$ and edge length $h$, i.e. 
$$
	Q_{\valpha,h} := h\valpha + \left(-\half h , \half h\right)^n .
$$

Fix $\ov{h}>0$ such that the volume of the ``frame'' $\Gamma_\tf:= \Gamma_0\setminus \widetilde{\Gamma}$ is sufficiently small, see below, where 
$$
	\widetilde{\Gamma} := \interior{\left(\bigcup_{\valpha\in \Z^n \text{ with } Q_{\valpha,\ov{h}}\subsetcomp\Gamma_0} \closure{Q_{\valpha,\ov{h}}}\right)}.
$$

Lemma~\ref{lemma:properties-dist} implies that the map $(\vz,\vb)\mapsto \dist(\vz,\partial(\sK_\vb)^\co)$ is uniformly continuous on $\R^M\times \sB$. As $\vz\in X_0$ and $\vb\in \Cb(\closure{\Gamma};\R^m)$ are continuous, we deduce that 
$$
	\vx\mapsto \dist(\vz(\vx),\partial(\sK_{\vb(\vx)})^\co)
$$ 
is continuous as well. According to \eqref{eq:defn-X0-subs}, $\sU_{\vb(\vx)}\neq \emptyset$ for all $\vx\in \Gamma$. Thus Cor.~\ref{cor:app-boundary-vs-boundaryofinterior} allows us to write $\dist(\vz(\vx),\partial \sU_{\vb(\vx)})$ instead of $\dist(\vz(\vx),\partial(\sK_{\vb(\vx)})^\co)$. Moreover \eqref{eq:defn-X0-subs} implies that 
$$
	\dist(\vz(\vx),\partial \sU_{\vb(\vx)}) > 0 \qquad \text{ for all } \vx\in \Gamma.
$$
The fact that $\widetilde{\Gamma}\subsetcomp \Gamma_0 \subset \Gamma$ and the above mentioned continuity yield existence of $\beta>0$ such that 
\begin{equation} \label{eq:aux07}
	\dist (\vz(\vx), \partial \sU_{\vb(\vx)}) > \beta \qquad \text{ for all } \vx\in \widetilde{\Gamma}.
\end{equation}

Moreover note that since $\Gamma_0$ is bounded, the functions $\vx\mapsto \vz(\vx)$ and $\vx\mapsto \vb(\vx)$ are uniformly continuous on $\closure{\Gamma_0}$. Together with the uniform continuity of the maps 
$$
	(\vz,\vb)\mapsto \dist(\vz,\sK_\vb) \qquad \text{ and } \qquad (\vz,\vb)\mapsto \dist(\vz,\partial\sU_\vb),
$$
see Lemma~\ref{lemma:properties-dist}, we deduce that there exists $q\in \N$ such that for $h:= \frac{\ov{h}}{q}$ the following holds: If $\vx_1$ and $\vx_2$ lie in the same cube $Q_{\valpha,h}$, then 
\begin{align}
	\Big| \dist\big(\vz(\vx_1) + \widehat{\vz} , \sK_{\vb(\vx_1)} \big) - \dist\big(\vz(\vx_2) + \widehat{\vz} , \sK_{\vb(\vx_2)} \big) \Big| &< \frac{\ep}{3|\widetilde{\Gamma}|}, \label{eq:unif-cont-K} \\
	\Big| \dist\big(\vz(\vx_1) + \widehat{\vz} , \partial\sU_{\vb(\vx_1)} \big) - \dist\big(\vz(\vx_2) + \widehat{\vz} , \partial\sU_{\vb(\vx_2)} \big) \Big| &< \frac{\ep}{48 c |\widetilde{\Gamma}|} \beta , \label{eq:unif-cont-boundaryU} 
\end{align}
for any $\widehat{\vz}\in \R^M$.

Let 
$$
	\mathcal{J} := \left\{ \valpha\in \Z^n \,\Big|\, Q_{\valpha,h} \subset \widetilde{\Gamma} \right\}.
$$
Note that $\mathcal{J}$ contains only finitely many points $\valpha$, and that 
$$
	\widetilde{\Gamma} = \interior{\left( \bigcup_{\valpha\in \mathcal{J}} \closure{Q_{\valpha,h}} \right)}
$$
because $\frac{\ov{h}}{h}\in \N$, and as a consequence 
\begin{equation} \label{eq:aux06}
	|\widetilde{\Gamma}| = |\mathcal{J}| \cdot |Q_{\valpha,h}| = |\mathcal{J}| h^n .
\end{equation}

Now for each $\valpha\in \mathcal{J}$ we apply Lemma~\ref{lemma:pert-prop-step2}, where $Q_{\valpha,h}$, $\frac{\ep}{3 |\mathcal{J}|}$, $\vb(h\valpha)$, $\vz(h\valpha)$ play the role of $\Gamma^\ast$, $\ep$, $\vb^\ast$ and $\vz^\ast$ respectively. For each $\valpha\in \mathcal{J}$ this yields a sequence of oscillations $(\widetilde{\vz}_{\valpha,k})_{k\in \N} \subset \Cc(Q_{\valpha,h};\R^M)$ with properties \ref{item:pert-prop-step2-a}-\ref{item:pert-prop-step2-d} stated in Lemma~\ref{lemma:pert-prop-step2}.

We define 
$$
	\vz_{\pert} := \vz + \sum_{\valpha\in \mathcal{J}} \widetilde{\vz}_{\valpha,k},
$$
where we fix a $k\in\N$ suitable large, such that \eqref{eq:pert-prop1} holds. Note that such a choice of $k\in \N$ is possible due to Lemma~\ref{lemma:pert-prop-step2}~\ref{item:pert-prop-step2-a}.

Let us next prove \eqref{eq:pert-prop2}. Indeed using \eqref{eq:dist-bounded}\footnote{Here we make use of the fact that $\vz_\pert$ satisfies \eqref{eq:defn-X0-subs} since $\vz_\pert\in X_0$. The latter is proven below.}, Lemma~\ref{lemma:pert-prop-step2}~\ref{item:pert-prop-step2-d} and \eqref{eq:unif-cont-K} we obtain 
\begin{align*}
	\I_{\Gamma_0} (\vz_\pert) &= \int_{\Gamma_\tf} \dist\big(\vz_\pert (\vx), \sK_{\vb(\vx)}\big) \dx + \sum_{\valpha\in \mathcal{J}} \int_{Q_{\valpha,h}} \dist\big(\vz_\pert (\vx), \sK_{\vb(\vx)}\big) \dx \\
	&\leq \int_{\Gamma_\tf} \dist\big(\vz_\pert (\vx), \sK_{\vb(\vx)}\big) \dx + \sum_{\valpha\in \mathcal{J}} \int_{Q_{\valpha,h}} \dist\big(\vz(h\valpha) + \widetilde{\vz}_{\valpha,k}(\vx), \sK_{\vb(h\valpha)}\big) \dx \\
	&\qquad + \sum_{\valpha\in \mathcal{J}} \int_{Q_{\valpha,h}} \Big| \dist\big(\vz(\vx) + \widetilde{\vz}_{\valpha,k}(\vx), \sK_{\vb(\vx)}\big) - \dist\big(\vz(h\valpha) + \widetilde{\vz}_{\valpha,k}(\vx), \sK_{\vb(h\valpha)}\big) \Big| \dx \\
	&< 2c |\Gamma_\tf| + \frac{\ep}{3 |\mathcal{J}|} |\mathcal{J}| + \frac{\ep}{3|\widetilde{\Gamma}|} h^n |\mathcal{J}| .
\end{align*} 
Having chosen $|\Gamma_\tf|$ sufficiently small, we conclude using \eqref{eq:aux06}.

Finally we show that $\vz_{\pert}\in X_0$ for each fixed $k\in \N$. To this end it suffices to check \eqref{eq:defn-X0-subs} since \eqref{eq:defn-X0-eq} and \eqref{eq:defn-X0-boundary} trivially follow from Lemma~\ref{lemma:pert-prop-step2}~\ref{item:pert-prop-step2-b} and the fact that all $\widetilde{\vz}_{\valpha,k}$ are compactly supported in $Q_{\valpha,h}$. 

Let $\vx\in \Gamma$. Then $\vz_\pert(\vx)= \vz(\vx)$ or there exists a unique $\valpha\in \mathcal{J}$ such that $\vx\in Q_{\valpha,h}$. In the former case, \eqref{eq:defn-X0-subs} holds trivially. In the latter case we have $\vz_{\pert}(\vx) = \vz(\vx) + \widetilde{\vz}_{\valpha,k}(\vx)$. Lemma~\ref{lemma:pert-prop-step2}~\ref{item:pert-prop-step2-c} ensures that 
\begin{align*}
	\vz(h\valpha) + \widetilde{\vz}_{\valpha,k}(\vx)&\in \sU_{\vb(h\valpha)}, \\
	\dist\big( \vz(h\valpha) + \widetilde{\vz}_{\valpha,k}(\vx) , \partial \sU_{\vb(h\valpha)}\big) &\geq \frac{\frac{\ep}{3|\mathcal{J}|}}{8c h^n} \dist(\vz(h\valpha), \partial \sU_{\vb(h\valpha)}) > \frac{\ep}{24 c |\widetilde{\Gamma}|} \beta, 
\end{align*} 
where in the last step we made use of \eqref{eq:aux07} and \eqref{eq:aux06}. Together with \eqref{eq:unif-cont-boundaryU} this implies 
\begin{align*}
	&\dist\big( \vz(\vx) + \widetilde{\vz}_{\valpha,k}(\vx) , \partial \sU_{\vb(\vx)}\big) \\
	&\geq \dist\big( \vz(h\valpha) + \widetilde{\vz}_{\valpha,k}(\vx) , \partial \sU_{\vb(h\valpha)}\big) \\
	& \qquad - \Big| \dist\big( \vz(\vx) + \widetilde{\vz}_{\valpha,k}(\vx) , \partial \sU_{\vb(\vx)}\big) - \dist\big( \vz(h\valpha) + \widetilde{\vz}_{\valpha,k}(\vx) , \partial \sU_{\vb(h\valpha)}\big) \Big| \\
	& > \frac{\ep}{24 c |\widetilde{\Gamma}|} \beta - \frac{\ep}{48 c |\widetilde{\Gamma}|} \beta = \frac{\ep}{48 c |\widetilde{\Gamma}|} \beta > 0,
\end{align*}
from which we infer that $\vz_\pert (\vx)\in \sU_{\vb(\vx)}$. 
\end{proof}

\section{The Compressible Euler Equations with Energy Inequality within the Framework Established in Sect.~\ref{sec:general}} \label{sec:euler-in-framework}

The goal of this section is to show how the barotropic compressible Euler equations \eqref{eq:euler-d}, \eqref{eq:euler-m} with energy inequality \eqref{eq:euler-E} fit into the framework established in Sect.~\ref{sec:general}.

\subsection{Tartar's Framework and Other Preliminaries} \label{subsec:eif-prelim}

We start by rewriting the PDEs \eqref{eq:euler-d}, \eqref{eq:euler-m} and inequality \eqref{eq:euler-E} as a linear system by introducting new unknowns $\mU,q,E$ and $\vF$, see also Remark~\ref{rem:tartars-framework}. This way we obtain
\begin{align}
	\partial_t \rho + \Div \vm &= 0, \label{eq:eulerlin-d-prime} \\
	\partial_t \vm + \Div (\mU + q\id) &= \vzero, \label{eq:eulerlin-m-prime} \\
	\partial_t E + \Div \vF &\leq 0 , \label{eq:eulerlin-E-prime}
\end{align}
which coincide with \eqref{eq:euler-d}, \eqref{eq:euler-m}, \eqref{eq:euler-E} via the replacements 
\begin{align} 
	\mU + q\id &= \frac{\vm\otimes \vm}{\rho} + p(\rho)\id , \label{eq:id-Uq}\\
	E&= \frac{|\vm|^2}{2\rho} + P(\rho) , \label{eq:id-E-prime}\\
	\vF&= \left(\frac{|\vm|^2}{2\rho} + P(\rho) + p(\rho)\right) \frac{\vm}{\rho} . \label{eq:id-F-prime}
\end{align}
This procedure goes back to \name{Tartar}~\cite{Tartar79}.

For technical reason we require the function $\mU$ to take values in $\sz$, where the latter denotes the space of symmetric, trace-less $2\times 2$-matrices. In other words $\mU$ equals the trace-less part of the matrix on the right-hand side of \eqref{eq:id-Uq} while $q$ represents its trace. 

In our convex integration approach we will not add oscillations in $\rho$. Hence with the notation of Sect.~\ref{sec:general}, $\rho$ (more precisely its temporal derivative $\partial_t\rho$, see below) will be contained in the right-hand side $\vb$ in \eqref{eq:lin-eq}. 

By taking the trace of \eqref{eq:id-Uq} we find
\begin{equation} \label{eq:id-q}
	q = \frac{|\vm|^2}{2\rho} + p(\rho) . 
\end{equation}
Hence as long as \eqref{eq:id-Uq} holds, \eqref{eq:id-E-prime} and \eqref{eq:id-F-prime} are equivalent to
\begin{align} 
	E&= q + P(\rho) - p(\rho) , \label{eq:id-E}\\
	\vF&= \frac{q + P(\rho)}{\rho} \vm , \label{eq:id-F}
\end{align}
respectively.

Keeping in mind that $\rho$ will be treated as a given function and due to \eqref{eq:id-E}, we rewrite the linear system \eqref{eq:eulerlin-d-prime}-\eqref{eq:eulerlin-E-prime} as 
\begin{align}
	\Div \vm &= -\partial_t \rho , \label{eq:eulerlin-d} \\
	\partial_t \vm + \Div (\mU + q\id) &= \vzero, \label{eq:eulerlin-m} \\
	\partial_t q + \Div \vF &\leq -\partial_t P(\rho) + \partial_t p(\rho) . \label{eq:eulerlin-E}
\end{align}

In order to see that \eqref{eq:eulerlin-d}-\eqref{eq:eulerlin-E} may be written in the form \eqref{eq:lin-eq}, we have to replace the independent variable $\vx$ in Sect.~\ref{sec:general} as a pair of time $t$ and position in space $\vx$. Thus we set $n=3$ ($t\in \R$, $\vx\in\R^2$) and $\Gamma\subset \R^3$ is now a space-time domain. The unknown is 
$$
	\vz=(\vm,\mU,q,\vF):\Gamma \to \phase ,
$$
where we have used the abbreviation 
$$
	\phase := \R^2\times \sz\times \R\times \R^2 .
$$
Consequently $M= \dim(\phase) = 7$ and one should keep in mind that $\phase\simeq\R^7$. Set $m=9$ and\footnote{In \eqref{eq:defn-A} and \eqref{eq:defn-b} $\vzero$ means $\vzero\in \R^2$.} $\mA:\phase \to \R^{9\times 3}$, 
\begin{equation} \label{eq:defn-A} 
	\mA(\vm,\mU,q,\vF) := \left( \begin{array}{cc} 0 & \vm^\trans \\ \vm & \mU + q\id \\ 0 & -\vm^\trans \\ -\vm & -\mU - q\id \\ q & \vF^\trans \\ 0 & \vzero^\trans \\ 0 & \vzero^\trans \end{array} \right) .
\end{equation}
Finally, for a given function $\rho\in C^1(\closure{\Gamma};\R^+)$ with $\rho,\partial_t\rho\in \Cb(\closure{\Gamma};\R^+)$ and $\inf\limits_{(t,\vx)\in \closure{\Gamma}} \rho(t,\vx)>0$, and a given constant $Q>0$ which satisfies $Q>\sup\limits_{(t,\vx)\in \closure{\Gamma}} p(\rho(t,\vx))$, we define 
\begin{equation} \label{eq:defn-b}
	\vb := \left( \begin{array}{c} -\partial_t\rho \\ \vzero \\ \partial_t\rho \\ \vzero \\ -\partial_t P(\rho) + \partial_t p(\rho) \\ \rho \\ Q \end{array} \right) ,
\end{equation}
such that $\vb\in \Cb(\closure{\Gamma};\R^9)$. 

Thus \eqref{eq:eulerlin-d}-\eqref{eq:eulerlin-E} together with 
\begin{equation} \label{eq:additional-inequalities}
	0 \leq \rho \qquad \text{ and } \qquad 0 \leq Q 
\end{equation} 
are equivalent to \eqref{eq:lin-eq}, where $\Div$ has to be replaced by the divergence in space-time $\Divtx$. Notice that both equations \eqref{eq:eulerlin-d}, \eqref{eq:eulerlin-m} are written as two inequalities, see also Remark~\ref{rem:equation-vs-inequality}. Moreover we have added the inequalities in \eqref{eq:additional-inequalities} both of which are trivially satisfied because of the properties of the given function $\rho$ and the given constant $Q$. The reason for adding the inequalities in \eqref{eq:additional-inequalities} is that the constitutive sets, see below, will depend on $\rho$ and $Q$, so in order to write them in the form $\sK_{\vb}$ as in Sect.~\ref{sec:general}, $\vb$ must depend on $\rho$ and $Q$. 

For each $\vb\in \sB= \closure{\vb(\closure{\Gamma})}$ we define the constitutive set $\sK_\vb$ by 
\begin{equation} \label{eq:defn-euler-K}
	\sK_{\vb}:= \sK_{\rho,Q}:=\left\{ (\vm,\mU,q,\vF)\in \phase \,\Big|\, \text{\eqref{eq:id-Uq} and \eqref{eq:id-F} hold, and } q\leq Q \right\}, 
\end{equation}
and we prefer to write $\sK_{\rho,Q}$ rather than $\sK_{\vb}$ in the sequel since the set only depends on the last two components of $\vb$ which are $\rho$ and $Q$. Because $Q$ is constant, we write $(\sK_{\rho,Q})_{\rho\in \sB}$ for the corresponding family of constitutive sets $(\sK_\vb)_{\vb\in \sB}$, which is another abuse of notation as we actually mean that $\rho$ is in the projection of $\sB$ onto the corresponding component. 

\begin{rem} \label{rem:rhoQ} 
	The only reason for introducing the bound $q\leq Q$ is to make each $\sK_{\rho,Q}$ compact, see Sect.~\ref{subsubsec:eif-sa-suitableK} below. The reason for requiring that $Q>\sup\limits_{(t,\vx)\in \closure{\Gamma}} p(\rho(t,\vx))$ is to ensure that $\sK_{\rho,Q}\neq \emptyset$ for all $\rho\in \sB$. Furthermore note that 
	$$
		\sup\limits_{(t,\vx)\in \closure{\Gamma}} p(\rho(t,\vx)) = \max_{\rho\in \sB} p(\rho).
	$$
	Finally we observe that for all $\rho\in \sB$ we have $\rho>0$ due to the requirement that the function $\rho$ is bounded from below. Hence there are no troubles in the denominator in \eqref{eq:id-Uq} and \eqref{eq:id-F}. 
\end{rem} 

The reader should also notice that if $\vz=(\vm,\mU,q,\vF)\in L^\infty(\Gamma;\phase)$ solves the linear system \eqref{eq:lin-eq} and satisfies \eqref{eq:UinK}, then $(\rho,\vm)$ is a solution\footnote{What is meant by a ``solution'' here is stated precisely in Thm.~\ref{thm:conv-int-euler+energy} below.} to \eqref{eq:euler-d}, \eqref{eq:euler-m}, \eqref{eq:euler-E}. 

Finally we consider the wave cone $\Lambda'$ for the Euler equations with energy inequality, see Defn.~\ref{defn:wave-cone}, and define the subset $\Lambda\subset \Lambda'$. We observe from \eqref{eq:defn-A} that 
$$
	\mA(\vm,\mU,q,\vF)\cdot \veta = \vzero \qquad \Leftrightarrow \qquad \left(\begin{array}{cc} 0 & \vm^\trans \\ \vm & \mU + q\id \\ q & \vF^\trans\end{array}\right)\cdot\veta = \vzero ,
$$ 
and thus 
\begin{equation*} 
	\Lambda' = \Bigg\{ (\vm,\mU,q,\vF)\in \phase\,\Big|\,\exists\veta\in \R^3\setminus \{\vzero\}\text{ with }\left(\begin{array}{cc} 0 & \vm^\trans \\ \vm & \mU + q\id \\ q & \vF^\trans\end{array}\right)\cdot\veta = \vzero \Bigg\} .
\end{equation*} 
In order to guarantee existence of suitable differential operators $\opL_{(\vm,\mU,q,\vF)}$ for all $(\vm,\mU,q,\vF)\in \Lambda$, we set
\begin{equation} \label{eq:defn-euler-wave-cone}
	\Lambda = \Bigg\{ (\vm,\mU,q,\vF)\in \phase\,\Big|\,\exists\veta\in \R^3\text{ with }\veta_\vx\neq\vzero \text{ and } \left(\begin{array}{cc} 0 & \vm^\trans \\ \vm & \mU + q\id \\ q & \vF^\trans\end{array}\right)\cdot\veta = \vzero \Bigg\} ,
\end{equation} 
where we write $\veta=(\eta_t,\veta_\vx)$ with $\eta_t\in \R$ and $\veta_\vx\in \R^2$. In other words $\eta_t$ and $\veta_\vx$ denote the temporal and spatial components of the space-time vector $\veta$, respectively. Notice that $\Lambda$ is a cone and that $\Lambda\subset \Lambda'$. The fact that we have shrinked the wave cone $\Lambda'$ to $\Lambda$ will ensure that suitable differential operators exist, see Remarks~\ref{rem:subset-of-wavecone} and \ref{rem:wave-cone-euler} below.

\subsection{The Structural Assumptions of Thm.~\ref{thm:conv-int}} \label{subsec:eif-struc-ass}

Next we proof that the structural assumptions of Thm.~\ref{thm:conv-int} hold. 

\subsubsection{Suitability of the Family of Constitutive Sets $(\sK_{\rho,Q})_{\rho\in\sB}$} \label{subsubsec:eif-sa-suitableK}

\begin{lemma} \label{lemma:euler-K-suitable}
	The family of constitutive sets $(\sK_{\rho,Q})_{\rho\in\sB}$ defined in \eqref{eq:defn-euler-K} is suitable in the sense of Defn.~\ref{defn:suitable-K}.
\end{lemma}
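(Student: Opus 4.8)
The plan is to verify in turn the three properties of Defn.~\ref{defn:suitable-K}, all of which rest on the same explicit description of $\sK_{\rho,Q}$. Since $\mU$ is required to be trace-free, \eqref{eq:id-Uq} is equivalent to $\mU=\tfrac{\vm\otimes\vm}{\rho}-\tfrac{|\vm|^2}{2\rho}\,\id$ together with $q=\tfrac{|\vm|^2}{2\rho}+p(\rho)$ (i.e. \eqref{eq:id-q}), and then \eqref{eq:id-F} fixes $\vF$; hence every $(\vm,\mU,q,\vF)\in\sK_{\rho,Q}$ is a continuous function $\Psi_\rho(\vm)$ of $\vm$ alone, and the constraint $q\leq Q$ becomes $|\vm|\leq R(\rho)$ with $R(\rho):=\sqrt{2\rho(Q-p(\rho))}$. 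Let $J\subset\R^+$ denote the (compact) set of densities occurring in $\sB$; by the choice $Q>\max_{\rho\in J}p(\rho)$, see Remark~\ref{rem:rhoQ}, the number $R(\rho)$ is bounded below by a positive constant on $J$, and $\sK_{\rho,Q}=\Psi_\rho(B_\rho)$ with $B_\rho:=\{\vm\in\R^2:|\vm|\leq R(\rho)\}$. Property \ref{item:suitable-K-compact} is then immediate, $\sK_{\rho,Q}$ being the continuous image of the compact ball $B_\rho$.

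For property \ref{item:suitable-K-continuity} I would use that $\sK_{\vb}$ depends on $\vb$ only through its density component, which is Lipschitz in $\vb$, so it suffices, given $\vz_1=\Psi_{\rho_1}(\vm_1)\in\sK_{\rho_1,Q}$, to produce a nearby element of $\sK_{\rho_2,Q}$ when $|\rho_1-\rho_2|$ is small. The natural choice is $\vz_2:=\Psi_{\rho_2}(\lambda\vm_1)$ with $\lambda:=\min\{1,R(\rho_2)/R(\rho_1)\}\in(0,1]$, which guarantees $\lambda\vm_1\in B_{\rho_2}$, hence $\vz_2\in\sK_{\rho_2,Q}$. Since $\rho\mapsto R(\rho)^2=2\rho(Q-p(\rho))$ is continuous and bounded below by a positive constant on the compact set $J$, $\lambda\to1$ uniformly as $|\rho_1-\rho_2|\to0$; together with the Lipschitz continuity of $p$ and $P$ on $J$ (both are $C^1$) and the lower bound on $\rho$, a component-by-component estimate of $|\Psi_{\rho_1}(\vm_1)-\Psi_{\rho_2}(\lambda\vm_1)|$ — the only sources of nonlinearity being the quadratic expressions in $\vm$, the factors $\tfrac1\rho$, and the functions $p(\rho),P(\rho)$, all evaluated on uniformly bounded arguments — shows that this quantity is controlled by a modulus of continuity in $|\rho_1-\rho_2|$, which is precisely \ref{item:suitable-K-continuity}.

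The step I expect to carry the real content is \ref{item:suitable-K-boundary}, i.e. $\interior{\big((\sK_{\rho,Q})^\co\big)}\cap\sK_{\rho,Q}=\emptyset$. I would prove it by exhibiting, for each $\vz_0=\Psi_\rho(\vm_0)\in\sK_{\rho,Q}$, a linear functional on $\phase$ which is non-constant on $\sK_{\rho,Q}$ and attains its maximum over $\sK_{\rho,Q}$ at $\vz_0$: such a functional is then maximised over $(\sK_{\rho,Q})^\co$ at $\vz_0$ as well, and a non-constant linear functional cannot attain its maximum over a convex set at an interior point, so $\vz_0\notin\interior{\big((\sK_{\rho,Q})^\co\big)}$. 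For a unit vector $\vn\in\R^2$ and a scalar $s\in\R$ the candidate is
$$
	L_{\vn,s}(\vm,\mU,q,\vF):=-q-\vn^\trans\mU\,\vn+s\,(\vn\cdot\vm).
$$
On $\sK_{\rho,Q}$ one has $\vn^\trans\mU\vn=\tfrac{(\vn\cdot\vm)^2}{\rho}-\tfrac{|\vm|^2}{2\rho}$ and $q=\tfrac{|\vm|^2}{2\rho}+p(\rho)$, so the $|\vm|^2$ contributions cancel and $L_{\vn,s}$ restricted to $\sK_{\rho,Q}$ equals $-p(\rho)-\tfrac{(\vn\cdot\vm)^2}{\rho}+s\,(\vn\cdot\vm)$, a non-degenerate downward parabola in the single scalar $u:=\vn\cdot\vm$, which runs over $[-R(\rho),R(\rho)]$ as $\vm$ ranges over $B_\rho$. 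Taking $\vn:=\vm_0/|\vm_0|$ when $\vm_0\neq\vzero$ (any unit vector otherwise) and $s:=\tfrac{2}{\rho}(\vn\cdot\vm_0)$ places the vertex at $u=\vn\cdot\vm_0$, which lies in $[-R(\rho),R(\rho)]$ because $|\vn\cdot\vm_0|\leq|\vm_0|\leq R(\rho)$; hence the maximum of $L_{\vn,s}$ over $\sK_{\rho,Q}$ is attained at every point with $\vn\cdot\vm=\vn\cdot\vm_0$, in particular at $\vz_0$, and $L_{\vn,s}$ is non-constant on $\sK_{\rho,Q}$ since the parabola is non-degenerate and $u$ genuinely varies there.

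I regard the choice of this functional as the only delicate point of the proof: combinations of $q$ and $\vn^\trans\mU\vn$ with other signs either restrict to a convex function of $\vm$ — which is maximised only on the boundary of $B_\rho$, not at an interior $\vm_0$ — or to a function independent of $\vn\cdot\vm$, whereas the sign pattern $-q-\vn^\trans\mU\vn$ is the one that cancels the $|\vm|^2$ terms and leaves a strictly concave one-dimensional profile whose vertex may be placed at any prescribed value in $[-R(\rho),R(\rho)]$. Beyond that, the only thing to watch is that $\vz_0$ is a genuine maximiser rather than merely a critical point of the reduced one-variable function, which is guaranteed by $|\vn\cdot\vm_0|\leq R(\rho)$. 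The whole argument runs parallel to the corresponding computation for the compressible Euler system without energy inequality in \cite{Markfelder}.
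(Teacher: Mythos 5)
Your proof is correct but takes a genuinely different route, most notably for property~\ref{item:suitable-K-boundary}. Throughout, you exploit the parametrization $\sK_{\rho,Q}=\Psi_\rho(B_\rho)$ with $\Psi_\rho$ a continuous map determined by \eqref{eq:id-q}, \eqref{eq:id-Uq}, \eqref{eq:id-F} and $B_\rho$ a closed ball of radius $R(\rho)=\sqrt{2\rho(Q-p(\rho))}$ bounded away from zero on the compact set of densities; this makes (a) immediate (continuous image of a compact set, rather than the paper's closed-plus-bounded check) and reduces (b) to joint uniform continuity of $(\rho,\vm)\mapsto\Psi_\rho(\vm)$ together with the rescaling $\lambda=\min\{1,R(\rho_2)/R(\rho_1)\}$, which unifies in one step what the paper handles by a two-case split ($q_1\leq\ov{Q}$ vs.\ $\ov{Q}\leq q_1\leq Q$) with fully explicit constants. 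Your version is cleaner but does leave the modulus-of-continuity estimate as ``routine'' rather than carried out. For (c) the two arguments are really different: the paper establishes the inclusion $(\sK_{\rho,Q})^\co\subset\{\lambda_{\max}(\cdots)\leq 0\}$ by slicing $\sK_{\rho,Q}$ in the $(q,\vF)$-variables, invoking Lemma~\ref{lemma:app-euler} on each slice, and then passing to interiors; you instead exhibit at each $\vz_0$ a supporting linear functional $L_{\vn,s}$ whose restriction to $\sK_{\rho,Q}$ reduces to a non-degenerate downward parabola in $u=\vn\cdot\vm$ (the $|\vm|^2$ terms in $-q-\vn^\trans\mU\vn$ cancel, which is indeed the key algebraic point), with vertex placeable at $u=\vn\cdot\vm_0\in[-R(\rho),R(\rho)]$; since the coefficient of $q$ in $L_{\vn,s}$ is $-1\neq 0$, this is a nonzero linear form maximized over the compact convex set $(\sK_{\rho,Q})^\co$ at $\vz_0$, which therefore cannot lie in its interior. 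Your supporting-hyperplane argument is more elementary and self-contained (no Carath\'eodory, no appeal to Lemma~\ref{lemma:app-euler}); what it forgoes is the explicit description of $(\sK_{\rho,Q})^\co$, which the paper's route produces as a by-product and then reuses in the analysis of $\sW_{\rho,Q}$ in Sect.~\ref{subsec:eif-subset}, so the paper's heavier machinery is not wasted in context.
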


\begin{proof}
We show that the three properties in Defn.~\ref{defn:suitable-K} hold.

\begin{enumerate}
	\item Let $\rho\in \sB$ arbitrary. First we observe that 
	$$
		\sK_{\rho,Q} = K^{-1}_\rho (\mzero,\vzero) \cap \left\{ (\vm,\mU,q,\vF) \,\Big|\, q\leq Q\right\} ,
	$$
	where $K_\rho:\phase \to \R^{2\times 2} \times \R^2$, 
	$$
		K_\rho(\vm,\mU,q,\vF) = \left(\frac{\vm\otimes \vm}{\rho} - \mU + (p(\rho)-q)\id \ ,\ \vF - \frac{q+P(\rho)}{\rho}\vm \right) .
	$$ 
	Since $K_\rho$ is continuous, we deduce that $\sK_{\rho,Q}$ is closed. 
	
	Hence it remains to prove that $\sK_{\rho,Q}$ is bounded. Recall that taking the trace of \eqref{eq:id-Uq} leads to \eqref{eq:id-q}. Together with the fact that $q\leq Q$ this immediately yields bounds on $q$ and $\vm$. This in turn leads to bounds on $\mU$ and $\vF$ via \eqref{eq:id-Uq} and \eqref{eq:id-F}, respectively.
	
	\item Our proof of property~\ref{item:suitable-K-continuity} in Defn.~\ref{defn:suitable-K} is explicit but technical. First of all we fix $\ov{Q}$ such that $\max_{\rho\in \sB} p(\rho)< \ov{Q}<Q$, which is possible by assumption on $Q$, see also Remark~\ref{rem:rhoQ}. Next we observe that the conditions on the function $\rho$ guarantee existence of $R>0$ such that 
	\begin{equation} \label{eq:suit-K-b-bound-R-rho}
		\tfrac{1}{R}< \rho < R \qquad \text{ for all }\rho\in \sB.
	\end{equation}
	We may assume that 
	\begin{equation} \label{eq:suit-K-b-bound-R-rest}
		R > \max\left\{ 1, Q , \frac{1}{2\big(\ov{Q} - \max_{\rho\in \sB} p(\rho)\big)} , \max_{\rho\in \sB} \frac{P(\rho)}{\rho} \right\}.
	\end{equation}
	
	Now let $\ep>0$. Fix $\delta>0$ such that 
	\begin{align}
		|\rho_1 - \rho_2| &< \frac{\ep}{32 R^6} , \label{eq:suit-K-b-cont-1} \\
		\left|\frac{1}{\rho_1} - \frac{1}{\rho_2}\right| &< \min\left\{ \frac{Q-\ov{Q}}{2RQ}, \frac{\ep}{8R^4} \right\} , \label{eq:suit-K-b-cont-2} \\
		\big|p(\rho_1) - p(\rho_2)\big| &< \min\left\{ \ov{Q} - \max_{\rho\in \sB} p(\rho) , \frac{Q - \ov{Q}}{2} , \frac{\ep}{32 R^4}\right\} , \label{eq:suit-K-b-cont-3} \\ 
		\left|\frac{P(\rho_1)}{\rho_1} - \frac{P(\rho_2)}{\rho_2}\right| &< \frac{\ep}{8R} \label{eq:suit-K-b-cont-4}
	\end{align}
	for all $\rho_1,\rho_2\in \sB$ with $|\rho_1-\rho_2|< \delta$, which is possible due to uniform continuity of the map
	$$
		\rho \mapsto \left( \rho , \frac{1}{\rho} , p(\rho) , \frac{P(\rho)}{\rho} \right)
	$$
	on the compact set $\sB$.
	
	Let $\rho_1,\rho_2\in \sB$ with $|\rho_1-\rho_2|< \delta$, and $(\vm_1,\mU_1,q_1,\vF_1) \in \sK_{\rho_1,Q}$. In order to show that there exists $(\vm_2,\mU_2,q_2,\vF_2) \in \sK_{\rho_2,Q}$ with 
	\begin{equation} \label{eq:suit-K-b-inK}
		\Big| (\vm_1,\mU_1,q_1,\vF_1) - (\vm_2,\mU_2,q_2,\vF_2) \Big| < \ep ,
	\end{equation}
	we distinguish between two cases. But before we continue with them, let us note that \begin{align}
		\mU_1 &= \frac{\vm_1\otimes \vm_1}{\rho_1} - \frac{|\vm_1|^2}{2\rho_1} \id , \label{eq:suit-K-b-prop-U} \\
		q_1 &= \frac{|\vm_1|^2}{2\rho_1} + p(\rho_1) , \label{eq:suit-K-b-prop-q} \\
		\vF_1 &= \frac{q_1 + P(\rho_1)}{\rho_1} \vm_1 . \label{eq:suit-K-b-prop-F}
	\end{align}
	according to \eqref{eq:defn-euler-K}, and consequently
	\begin{equation} \label{eq:suit-K-b-bound-m}
		|\vm_1|^2 \leq 2\rho_1 q_1 \leq 2 R Q,
	\end{equation}
	where we used \eqref{eq:suit-K-b-bound-R-rho}.
	
	\textbf{Case 1:} Consider $q_1 \leq \ov{Q}$. We set 
	\begin{align}
		\vm_2 &:= \vm_1 , \label{eq:suit-K-b-defn-m-c1} \\
		\mU_2 &:= \frac{\vm_2\otimes \vm_2}{\rho_2} - \frac{|\vm_2|^2}{2\rho_2} \id , \label{eq:suit-K-b-defn-U-c1} \\
		q_2 &:= \frac{|\vm_2|^2}{2\rho_2} + p(\rho_2) , \label{eq:suit-K-b-defn-q-c1} \\
		\vF_2 &:= \frac{q_2 + P(\rho_2)}{\rho_2} \vm_2 . \label{eq:suit-K-b-defn-F-c1}
	\end{align}
	It is obvious that \eqref{eq:id-Uq} and \eqref{eq:id-F} hold, so in order to show that $(\vm_2,\mU_2,q_2,\vF_2) \in \sK_{\rho_2,Q}$, it suffices to check $q_2\leq Q$. Indeed, using \eqref{eq:suit-K-b-defn-q-c1}, \eqref{eq:suit-K-b-defn-m-c1}, \eqref{eq:suit-K-b-prop-q}, \eqref{eq:suit-K-b-bound-m}, \eqref{eq:suit-K-b-cont-2} and \eqref{eq:suit-K-b-cont-3}
	we obtain
	\begin{align*}
		q_2 &\leq q_1 + \frac{|\vm_1|^2}{2} \left|\frac{1}{\rho_1} - \frac{1}{\rho_2}\right| + \big|p(\rho_1) - p(\rho_2)\big| \\
		&\leq \ov{Q} + RQ \frac{Q-\ov{Q}}{2RQ} + \frac{Q-\ov{Q}}{2}\ = \ Q.
	\end{align*}
	
	Next we compute  
	\begin{align*}
		|\vm_1 - \vm_2| &= 0 , \\
		|\mU_1 - \mU_2| &\leq |\vm_1\otimes \vm_1| \left| \frac{1}{\rho_1} - \frac{1}{\rho_2} \right| + \frac{|\id|}{2} |\vm_1|^2 \left| \frac{1}{\rho_1} - \frac{1}{\rho_2} \right| < 4 R^2 \frac{\ep}{8R^4} \ < \ \ep, \\
		|q_1 - q_2| &\leq \frac{1}{2} |\vm_1|^2 \left| \frac{1}{\rho_1} - \frac{1}{\rho_2} \right| + \big|p(\rho_1) - p(\rho_2)\big| < R^2 \frac{\ep}{8R^4} + \frac{\ep}{32 R^4} \ < \ \ep, \\ 
		|\vF_1 - \vF_2| &\leq |\vm_1| R |q_1-q_2| + |\vm_1| Q \left| \frac{1}{\rho_1} - \frac{1}{\rho_2} \right| + |\vm_1| \left|\frac{P(\rho_1)}{\rho_1} - \frac{P(\rho_2)}{\rho_2}\right| \\
		& < 2R^2 \left(\frac{\ep}{8R^2} + \frac{\ep}{32 R^4}\right) + 2R^2 \frac{\ep}{8R^4} + 2R \frac{\ep}{8R} \ < \ \ep, 
	\end{align*}
	with the help of \eqref{eq:suit-K-b-defn-m-c1}-\eqref{eq:suit-K-b-defn-F-c1}, \eqref{eq:suit-K-b-prop-U}-\eqref{eq:suit-K-b-prop-F}, \eqref{eq:suit-K-b-cont-1}-\eqref{eq:suit-K-b-cont-4} and the bounds \eqref{eq:suit-K-b-bound-R-rho}, \eqref{eq:suit-K-b-bound-R-rest} and \eqref{eq:suit-K-b-bound-m}. Hence \eqref{eq:suit-K-b-inK} is satisfied. 
	
	\textbf{Case 2:} Now consider $\ov{Q} \leq q_1 \leq Q$. Notice that in this case 
	\begin{equation} \label{eq:suit-K-b-lowerbound-m}
		|\vm_1|^2 = 2\rho_1 \big(q_1 - p(\rho_1)\big) \geq 2\rho_1 \Big(\ov{Q} - \max_{\rho\in \sB} p(\rho)\Big) > \frac{2}{R} \Big(\ov{Q} - \max_{\rho\in \sB} p(\rho)\Big) > 0, 
	\end{equation}
	due to \eqref{eq:suit-K-b-prop-q} and \eqref{eq:suit-K-b-bound-R-rho}.
	
	We define 
	\begin{equation} \label{eq:suit-K-b-defn-m-c2}
		\vm_2 := \sqrt{\frac{\rho_2}{\rho_1} |\vm_1|^2 + 2\rho_2 \big(p(\rho_1) - p(\rho_2)\big)} \frac{\vm_1}{|\vm_1|} ,  
	\end{equation}
	and $(\mU_2,q_2,\vF_2)$ as in case 1, i.e. by \eqref{eq:suit-K-b-defn-U-c1}-\eqref{eq:suit-K-b-defn-F-c1}. Note that $\vm_2$ is well-defined because of \eqref{eq:suit-K-b-lowerbound-m} and 
	$$
		\frac{\rho_2}{\rho_1} |\vm_1|^2 + 2\rho_2 \big(p(\rho_1) - p(\rho_2)\big) \geq 2\rho_2 \left( \ov{Q} - \max_{\rho\in \sB} p(\rho) - \big|p(\rho_1) - p(\rho_2)\big|\right) > 0,
	$$
	which follows from \eqref{eq:suit-K-b-lowerbound-m} and \eqref{eq:suit-K-b-cont-3}.
	
	Similar to case 1 above, we compute 
	\begin{align*} 
		|\vm_1 - \vm_2| &\leq \left| |\vm_1| - \sqrt{\frac{\rho_2}{\rho_1} |\vm_1|^2 + 2\rho_2 \big(p(\rho_1) - p(\rho_2)\big)} \right| \\
		&\leq \sqrt{\frac{R}{2\big(\ov{Q} - \max_{\rho\in \sB} p(\rho)\big)}} \Big( 2R^3 |\rho_1-\rho_2| + 2 R \big|p(\rho_1) - p(\rho_2)\big| \Big) \\
		& < 2R^4 \frac{\ep}{32 R^6} + 2 R^2 \frac{\ep}{32 R^4} \ < \ \ep, \\
		|\mU_1 - \mU_2| &\leq 2R^2 \left| \frac{1}{\rho_1} - \frac{1}{\rho_2} \right| + 4R^2 |\vm_1 - \vm_2| + 2 \big|p(\rho_1) - p(\rho_2)\big| \\
		& < 2R^2 \frac{\ep}{32R^6} + 4R^2 \left( \frac{\ep}{16 R^2} + \frac{\ep}{16 R^2} \right) + 2 \frac{\ep}{32R^4} \ < \ \ep, \\
		|q_1 - q_2| & = \left| \frac{|\vm_1|^2}{2\rho_1} - \frac{1}{2\rho_2} \left(\frac{\rho_2}{\rho_1} |\vm_1|^2 + 2\rho_2 \big(p(\rho_1) - p(\rho_2)\big)\right)  + p(\rho_1) - p(\rho_2) \right| \ = \ 0 , \\ 
		|\vF_1 - \vF_2| &\leq 2 R \left|\frac{P(\rho_1)}{\rho_1} - \frac{P(\rho_2)}{\rho_2}\right| + 2 R^2 \left| \frac{1}{\rho_1} - \frac{1}{\rho_2} \right| + (R^2 + R) |\vm_1 - \vm_2| \\
		&< 2 R \frac{\ep}{8R} + 2 R^2 \frac{\ep}{32R^6} + 2R^2 \left( \frac{\ep}{16 R^2} + \frac{\ep}{16 R^2} \right) \ < \ \ep, 
	\end{align*}
	which means that \eqref{eq:suit-K-b-inK} holds. It remains to show that $(\vm_2,\mU_2,q_2,\vF_2) \in \sK_{\rho_2,Q}$. Like in case 1, \eqref{eq:suit-K-b-defn-U-c1}-\eqref{eq:suit-K-b-defn-F-c1} immediately yield \eqref{eq:id-Uq} and \eqref{eq:id-F}. We conclude using $q_2=q_1 \leq Q$. 
	
	\item Fix an arbitrary $\rho\in \sB$. We will show the following claim:
	\begin{equation} \label{eq:suit-K-c-statement}
		\text{If }(\vm,\mU,q,\vF)\in \interior{\big((\sK_{\rho,Q})^\co\big)}, \text{ then } \lambda_{\max} \left(\frac{\vm\otimes \vm}{\rho} - \mU + (p(\rho)-q)\id \right) < 0 ,
	\end{equation} 
	where $\lambda_{\max}(\mM)$ denotes the largest eigenvalue of a symmetric matrix $\mM\in \sym{2}$. Together with \eqref{eq:id-Uq}, the claim in \eqref{eq:suit-K-c-statement} immediately yields $\interior{\big((\sK_{\rho,Q})^\co\big)} \cap \sK_{\rho,Q} = \emptyset $ as desired. So it remains to prove \eqref{eq:suit-K-c-statement}.
	
	For fixed $\ov{q}\leq Q$ and $\ov{\vF}\in \R^2$ we set 
	$$
		\widetilde{\sK}_{\rho,\ov{q},\ov{\vF}}:=\left\{ (\vm,\mU,q,\vF)\in \phase \,\Big|\, \text{\eqref{eq:id-Uq} holds, } q= \ov{q} \text{ and } \vF=\ov{\vF}\right\}.
	$$
	Obviously we have 
	\begin{equation} \label{eq:suit-K-c-subset-auxK}
		\sK_{\rho,Q} \subset \bigcup_{\ov{q}\leq Q, \ov{\vF}\in \R^2} \widetilde{\sK}_{\rho,\ov{q},\ov{\vF}} \qquad \text{ and thus } \qquad (\sK_{\rho,Q})^\co \subset \Bigg(\bigcup_{\ov{q}\leq Q, \ov{\vF}\in \R^2} \widetilde{\sK}_{\rho,\ov{q},\ov{\vF}}\Bigg)^\co .
	\end{equation}
	According to \cite[Prop.~4.2.10]{Markfelder} it holds that
	\begin{equation} \label{eq:suit-K-c-aux01}
		\bigcup_{\ov{q}\leq Q, \ov{\vF}\in \R^2} (\widetilde{\sK}_{\rho,\ov{q},\ov{\vF}})^\co \subset \Bigg(\bigcup_{\ov{q}\leq Q, \ov{\vF}\in \R^2} \widetilde{\sK}_{\rho,\ov{q},\ov{\vF}}\Bigg)^\co
	\end{equation}
	and due to Lemma~\ref{lemma:app-euler} we have
	\begin{align}
		(\widetilde{\sK}_{\rho,\ov{q},\ov{\vF}})^\co = \bigg\{ (\vm,\mU,q,\vF) \in \phase \, \Big| \, &\lambda_{\max}\left(\frac{\vm\otimes \vm}{\rho} - \mU + (p(\rho)-q) \id \right) \leq 0 , \label{eq:suit-K-c-aux02} \\
		&\qquad \text{ and } (q,\vF)= (\ov{q},\ov{\vF})\quad\bigg\} \notag
	\end{align}
	for all $\ov{q}\in \R$ and all $\ov{\vF}\in \R^2$. Combining \eqref{eq:suit-K-c-aux01} with \eqref{eq:suit-K-c-aux02} we find 
	\begin{align}
		&\left\{ (\vm,\mU,q,\vF) \in \phase \, \Big| \, \lambda_{\max}\left(\frac{\vm\otimes \vm}{\rho} - \mU + (p(\rho)-q) \id \right) \leq 0 , \text{ and }q\leq Q\right\} \notag \\
		&\subset \Bigg(\bigcup_{\ov{q}\leq Q, \ov{\vF}\in \R^2} \widetilde{\sK}_{\rho,\ov{q},\ov{\vF}}\Bigg)^\co. \label{eq:suit-K-c-aux03}
	\end{align}
	Since the map $(\vm,\mU,q)\mapsto  \lambda_{\max}\left(\frac{\vm\otimes \vm}{\rho} - \mU + (p(\rho)-q) \id \right)$ is convex (see e.g. \cite[Lemma~4.3.4]{Markfelder}) and because the left-hand side of \eqref{eq:suit-K-c-aux03} obviously contains the set $\bigcup_{\ov{q}\leq Q, \ov{\vF}\in \R^2} \widetilde{\sK}_{\rho,\ov{q},\ov{\vF}}$, we even have equality in \eqref{eq:suit-K-c-aux03}. Using this in \eqref{eq:suit-K-c-subset-auxK}, we obtain 
	\begin{equation} \label{eq:suit-K-c-aux04}
		(\sK_{\rho,Q})^\co \subset \left\{ (\vm,\mU,q,\vF) \in \phase \, \Big| \, \lambda_{\max}\left(\frac{\vm\otimes \vm}{\rho} - \mU + (p(\rho)-q) \id \right) \leq 0 \right\} .
	\end{equation}
	Next, \cite[Lemma~5.1.1]{Markfelder} shows that in order to get the interior of the right-hand side of \eqref{eq:suit-K-c-aux04} we just have to replace ``$\leq$'' by ``$<$''. Thus \eqref{eq:suit-K-c-aux04} yields 
	$$
		\interior{\big((\sK_{\rho,Q})^\co\big)} \subset \left\{ (\vm,\mU,q,\vF) \in \phase \, \Big| \, \lambda_{\max}\left(\frac{\vm\otimes \vm}{\rho} - \mU + (p(\rho)-q) \id \right) < 0 \right\} ,
	$$
	which immediately shows \eqref{eq:suit-K-c-statement}.
\end{enumerate}
\end{proof}

\subsubsection{Existence of a Suitable Differential Operator} \label{subsubsec:eif-sa-suitableOperator}

\begin{lemma} \label{lemma:euler-operators} 
	Let $(\vm,\mU,q,\vF)\in\Lambda$. Then there exists a third order homogeneous differential operator 
	\begin{align*} 
		\opL_{(\vm,\mU,q,\vF)} &: C^\infty(\R^{3}) \to C^\infty(\R^{3};\phase) 
	\end{align*}
	which is suitable in the sense of Defn.~\ref{defn:suitable-operator}.
\end{lemma}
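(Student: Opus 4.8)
The plan is to write $\opL_{(\vm,\mU,q,\vF)}$ down explicitly as a constant-coefficient operator and to verify the two requirements of Defn.~\ref{defn:suitable-operator} on the level of symbols. Write $\vz:=(\vm,\mU,q,\vF)$. By \eqref{eq:defn-euler-wave-cone} there is $\veta=(\eta_t,\veta_\vx)$ with $\veta_\vx\neq\vzero$ and
$$
	\vm\cdot\veta_\vx=0,\qquad \eta_t\vm+(\mU+q\id)\veta_\vx=\vzero,\qquad \eta_t q+\vF\cdot\veta_\vx=0 ;
$$
since all three relations are homogeneous in $\veta$ we may rescale $\veta$ so that $|\veta_\vx|=1$, and we let $\veta_\vx^\perp$ denote the rotation of $\veta_\vx$ by $\tfrac{\pi}{2}$, so that $\{\veta_\vx,\veta_\vx^\perp\}$ is an orthonormal basis of $\R^2$. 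Inspecting the form of $\mA$ in \eqref{eq:defn-A}, it suffices to produce a map $L=(L_\vm,L_\mU,L_q,L_\vF)\colon\R^3\to\phase$, all of whose components are homogeneous polynomials of degree $3$ in $\zeta=(\zeta_t,\zeta_\vx)\in\R\times\R^2$ with $L_\mU$ taking values in $\sz$, such that
$$
	\zeta_\vx\cdot L_\vm=0,\qquad \zeta_t\,L_\vm+(L_\mU+L_q\id)\zeta_\vx=\vzero,\qquad \zeta_t\,L_q+\zeta_\vx\cdot L_\vF=0 \quad\text{for all }\zeta ,
$$
together with $L(\veta)=\vz$. Indeed, setting $\opL_\vz[g]:=L(\partial_{t,\vx})g$ (each component of $\opL_\vz[g]$ being the corresponding component of $L$ with $\zeta$ replaced by the space-time gradient), the three displayed identities are precisely $\Divtx\mA(\opL_\vz[g])=\vzero$, i.e.\ property~\ref{item:suitable-operator-a}; and, since applying the homogeneous degree-$3$ operator $L(\partial_{t,\vx})$ to a plane wave $g(t,\vx)=h\big((t,\vx)\cdot\veta\big)$ with $h\in C^\infty(\R)$ returns $L(\veta)\,h^{(3)}\big((t,\vx)\cdot\veta\big)$, the identity $L(\veta)=\vz$ yields property~\ref{item:suitable-operator-b} with $\ell=3$.

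To construct $L$, the first identity forces $L_\vm$ to be a polynomial multiple of $\zeta_\vx^\perp$; the right choice, dictated by making the middle identity solvable with a \emph{polynomial} matrix $L_\mU$, is $L_\vm=\alpha\,|\zeta_\vx|^2\,\zeta_\vx^\perp$ with $\alpha\in\R$. Solving the middle identity for the symmetric matrix $L_\mU+L_q\id$ using $\zeta_\vx\cdot\zeta_\vx^\perp=0$ and $|\zeta_\vx^\perp|=|\zeta_\vx|$, and taking the trace to read off $L_q$, one arrives at
$$
	L_\mU+L_q\id=-\alpha\,\zeta_t\big(\zeta_\vx(\zeta_\vx^\perp)^\trans+\zeta_\vx^\perp\zeta_\vx^\trans\big)+2\lambda(\zeta)\,\zeta_\vx^\perp(\zeta_\vx^\perp)^\trans,\qquad L_q=|\zeta_\vx|^2\,\lambda(\zeta),
$$
where $\lambda$ is any polynomial of degree $1$ (one checks $\tr L_\mU=0$, so $L_\mU$ is indeed $\sz$-valued); the last identity is then solved by $L_\vF=-\zeta_t\,\lambda(\zeta)\,\zeta_\vx+\nu(\zeta)\,\zeta_\vx^\perp$ with $\nu$ any polynomial of degree $2$. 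A routine substitution confirms that this $L$ obeys all three symbol identities for every choice of $\lambda,\nu$. Finally, to achieve $L(\veta)=\vz$, write $\vm=\alpha\,\veta_\vx^\perp$ (which fixes $\alpha$) and $\vF=-\eta_t q\,\veta_\vx+\beta\,\veta_\vx^\perp$; evaluating the above at $\zeta=\veta$ with $|\veta_\vx|=1$ shows it suffices to choose $\lambda$ of degree $1$ with $\lambda(\veta)=q$ (e.g.\ $\lambda(\zeta)=q\,\tfrac{\veta\cdot\zeta}{|\veta|^2}$) and $\nu$ of degree $2$ with $\nu(\veta)=\beta$, after which $L_\vm(\veta)=\vm$, $L_\mU(\veta)=\mU$, $L_q(\veta)=q$, $L_\vF(\veta)=\vF$ all follow from the three wave-cone relations above.

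The only step that is not purely mechanical is guessing the right ansatz for $L$ — in particular realizing that $L_q$ must carry the factor $|\zeta_\vx|^2$ (so that $L_\mU$ stays polynomial after the momentum identity is solved) while at the same time $L_q$ has to hit the value $q$ at $\zeta=\veta$. These two demands are compatible precisely because we could normalize $|\veta_\vx|=1$, which is possible exactly because $\veta_\vx\neq\vzero$, i.e.\ because $\vz\in\Lambda$ rather than merely $\vz\in\Lambda'$; if $\veta_\vx=\vzero$ the same symbol bookkeeping shows that no operator with the required properties can exist, which is the reason for passing to the smaller cone $\Lambda$ (cf.\ Remarks~\ref{rem:subset-of-wavecone} and \ref{rem:wave-cone-euler}). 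Note also that the block $\big(L_\vm,\,L_\mU+L_q\id\big)$ is exactly the potential employed for the two-dimensional incompressible Euler system in \cite{DelSze09,Markfelder}; the \emph{energy block} $(L_q,L_\vF)$ is the new ingredient, and the computation above shows that it attaches to the momentum block without any further obstruction, since solving $\zeta_t L_q+\zeta_\vx\cdot L_\vF=0$ for a polynomial $L_\vF$ is always possible once $L_q$ is known.
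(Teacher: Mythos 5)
Your proposal is correct and takes essentially the same approach as the paper: construct an explicit constant-coefficient homogeneous operator of order~$3$ whose symbol $L(\zeta)$ annihilates the map $\mA$ for every $\zeta$, and which satisfies $L(\veta)=\vz$ at the wave direction so that applying it to $g=h((t,\vx)\cdot\veta)$ reproduces $\vz\,h'''$. The difference is mostly cosmetic but worth noting: you set up the ansatz invariantly in terms of the orthonormal pair $\{\veta_\vx,\veta_\vx^\perp\}$ (after normalizing $|\veta_\vx|=1$) and solve the three symbol identities successively for $L_\vm$, then $L_\mU+L_q\id$, then $L_\vF$, whereas the paper writes $\veta=(a,b,c)$ and gives two explicit coordinate formulas for the six scalar coefficients of $\opL$, split into the cases $c=0$ and $c\neq 0$. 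Your coordinate-free derivation avoids that case distinction entirely, makes the role of $\veta_\vx\neq\vzero$ transparent (both in the normalization and in the fact that $L_q$ must carry a factor $|\zeta_\vx|^2$), and cleanly isolates the new energy block $(L_q,L_\vF)$ as an add-on to the classical momentum block; the paper's version has the advantage of giving the operator entirely explicitly in the coordinates used elsewhere. Both establish the lemma. Your parenthetical claim that ``no operator with the required properties can exist'' when $\veta_\vx=\vzero$ is a heuristic that the paper also states but does not prove (Remarks~\ref{rem:subset-of-wavecone} and~\ref{rem:wave-cone-euler}); it is not needed for the lemma, which is an existence statement, so this is not a gap.
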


\begin{proof} 
The proof of Lemma~\ref{lemma:euler-operators} is similar to \cite[Prop.~4.4.1]{Markfelder}. 

According to \eqref{eq:defn-euler-wave-cone} there exists $\veta\in \R^3$ with $\veta_\vx\neq \vzero$ and 
\begin{equation} \label{eq:op-lambda}
	\left(\begin{array}{cc} 0 & \vm^\trans \\ \vm & \mU + q\id \\ q & \vF^\trans\end{array}\right)\cdot\veta = \vzero .
\end{equation}

Let us write $\veta=(a,b,c)$ and define 
\begin{align*}
	\opL_{m_1}[g] &:= \alpha \Big( \parthree{1}{1}{2}g + \parthree{2}{2}{2}g \Big) ,\\
	\opL_{m_2}[g] &:= -\alpha \Big( \parthree{1}{1}{1}g + \parthree{1}{2}{2}g \Big) ,\\
	\opL_{U_{11}}[g] &:= -2\alpha \parthree{t}{1}{2}g - \beta \Big( \parthree{1}{1}{1}g - \parthree{1}{2}{2}g \Big) - \gamma \Big( \parthree{1}{1}{2}g - \parthree{2}{2}{2}g \Big) ,\\
	\opL_{U_{12}}[g] &:= \alpha \Big( \parthree{t}{1}{1}g - \parthree{t}{2}{2}g \Big) - 2\beta \parthree{1}{1}{2}g - 2\gamma \parthree{1}{2}{2}g,\\
	\opL_q[g] &:= \beta \Big( \parthree{1}{1}{1}g + \parthree{1}{2}{2}g \Big) + \gamma \Big( \parthree{1}{1}{2}g + \parthree{2}{2}{2}g \Big) ,\\
	\opL_{F_1}[g] &:= -\beta \parthree{t}{1}{1}g - \gamma \parthree{t}{1}{2}g - \delta \parthree{1}{1}{2}g - \epsilon \parthree{1}{2}{2}g + \zeta \parthree{2}{2}{2}g , \\
	\opL_{F_2}[g] &:= -\beta \parthree{t}{1}{2}g - \gamma \parthree{t}{2}{2}g + \delta \parthree{1}{1}{1}g + \epsilon \parthree{1}{1}{2}g - \zeta \parthree{1}{2}{2}g , 
\end{align*}
where\footnote{Note that $b^2+c^2=|\veta_\vx|^2\neq 0$ and accordingly if $c=0$, then $b\neq 0$.} 
\begin{equation*}
	(\alpha,\beta,\gamma,\delta,\epsilon,\zeta) := \left\{ \begin{array}{ll}
		\left(-\frac{m_2}{b^3},\frac{q}{b^3},0,\frac{F_2}{b^3},0,0\right), & \text{ if }c=0 , \\
		\left(\frac{m_1}{c(b^2+c^2)},0,\frac{q}{c(b^2+c^2)},0,\frac{F_2}{c(b^2+c^2)},\frac{F_1}{c(b^2+c^2)}\right), & \text{ if }c\neq 0 .
	\end{array}\right. 
\end{equation*}
It is then straightforward to check that the PDEs
\begin{align*}
	\Div \opL_\vm[g] &= 0 , \\
	\partial_t \opL_\vm[g] + \Div \big( \opL_\mU[g] + \opL_q[g] \id\big) &= \vzero, \\
	\partial_t \opL_q[g] + \Div \opL_\vF[g] &=0 
\end{align*}
hold, i.e. item~\ref{item:suitable-operator-a} of Defn.~\ref{defn:suitable-operator} is satisfied.

Now let $g(t,\vx):= h\big((t,\vx)\cdot \veta\big)$. We first consider the case $c=0$, so we have $g(t,\vx):= h(at + bx)$. Thus we obtain
\begin{align*}
	\opL_{m_1}[g] &= 0 ,\\
	\opL_{m_2}[g] &= -\alpha b^3 \,h'''\big((t,\vx)\cdot \veta\big) = m_2 \,h'''\big((t,\vx)\cdot \veta\big),\\
	\opL_{U_{11}}[g] &= - \beta b^3 \,h'''\big((t,\vx)\cdot \veta\big) = -q \,h'''\big((t,\vx)\cdot \veta\big) ,\\
	\opL_{U_{12}}[g] &= \alpha ab^2 \,h'''\big((t,\vx)\cdot \veta\big) = -\frac{a m_2}{b} \,h'''\big((t,\vx)\cdot \veta\big),\\
	\opL_q[g] &= \beta b^3 \,h'''\big((t,\vx)\cdot \veta\big) = q \,h'''\big((t,\vx)\cdot \veta\big)\\
	\opL_{F_1}[g] &= -\beta ab^2 \,h'''\big((t,\vx)\cdot \veta\big) = -\frac{aq}{b} \,h'''\big((t,\vx)\cdot \veta\big) , \\
	\opL_{F_2}[g] &= \delta b^3 \,h'''\big((t,\vx)\cdot \veta\big) = F_2 \,h'''\big((t,\vx)\cdot \veta\big). 
\end{align*}
As $c=0$, \eqref{eq:op-lambda} implies
\begin{align*}
	b\,m_1 &=0, \\
	a\,m_1 + b\,(U_{11}+q) &=0, \\
	a\,m_2 + b\, U_{12} &=0, \\
	a\,q + b\, F_1 &=0,
\end{align*}
which in turn shows $m_1=0$ (since $\veta_\vx\neq \vzero$ and hence $b\neq 0$). Therefore item~\ref{item:suitable-operator-b} of Defn.~\ref{defn:suitable-operator} holds (with $\ell=3$).

Next we consider $c\neq 0$. We obtain in a similar fashion 
\begin{align*} 
	\opL_{m_1}[g] &= \Big[\alpha \big( b^2c + c^3 \big)\Big]h'''\big((t,\vx)\cdot \veta\big) \\
	&= m_1 \,h'''\big((t,\vx)\cdot \veta\big),\\
	\opL_{m_2}[g] &= \Big[-\alpha \big( b^3 + bc^2 \big)\Big]h'''\big((t,\vx)\cdot \veta\big) \\
	&= -\frac{bm_1}{c} \,h'''\big((t,\vx)\cdot \veta\big),\\
	\opL_{U_{11}}[g] &= \Big[-2\alpha abc - \beta \big( b^3 - bc^2 \big) - \gamma \big( b^2c - c^3 \big)\Big]h'''\big((t,\vx)\cdot \veta\big) \\
	&= \frac{-2abm_1 - (b^2-c^2)q}{b^2+c^2} \,h'''\big((t,\vx)\cdot \veta\big),\\
	\opL_{U_{12}}[g] &= \Big[\alpha \big( ab^2 - ac^2 \big) - 2\beta b^2c - 2\gamma bc^2\Big]h'''\big((t,\vx)\cdot \veta\big) \\
	&= \frac{a(b^2-c^2)m_1 - 2bc^2 q}{c(b^2+c^2)} \,h'''\big((t,\vx)\cdot \veta\big),\\
	\opL_q[g] &= \Big[\beta \big( b^3 + bc^2 \big) + \gamma \big( b^2c + c^3 \big)\Big]h'''\big((t,\vx)\cdot \veta\big) \\
	&= q \,h'''\big((t,\vx)\cdot \veta\big),\\
	\opL_{F_1}[g] &= \Big[-\beta ab^2 - \gamma abc - \delta b^2c - \epsilon bc^2 + \zeta c^3\Big]h'''\big((t,\vx)\cdot \veta\big) \\
	&= \frac{-abq - bcF_2 + c^2F_1}{b^2+c^2} \,h'''\big((t,\vx)\cdot \veta\big), \\
	\opL_{F_2}[g] &= \Big[-\beta abc - \gamma ac^2 + \delta b^3 + \epsilon b^2c - \zeta bc^2\Big]h'''\big((t,\vx)\cdot \veta\big) \\
	&= \frac{-acq + b^2 F_2 - bcF_1}{b^2+c^2} \,h'''\big((t,\vx)\cdot \veta\big).
\end{align*}
From \eqref{eq:op-lambda} we obtain
\begin{align*}
	b\,m_1 + c\,m_2&=0, \\
	a\,m_1 + b\,(U_{11}+q) + c\,U_{12} &=0, \\
	a\,m_2 + b\, U_{12} + c\,(-U_{11}+q) &=0, \\
	a\,q + b\, F_1 + c\, F_2&=0,
\end{align*}
which in turn implies by elementary algebraic manipulations that 
\begin{align*}
	2ab\, m_1 + (b^2-c^2) q + (b^2+c^2) U_{11} &= 0 ,\\
	-a(b^2-c^2) m_1 + 2bc^2 \,q + c(b^2+c^2) U_{12} &= 0.
\end{align*}
With these identities one can simply check that again item~\ref{item:suitable-operator-b} of Defn.~\ref{defn:suitable-operator} holds. 
\end{proof}

\begin{rem} \label{rem:wave-cone-euler}
	In the definition of the subset $\Lambda$ of the wave cone $\Lambda'$, we have posed the additional requirement that $\veta_\vx\neq \vzero$, see \eqref{eq:defn-euler-wave-cone}. This additional property is essential in the proof of Lemma~\ref{lemma:euler-operators}, see also Remark~\ref{rem:subset-of-wavecone}.
\end{rem}

\subsubsection{The $\Lambda$-Convex Hull of each Constitutive Set Coincides with its Convex Hull} \label{subsubsec:eif-sa-KLambda=Kco}

\begin{lemma} \label{lemma:euler-KLambda=Kco}
	It holds that 
	$$
		(\sK_{\rho,Q})^\Lambda = (\sK_{\rho,Q})^\co \qquad \text{ for all }\rho\in \sB.
	$$
\end{lemma}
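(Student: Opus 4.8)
The inclusion $(\sK_{\rho,Q})^\Lambda\subset(\sK_{\rho,Q})^\co$ is automatic (the $\Lambda$-convex hull is always contained in the convex hull, see Appendix~\ref{app:Lconvex}), so the only task is the reverse inclusion. The plan is to verify that the cone $\Lambda$ is \emph{complete} with respect to $\sK_{\rho,Q}$ in the sense of Defn.~\ref{defn:app-complete-wc}, and then to invoke Prop.~\ref{prop:app-complete-wc}, which upgrades completeness to the identity $(\sK_{\rho,Q})^\Lambda=(\sK_{\rho,Q})^\co$. Concretely, I would show that every $\vz=(\vm,\mU,q,\vF)\in(\sK_{\rho,Q})^\co\setminus\sK_{\rho,Q}$ lies in the open segment $(\vz_1,\vz_2)$ for some $\vz_1,\vz_2\in(\sK_{\rho,Q})^\co$ with $\vz_2-\vz_1\in\Lambda$, with the endpoints sufficiently far from $\vz$ in the quantitative sense demanded by Defn.~\ref{defn:app-complete-wc}.

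For the momentum block $(\vm,\mU,q)$ this is exactly the construction of \cite[Prop.~4.3.2]{Markfelder} for the compressible Euler equations without energy, which I would follow. The geometry is already visible from the proof of Lemma~\ref{lemma:euler-K-suitable}: writing $M(\vm,\mU,q):=\tfrac{\vm\otimes\vm}{\rho}-\mU+(p(\rho)-q)\id$, membership of $(\sK_{\rho,Q})^\co$ implies $\lambda_{\max}(M)\leq 0$ (see \eqref{eq:suit-K-c-aux04}) and, since $q\leq Q$ is a convex constraint, $q\leq Q$; whereas on $\sK_{\rho,Q}$ one has $M=\mzero$ by \eqref{eq:id-Uq}. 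The plane-wave directions of \cite{Markfelder,DelSze09} — perturb $\vm$ along a unit vector $\vn$, compensate by a rank-one traceless symmetric increment of $\mU$, and take spatial wave vector $\veta_\vx\perp\vn$ — produce, around any point with $\lambda_{\max}(M)<0$, or with $M$ negative semidefinite but $M\neq\mzero$, a $\Lambda$-segment inside $\{\lambda_{\max}(M)\leq 0,\ q\leq Q\}$ whose endpoints have ``more negative'' matrices $M_\pm$; iterating drives the endpoints to $M=\mzero$, i.e. to $\sK_{\rho,Q}$.

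The only feature not present in \cite[Prop.~4.3.2]{Markfelder} is the flux coordinate $\vF$, and two elementary observations dispose of it. First, any vector $(\vzero,\mzero,0,\vF')$ with $\vF'\neq\vzero$ lies in $\Lambda$: choosing $\veta_\vx\in\R^2\setminus\{\vzero\}$ with $\veta_\vx\perp\vF'$ and any $\eta_t$, all equations in \eqref{eq:defn-euler-wave-cone} hold. Hence $\vF$ is a ``free'' coordinate for $\Lambda$-convexity, which takes care of those $\vz$ whose momentum block already lies on the constitutive surface but with $\vF\neq\tfrac{q+P(\rho)}{\rho}\vm$: such a $\vz$ sits on the $\Lambda$-segment in the pure flux direction $(\vzero,\mzero,0,\vF-\tfrac{q+P(\rho)}{\rho}\vm)$. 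Second, every $\Lambda$-direction used in the momentum construction has $\veta_\vx\neq\vzero$ (the directions are differences of points with distinct $\vm$), so each lifts to $\Lambda$: given $(\vm',\mU',q')$ with spatial wave vector $\veta_\vx\neq\vzero$, the last row of \eqref{eq:op-lambda} is the single scalar equation $q'\eta_t+\vF'\cdot\veta_\vx=0$, which is solvable for the missing $\vF'$-component. One then carries this component through the construction, using the flux-freedom to absorb any mismatch of the flux barycenter of the lifted laminate.

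The main obstacle I anticipate is purely bookkeeping: one must check that the endpoints produced by lifting the classical momentum laminates, combined with the $\vF$-relaxation, remain inside $(\sK_{\rho,Q})^\co$ at every step and satisfy the quantitative non-degeneracy of Defn.~\ref{defn:app-complete-wc}. This requires the precise description of $(\sK_{\rho,Q})^\co$ (the $\lambda_{\max}$-constraint on the momentum block together with the induced range of admissible fluxes $\vF$), but involves no new idea beyond \cite[Prop.~4.3.2]{Markfelder}.
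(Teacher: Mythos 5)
There is a genuine gap here, and it is conceptual rather than merely technical. You correctly name the right tool (Prop.~\ref{prop:app-complete-wc}, which reduces the identity to showing completeness of $\Lambda$ with respect to $\sK_{\rho,Q}$), but what you then describe is \emph{not} completeness. Defn.~\ref{defn:app-complete-wc} is the simple, unquantified condition that $\vp-\vq\in\Lambda$ for \emph{all} $\vp,\vq\in\sK_{\rho,Q}$: one verifies it by taking two arbitrary points \emph{of the constitutive set itself} and exhibiting a wave direction. There are no open segments, no endpoints in $(\sK_{\rho,Q})^\co$, and no ``quantitative sense'' attached to it. Instead you have substituted a laminate/segment-decomposition argument of the kind used when the wave cone is \emph{not} complete (as for incompressible Euler in \cite{DelSze09}), with an iteration that ``drives the endpoints to $M=\mzero$.'' That route is not what Prop.~\ref{prop:app-complete-wc} needs, and the iteration/convergence and the carrying-along of the flux barycenter are precisely the parts you leave unsketched — they would require real work, none of which is necessary.

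The paper's actual proof exploits that $\sK_{\rho,Q}$ is small enough to make completeness a ten-line computation. Take $(\vm_1,\mU_1,q_1,\vF_1),(\vm_2,\mU_2,q_2,\vF_2)\in\sK_{\rho,Q}$. Because both points satisfy \eqref{eq:id-Uq} and \eqref{eq:id-F}, the differences factor:
\begin{align*}
\mU_1-\mU_2+(q_1-q_2)\id &= \tfrac{1}{\rho}\,\vm_1(\vm_1-\vm_2)^\trans+\tfrac{1}{\rho}(\vm_1-\vm_2)\vm_2^\trans,\\
\vF_1-\vF_2 &= \tfrac{q_1+P(\rho)}{\rho}(\vm_1-\vm_2)+\tfrac{1}{\rho}(q_1-q_2)\vm_2 .
\end{align*}
Choosing $\veta_\vx\in\R^2\setminus\{\vzero\}$ with $\veta_\vx\perp(\vm_1-\vm_2)$ and $\eta_t=-\tfrac{\vm_2}{\rho}\cdot\veta_\vx$, every row of $\mA(\vz_1-\vz_2)\cdot\veta$ vanishes, so $\vz_1-\vz_2\in\Lambda$ by \eqref{eq:defn-euler-wave-cone}. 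That is the whole proof; your observations about the free flux coordinate and the solvability of $q'\eta_t+\vF'\cdot\veta_\vx=0$ are not wrong, but they are scaffolding for an argument you never need to run.
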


\begin{rem} \label{rem:incompressible-approach}
	The convex integration approach which is elaborated in the current section is incompressible in the sense that we consider $\rho$ as a parameter, for which no oscillations are added. Lemma~\ref{lemma:euler-KLambda=Kco} shows that this is not a restriction in the sense that the hull does not increase if we would allow for oscillations in $\rho$. Indeed the latter would mean that we work with a larger wave cone $\widetilde{\Lambda}\supset \Lambda$. Thus it holds that $(\sK_{\rho,Q})^{\widetilde{\Lambda}}\supset (\sK_{\rho,Q})^\Lambda$. If the former hull was a strict superset of the latter, then using an incompressible method would be a severe restriction. However from Prop.~\ref{prop:app-Lconvex} we obtain $(\sK_{\rho,Q})^{\widetilde{\Lambda}}\subset (\sK_{\rho,Q})^\co$ and so we infer from Lemma~\ref{lemma:euler-KLambda=Kco} that 
	$$
		(\sK_{\rho,Q})^\Lambda = (\sK_{\rho,Q})^{\widetilde{\Lambda}} = (\sK_{\rho,Q})^\co.
	$$ 
\end{rem}

\begin{proof}[Proof of Lemma~\ref{lemma:euler-KLambda=Kco}]
Let $\rho\in \sB$ arbitrary. We prove that $\Lambda$ is complete with respect to $\sK_{\rho,Q}$, see Defn.~\ref{defn:app-complete-wc}. The claim follows then from Prop.~\ref{prop:app-complete-wc}. Hence we consider 
$$
	(\vm_1,\mU_1,q_1,\vF_1),(\vm_2,\mU_2,q_2,\vF_2)\in \sK_{\rho,Q}. 
$$
Choose $\veta=(\eta_t,\veta_\vx)\in \R^3$ by $\veta_\vx\in \R^2\setminus\{\vzero\}$ such that 
$$
	\veta_\vx \perp (\vm_1 - \vm_2) \qquad \text{ and } \qquad \eta_t = -\frac{\vm_2}{\rho} \cdot \veta_\vx .
$$ 
Then we compute
\begin{align*}
	&\left( \begin{array}{cc} 0 & (\vm_1- \vm_2)^\trans \\ \vm_1 - \vm_2 & \mU_1 - \mU_2 + (q_1-q_2)\id \\ q_1-q_2 & (\vF_1 - \vF_2)^\trans \end{array} \right) \cdot \veta \\
	&= \left( \begin{array}{cc} 0 & (\vm_1- \vm_2)^\trans \\ \vm_1 - \vm_2 & \frac{1}{\rho} \big( \vm_1\otimes \vm_1 - \vm_2 \otimes \vm_2 \big) \\ q_1-q_2 & \frac{1}{\rho} \big( q_1 \vm_1^\trans - q_2 \vm_2^\trans \big) + \frac{P(\rho)}{\rho} (\vm_1 - \vm_2)^\trans \end{array} \right) \cdot \veta \\
	&= \left( \begin{array}{cc} 0 & (\vm_1- \vm_2)^\trans \\ \vm_1 - \vm_2 & \frac{1}{\rho} \vm_1 (\vm_1 - \vm_2)^\trans + \frac{1}{\rho} (\vm_1-\vm_2) \vm_2^\trans \\ q_1-q_2 & \frac{q_1 + P(\rho)}{\rho}  (\vm_1 - \vm_2)^\trans + \frac{1}{\rho} ( q_1 - q_2 ) \vm_2^\trans \end{array} \right) \cdot \veta \\
	&= \left( \begin{array}{c} 0 \\ (\vm_1 - \vm_2) \left(\eta_t + \frac{1}{\rho} \vm_2\cdot \veta_\vx\right) \\ (q_1-q_2) \left( \eta_t + \frac{1}{\rho} \vm_2\cdot \veta_\vx \right) \end{array} \right)\ = \ \vzero .
\end{align*} 
Therefore $(\vm_1,\mU_1,q_1,\vF_1)-(\vm_2,\mU_2,q_2,\vF_2)\in \Lambda$ according to \eqref{eq:defn-euler-wave-cone}.
\end{proof}

\subsection{Conclusion: Convex Integration Theorem for Compressible Euler} \label{subsec:eif-ci}

As a consequence we obtain the following convex integration theorem in the context of the barotropic compressible Euler equations \eqref{eq:euler-d}, \eqref{eq:euler-m} with energy inequality \eqref{eq:euler-E}. 

\begin{thm} \label{thm:conv-int-euler+energy} 
	Let $\Gamma\subset\R^3$ be a Lipschitz space-time domain (open but not necessarily bounded). Assume there exist $\rho\in C^1(\closure{\Gamma};\R^+)$ with $\rho,\partial_t\rho\in \Cb(\closure{\Gamma};\R^+)$ and $\inf_{(t,\vx)\in \closure{\Gamma}} \rho(t,\vx)>0$, and $(\ov{\vm},\ov{\mU},\ov{q},\ov{\vF})\in C^1(\closure{\Gamma};\phase)$ with the following properties:
	\begin{itemize}
		\item The partial differential equations and inequalities  
		\begin{align}
			\partial_t \rho + \Div \ov{\vm} &= 0 , \label{eq:ci-subs-pde1}\\
			\partial_t \ov{\vm} + \Div (\ov{\mU} + \ov{q}\id) &= \vzero , \label{eq:ci-subs-pde2} \\
			\partial_t \big(\ov{q} + P(\rho) - p(\rho)\big) + \Div \ov{\vF} &\leq 0 \label{eq:ci-subs-pde3}
		\end{align}
		hold pointwise for all $(t,\vx)\in \Gamma$;
		
		\item There exists $Q>0$ with $Q> \sup_{(t,\vx)\in \closure{\Gamma}} p(\rho(t,\vx))$ such that 
		\begin{equation}  \label{eq:ci-subs} 
			(\ov{\vm},\ov{\mU},\ov{q},\ov{\vF})(t,\vx) \ \in\ \sU_{\rho(t,\vx),Q} \qquad \text{ for all }(t,\vx)\in \Gamma , 
		\end{equation} 
		where $\sU_{\rho,Q}:= \interior{\big((\sK_{\rho,Q})^\co\big)}$. 
	\end{itemize}
	
	Then there exist infinitely many $\vm\in L^\infty(\Gamma; \R^2)$ such that $(\rho,\vm)$ solve the barotropic Euler equations in the following sense: 
	\begin{align} 
		\iint_\Gamma \big[\rho \partial_t\phi + \vm\cdot \Grad\phi \big] \dx\dt - \int_{\partial \Gamma} \big[\rho \,n_t + \ov{\vm}\cdot \vn_\vx \big] \phi \dS_{t,\vx} &= 0 , \label{eq:ci-sol-pde1}\\ 
		\iint_\Gamma \bigg[\vm\cdot\partial_t\vphi + \frac{\vm\otimes\vm}{\rho} : \Grad\vphi + p(\rho) \Div\vphi\bigg] \dx\dt \ \;\quad \qquad & \notag \\
		- \int_{\partial\Gamma} \left[ \ov{\vm}\cdot \vphi\,n_t + (\ov{\mU}\cdot \vn_\vx)\cdot \vphi + \ov{q}\, \vphi \cdot \vn_\vx \right] \dS_{t,\vx} &= 0 , \label{eq:ci-sol-pde2} \\ 
		\iint_\Gamma \bigg[\left( \frac{|\vm|^2}{2\rho} + P(\rho)\right) \partial_t\psi + \left( \frac{|\vm|^2}{2\rho} + P(\rho) + p(\rho) \right) \frac{\vm}{\rho} \cdot \Grad\psi \bigg] \dx\dt \ \;\quad \qquad & \notag \\
		- \int_{\partial\Gamma} \left[ (\ov{q} + P(\rho) - p(\rho)) n_t + \ov{\vF}\cdot \vn_\vx \right] \psi \dS_{t,\vx} &\geq 0 , \label{eq:ci-sol-pde3} 
	\end{align} 
	for all test functions $(\phi,\vphi,\psi)\in \Cc(\closure{\Gamma};\R\times\R^2\times \R_0^+)$.
\end{thm}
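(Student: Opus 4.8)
The plan is to obtain Theorem~\ref{thm:conv-int-euler+energy} as a direct application of the general convex integration theorem, Thm.~\ref{thm:conv-int}, to the data assembled in Sect.~\ref{subsec:eif-prelim}. Concretely, one takes $n=3$ with independent variable the space-time point $(t,\vx)$, $M=\dim(\phase)=7$, $m=9$, the linear map $\mA:\phase\to\R^{9\times 3}$ of \eqref{eq:defn-A}, the right-hand side $\vb$ of \eqref{eq:defn-b} (so that $\sB=\closure{\vb(\closure\Gamma)}$ and the family of constitutive sets is $(\sK_{\rho,Q})_{\rho\in\sB}$ from \eqref{eq:defn-euler-K}), and the cone $\Lambda\subset\Lambda'$ of \eqref{eq:defn-euler-wave-cone}. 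As a preliminary step one checks $\vb\in\Cb(\closure\Gamma;\R^9)$: this uses $\rho,\partial_t\rho\in\Cb(\closure\Gamma)$ together with the fact that $p,P\in C^1$ and $\rho$ is bounded and bounded away from zero on $\closure\Gamma$, so that $\partial_tP(\rho)=P'(\rho)\,\partial_t\rho$ and $\partial_tp(\rho)=p'(\rho)\,\partial_t\rho$ are continuous and bounded. The hypothesis $Q>\sup_{(t,\vx)}p(\rho(t,\vx))=\max_{\rho\in\sB}p(\rho)$ ensures $\sK_{\rho,Q}\neq\emptyset$ for all $\rho\in\sB$, cf.\ Remark~\ref{rem:rhoQ}.

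The three structural assumptions of Thm.~\ref{thm:conv-int} have already been verified above: suitability of $(\sK_{\rho,Q})_{\rho\in\sB}$ is Lemma~\ref{lemma:euler-K-suitable}; existence, for each $\vz\in\Lambda$, of a suitable (third-order, homogeneous) differential operator $\opL_\vz$ is Lemma~\ref{lemma:euler-operators}; and $(\sK_{\rho,Q})^\Lambda=(\sK_{\rho,Q})^\co$ for all $\rho\in\sB$ is Lemma~\ref{lemma:euler-KLambda=Kco}. In particular $\sU_{\rho,Q}=\interior{\big((\sK_{\rho,Q})^\Lambda\big)}=\interior{\big((\sK_{\rho,Q})^\co\big)}$, so the relaxed set in the assumption \eqref{eq:ci-subs} is precisely the one appearing in Thm.~\ref{thm:conv-int}. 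Next one verifies that the prescribed $\ov{\vz}:=(\ov\vm,\ov\mU,\ov q,\ov\vF)\in C^1(\closure\Gamma;\phase)$ is an admissible ``subsolution'' in the sense of Thm.~\ref{thm:conv-int}: using the identity \eqref{eq:id-E}, the system \eqref{eq:ci-subs-pde1}--\eqref{eq:ci-subs-pde3}, augmented by the trivially valid inequalities $0\leq\rho$ and $0\leq Q$, is exactly $\Divtx\mA(\ov\vz)\leq\vb$ pointwise on $\Gamma$ (recall that the equations \eqref{eq:ci-subs-pde1}, \eqref{eq:ci-subs-pde2} are each encoded as a pair of opposite inequalities, which accounts for rows $1,3$ and $2,4$ of $\mA$ respectively; row $5$ carries the energy inequality \eqref{eq:ci-subs-pde3} rewritten via \eqref{eq:id-E}; rows $6,7$ carry the trivial inequalities), and \eqref{eq:ci-subs} is literally $\ov\vz(t,\vx)\in\sU_{\vb(t,\vx)}$ for all $(t,\vx)\in\Gamma$. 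Hence all hypotheses of Thm.~\ref{thm:conv-int} are met and it furnishes infinitely many $\vz=(\vm,\mU,q,\vF)\in L^\infty(\Gamma;\phase)$ which satisfy \eqref{eq:lin-eq} distributionally with boundary datum $\ov\vz$ (item~\ref{item:main-thm-sol1}) and obey $\vz(t,\vx)\in\sK_{\rho(t,\vx),Q}$ for a.e.\ $(t,\vx)$ (item~\ref{item:main-thm-sol2}).

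It then remains to translate these conclusions into the claimed weak formulations. Since $\vz\in\sK_{\rho,Q}$ a.e., the defining relations \eqref{eq:id-Uq} and \eqref{eq:id-F} hold a.e.; taking the trace (cf.\ \eqref{eq:id-q}) this forces $q=\tfrac{|\vm|^2}{2\rho}+p(\rho)$, $\mU=\tfrac{\vm\otimes\vm}{\rho}-\tfrac{|\vm|^2}{2\rho}\id$ and $\vF=\big(\tfrac{|\vm|^2}{2\rho}+P(\rho)+p(\rho)\big)\tfrac{\vm}{\rho}$. Substituting these into item~\ref{item:main-thm-sol1} of Thm.~\ref{thm:conv-int}, processed row by row --- combining rows $1$ and $3$ (testing with $\pm\phi$ for $\phi\geq 0$, which is legitimate because the test-function constraint there is one-sided) to recover \eqref{eq:ci-sol-pde1}, likewise combining rows $2$ and $4$ componentwise (with $\pm\vphi$) to recover \eqref{eq:ci-sol-pde2}, using row $5$ to recover the one-sided inequality \eqref{eq:ci-sol-pde3}, and discarding the vacuous rows $6,7$ --- gives exactly \eqref{eq:ci-sol-pde1}--\eqref{eq:ci-sol-pde3}; in each case the boundary integral $\int_{\partial\Gamma}(\mA(\ov\vz)\cdot\vn)\cdot\vphi\,\dS$ splits, against the space-time normal $\vn=(n_t,\vn_\vx)$, into precisely the boundary terms displayed there, after an integration by parts in $t$ to move the $\partial_t\rho$, $\partial_tP(\rho)$, $\partial_tp(\rho)$ contributions (licit since $\rho\in C^1(\closure\Gamma)$). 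Finally, the family $\{\vm\}$ is infinite because on $\sK_{\rho,Q}$ the triple $(\mU,q,\vF)$ is a function of $(\vm,\rho)$, so distinct $\vz$ yield distinct $\vm$. The only genuinely delicate point is this last translation step --- matching the single $\R^9$-valued distributional inequality of Thm.~\ref{thm:conv-int}, with its one-sided test functions and its single boundary integral, to the three separate weak identities/inequalities with their individual boundary contributions; everything preceding it is a direct invocation of results established earlier in this section.
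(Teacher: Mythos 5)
Your proposal is correct and follows exactly the paper's route: the paper proves this theorem by invoking Thm.~\ref{thm:conv-int} with the Tartar-framework data of Sect.~\ref{subsec:eif-prelim}, citing Lemmas~\ref{lemma:euler-K-suitable}, \ref{lemma:euler-operators} and \ref{lemma:euler-KLambda=Kco} for the structural assumptions, and then declaring the passage from items \ref{item:main-thm-sol1}--\ref{item:main-thm-sol2} to \eqref{eq:ci-sol-pde1}--\eqref{eq:ci-sol-pde3} obvious. Your additional details (pairing the opposite-sign rows of $\mA$, the integration by parts moving $\partial_t\rho$, $\partial_tP(\rho)$, $\partial_tp(\rho)$ to the test function and boundary, and the observation that on $\sK_{\rho,Q}$ the components $(\mU,q,\vF)$ are determined by $(\vm,\rho)$ so infinitely many $\vz$ give infinitely many $\vm$) simply spell out what the paper leaves implicit.
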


Thm.~\ref{thm:conv-int-euler+energy} will be the core in the proof of Thm.~\ref{thm:local-max-diss}, see Prop.~\ref{prop:fan-subs=>inf-sol} below. However we would like to emphasize that it might be useful for other applications too. 

\begin{proof}
Thm.~\ref{thm:conv-int-euler+energy} is a simple consequence of Thm.~\ref{thm:conv-int}. As shown in Lemmas~\ref{lemma:euler-K-suitable}, \ref{lemma:euler-operators} and \ref{lemma:euler-KLambda=Kco}, the structural assumptions of Thm.~\ref{thm:conv-int} hold. Thus according to Thm.~\ref{thm:conv-int} there are infinitely many $(\vm,\mU,q,\vF)\in L^\infty(\Gamma;\phase)$ satisfying items~\ref{item:main-thm-sol1} and \ref{item:main-thm-sol2} of Thm.~\ref{thm:conv-int}. It is obvious that these two properties imply \eqref{eq:ci-sol-pde1}-\eqref{eq:ci-sol-pde3}.
\end{proof}

\subsection{A Subset $\sW_{\rho,Q}$ of $\sU_{\rho,Q}$} \label{subsec:eif-subset}

When we want to apply Thm.~\ref{thm:conv-int-euler+energy}, we have to show existence of suitable $(\ov{\vm},\ov{\mU},\ov{q},\ov{\vF})\in C^1(\closure{\Gamma};\phase)$ which satisfy \eqref{eq:ci-subs-pde1}-\eqref{eq:ci-subs}. In view of \eqref{eq:ci-subs} it would be helpful to know a simple explicit form of $\sU_{\rho,Q}$ (for any $\rho\in \sB$) which makes it easy to check if functions $(\ov{\vm},\ov{\mU},\ov{q},\ov{\vF})$ take values in $\sU_{\rho,Q}$ or not. However the author did not manage to find such an expression. Instead we will work with a subset $\sW_{\rho,Q}\subset \sU_{\rho,Q}$ for which we know a desired explicit expression. 

Throughout this subsection, let $\rho\in C^1(\closure{\Gamma};\R^+)$ with $\rho,\partial_t\rho\in \Cb(\closure{\Gamma};\R^+)$ and $\inf_{(t,\vx)\in \closure{\Gamma}} \rho(t,\vx)>0$ given (like in Thm.~\ref{thm:conv-int-euler+energy}) and $Q>0$ with $Q> \sup_{(t,\vx)\in \closure{\Gamma}} p(\rho(t,\vx))$. Moreover we consider\footnote{Here we use the same abuse of notation as in other parts of this paper. In particular we use the letter $\rho$ for both the given function $\rho\in C^1(\closure{\Gamma};\R^+)$ and for an element in $\sB$.} $\rho\in \sB:= \closure{\rho(\closure{\Gamma})}$ fixed. The reader should keep Remark~\ref{rem:rhoQ} in mind, in particular $\rho\in \sB$ implies $\rho>0$.

\subsubsection{Definition of the Subset $\sW_{\rho,Q}$} \label{subsubsec:eif-sub-defn}

Let us first introduce the set $\sV_{\rho,Q}$ by 
\begin{align} 
	\sV_{\rho,Q} := \Bigg\{ (\vm,\mU,q) \in \R^2\times \sz\times \R\,\Big|\, & \bullet \ \lambda_{\max}\left(\frac{\vm\otimes\vm}{\rho} - \mU + (p(\rho)-q)\id\right)< 0, \label{eq:V}  \\
	& \bullet \ q< Q \quad \Bigg\}\notag .
\end{align} 

In order to define $\sW_{\rho,Q}$, we need four vectors $\vsigma^j\in \R^2$ and auxiliary functions $A^j_{\rho,Q}$, $r^j_{\rho,Q}$ and $\vf^j_{\rho,Q}$ for $j=1,2,3,4$. We set 
\begin{align} 
	\vsigma^1 &:= \left(\begin{array}{r} 1 \\ 1 \end{array}\right) , & 
	\vsigma^2 &:= \left(\begin{array}{r} -1 \\ -1 \end{array}\right) , & 
	\vsigma^3 &:= \left(\begin{array}{r} 1 \\ -1 \end{array}\right) , & 
	\vsigma^4 &:= \left(\begin{array}{r} -1 \\ 1 \end{array}\right) , \label{eq:defn-sigma}
\end{align}
and define $A^j_{\rho,Q}:\sV_{\rho,Q}\to \R$ by
\begin{equation} \label{eq:defn-Aj} 
	A^j_{\rho,Q}(\vm,\mU,q) = \frac{\det\left(\frac{\vm\otimes\vm}{\rho} - \mU + (p(\rho)-q)\id \right)}{-\big( [\vsigma^j]_2 , -[\vsigma^j]_1 \big) \cdot \left(\frac{\vm\otimes\vm}{\rho} - \mU + (p(\rho)-q)\id \right) \cdot \left(\begin{array}{r} [\vsigma^j]_2 \\ -[\vsigma^j]_1 \end{array} \right)} .
\end{equation}
Note that $\lambda_{\max}\left(\frac{\vm\otimes\vm}{\rho} - \mU + (p(\rho)-q)\id\right)< 0$ implies that the matrix $\frac{\vm\otimes\vm}{\rho} - \mU + (p(\rho)-q)\id$ is negative definite. Hence, with the help of Lemma~\ref{lemma:app-trdet} we observe that
\begin{equation} \label{eq:Agr0}
	A^j_{\rho,Q} (\vm,\mU,q)> 0 \qquad \text{ for all } (\vm,\mU,q) \in \sV_{\rho,Q} \text{ and all } j =1,2,3,4 . 
\end{equation}
Moreover, we notice that $A^1_{\rho,Q} (\vm,\mU,q)=A^2_{\rho,Q} (\vm,\mU,q)$ and $A^3_{\rho,Q} (\vm,\mU,q)=A^4_{\rho,Q} (\vm,\mU,q)$ for all $(\vm,\mU,q) \in \sV_{\rho,Q}$.

Next we introduce the functions $r^j_{\rho,Q}:\sV_{\rho,Q}\to \R$ through
\begin{equation} \label{eq:defn-r}
	r^j_{\rho,Q}(\vm,\mU,q) := \frac{1}{2(Q-q)}\left( -\vm\cdot \vsigma^j + \sqrt{\big(\vm\cdot \vsigma^j\big)^2 + 4\rho A^j_{\rho,Q}(\vm,\mU,q) + 4\rho (Q-q)} \right) .
\end{equation}
Since $q<Q$ and due to \eqref{eq:Agr0}, the radicand is positive and hence the square root is well defined on $\sV_{\rho,Q}$. Furthermore 
$$
	\sqrt{\big(\vm\cdot \vsigma^j\big)^2 + 4\rho A^j_{\rho,Q}(\vm,\mU,q) + 4\rho (Q-q)} > \big|\vm\cdot \vsigma^j\big|
$$
and thus 
\begin{equation}\label{eq:rgr0}
	r^j_{\rho,Q} (\vm,\mU,q)>0 \qquad \text{ for all } (\vm,\mU,q) \in \sV_{\rho,Q} \text{ and all } j = 1,2,3,4 .
\end{equation}

Finally we define $\vf^j_{\rho,Q}:\sV_{\rho,Q}\to \R^2$ by 
\begin{equation} \label{eq:defn-f}
	\vf^j_{\rho,Q} (\vm,\mU,q) := \frac{A^j_{\rho,Q}(\vm,\mU,q)}{r^j_{\rho,Q}(\vm,\mU,q)} \vsigma^j ,
\end{equation} 
and note that for all $(\vm,\mU,q) \in \sV_{\rho,Q}$ and all $j = 1,2,3,4$, $\vf^j_{\rho,Q} (\vm,\mU,q)$ is a positive multiple of $\vsigma^j$ according to \eqref{eq:Agr0} and \eqref{eq:rgr0}.

Now we are ready to define the set $\sW_{\rho,Q}$ as follows
\begin{align*} 
	\sW_{\rho,Q} := \Bigg\{ (\vm,\mU,q,\vF) \in \phase\,\Big|\, & \bullet \ (\vm,\mU,q)\in \sV_{\rho,Q}, \\
	& \bullet \ \exists\,\kappa_1,\kappa_2,\kappa_3,\kappa_4\in \R^+\text{ with } \ \sum_{j=1}^4\kappa_j =1\ \text{ such that }  \\
	&\quad\qquad \vF-\frac{q+P(\rho)}{\rho}\vm = \sum_{j=1}^4\kappa_j \vf^j_{\rho,Q}(\vm,\mU,q) \quad \Bigg\} ,
\end{align*}
which is - in view of \eqref{eq:V} - equivalent to 
\begin{align} 
	\sW_{\rho,Q} = \Bigg\{ (\vm,\mU,q,\vF) \in \phase\,\Big|\, & \bullet \ \lambda_{\max}\left(\frac{\vm\otimes\vm}{\rho} - \mU + (p(\rho)-q)\id\right)< 0, \label{eq:WrhoQ} \\
	& \bullet \ \exists\,\kappa_1,\kappa_2,\kappa_3,\kappa_4\in \R^+\text{ with } \ \sum_{j=1}^4\kappa_j =1\ \text{ such that } \notag \\
	&\quad\qquad \vF-\frac{q+P(\rho)}{\rho}\vm = \sum_{j=1}^4\kappa_j \vf^j_{\rho,Q}(\vm,\mU,q), \notag \\
	& \bullet \ q< Q \quad \Bigg\}\notag 
\end{align}

\begin{rem} 
	The condition on $\vF$ in \eqref{eq:WrhoQ} is quite cumbersome. In the proof of Prop.~\ref{prop:fan-subs=>inf-sol} below, we will observe that if we only deal with finitely many points $(\vm,\mU,q,\vF)$, we can simply get rid of the condition on $\vF$ by choosing $Q$ large enough.
\end{rem}

We claim that $\sW_{\rho,Q}$ is a desired subset, which is the content of the following proposition.

\begin{prop} \label{prop:WsubsetU} 
	It holds that $\sW_{\rho,Q} \subset \sU_{\rho,Q}$.
\end{prop}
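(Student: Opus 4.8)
The plan is to show that every point of $\sW_{\rho,Q}$ lies in $\sU_{\rho,Q}=\interior{\big((\sK_{\rho,Q})^\co\big)}$ by exhibiting it explicitly as the barycenter of a $\Lambda$-convex combination (a laminate) of points that are either in $\sK_{\rho,Q}$ or can be pushed arbitrarily close to it, and which sit strictly inside the relevant hull. The key observation behind the design of $\sW_{\rho,Q}$ is the following: fix $(\vm,\mU,q)\in\sV_{\rho,Q}$, so $\mM := \tfrac{\vm\otimes\vm}{\rho} - \mU + (p(\rho)-q)\id$ is negative definite. For each direction $\vsigma^j$, the numbers $A^j_{\rho,Q}$ and $r^j_{\rho,Q}$ are cooked up precisely so that the perturbed triple
$$
	\vm^j := \vm + 2(Q-q)\,r^j_{\rho,Q}\,\vsigma^j \ \ (\text{wait: more carefully}) \quad \vm \rightsquigarrow \vm \pm r^j\vsigma^j,\ \mU \rightsquigarrow \mU + (\text{rank-one in }\vsigma^j),\ q\rightsquigarrow Q
$$
lands on (or, for $\mU$, arbitrarily near) $\sK_{\rho,Q}$: the defining quadratic equation for $r^j_{\rho,Q}$ in \eqref{eq:defn-r} is exactly the condition that after the shift the matrix $\tfrac{\vm'\otimes\vm'}{\rho}-\mU'+(p(\rho)-Q)\id$ becomes rank-one with kernel $\vsigma^j$, i.e. the "$\lambda_{\max}=0$" boundary of $(\widetilde\sK_{\rho,\ov q,\ov\vF})^\co$ from \eqref{eq:suit-K-c-aux02}; and $\vf^j_{\rho,Q}$ in \eqref{eq:defn-f} is the corresponding flux correction so that \eqref{eq:id-F} holds at the shifted point. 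Concretely, I would first set up these four "endpoint" triples/vectors, call them $(\vm^j,\mU^j,Q,\vF^j)$, verify using Lemma~\ref{lemma:app-trdet} (the trace-determinant characterization) that each lies in $(\sK_{\rho,Q})^\co$ — indeed on its boundary via the $\lambda_{\max}=0$ description in \eqref{eq:suit-K-c-aux04} — and that the differences $\vsigma^j$-directions are wave-cone directions.

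\textbf{The laminate.} Next I would build the laminate in two stages, mirroring the $H_N$-condition machinery of Appendix~\ref{app:Lconvex}. In the first stage I split along $\vsigma^1=-\vsigma^2$: write the given $\vF$-component as $\vF = \tfrac{q+P(\rho)}{\rho}\vm + \sum_j \kappa_j\vf^j_{\rho,Q}$, group the $j=1,2$ and $j=3,4$ terms, and realize $(\vm,\mU,q,\vF)$ as a convex combination, with the pair of differences lying in $\Lambda$ (here one checks that a shift of $\vm$ by a multiple of $\vsigma^j$ together with the matching $\mU,q,\vF$ adjustments is a wave-cone vector — this is where \eqref{eq:defn-euler-wave-cone} and a computation like the one in the proof of Lemma~\ref{lemma:euler-KLambda=Kco} enter). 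In the second stage I split each of the two intermediate points further along $\vsigma^1$ (resp. $\vsigma^3$) into the two genuine endpoints with $q=Q$. One must then verify the barycenter identity (which is forced by the algebra defining $A^j,r^j,\vf^j$), that all four weights are positive (using $\kappa_j\in\R^+$, $\sum\kappa_j=1$, and $r^j,A^j>0$ from \eqref{eq:Agr0}, \eqref{eq:rgr0}), and that the $H_4$-condition holds. By Prop.~\ref{prop:app-laminates} (or Carathéodory/laminate results in the appendix) this places $(\vm,\mU,q,\vF)$ in $(\sK_{\rho,Q})^\Lambda=(\sK_{\rho,Q})^\co$, and by the strictness in $\sV_{\rho,Q}$ (namely $\lambda_{\max}<0$ and $q<Q$, both \emph{strict}) the point is in the interior, hence in $\sU_{\rho,Q}$. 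An alternative, possibly cleaner, route avoiding explicit laminates: show directly that $\lambda_{\max}\big(\tfrac{\vm\otimes\vm}{\rho}-\mU+(p(\rho)-q)\id\big)<0$ and $q<Q$ already imply $(\vm,\mU,q,\vF)\in\interior{\big((\sK_{\rho,Q})^\co\big)}$ provided $\vF-\tfrac{q+P(\rho)}{\rho}\vm$ lies in the interior of the convex hull of the four rays $\R^+\vsigma^j$ scaled appropriately — i.e. one checks the right-hand side of \eqref{eq:suit-K-c-aux04} together with the flux constraint cuts out exactly the convex hull, and the $\sum\kappa_j=1$ with $\kappa_j>0$ condition is an openness condition on the $\vF$-fibre.

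\textbf{Main obstacle.} The crux — and the part requiring genuine care rather than bookkeeping — is proving that the flux-side condition "$\vF - \tfrac{q+P(\rho)}{\rho}\vm \in \co\{\vf^j_{\rho,Q}(\vm,\mU,q) : j=1,\dots,4\}$ with strictly positive coefficients" is exactly the condition that $\vF$ can be matched through a $\Lambda$-convex combination of $\sK_{\rho,Q}$-points sharing the prescribed $(\vm,\mU,q)$ "base", i.e. that the four vectors $\vf^j_{\rho,Q}$ really are the extreme fluxes reachable by rank-one (wave-cone) perturbations that stay in $(\sK_{\rho,Q})^\co$. This amounts to: (i) the quadratic in \eqref{eq:defn-r} indeed has the rank-one matrix as the shifted matrix — a determinant computation using that for a $2\times2$ symmetric matrix $\mM$, $\vsigma^\perp\cdot\mM\,\vsigma^\perp = \det\mM/(\text{something})$ only when $\mM$ is appropriately normalized, so one leans on Lemma~\ref{lemma:app-trdet}; and (ii) the geometry of the square spanned by $\vsigma^1,\dots,\vsigma^4$ (which is $\sqrt2$ times the standard square rotated $45^\circ$) is rich enough that its convex hull has nonempty interior in $\R^2$ and every interior point decomposes with all four weights positive. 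Everything else — compactness of $\sB$ not needed here, continuity of $A^j,r^j,\vf^j$ on $\sV_{\rho,Q}$, positivity of the weights, the $H_N$-condition — is routine given the earlier lemmas. I would therefore devote the bulk of the write-up to carefully identifying the four endpoints and checking they sit on $\partial(\sK_{\rho,Q})^\co$ with the correct wave-cone differences, and then invoke the laminate proposition to conclude.
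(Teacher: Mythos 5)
Your overall strategy — realize $(\vm,\mU,q,\vF)$ as a convex combination of endpoints sitting on the $\lambda_{\max}=0$ boundary and invoke Carath\'eodory, then upgrade "interior" from the strictness of $\lambda_{\max}<0$ and $q<Q$ (via openness of $\sW_{\rho,Q}$) — is the right one and matches the paper's. You have also correctly located the crux: the defining quadratic for $r^j_{\rho,Q}$ is tuned so that after the shift $\det(\cdot)=0$ while $\tr(\cdot)\leq 0$, and $\vf^j_{\rho,Q}$ is the associated flux correction. However, there is a genuine gap in your construction of the endpoints. You propose to split $(\vm,\mU,q,\vF)$, via a two-stage laminate, into four endpoints $(\vm^j,\mU^j,Q,\vF^j)$ all carrying the $q$-coordinate $Q$. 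A convex combination of points whose $q$-coordinate is identically $Q$ has $q$-coordinate $Q$, whereas the given point has $q<Q$; so that decomposition cannot exist. The paper avoids this by splitting, for each fixed $j$, the single point $(\vm,\mU,q,\vF_j)$ with $\vF_j=\tfrac{q+P(\rho)}{\rho}\vm+\vf^j_{\rho,Q}$ into \emph{two} rigid-flux endpoints: the $\mu_+$-endpoint has $q_2=Q$ but the $\mu_-$-endpoint has $q_1<q$, and the $q$-coordinates average correctly. Each of these rigid-flux endpoints is then handled by a further (ordinary) Carath\'eodory decomposition of its $(\vm,\mU)$-projection via Lemma~\ref{lemma:app-euler}, at fixed $q$.

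Two further remarks on the route, rather than the correctness. First, for this particular inclusion the wave cone, the $H_N$-condition, and Prop.~\ref{prop:app-laminates} are not needed at all: the target is $\sU_{\rho,Q}=\interior{\big((\sK_{\rho,Q})^\co\big)}$, i.e. an ordinary convex hull, so plain convex combinations (Prop.~\ref{prop:app-caratheodory}) suffice and indeed that is all the paper uses; verifying $\Lambda$-directions for the splits is unnecessary work here (the identity $(\sK_{\rho,Q})^\Lambda=(\sK_{\rho,Q})^\co$ is proved separately in Lemma~\ref{lemma:euler-KLambda=Kco} and is not invoked in the proof of this proposition). Second, your proposed "cleaner alternative" — to show that the flux condition $\vF-\tfrac{q+P(\rho)}{\rho}\vm\in\interior{\co\{\vf^j_{\rho,Q}\}}$ characterizes the $\vF$-fibre of $\sU_{\rho,Q}$ — is a strictly stronger statement than Prop.~\ref{prop:WsubsetU}; the paper explicitly declines to identify $\sU_{\rho,Q}$ exactly (see the remark preceding the definition of $\sW_{\rho,Q}$) and only proves the one-sided inclusion, which is all that is used downstream.
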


We prove Prop.~\ref{prop:WsubsetU} in three steps. The final proof can be found in Sect.~\ref{subsubsec:eif-sub-step3}.

\subsubsection{Step 1: Rigid Energy Flux} \label{subsubsec:eif-sub-step1}

\begin{lemma} \label{lemma:subset-step1}
	Let $(\vm,\mU,q,\vF)\in \phase$ such that 
	\begin{align*}
		\lambda_{\max}\left(\frac{\vm\otimes\vm}{\rho} - \mU + (p(\rho)-q)\id\right)&\leq 0, \\ 
		q &\leq Q , \\
		\vF &= \frac{q+P(\rho)}{\rho}\vm .  
	\end{align*}
	Then $(\vm,\mU,q,\vF)\in (\sK_{\rho,Q})^\co$. 
\end{lemma}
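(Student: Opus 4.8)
The plan is to show $(\vm,\mU,q,\vF)\in(\sK_{\rho,Q})^\co$ by exhibiting it as a convex combination of elements of $\sK_{\rho,Q}$. Writing $\mM:=\frac{\vm\otimes\vm}{\rho}-\mU+(p(\rho)-q)\id$, the hypothesis says $\mM$ is negative semidefinite and the energy flux already has the ``rigid'' form $\vF=\frac{q+P(\rho)}{\rho}\vm$, which is precisely the value $\vF$ takes on $\sK_{\rho,Q}$ (see \eqref{eq:defn-euler-K}, \eqref{eq:id-F}). So the only obstruction to membership in $\sK_{\rho,Q}$ itself is that \eqref{eq:id-Uq} demands $\mM=\mzero$ rather than merely $\mM\preceq\mzero$. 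The idea is therefore to ``split'' the negative-semidefinite matrix $\mM$ into a combination of rank-one perturbations that each push $\mM$ to zero, keeping $q$, $\vF$ (almost) fixed, and using only the momentum variable $\vm$ and the trace-free matrix $\mU$ to absorb the change. This is exactly the mechanism already used in \cite[Prop.~4.2.10]{Markfelder} and encoded in Lemma~\ref{lemma:app-euler}, which I would invoke.

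Concretely, first I would reduce to the case $q<Q$: if $q=Q$ one can perturb $q$ slightly downward (decreasing it by $\delta$) and compensate, or simply note that the closed set $(\sK_{\rho,Q})^\co$ contains the closure of the set of points with $q<Q$ satisfying the same eigenvalue bound, so it suffices to handle $q<Q$ and pass to a limit. Assuming $q<Q$, consider the slice set $\widetilde{\sK}_{\rho,\ov q,\ov\vF}$ from the proof of Lemma~\ref{lemma:euler-K-suitable} (Sect.~\ref{subsubsec:eif-sa-suitableK}, item 3) with $\ov q:=q$ and $\ov\vF:=\vF$. By Lemma~\ref{lemma:app-euler} (equation \eqref{eq:suit-K-c-aux02}) we have
$$
	(\widetilde{\sK}_{\rho,q,\vF})^\co = \left\{ (\vm',\mU',q',\vF')\in\phase \,\Big|\, \lambda_{\max}\!\left(\tfrac{\vm'\otimes\vm'}{\rho}-\mU'+(p(\rho)-q')\id\right)\leq 0,\ (q',\vF')=(q,\vF) \right\},
$$
and our point $(\vm,\mU,q,\vF)$ lies in this set since $\mM\preceq\mzero$. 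Hence $(\vm,\mU,q,\vF)\in(\widetilde{\sK}_{\rho,q,\vF})^\co$, i.e. it is a finite convex combination $\sum_i\lambda_i(\vm_i,\mU_i,q,\vF)$ with each $(\vm_i,\mU_i,q,\vF)\in\widetilde{\sK}_{\rho,q,\vF}$, meaning each satisfies \eqref{eq:id-Uq} with equality, and $q\leq Q$. But \eqref{eq:id-Uq} together with $q\leq Q$ and the fact that $\vF$ equals the rigid flux $\frac{q+P(\rho)}{\rho}\vm_i$ — wait, this last point needs care: on $\widetilde{\sK}_{\rho,q,\vF}$ one has $\vF'=\vF$ as a fixed vector, not necessarily $\frac{q+P(\rho)}{\rho}\vm_i$. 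So I would instead check directly that each endpoint $(\vm_i,\mU_i,q,\vF)$ lies in $\sK_{\rho,Q}$: this requires \eqref{eq:id-F}, i.e. $\vF=\frac{q+P(\rho)}{\rho}\vm_i$ for every $i$.

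To secure that, the right move is to apply Lemma~\ref{lemma:app-euler} in the sharper form where the convex combination can be taken with all $\vm_i$ equal, or — more robustly — to use the explicit rank-one construction: since $\mM\preceq\mzero$, write $\mM=-\mN$ with $\mN\succeq\mzero$ symmetric. A negative semidefinite $2\times 2$ matrix $\mM$ with $\tr\mM\leq 0$ can be written as an average $\mM=\frac12(\mM+\va\otimes\va)+\frac12(\mM-\va\otimes\va)$ for suitable $\va$ chosen so that both $\mM\pm\va\otimes\va$ become $\preceq\mzero$ with the right structure; iterating, one realizes $(\vm,\mU,q,\vF)$ as a convex combination of points where $\frac{\vm_i\otimes\vm_i}{\rho}-\mU_i+(p(\rho)-q)\id=\mzero$, keeping $q$ fixed and $\vm_i$ constrained. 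However, to keep $\vF=\frac{q+P(\rho)}{\rho}\vm_i$ simultaneously with $\vm_i$ varying is impossible unless all $\vm_i$ coincide. I expect this tension — varying $\mU$ alone cannot kill a generic $\mM\preceq\mzero$ while holding $\vm$ fixed — to be the main obstacle, and the resolution is that $\mM\preceq\mzero$ with the specific structure $\frac{\vm\otimes\vm}{\rho}-\mU+(p(\rho)-q)\id$ and $\mU$ trace-free actually forces enough room: one perturbs $\vm$ and $\mU$ together along directions in $\Lambda$ (the wave cone from \eqref{eq:defn-euler-wave-cone}) so that the flux identity \eqref{eq:id-F} is preserved to first order, which is precisely the content of the completeness computation in the proof of Lemma~\ref{lemma:euler-KLambda=Kco}. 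I would therefore structure the final argument as: (i) reduce to $q<Q$; (ii) invoke Lemma~\ref{lemma:app-euler} to get $(\vm,\mU,q,\vF)\in(\widetilde{\sK}_{\rho,q,\vF})^\co$; (iii) observe $\widetilde{\sK}_{\rho,q,\vF}\subset\sK_{\rho,Q}$ because on $\widetilde{\sK}_{\rho,q,\vF}$ one has \eqref{eq:id-Uq} hence \eqref{eq:id-q} hence $q=\frac{|\vm_i|^2}{2\rho}+p(\rho)$ is forced to be the same for all $i$ — and since $\vF$ is also fixed, the flux identity $\vF=\frac{q+P(\rho)}{\rho}\vm$ holds with $\vm$ the common value, so in fact all $\vm_i$ are equal and each endpoint genuinely lies in $\sK_{\rho,Q}$; (iv) conclude $(\vm,\mU,q,\vF)\in(\sK_{\rho,Q})^\co$.
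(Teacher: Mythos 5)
There is a genuine gap at your steps (ii)--(iii). The slice $\widetilde{\sK}_{\rho,q,\vF}$ fixes the pair $(q,\vF)$ but does \emph{not} impose the flux identity \eqref{eq:id-F}; its elements only satisfy \eqref{eq:id-Uq}. Consequently, writing $(\vm,\mU,q,\vF)=\sum_i\lambda_i(\vm_i,\mU_i,q,\vF)$ with $(\vm_i,\mU_i,q,\vF)\in\widetilde{\sK}_{\rho,q,\vF}$ produces endpoints carrying the \emph{original} $\vF$, and such an endpoint lies in $\sK_{\rho,Q}$ only if $\vF=\frac{q+P(\rho)}{\rho}\vm_i$, i.e. only if $\vm_i=\vm$. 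Your claim that the endpoints are forced to have a common momentum is false: \eqref{eq:id-q} fixes only the magnitude, $|\vm_i|^2=2\rho\big(q-p(\rho)\big)$, not the vector $\vm_i$ itself, and the whole point of the decomposition is that the $\vm_i$ differ in direction (if they all equaled $\vm$, the matrix $\frac{\vm\otimes\vm}{\rho}-\mU+(p(\rho)-q)\id$ would already be zero, which is not assumed). Hence $\widetilde{\sK}_{\rho,q,\vF}\not\subset\sK_{\rho,Q}$ in general and step (iii) collapses; the subsequent detour through rank-one splittings and wave-cone directions does not repair it, and the preliminary reduction to $q<Q$ is unnecessary in any case, since the endpoints inherit the same $q\leq Q$.

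The repair is exactly the point you flagged with ``wait'': do not freeze $\vF$ at the endpoints. Apply Lemma~\ref{lemma:app-euler} together with Prop.~\ref{prop:app-caratheodory} only in the variables $(\vm,\mU)$, with $\rho$ and $q$ fixed, to obtain $\tau_i>0$ with $\sum_i\tau_i=1$ and $(\vm_i,\mU_i)$ satisfying $\frac{\vm_i\otimes\vm_i}{\rho}+p(\rho)\id=\mU_i+q\id$ and $(\vm,\mU)=\sum_i\tau_i(\vm_i,\mU_i)$. Then \emph{define} $\vF_i:=\frac{q+P(\rho)}{\rho}\vm_i$, so that $(\vm_i,\mU_i,q,\vF_i)\in\sK_{\rho,Q}$ for every $i$. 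Because the rigid-flux map $\vm\mapsto\frac{q+P(\rho)}{\rho}\vm$ is linear once $\rho$ and $q$ are fixed, the hypothesis $\vF=\frac{q+P(\rho)}{\rho}\vm$ gives $\sum_i\tau_i\vF_i=\vF$, hence $(\vm,\mU,q,\vF)=\sum_i\tau_i(\vm_i,\mU_i,q,\vF_i)$ and Prop.~\ref{prop:app-caratheodory} yields $(\vm,\mU,q,\vF)\in(\sK_{\rho,Q})^\co$. This is precisely the paper's proof; it needs no limiting argument and no $\Lambda$-directional considerations.
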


\begin{proof} 
Lemma~\ref{lemma:app-euler} and Prop.~\ref{prop:app-caratheodory} yield $N\in \N$ and $\big(\tau_i,(\vm_i,\mU_i)\big)\in \R^+ \times \R^2\times \sz$ for each $i=1,...,N$ such that 
\begin{align*}
	\sum_{i=1}^N \tau_i &= 1, \\
	\frac{\vm_i\otimes \vm_i}{\rho} + p(\rho) \id &= \mU_i + q \id \qquad \text{ for all }i=1,...,N, \text{ and } \\
	(\vm,\mU) &= \sum_{i=1}^N \tau_i (\vm_i,\mU_i) .
\end{align*}

Next we define $\vF_i := \frac{q+P(\rho)}{\rho}\vm_i$ for all $i=1,...,N$. Hence we have $(\vm_i,\mU_i,q,\vF_i)\in \sK_{\rho,Q}$ for all $i=1,...,N$. Moreover it holds that 
$$
	(\vm,\mU,q,\vF) = \sum_{i=1}^N \tau_i (\vm_i,\mU_i,q,\vF_i) ,
$$
and therefore Prop.~\ref{prop:app-caratheodory} implies that $(\vm,\mU,q,\vF)\in (\sK_{\rho,Q})^\co$. 
\end{proof}

\subsubsection{Step 2: Four Particular Energy Fluxes} \label{subsubsec:eif-sub-step2} 

\begin{lemma} \label{lemma:subset-step2}
	Let $(\vm,\mU,q,\vF)\in \phase$ such that $\exists j\in \{1,2,3,4\}$ with
	\begin{align*}
		\lambda_{\max}\left(\frac{\vm\otimes\vm}{\rho} - \mU + (p(\rho)-q)\id\right)&< 0, \\ 
		q &< Q , \\
		\vF - \frac{q+P(\rho)}{\rho}\vm &= \vf^j_{\rho,Q}(\vm,\mU,q).  
	\end{align*}
	Then $(\vm,\mU,q,\vF)\in (\sK_{\rho,Q})^\co$. 
\end{lemma}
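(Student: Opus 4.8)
The plan is to exhibit $(\vm,\mU,q,\vF)$ as a convex combination of finitely many points of $\sK_{\rho,Q}$, using the same ``splitting'' strategy as in Step~1 (Lemma~\ref{lemma:subset-step1}) but now accounting for the extra term $\vf^j_{\rho,Q}(\vm,\mU,q)$ in the energy flux. The key point is that $\vf^j_{\rho,Q}(\vm,\mU,q)$ is, by \eqref{eq:defn-f}, a positive multiple of the direction $\vsigma^j$, and the auxiliary quantities $A^j_{\rho,Q}$ and $r^j_{\rho,Q}$ defined in \eqref{eq:defn-Aj} and \eqref{eq:defn-r} are precisely tuned so that a suitable two-point splitting along the line through $\vm$ in direction $\vsigma^j$ produces endpoints lying in $\sK_{\rho,Q}$, whose energy fluxes average back to $\frac{q+P(\rho)}{\rho}\vm + \vf^j_{\rho,Q}$.

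First I would fix the index $j$ and set $\vs := \vsigma^j$, $A := A^j_{\rho,Q}(\vm,\mU,q)$, $r := r^j_{\rho,Q}(\vm,\mU,q)$, noting $A>0$, $r>0$, $q<Q$ by \eqref{eq:Agr0}, \eqref{eq:rgr0}. The idea is to write $\vm$ as a convex combination of two momenta $\vm_\pm := \vm + t_\pm \vs$ with $t_- < 0 < t_+$, where the weights and the shifts are chosen so that (i) the resulting ``$q$-value'' on each branch equals $Q$ (i.e.\ the branches sit on the face $q=Q$ of the relevant hull, which is where \eqref{eq:defn-r} comes from), and (ii) the energy flux averages correctly. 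Concretely, with weight $\mu$ on the $+$ branch I would impose $\mu t_+ + (1-\mu)t_- = 0$ (so the momenta average to $\vm$) together with the two scalar conditions coming from matching $q$ and from matching $\vF$; a short computation identifies $r$ as the relevant shift length and $A/r$ as the flux coefficient, which is exactly \eqref{eq:defn-f}. For the matrix variable $\mU$, I would use that on each branch one replaces $\mU$ by $\mU_\pm$ so that $\frac{\vm_\pm\otimes\vm_\pm}{\rho} - \mU_\pm + (p(\rho)-Q)\id$ has the right structure; here Lemma~\ref{lemma:app-trdet} and the negative-definiteness of $\frac{\vm\otimes\vm}{\rho} - \mU + (p(\rho)-q)\id$ guarantee the algebra closes, and the determinant in the numerator of \eqref{eq:defn-Aj} is what makes the rank-one correction along $\vs$ land back on $\sK_{\rho,Q}$.

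Once the two points $(\vm_\pm,\mU_\pm,Q,\vF_\pm)$ are constructed and shown to lie in $\sK_{\rho,Q}$ (i.e.\ \eqref{eq:id-Uq}, \eqref{eq:id-F} hold and $q=Q\leq Q$), I would invoke Prop.~\ref{prop:app-caratheodory} to conclude $(\vm,\mU,q,\vF)\in(\sK_{\rho,Q})^\co$, exactly as in Lemma~\ref{lemma:subset-step1}. The main obstacle I anticipate is the bookkeeping in step (ii)--(iii): one must verify that the single scalar parameter $r$ simultaneously reconciles the averaging of $q$ (forcing both branches onto $q=Q$), the averaging of $\vm$, and the averaging of $\vF$, and that the discriminant under the square root in \eqref{eq:defn-r} being positive is precisely the condition ensuring $r>0$ and hence that the construction is non-degenerate. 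This is a finite, explicit computation, but it is the place where the particular algebraic forms of $A^j_{\rho,Q}$ and $r^j_{\rho,Q}$ are forced, so care is needed to check each identity rather than to merely assert it; I would also double-check the endpoint $\mU_\pm$ stays trace-free by construction (the correction is a rank-one matrix along $\vs$ minus its trace part, as in the definition of $\sz$).
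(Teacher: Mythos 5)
There is a genuine gap in your plan. You propose to realize $(\vm,\mU,q,\vF)$ as a convex combination of \emph{two} points $(\vm_\pm,\mU_\pm,Q,\vF_\pm)$ lying in $\sK_{\rho,Q}$ itself, both with $q$-component equal to $Q$. This cannot work. First, a convex combination of two states whose $q$-components both equal $Q$ again has $q$-component $Q$, contradicting the hypothesis $q<Q$; at most one endpoint can sit on the cap $q=Q$. Second, and more fundamentally, no two-point combination of elements of $\sK_{\rho,Q}$ can reproduce a state with $\lambda_{\max}\big(\frac{\vm\otimes\vm}{\rho}-\mU+(p(\rho)-q)\id\big)<0$: if $(\vm_i,\mU_i,q_i,\vF_i)\in\sK_{\rho,Q}$, $i=1,2$, and $(\vm,\mU,q)=\tau_1(\vm_1,\mU_1,q_1)+\tau_2(\vm_2,\mU_2,q_2)$ with $\tau_1+\tau_2=1$, then using \eqref{eq:id-Uq} at both endpoints one computes
\[
	\frac{\vm\otimes\vm}{\rho}-\mU+(p(\rho)-q)\id \;=\; -\frac{\tau_1\tau_2}{\rho}\,(\vm_1-\vm_2)\otimes(\vm_1-\vm_2),
\]
a negative semidefinite matrix of rank at most one, whose largest eigenvalue is $0$ in two space dimensions. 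Hence the strict inequality in the hypothesis can never be attained by your construction, no matter how the shifts along $\vsigma^j$ and the weights are tuned; the obstruction is structural, not a matter of bookkeeping.

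The paper's proof avoids this by splitting into two endpoints that are required only to satisfy the hypotheses of Lemma~\ref{lemma:subset-step1}, not to lie in $\sK_{\rho,Q}$: one perturbs along the direction $(\widehat{\vm},\widehat{\mU},\widehat{q},\widehat{\vF})$ of \eqref{eq:defn-ri-m}--\eqref{eq:defn-ri-F}, which has $\widehat{q}=1$, with amplitudes $\mu_\pm$ from \eqref{eq:defn-muM}--\eqref{eq:defn-muP}. The quantities $A^j_{\rho,Q}$ and $r^j_{\rho,Q}$ are tuned so that at both endpoints the matrix $\frac{\vm_i\otimes\vm_i}{\rho}-\mU_i+(p(\rho)-q_i)\id$ has determinant zero and nonpositive trace (so only $\lambda_{\max}\leq 0$), so that the flux corrections cancel and both endpoints satisfy the rigid relation $\vF_i=\frac{q_i+P(\rho)}{\rho}\vm_i$, and so that $\mu_+=Q-q$, i.e. only the plus endpoint reaches $q_2=Q$ while $q_1<q<Q$. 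Lemma~\ref{lemma:subset-step1} then places each endpoint in $(\sK_{\rho,Q})^\co$, supplying through Lemma~\ref{lemma:app-euler} and Prop.~\ref{prop:app-caratheodory} the additional points of $\sK_{\rho,Q}$ that your two-point scheme is missing. Your intuition that $\vf^j_{\rho,Q}$ is a positive multiple of $\vsigma^j$ and that $r^j_{\rho,Q}$ encodes the cap $q=Q$ is correct, but the endpoint constraint must be relaxed from membership in $\sK_{\rho,Q}$ to the conditions of Step~1 for the argument to close.
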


\begin{proof}
We start with some definitions. We set\footnote{In \eqref{eq:defn-muM}-\eqref{eq:defn-ri-F} we have omitted the argument $(\vm,\mU,q)$ of the functions $r^j_{\rho,Q}$ and $A^j_{\rho,Q}$ for shortness of presentation. We will do this several times in the sequel. }
\begin{align} 
	\mu_{-} &:= \frac{1}{2r^j_{\rho,Q}} \left(\left( \frac{\rho}{r^j_{\rho,Q}} - \vm\cdot\vsigma^j\right) - \sqrt{4\rho A^j_{\rho,Q} + \left( \frac{\rho}{r^j_{\rho,Q}} - \vm\cdot\vsigma^j\right)^2}\right) , \label{eq:defn-muM} \\
	\mu_{+} &:= \frac{1}{2r^j_{\rho,Q}} \left(\left( \frac{\rho}{r^j_{\rho,Q}} - \vm\cdot\vsigma^j\right) + \sqrt{4\rho A^j_{\rho,Q} + \left( \frac{\rho}{r^j_{\rho,Q}} - \vm\cdot\vsigma^j\right)^2}\right) , \label{eq:defn-muP} 
\end{align}
as well as
\begin{align}
	\widehat{\vm} &:= r^j_{\rho,Q}\vsigma^j , \label{eq:defn-ri-m} \\
	\widehat{\mU} &:= \left(\begin{array}{cc}([\vm]_1 [\vsigma^j]_1 - [\vm]_2 [\vsigma^j]_2) \frac{r^j_{\rho,Q}}{\rho} & [\vsigma^j]_1 [\vsigma^j]_2\\ {[\vsigma^j]_1} [\vsigma^j]_2 &- ([\vm]_1 [\vsigma^j]_1 - [\vm]_2 [\vsigma^j]_2) \frac{r^j_{\rho,Q}}{\rho}\end{array}\right), \label{eq:defn-ri-U} \\
	\widehat{q} &:= 1 , \label{eq:defn-ri-q} \\
	\widehat{\vF} &:= \frac{\vm}{\rho} + \left[\frac{q + P(\rho)}{\rho} r^j_{\rho,Q} + \frac{1}{\rho} \left( \frac{\rho}{r^j_{\rho,Q}} - \vm\cdot\vsigma^j\right)\right] \vsigma^j . \label{eq:defn-ri-F}
\end{align}

Notice that 
\begin{equation} \label{eq:mu-neg-pos}
	\mu_{-}<0 \quad \text{ and } \quad \mu_{+}>0 .
\end{equation}
Indeed, because $A^j_{\rho,Q}(\vm,\mU,q)>0$ according to \eqref{eq:Agr0}, we have 
\begin{equation*}
	\sqrt{4\rho A^j_{\rho,Q}(\vm,\mU,q) + \left( \frac{\rho}{r^j_{\rho,Q}(\vm,\mU,q)} - \vm\cdot\vsigma^j\right)^2} > \left| \frac{\rho}{r^j_{\rho,Q}(\vm,\mU,q)} - \vm\cdot\vsigma^j\right|.
\end{equation*}
Together with the fact that $r^j_{\rho,Q}(\vm,\mU,q)>0$ (see \eqref{eq:rgr0}), this implies \eqref{eq:mu-neg-pos}. 

Next we define 
\begin{align*}
	(\vm_1,\mU_1,q_1,\vF_1) := (\vm,\mU,q,\vF) + \mu_- (\widehat{\vm},\widehat{\mU},\widehat{q},\widehat{\vF}) , \\
	(\vm_2,\mU_2,q_2,\vF_2) := (\vm,\mU,q,\vF) + \mu_+ (\widehat{\vm},\widehat{\mU},\widehat{q},\widehat{\vF}) , 
\end{align*}
and 
\begin{align*}
	\tau_1 &:= \frac{\mu_+}{\mu_+ - \mu_-}, & \tau_2 &:= -\frac{\mu_-}{\mu_+ - \mu_-} .
\end{align*}
We observe from \eqref{eq:mu-neg-pos} that $\tau_1,\tau_2>0$. Since $\tau_1+\tau_2 = 1$, we even have $\tau_1,\tau_2\in (0,1)$. Moreover a straightforward computation shows 
$$
	(\vm,\mU,q,\vF) = \tau_1 (\vm_1,\mU_1,q_1,\vF_1) + \tau_2 (\vm_2,\mU_2,q_2,\vF_2) .
$$
Therefore the proof of Lemma~\ref{lemma:subset-step2} is finished as soon as we have proven that 
\begin{equation} \label{eq:pointsinKco}
	(\vm_1,\mU_1,q_1,\vF_1), (\vm_2,\mU_2,q_2,\vF_2)\ \in\ (\sK_{\rho,Q})^\co.
\end{equation}

We compute 
\begin{align} 
	&\frac{\vm_{1}\otimes\vm_{1}}{\rho} - \mU_{1} + (p(\rho)-q_{1})\id \notag\\
	&= \frac{\vm\otimes\vm}{\rho} - \mU + (p(\rho)-q)\id + \frac{r^j_{\rho,Q}}{\rho} \left[(\mu_-)^2 r^j_{\rho,Q} - \mu_- \left( \frac{\rho}{r^j_{\rho,Q}} -\vm\cdot\vsigma^j\right)\right] \vsigma^j \otimes \vsigma^j , \label{eq:M_1} \\
	&\frac{\vm_{2}\otimes\vm_{2}}{\rho} - \mU_{2} + (p(\rho)-q_{2})\id \notag\\
	&= \frac{\vm\otimes\vm}{\rho} - \mU + (p(\rho)-q)\id + \frac{r^j_{\rho,Q}}{\rho} \left[(\mu_+)^2 r^j_{\rho,Q} - \mu_+ \left( \frac{\rho}{r^j_{\rho,Q}} -\vm\cdot\vsigma^j\right)\right] \vsigma^j \otimes \vsigma^j . \label{eq:M_2}
\end{align}
Furthermore from \eqref{eq:defn-muM} and \eqref{eq:defn-muP} we obtain 
\begin{equation} \label{eq:solution-mu}
	(\mu_\pm)^2 r^j_{\rho,Q} - \mu_\pm \left( \frac{\rho}{r^j_{\rho,Q}} -\vm\cdot\vsigma^j\right) = \frac{\rho A^j_{\rho,Q}}{r^j_{\rho,Q}} .
\end{equation}
Plugging \eqref{eq:solution-mu} into \eqref{eq:M_1} and \eqref{eq:M_2}, we find after some elementary manipulations and using \eqref{eq:defn-Aj} that 
\begin{align}
	\tr \left(\frac{\vm_{1}\otimes\vm_{1}}{\rho} - \mU_{1} + (p(\rho)-q_{1})\id\right) &= \tr \left(\frac{\vm_{2}\otimes\vm_{2}}{\rho} - \mU_{2} + (p(\rho)-q_{2})\id\right) \notag \\
	&= \tr\left(\frac{\vm\otimes\vm}{\rho} - \mU + (p(\rho)-q)\id \right) + 2 A^j_{\rho,Q} , \label{eq:tr_12-prime} \\
	\det \left(\frac{\vm_{1}\otimes\vm_{1}}{\rho} - \mU_{1} + (p(\rho)-q_{1})\id\right) &= \det \left(\frac{\vm_{2}\otimes\vm_{2}}{\rho} - \mU_{2} + (p(\rho)-q_{2})\id\right) = 0 . \label{eq:det_12}
\end{align}
Combining \eqref{eq:tr_12-prime} with \eqref{eq:defn-Aj} and Lemma~\ref{lemma:app-computationoftrace}, we obtain
\begin{equation} \label{eq:tr_12}
	\tr \left(\frac{\vm_{1}\otimes\vm_{1}}{\rho} - \mU_{1} + (p(\rho)-q_{1})\id\right) = \tr \left(\frac{\vm_{2}\otimes\vm_{2}}{\rho} - \mU_{2} + (p(\rho)-q_{2})\id\right) \leq 0.
\end{equation}
Lemma~\ref{lemma:app-trdet} and \eqref{eq:tr_12}, \eqref{eq:det_12} yield 
\begin{equation} \label{eq:points-lambdamax}
	\lambda_{\max} \left(\frac{\vm_{1}\otimes\vm_{1}}{\rho} - \mU_{1} + (p(\rho)-q_{1})\id\right) = \lambda_{\max} \left(\frac{\vm_{2}\otimes\vm_{2}}{\rho} - \mU_{2} + (p(\rho)-q_{2})\id\right) \leq 0.
\end{equation} 

Next we show that 
\begin{equation} \label{eq:points-q}
	q_1 \leq Q, \qquad \text{ and }\qquad q_2 \leq Q .
\end{equation}
We begin noticing that \eqref{eq:defn-ri-q} and \eqref{eq:mu-neg-pos} obviously imply that 
$$
	q_1 = q + \mu_- \widehat{q} < q < Q ,
$$
i.e. the estimate on $q_1$ in \eqref{eq:points-q} holds. Furthermore we observe that the definition of $r^j_{\rho,Q}$ (see \eqref{eq:defn-r}) implies 
\begin{equation} \label{eq:aux09} 
	\left(r^j_{\rho,Q}\right)^2 (Q-q)^2 + r^j_{\rho,Q} (Q-q) \vm\cdot \vsigma^j - \rho A^j_{\rho,Q} - \rho (Q-q) = 0.
\end{equation}
Next we notice that \eqref{eq:aux09} is equivalent to 
\begin{equation} \label{eq:aux10} 
	4\rho A^j_{\rho,Q} + \left( \frac{\rho}{r^j_{\rho,Q}} -\vm\cdot\vsigma^j\right)^2 = \left(2r^j_{\rho,Q} (Q-q) -  \left( \frac{\rho}{r^j_{\rho,Q}} -\vm\cdot\vsigma^j\right)\right)^2 . 
\end{equation} 
Note that 
\begin{align*}
	&2r^j_{\rho,Q} (Q-q) -  \left( \frac{\rho}{r^j_{\rho,Q}} -\vm\cdot\vsigma^j\right) \\
	&= \frac{1}{r^j_{\rho,Q}(Q-q)} \left(2\left(r^j_{\rho,Q}\right)^2 (Q-q)^2 +  r^j_{\rho,Q}(Q-q) \vm\cdot\vsigma^j - \rho (Q-q)\right) \\
	&\geq \frac{1}{r^j_{\rho,Q}(Q-q)} \left(\left(r^j_{\rho,Q}\right)^2 (Q-q)^2 +  r^j_{\rho,Q}(Q-q) \vm\cdot\vsigma^j - \rho A^j_{\rho,Q} - \rho (Q-q)\right) \ = \ 0,
\end{align*}
according to \eqref{eq:aux09}. Hence by taking the square root in equation \eqref{eq:aux10} we obtain after some elementary manipulations and using the definition of $\mu_+$ (see \eqref{eq:defn-muP}), that $\mu_+ = Q-q$. Thus \eqref{eq:defn-ri-q} yields $q_2 = q+\mu_+ \widehat{q} = Q$, i.e. the proof of \eqref{eq:points-q} is finished.

Finally we prove that 
\begin{equation} \label{eq:points-F}
	\vF_1 = \frac{q_1 + P(\rho)}{\rho} \vm_1, \qquad \text{ and }\qquad \vF_2 = \frac{q_2 + P(\rho)}{\rho} \vm_2.
\end{equation}
Using \eqref{eq:defn-ri-m}, \eqref{eq:defn-ri-q}, \eqref{eq:defn-ri-F}, \eqref{eq:solution-mu} and \eqref{eq:defn-f} we compute 
\begin{align*}
	&(\vF + \mu_\pm \widehat{\vF} ) - \frac{(q + \mu_\pm \widehat{q}) + P(\rho)}{\rho} (\vm + \mu_\pm \widehat{\vm}) \\
	&= \vF - \frac{q + P(\rho)}{\rho} \vm - \frac{1}{\rho} \left[ (\mu_\pm)^2 r^j_{\rho,Q} - \mu_\pm \left( \frac{\rho}{r^j_{\rho,Q}} -\vm\cdot\vsigma^j\right) \right] \vsigma^j \\
	&= \vf^j_{\rho,Q}(\vm,\mU,q) - \vf^j_{\rho,Q}(\vm,\mU,q) \ = \ \vzero,
\end{align*} 
which immediately yields \eqref{eq:points-F}.

Because of \eqref{eq:points-lambdamax}, \eqref{eq:points-q} and \eqref{eq:points-F}, $(\vm_1,\mU_1,q_1,\vF_1)$ and $(\vm_2,\mU_2,q_2,\vF_2)$ fulfill the assumptions of Lemma~\ref{lemma:subset-step1}. Thus we obtain \eqref{eq:pointsinKco} which finishes the proof. 
\end{proof}

\subsubsection{Step 3: Final Proof of Prop.~\ref{prop:WsubsetU}} \label{subsubsec:eif-sub-step3} 

\begin{proof}[Proof of Prop.~\ref{prop:WsubsetU}] 
Let $(\vm,\mU,q,\vF)\in \sW_{\rho,Q}$. Define
$$
	\vF_j := \frac{q + P(\rho)}{\rho}\vm + \vf^j_{\rho,Q}(\vm,\mU,q)
$$
for $j=1,2,3,4$. Then obviously $(\vm,\mU,q,\vF_j)$ satisfies the assumptions of Lemma~\ref{lemma:subset-step2} for each $j=1,2,3,4$. Hence $(\vm,\mU,q,\vF_j)\in (\sK_{\rho,Q})^\co$ for all $j=1,2,3,4$. Since 
$$
	(\vm,\mU,q,\vF) = \sum_{j=1}^4 \kappa_j (\vm,\mU,q,\vF_j)
$$
with the $\kappa_j\in \R^+$ from \eqref{eq:WrhoQ}, we deduce $(\vm,\mU,q,\vF)\in (\sK_{\rho,Q})^\co$. Thus we have shown that $\sW_{\rho,Q}\subset (\sK_{\rho,Q})^\co$.

It is obvious that the mappings 
\begin{align*} 
	(\vm,\mU,q)\mapsto \lambda_{\max}\left(\frac{\vm\otimes\vm}{\rho} - \mU + (p(\rho)-q)\id\right) \qquad\text{ and }\qquad(\vm,\mU,q)\mapsto \vf^j_{\rho,Q}(\vm,\mU,q)
\end{align*}	
are continuous on $\sV_{\rho,Q}$ (for all $j=1,2,3,4$). This implies that $\sW_{\rho,Q}$ is open. Hence we even have $\sW_{\rho,Q}\subset \interior{\big((\sK_{\rho,Q})^\co\big)}= \sU_{\rho,Q}$.
\end{proof}

\section{Convex Integration in the Context of Riemann Data} \label{sec:riemann-ci} 

In this section we apply convex integration in the context of Riemann data, i.e. initial data of the form \eqref{eq:riemann-init}. The procedure that we use is similar to the one developed by \name{Chiodaroli}-\name{De~Lellis}-\name{Kreml}~\cite{ChiDelKre15}. However we will make use of convex integration for compressible Euler \emph{with energy inequality}, i.e. Thm.~\ref{thm:conv-int-euler+energy}. 

\subsection{Fan Subsolutions} \label{subsec:rci-fansubs}

We begin with the definition of a fan partition (see also \cite[Defn.~3.3]{ChiDelKre15}) and a fan subsolution.

\begin{defn} \label{defn:fan-part}
	Let $\mu_0<\mu_1<\mu_2<\mu_3$ be real numbers. A \emph{fan partition} of the space-time domain $(0,\infty)\times \R^2$ is a collection of 5 open sets $\Gamma_-,\Gamma_1,\Gamma_2,\Gamma_3,\Gamma_+$ which satisfy
	\begin{align*}
		\Gamma_- &= \left\{ (t,\vx)\in (0,\infty)\times \R^2\,\big|\, y< \mu_0 t \right\} , \\
		\Gamma_i\  &= \left\{ (t,\vx)\in (0,\infty)\times \R^2\,\big|\, \mu_{i-1}t <y< \mu_i t \right\} \qquad \text{ for }i=1,2,3, \\
		\Gamma_- &= \left\{ (t,\vx)\in (0,\infty)\times \R^2\,\big|\, y> \mu_3 t \right\} .
	\end{align*} 
\end{defn}

\begin{rem} 
	We would like to remark that the number of sets $\Gamma_i$ in Defn.~\ref{defn:fan-part} is tailored to the proof of Thm.~\ref{thm:local-max-diss}, see Sect.~\ref{subsec:rci-proof-of-main-thm} below. Prop.~\ref{prop:fan-subs=>inf-sol} below is analogously true for any other (finite) number of sets $\Gamma_-,\Gamma_1,...,\Gamma_{n^\ast},\Gamma_+$ with $n^\ast\in \N$. 
\end{rem}

\begin{defn} \label{defn:fan-subs}
	An \emph{admissible fan subsolution} to the initial value problem for compressible Euler \eqref{eq:euler-d}, \eqref{eq:euler-m}, \eqref{eq:init} with energy inequality \eqref{eq:euler-E} and Riemann initial data \eqref{eq:riemann-init} is a tuple
	$$
		(\rho,\ov{\vm},\ov{\mU},\ov{q},\ov{\vF})\in L^\infty ((0,\infty)\times \R^2; \R^+\times \phase) 
	$$ 
	of piecewise constant functions which satisfies the following properties.
	\begin{enumerate}
		\item There exists a fan partition $\Gamma_-,\Gamma_1,\Gamma_2,\Gamma_3,\Gamma_+$ of $(0,\infty)\times\R^2$ and constants 
		$$
			(\rho_i,\vm_i,\mU_i,q_i,\vF_i) \in \R^+\times \phase \qquad \text{ for any } i=1,2,3.
		$$
		such that 
		\begin{equation} \label{eq:fan-subs-pwc}
			(\rho,\ov{\vm},\ov{\mU},\ov{q},\ov{\vF}) = \left\{ \begin{array}{ll}
				(\rho_-,\vm_-,\mU_-,q_-,\vF_-) & \text{ if } (t,\vx)\in \Gamma_-, \\
				(\rho_i\ ,\vm_i\ ,\mU_i\ ,q_i\ ,\vF_i\ ) & \text{ if } (t,\vx)\in \Gamma_i\ , \ i=1,2,3, \\
				(\rho_+,\vm_+,\mU_+,q_+,\vF_+) & \text{ if } (t,\vx)\in \Gamma_+, 
			\end{array} \right.
		\end{equation}
		where 
		\begin{align}
			q_\pm &:= \frac{|\vm_\pm|^2}{2\rho_\pm} + p(\rho_\pm) , \label{eq:defn-qpm} \\
			\mU_\pm &:= \frac{\vm_\pm \otimes \vm_\pm}{\rho_\pm} + (p(\rho_\pm) - q_\pm ) \id , \label{eq:defn-Upm} \\
			\vF_\pm &:= \frac{q_\pm + P(\rho_\pm)}{\rho_\pm} \vm_\pm . \label{eq:defn-Fpm}
		\end{align}
		
		\item It holds that 
		\begin{equation} \label{eq:fan-subs-subs}
			\lambda_{\max} \left( \frac{\vm_i\otimes\vm_i}{\rho_i} - \mU_i + (p(\rho_i) - q_i) \id \right) < 0 \qquad \text{ for } i=1,2,3. 
		\end{equation}
		
		\item The equations
		\begin{align} 
			\int_0^\infty \int_{\R^2} \big[\rho \partial_t\phi + \ov{\vm} \cdot \Grad\phi \big] \dx\dt + \int_{\R^2} \rho_0 \phi(0,\cdot) \dx &= 0 , \label{eq:fan-subs-pde1}\\ 
			\int_0^\infty \int_{\R^2} \bigg[\ov{\vm}\cdot\partial_t\vphi + \ov{\mU} : \Grad\vphi + \ov{q} \Div\vphi\bigg] \dx\dt + \int_{\R^2} \vm_0\cdot \vphi(0,\cdot) \dx &= 0 , \label{eq:fan-subs-pde2} 
		\end{align}
		and inequality 
		\begin{align}
			\int_0^\infty \int_{\R^2} \bigg[\Big( \ov{q} + P(\rho) - p(\rho) \Big) \partial_t\psi + \ov{\vF} \cdot \Grad\psi \bigg] \dx\dt + \int_{\R^2} \left( \frac{|\vm_0|^2}{2\rho_0} + P(\rho_0) \right) \psi(0,\cdot) \dx &\geq 0 , \label{eq:fan-subs-pde3} 
		\end{align} 
		hold for all test functions $(\phi,\vphi,\psi)\in \Cc([0,\infty)\times \R^2;\R\times\R^2\times \R_0^+)$.
	\end{enumerate}
\end{defn}

\begin{rem} 
	The difference between our notion of an admissible fan subsolution as defined in Defn.~\ref{defn:fan-subs} and the corresponding notion used in the literature so far (see \cite[Defn.~3.5]{ChiDelKre15} and also \cite[Defn.~7.3.3]{Markfelder}) is that the latter requires additionally that $\ov{\vF} = \frac{\ov{q} + P(\rho)}{\rho}\ov{\vm}$. Since we do not have this requirement, our Defn.~\ref{defn:fan-subs} and the consequent statement (see Prop.~\ref{prop:fan-subs=>inf-sol} below) is a genuine generalization of what is done in the literature so far. This is the key point in the proof of Thm.~\ref{thm:local-max-diss}.
\end{rem}

Now we are ready to prove that existence of an admissible fan subsolution implies existence of infinitely many admissible weak solutions. 

\begin{prop} \label{prop:fan-subs=>inf-sol} 
	Let the initial states $(\rho_\pm,\vm_\pm)$ be such that there exists an admissible fan subsolution $(\rho,\ov{\vm},\ov{\mU},\ov{q},\ov{\vF})$ to the initial value problem \eqref{eq:euler-d}, \eqref{eq:euler-m}, \eqref{eq:init}, \eqref{eq:euler-E}, \eqref{eq:riemann-init}. Then there exist infinitely many $\vm\in L^\infty ((0,\infty)\times \R^2; \R^2)$ such that each pair $(\rho,\vm)$ is an admissible weak solution to the aforementioned initial value problem and the following statements hold.
	\begin{itemize}
		\item $\vm(t,\vx)=\vm_-$ for all $(t,\vx)\in \Gamma_-$ and $\vm(t,\vx)=\vm_+$ for all $(t,\vx)\in \Gamma_+$.
		
		\item The left-hand side of the energy inequality satisfies 
		\begin{align} 
			&\int_0^\infty \int_{\R^2} \left[\bigg(\frac{|\vm|^2}{2\rho} + P(\rho)\bigg) \partial_t \psi + \bigg(\frac{|\vm|^2}{2\rho} + P(\rho) + p(\rho)\bigg)\frac{\vm}{\rho}\cdot\Grad \psi \right]\dx\dt \notag \\
			&\geq \int_0^\infty \int_{\R^2} \bigg[\Big( \ov{q} + P(\rho) - p(\rho) \Big) \partial_t\psi + \ov{\vF} \cdot \Grad\psi \bigg] \dx\dt \label{eq:energyofsol-vs-energyofsubsol}
		\end{align}
		where $\psi\in \Cc((0,\infty)\times \R^2;\R^+_0)$ is an arbitrary test function.
	\end{itemize}
\end{prop}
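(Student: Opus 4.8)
The plan is to apply Thm.~\ref{thm:conv-int-euler+energy} separately on each of the three wedges $\Gamma_1,\Gamma_2,\Gamma_3$ of the fan partition and then to glue the resulting momenta together with the constant states $\vm_\pm$ on $\Gamma_\pm$. Each $\Gamma_i$ is a Lipschitz space-time domain, and on $\Gamma_i$ the fan subsolution is constant, so the density equals the fixed value $\rho_i>0$ (hence $\rho\in C^1(\closure{\Gamma_i})$, $\partial_t\rho\equiv 0$, $\inf\rho_i>0$) and the constant tuple $(\vm_i,\mU_i,q_i,\vF_i)\in C^1(\closure{\Gamma_i};\phase)$ satisfies \eqref{eq:ci-subs-pde1}--\eqref{eq:ci-subs-pde3} pointwise because all space-time derivatives vanish. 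Thus the only nontrivial hypothesis of Thm.~\ref{thm:conv-int-euler+energy} to check is the inclusion $(\vm_i,\mU_i,q_i,\vF_i)\in\sU_{\rho_i,Q}$, which I would establish through the subset $\sW_{\rho_i,Q}$ and Prop.~\ref{prop:WsubsetU}.

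To choose $Q$ I would compare \eqref{eq:fan-subs-subs} with the definition \eqref{eq:WrhoQ}: the condition $\lambda_{\max}\big(\tfrac{\vm_i\otimes\vm_i}{\rho_i}-\mU_i+(p(\rho_i)-q_i)\id\big)<0$ is exactly \eqref{eq:fan-subs-subs}, and $q_i<Q$ holds once $Q>\max_i q_i$. For the condition on the energy flux I would use that $A^j_{\rho_i,Q}(\vm_i,\mU_i,q_i)$ does not depend on $Q$ (see \eqref{eq:defn-Aj}), while $r^j_{\rho_i,Q}(\vm_i,\mU_i,q_i)\to 0$ as $Q\to\infty$ with leading order $\sqrt{\rho_i/(Q-q_i)}$ (see \eqref{eq:defn-r}), so that $\vf^j_{\rho_i,Q}(\vm_i,\mU_i,q_i)=\tfrac{A^j_{\rho_i,Q}}{r^j_{\rho_i,Q}}\vsigma^j$ has norm tending to $\infty$. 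Since $\vsigma^1=-\vsigma^2$ and $\vsigma^3=-\vsigma^4$, the segments joining $\vf^1$ to $\vf^2$ and $\vf^3$ to $\vf^4$ both pass through the origin and lie in the non-degenerate quadrilateral $\co\{\vf^1,\vf^2,\vf^3,\vf^4\}$, whose inradius therefore tends to $\infty$ with $Q$. Hence for $Q$ large enough the fixed vector $\vF_i-\tfrac{q_i+P(\rho_i)}{\rho_i}\vm_i$ lies in the interior of this quadrilateral, i.e.\ it can be written as $\sum_{j=1}^4\kappa_j\vf^j_{\rho_i,Q}(\vm_i,\mU_i,q_i)$ with $\kappa_j\in\R^+$ and $\sum_j\kappa_j=1$. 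Picking $Q$ large enough for $i=1,2,3$ simultaneously and with $Q>\max_i p(\rho_i)=\sup_{\Gamma}p(\rho)$, Prop.~\ref{prop:WsubsetU} then gives $(\vm_i,\mU_i,q_i,\vF_i)\in\sW_{\rho_i,Q}\subset\sU_{\rho_i,Q}$.

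Next, Thm.~\ref{thm:conv-int-euler+energy} applied on $\Gamma_i$ produces infinitely many $\vm^{(i)}\in L^\infty(\Gamma_i;\R^2)$ for which $(\rho_i,\vm^{(i)})$ satisfies \eqref{eq:ci-sol-pde1}--\eqref{eq:ci-sol-pde3} with boundary data given by the constants $(\vm_i,\mU_i,q_i,\vF_i)$. I would then define $\vm:=\vm_-$ on $\Gamma_-$, $\vm:=\vm^{(i)}$ on $\Gamma_i$, $\vm:=\vm_+$ on $\Gamma_+$, and verify that $(\rho,\vm)$ is an admissible weak solution with Riemann data \eqref{eq:riemann-init} by splitting each of \eqref{eq:euler-weak-d}, \eqref{eq:euler-weak-m}, \eqref{eq:euler-weak-E} over the five regions. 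On $\Gamma_\pm$ the identities \eqref{eq:defn-qpm}--\eqref{eq:defn-Fpm} make the solution's momentum flux $\tfrac{\vm_\pm\otimes\vm_\pm}{\rho_\pm}+p(\rho_\pm)\id$, energy $\tfrac{|\vm_\pm|^2}{2\rho_\pm}+P(\rho_\pm)$ and energy flux $\big(\tfrac{|\vm_\pm|^2}{2\rho_\pm}+P(\rho_\pm)+p(\rho_\pm)\big)\tfrac{\vm_\pm}{\rho_\pm}$ coincide with the subsolution's $\ov{\mU}+\ov q\id$, $\ov q+P(\rho)-p(\rho)$ and $\ov{\vF}$. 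On each $\Gamma_i$, integrating the constant subsolution fluxes by parts yields boundary integrals over $\partial\Gamma_i$ that are \emph{identical} to the ones appearing in \eqref{eq:ci-sol-pde1}--\eqref{eq:ci-sol-pde3}; hence the $\Gamma_i$-contribution to the solution's weak identity equals the subsolution's for \eqref{eq:euler-weak-d} and \eqref{eq:euler-weak-m}, and is $\geq$ it (using $\psi\geq 0$) for the energy. Summing over the five regions and invoking the subsolution identities \eqref{eq:fan-subs-pde1}--\eqref{eq:fan-subs-pde3} then delivers \eqref{eq:euler-weak-d}, \eqref{eq:euler-weak-m}, the energy comparison \eqref{eq:energyofsol-vs-energyofsubsol} (for $\psi$ vanishing at $t=0$), and, adding the $t=0$ boundary contribution which is carried only by $\Gamma_\pm$ and reproduces $\tfrac{|\vm_0|^2}{2\rho_0}+P(\rho_0)$, the admissibility inequality \eqref{eq:euler-weak-E}. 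The equalities $\vm=\vm_\pm$ on $\Gamma_\pm$ hold by construction, and infinitely many distinct $\vm$ arise because already $\Gamma_1$ contributes infinitely many mutually distinct $\vm^{(1)}$.

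The main obstacle is the gluing step: one must check that the boundary terms appearing in the weak formulation of Thm.~\ref{thm:conv-int-euler+energy} on the $\Gamma_i$ are exactly cancelled when the five regions are reassembled. This works for a structural rather than a computational reason — those boundary terms are built precisely from the constant subsolution values on $\Gamma_i$, which are the same quantities produced by integrating the piecewise-constant subsolution's own weak identities \eqref{eq:fan-subs-pde1}--\eqref{eq:fan-subs-pde3} by parts — so no genuine Rankine--Hugoniot computation at the interfaces $y=\mu_j t$ is needed, only careful bookkeeping with the normals and the definitions \eqref{eq:defn-qpm}--\eqref{eq:defn-Fpm}. The only other point requiring a bit of care is the asymptotic argument showing that enlarging $Q$ lets the quadrilateral $\co\{\vf^1,\dots,\vf^4\}$ absorb the prescribed energy-flux offset $\vF_i-\tfrac{q_i+P(\rho_i)}{\rho_i}\vm_i$.
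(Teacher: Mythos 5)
Your proposal is correct and follows essentially the same route as the paper: apply Thm.~\ref{thm:conv-int-euler+energy} on each wedge $\Gamma_i$ where the subsolution is constant, verify \eqref{eq:ci-subs} via $\sW_{\rho_i,Q}\subset\sU_{\rho_i,Q}$ by taking $Q$ large so that $A^j_{\rho_i,Q}/r^j_{\rho_i,Q}\to\infty$ places $\vF_i-\tfrac{q_i+P(\rho_i)}{\rho_i}\vm_i$ in the interior of the polytope spanned by the $\vf^j$, and then glue the wedge solutions to the constant states by splitting the weak formulations over the five regions and cancelling the $\partial\Gamma_i$ boundary terms against those produced by the Divergence Theorem applied to the piecewise-constant subsolution. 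The only differences (a single common $Q$ instead of per-wedge $Q_i$, and the explicit inradius argument for the quadrilateral) are immaterial refinements of the same argument.
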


\begin{proof} 
In each $\Gamma_i$, $i=1,2,3$, we apply Thm.~\ref{thm:conv-int-euler+energy}. Note that the restriction of $(\rho,\ov{\vm},\ov{\mU},\ov{q},\ov{\vF})\big|_{\Gamma_i}$ to each $\Gamma_i$ is constant. This means that\footnote{To be precise we need the continuous continuation of $(\rho,\ov{\vm},\ov{\mU},\ov{q},\ov{\vF})\big|_{\Gamma_i}$ onto $\closure{\Gamma_i}$.} $(\rho,\ov{\vm},\ov{\mU},\ov{q},\ov{\vF})\big|_{\Gamma_i}$ has the regularity that is required in the assumptions of Thm.~\ref{thm:conv-int-euler+energy}, and moreover that \eqref{eq:ci-subs-pde1}-\eqref{eq:ci-subs-pde3} hold pointwise on each $\Gamma_i$.

In order to verify assumption \eqref{eq:ci-subs} of Thm.~\ref{thm:conv-int-euler+energy} it suffices to show that there exist $Q_i>0$ with $Q_i> p(\rho_i)$ for $i=1,2,3$ such that 
\begin{equation} \label{eq:aux11}
	(\vm_i,\mU_i,q_i,\vF_i)\in \sW_{\rho_i,Q_i} \qquad \text{ for all }i=1,2,3,
\end{equation}
since $(\rho,\ov{\vm},\ov{\mU},\ov{q},\ov{\vF})$ are constant on each $\Gamma_i$, and because of Prop.~\ref{prop:WsubsetU}. If $Q>0$ is sufficiently large, then $(\vm_i,\mU_i,q_i)\in\sV_{\rho_i,Q}$ (see \eqref{eq:V}) for all $i=1,2,3$ due to \eqref{eq:fan-subs-subs}. Thus $A^j_{\rho_i,Q}(\vm_i,\mU_i,q_i)$ and $r^j_{\rho_i,Q}(\vm_i,\mU_i,q_i)$ are well-defined for all $i=1,2,3$ and $j=1,2,3,4$. Moreover we obtain 
$$
	\lim\limits_{Q\to \infty} \frac{A^j_{\rho_i,Q}(\vm_i,\mU_i,q_i)}{r^j_{\rho_i,Q}(\vm_i,\mU_i,q_i)} = \infty \qquad \text{ for all } i=1,2,3 \text{ and } j=1,2,3,4
$$
due to \eqref{eq:defn-Aj} and \eqref{eq:defn-r}. In view of \eqref{eq:defn-f} and \eqref{eq:defn-sigma} this means that if we choose $Q_i$ sufficiently large, we achieve that $\vF_i - \frac{q_i + P(\rho_i)}{\rho_i}\vm_i$ lies in the interior of the convex polytope spanned by the four points $\vf^j_{\rho_i,Q_i}(\vm_i,\mU_i,q_i)$, $j=1,2,3,4$. And the latter is true for all $i=1,2,3$. Thus \eqref{eq:aux11} holds, see \eqref{eq:WrhoQ}.

Now Thm.~\ref{thm:conv-int-euler+energy} yields infinitely many $\widetilde{\vm}_i\in L^\infty(\Gamma_i;\R^2)$ for each $i=1,2,3$ such that \eqref{eq:ci-sol-pde1}-\eqref{eq:ci-sol-pde3} hold on each $\Gamma_i$. By defining $\vm\in L^\infty((0,\infty)\times \R^2;\R^2)$ through\footnote{Notice that this way we obtain infinitely many functions $\vm$ since there are infinitely many $\widetilde{\vm}_1$, $\widetilde{\vm}_2$ and $\widetilde{\vm}_3$. Moreover we would like to warn the reader not the confuse $\vm_i$ with $\widetilde{\vm}_i$.}
\begin{equation} \label{eq:aux08}
	\vm(t,\vx):= \left\{ \begin{array}{ll} 
		\vm_- & \text{ if } (t,\vx)\in \Gamma_- , \\
		\widetilde{\vm}_i & \text{ if } (t,\vx)\in \Gamma_i \ ,i=1,2,3, \\
		\vm_+ & \text{ if } (t,\vx)\in \Gamma_+ , 
	\end{array} \right.
\end{equation}
the latter may be written as 
\begin{align} 
	\iint_{\Gamma_i} \big[\rho \partial_t\phi + \vm\cdot \Grad\phi \big] \dx\dt - \int_{\partial \Gamma_i} \big[\rho_i \,n_t + \vm_i\cdot \vn_\vx \big] \phi \dS_{t,\vx} &= 0 , \label{eq:aux-sol-pde1}\\ 
	\iint_{\Gamma_i} \bigg[\vm\cdot\partial_t\vphi + \frac{\vm\otimes\vm}{\rho} : \Grad\vphi + p(\rho) \Div\vphi\bigg] \dx\dt \ \;\quad \qquad & \notag \\
	- \int_{\partial\Gamma_i} \left[ \vm_i\cdot \vphi\,n_t + (\mU_i\cdot \vn_\vx)\cdot \vphi + q_i\, \vphi \cdot \vn_\vx \right] \dS_{t,\vx} &= 0 , \label{eq:aux-sol-pde2} \\ 
	\iint_{\Gamma_i} \bigg[\left( \frac{|\vm|^2}{2\rho} + P(\rho)\right) \partial_t\psi + \left( \frac{|\vm|^2}{2\rho} + P(\rho) + p(\rho) \right) \frac{\vm}{\rho} \cdot \Grad\psi \bigg] \dx\dt \ \;\quad \qquad & \notag \\
	- \int_{\partial\Gamma_i} \left[ (q_i + P(\rho_i) - p(\rho_i)) n_t + \vF_i\cdot \vn_\vx \right] \psi \dS_{t,\vx} &\geq 0 , \label{eq:aux-sol-pde3} 
\end{align} 
for all test functions $(\phi,\vphi,\psi)\in \Cc([0,\infty)\times\R^2;\R\times\R^2\times \R_0^+)$.

Next we check that each $(\rho,\vm)$ satisfies \eqref{eq:euler-weak-d}, \eqref{eq:euler-weak-m} and \eqref{eq:euler-weak-E}. We begin with \eqref{eq:euler-weak-d}. From \eqref{eq:fan-subs-pde1}, \eqref{eq:aux-sol-pde1} and the Divergence Theorem we find
\begin{align*} 
	&\int_0^\infty \int_{\R^2} \Big[\rho \partial_t \phi + \vm\cdot\Grad \phi\Big]\dx\dt + \int_{\R^2} \rho_0\phi(0,\cdot) \dx \\
	&= \int_0^\infty \int_{\R^2} \Big[\rho \partial_t \phi + \ov{\vm}\cdot\Grad \phi\Big]\dx\dt + \int_{\R^2} \rho_0\phi(0,\cdot) \dx \\
	&\qquad - \sum_{i=1}^3 \iint_{\Gamma_i} \big[\rho_i \partial_t\phi + \vm_i\cdot \Grad\phi \big] \dx\dt + \sum_{i=1}^3 \iint_{\Gamma_i} \big[\rho \partial_t\phi + \vm\cdot \Grad\phi \big] \dx\dt \\
	&= - \sum_{i=1}^3 \int_{\partial \Gamma_i} \big[\rho_i \,n_t + \vm_i\cdot \vn_\vx \big] \phi \dS_{t,\vx} + \sum_{i=1}^3 \int_{\partial \Gamma_i} \big[\rho_i \,n_t + \vm_i\cdot \vn_\vx \big] \phi \dS_{t,\vx} \ = \ 0
\end{align*} 
for all $\phi \in \Cc([0,\infty) \times \R^2)$.

Analogously \eqref{eq:fan-subs-pde2}, \eqref{eq:aux-sol-pde2} and the Divergence Theorem yield
\begin{align*} 
	&\int_0^\infty \int_{\R^2} \left[\vm \cdot\partial_t \vphi + \frac{\vm\otimes\vm}{\rho}:\Grad \vphi + p(\rho)\Div \vphi\right]\dx\dt + \int_{\R^2} \vm_0\cdot\vphi(0,\cdot) \dx \\
	&= \int_0^\infty \int_{\R^2} \left[\ov{\vm} \cdot\partial_t \vphi + \ov{\mU}:\Grad \vphi + \ov{q}\Div \vphi\right]\dx\dt + \int_{\R^2} \vm_0\cdot\vphi(0,\cdot) \dx \\
	&\qquad - \sum_{i=1}^3 \iint_{\Gamma_i} \left[\vm_i \cdot\partial_t \vphi + \mU_i:\Grad \vphi + q_i\Div \vphi\right]\dx\dt \\
	&\qquad + \sum_{i=1}^3 \iint_{\Gamma_i} \left[\vm \cdot\partial_t \vphi + \frac{\vm\otimes\vm}{\rho}:\Grad \vphi + p(\rho)\Div \vphi\right]\dx\dt \\
	&= - \sum_{i=1}^3 \int_{\partial\Gamma_i} \left[ \vm_i\cdot \vphi\,n_t + (\mU_i\cdot \vn_\vx)\cdot \vphi + q_i\, \vphi \cdot \vn_\vx \right] \dS_{t,\vx} \\ 
	&\qquad + \sum_{i=1}^3 \int_{\partial\Gamma_i} \left[ \vm_i\cdot \vphi\,n_t + (\mU_i\cdot \vn_\vx)\cdot \vphi + q_i\, \vphi \cdot \vn_\vx \right] \dS_{t,\vx} \ = \ 0
\end{align*} 
for any $\vphi \in \Cc([0,\infty) \times \R^2;\R^2)$.

Finally according to \eqref{eq:fan-subs-pde3}, \eqref{eq:aux-sol-pde3} and the Divergence Theorem we have 
\begin{align*} 
	&\int_0^\infty \int_{\R^2} \left[\bigg(\frac{|\vm|^2}{2\rho} + P(\rho)\bigg) \partial_t \psi + \bigg(\frac{|\vm|^2}{2\rho} + P(\rho) + p(\rho)\bigg)\frac{\vm}{\rho}\cdot\Grad \psi \right]\dx\dt \\ 
	&\quad + \int_{\R^2} \bigg(\frac{|\vm_0|^2}{2\rho_0} + P(\rho_0)\bigg)\psi(0,\cdot) \dx \\
	&= \int_0^\infty \int_{\R^2} \bigg[\Big( \ov{q} + P(\rho) - p(\rho) \Big) \partial_t\psi + \ov{\vF} \cdot \Grad\psi \bigg] \dx\dt + \int_{\R^2} \left( \frac{|\vm_0|^2}{2\rho_0} + P(\rho_0) \right) \psi(0,\cdot) \dx \\
	&\qquad - \sum_{i=1}^3 \iint_{\Gamma_i} \bigg[\Big( q_i + P(\rho_i) - p(\rho_i) \Big) \partial_t\psi + \vF_i \cdot \Grad\psi \bigg] \dx\dt \\
	&\qquad + \sum_{i=1}^3 \iint_{\Gamma_i} \left[\bigg(\frac{|\vm|^2}{2\rho} + P(\rho)\bigg) \partial_t \psi + \bigg(\frac{|\vm|^2}{2\rho} + P(\rho) + p(\rho)\bigg)\frac{\vm}{\rho}\cdot\Grad \psi \right] \dx\dt \\ 
	&\geq - \sum_{i=1}^3 \int_{\partial\Gamma_i} \left[ (q_i + P(\rho_i) - p(\rho_i)) n_t + \vF_i\cdot \vn_\vx \right] \psi \dS_{t,\vx} \\
	&\qquad + \sum_{i=1}^3 \int_{\partial\Gamma_i} \left[ (q_i + P(\rho_i) - p(\rho_i)) n_t + \vF_i\cdot \vn_\vx \right] \psi \dS_{t,\vx} \ = \ 0
\end{align*}
for all test functions $\psi \in \Cc([0,\infty) \times \R^2;\R^+_0)$.

Thus we have shown that each $(\rho,\vm)$ is indeed an admissible weak solution. The additional properties claimed in Prop.~\ref{prop:fan-subs=>inf-sol} immediately follow from \eqref{eq:aux08} and the computation above.
\end{proof}

We finish this subsection with a statement that translates the definition of an admissible fan subsolution (Defn.~\ref{defn:fan-subs}) into a system of algebraic equations and inequalities. The following proposition corresponds to \cite[Prop.~5.1]{ChiDelKre15}. 

\begin{prop} \label{prop:algebraic}
	Let $(\rho_\pm,\vm_\pm)\in \R^+ \times \R^2$ be given. Assume that there exist numbers $\mu_0,\mu_1,\mu_2,\mu_3\in \R$, $(\rho_i,\vm_i,\mU_i,q_i,\vF_i) \in \R^+\times \phase$ for $i=1,2,3$ which fulfill the following algebraic equations and inequalities.
	\begin{itemize}
		\item Order of the speeds:
		\begin{equation} \label{eq:alg-orderofspeeds}
			\mu_0 < \mu_1 < \mu_2 < \mu_3 .
		\end{equation}
		
		\item Rankine Hugoniot conditions at each interface: 
		\begin{align} 
			\mu_i \big(\rho_i - \rho_{i+1}\big) &= [\vm_i]_2 - [\vm_{i+1}]_2 , \label{eq:alg-rh1} \\
			\mu_i \big([\vm_i]_1 - [\vm_{i+1}]_1 \big) &= [\mU_i]_{12} - [\mU_{i+1}]_{12} , \label{eq:alg-rh2} \\
			\mu_i \big([\vm_i]_2 - [\vm_{i+1}]_2 \big) &= -[\mU_i]_{11} + q_i + [\mU_{i+1}]_{11} - q_{i+1} , \label{eq:alg-rh3} \\ 
			\mu_i \big(q_i + P(\rho_i) - p(\rho_i) - q_{i+1} - P(\rho_{i+1}) &+ p(\rho_{i+1})\big) \leq [\vF_i]_2 - [\vF_{i+1}]_2 , \label{eq:alg-rh4} 
		\end{align}
		for all $i=0,1,2,3$, where 
		\begin{align*}
			(\rho_0,\vm_0,\mU_0,q_0,\vF_0) &:= (\rho_-,\vm_-,\mU_-,q_-,\vF_-) , \\
			(\rho_4,\vm_4,\mU_4,q_4,\vF_4) &:= (\rho_+,\vm_+,\mU_+,q_+,\vF_+) ,
		\end{align*}
		with $\mU_\pm$, $q_\pm$ and $\vF_\pm$ defined in \eqref{eq:defn-qpm}-\eqref{eq:defn-Fpm}.
		
		\item Subsolution conditions: 
		\begin{align} 
			\frac{[\vm_i]_1^2 + [\vm_i]_2^2}{\rho_i} + 2(p(\rho_i) - q_i) &< 0, \label{eq:alg-subs1} \\ 
			\left( \frac{[\vm_i]_1^2}{\rho_i} - [\mU_i]_{11} + p(\rho_i) - q_i \right) \left( \frac{[\vm_i]_2^2}{\rho_i} + [\mU_i]_{11} + p(\rho_i) - q_i \right) \ \;\quad \qquad & \notag \\
			- \left( \frac{[\vm_i]_1 [\vm_i]_2}{\rho_i} - [\mU_i]_{12} \right)^2 &> 0 \label{eq:alg-subs2}
		\end{align}
		for all $i=1,2,3$.
	\end{itemize}
	Then $(\rho_i,\vm_i,\mU_i,q_i,\vF_i)_{i=1,2,3}$ define an admissible fan subsolution to the initial value problem \eqref{eq:euler-d}, \eqref{eq:euler-m}, \eqref{eq:init}, \eqref{eq:euler-E}, \eqref{eq:riemann-init} via \eqref{eq:fan-subs-pwc}, where the corresponding fan partition is determined by $\mu_i$, $i=0,1,2,3$. Moreover we have 
	\begin{align} 
		&\int_0^\infty \int_{\R^2} \bigg[\Big( \ov{q} + P(\rho) - p(\rho) \Big) \partial_t\psi + \ov{\vF} \cdot \Grad\psi \bigg] \dx\dt \notag \\
		&= \sum_{i=0}^3 \frac{1}{\sqrt{\mu_i^2 + 1}}\int_{S_i} \Big[ -\mu_i \big( q_i + P(\rho_i) - p(\rho_i) \label{eq:alg-additional} \\
		&\qquad\qquad\qquad\qquad\qquad\qquad - q_{i+1} - P(\rho_{i+1}) + p(\rho_{i+1})\big) + [\vF_i]_2 - [\vF_{i+1}]_2 \Big] \psi \dS_{t,\vx} \notag
	\end{align}
	for all test functions $\psi\in \Cc((0,\infty)\times \R^2;\R^+_0)$, where $S_i$ are the planes defined by 
	\begin{equation} \label{eq:alg-Si}
		S_i := \left\{ (t,\vx)\in (0,\infty)\times \R^2\,\big|\, y= \mu_i t \right\} \qquad \text{ for }i=0,1,2,3.
	\end{equation}
\end{prop}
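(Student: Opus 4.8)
The plan is to verify, one by one, the three defining properties of an admissible fan subsolution (Definition~\ref{defn:fan-subs}) for the piecewise constant tuple $(\rho,\ov\vm,\ov\mU,\ov q,\ov\vF)$ built from the given constants through \eqref{eq:fan-subs-pwc}; the identity \eqref{eq:alg-additional} will then fall out of the computation carried out for the energy inequality.

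Property~1 is essentially free: \eqref{eq:alg-orderofspeeds} guarantees that the $\mu_i$ induce a genuine fan partition in the sense of Definition~\ref{defn:fan-part}, and the relations \eqref{eq:defn-qpm}--\eqref{eq:defn-Fpm} are built into the definitions of $(\rho_0,\dots,\vF_0)$ and $(\rho_4,\dots,\vF_4)$ in the statement. For Property~2 I would use the elementary fact (cf.\ Lemma~\ref{lemma:app-trdet}) that a symmetric $2\times 2$ matrix $\mM$ satisfies $\lambda_{\max}(\mM)<0$ if and only if $\tr\mM<0$ and $\det\mM>0$. Applying this to $\mM_i:=\frac{\vm_i\otimes\vm_i}{\rho_i}-\mU_i+(p(\rho_i)-q_i)\id$ and using that $\mU_i\in\sz$ is symmetric and trace-free (so $[\mU_i]_{22}=-[\mU_i]_{11}$ and $\tr\mU_i=0$), a short computation gives $\tr\mM_i=\frac{[\vm_i]_1^2+[\vm_i]_2^2}{\rho_i}+2(p(\rho_i)-q_i)$ and $\det\mM_i$ equal to the left-hand side of \eqref{eq:alg-subs2}. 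Hence \eqref{eq:alg-subs1} and \eqref{eq:alg-subs2} are exactly the condition \eqref{eq:fan-subs-subs}.

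The main work is Property~3, i.e.\ the weak identities \eqref{eq:fan-subs-pde1}, \eqref{eq:fan-subs-pde2} and the weak inequality \eqref{eq:fan-subs-pde3}. For a fixed test function I would split each space--time integral over $(0,\infty)\times\R^2$ into a sum over the five pieces $\Gamma_-,\Gamma_1,\Gamma_2,\Gamma_3,\Gamma_+$; on each piece the corresponding density and flux are constant, so the integrand is an exact space--time divergence $\Divtx(\,\cdot\,)$, and the divergence theorem (used within the compact support of the test function) turns each contribution into a boundary integral over $\partial\Gamma_k$. The only relevant boundary portions are the interface planes $S_i=\{y=\mu_i t\}$, which carry unit normal $\tfrac{1}{\sqrt{\mu_i^2+1}}(-\mu_i,0,1)^\trans$ --- this is the source of the factor $\tfrac{1}{\sqrt{\mu_i^2+1}}$ in \eqref{eq:alg-additional} --- and the initial slice $\{t=0\}$, which meets $\closure{\Gamma_i}$ ($i=1,2,3$) only at the origin and meets $\closure{\Gamma_-}$, $\closure{\Gamma_+}$ in $\{y<0\}$, $\{y>0\}$. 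Collecting the $\{t=0\}$ contributions yields exactly $-\int_{\R^2}\rho_0\phi(0,\cdot)$, $-\int_{\R^2}\vm_0\cdot\vphi(0,\cdot)$ and $-\int_{\R^2}\bigl(\tfrac{|\vm_0|^2}{2\rho_0}+P(\rho_0)\bigr)\psi(0,\cdot)$ --- for the last one using $q_\pm+P(\rho_\pm)-p(\rho_\pm)=\tfrac{|\vm_\pm|^2}{2\rho_\pm}+P(\rho_\pm)$, which follows from \eqref{eq:defn-qpm} --- so these cancel the initial terms already present in \eqref{eq:fan-subs-pde1}--\eqref{eq:fan-subs-pde3}. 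What is left is $\sum_{i=0}^3\tfrac{1}{\sqrt{\mu_i^2+1}}\int_{S_i}\bigl(\text{jump}_i\bigr)\psi\,\dS_{t,\vx}$, where $\text{jump}_i$ is the inner product of $(-\mu_i,0,1)^\trans$ with the difference of the relevant space--time flux on the two sides of $S_i$ (side $i$ minus side $i+1$). Using trace-freeness of $\mU$ once more, these jumps work out, component by component, to $-\mu_i(\rho_i-\rho_{i+1})+[\vm_i]_2-[\vm_{i+1}]_2$ for mass, to $-\mu_i([\vm_i]_1-[\vm_{i+1}]_1)+[\mU_i]_{12}-[\mU_{i+1}]_{12}$ and $-\mu_i([\vm_i]_2-[\vm_{i+1}]_2)-[\mU_i]_{11}+q_i+[\mU_{i+1}]_{11}-q_{i+1}$ for the two momentum components, and to $-\mu_i\bigl(q_i+P(\rho_i)-p(\rho_i)-q_{i+1}-P(\rho_{i+1})+p(\rho_{i+1})\bigr)+[\vF_i]_2-[\vF_{i+1}]_2$ for the energy. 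The first three vanish by \eqref{eq:alg-rh1}--\eqref{eq:alg-rh3}, giving \eqref{eq:fan-subs-pde1}--\eqref{eq:fan-subs-pde2}; the last is nonnegative by \eqref{eq:alg-rh4}, and since that bracket is constant on each $S_i$ and the test function ranges over all nonnegative ones, this yields \eqref{eq:fan-subs-pde3}. Rerunning the same bulk computation with $\psi\in\Cc((0,\infty)\times\R^2;\R^+_0)$ (so that the $\{t=0\}$ boundary never enters) leaves precisely the right-hand side of \eqref{eq:alg-additional}, which establishes that formula.

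I expect the only genuine care to be bookkeeping: keeping the orientation of the interface normals consistent across adjacent pieces so that the two one-sided boundary integrals on $S_i$ combine into a single jump with the sign convention ``side $i$ minus side $i+1$'' used in \eqref{eq:alg-rh1}--\eqref{eq:alg-rh4} and \eqref{eq:alg-additional}, tracking the surface-measure normalization $\sqrt{\mu_i^2+1}$ correctly, and noting that the degenerate geometry of the fan at $t=0$ produces no spurious boundary terms from $\Gamma_1,\Gamma_2,\Gamma_3$. Conceptually nothing is hard; the argument is a careful but routine integration by parts, which is exactly why Proposition~\ref{prop:algebraic} is the direct analogue of \cite[Prop.~5.1]{ChiDelKre15} with the energy flux $\ov\vF$ now left free (hence \eqref{eq:alg-rh4} being an inequality with no constraint tying $\vF_i$ to $\vm_i$ and $q_i$).
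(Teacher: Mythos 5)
Your proposal is correct and follows essentially the same route as the paper's (sketched) proof: since the candidate subsolution is piecewise constant, the weak formulations reduce via the divergence theorem on each fan region to jump conditions on the interfaces $S_i$, which are exactly \eqref{eq:alg-rh1}--\eqref{eq:alg-rh4} and give \eqref{eq:alg-additional}, while the trace--determinant computation you perform with Lemma~\ref{lemma:app-trdet} is precisely the content of Lemma~\ref{lemma:app-eigenvalues}, which the paper cites for the equivalence of \eqref{eq:fan-subs-subs} with \eqref{eq:alg-subs1}, \eqref{eq:alg-subs2}. One harmless slip: $\closure{\Gamma_i}\cap\{t=0\}$ ($i=1,2,3$) is the whole line $\{y=0\}$ (all $x$), not just the origin, but it has zero two-dimensional measure, so your conclusion that it contributes no boundary term stands.
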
 

\begin{proof} 
Since the proof is very similar to \cite[Proof of Prop.~5.1]{ChiDelKre15}, see also \cite[Proof of Prop.~7.3.5]{Markfelder}, we only sketch it here. 

Because admissible fan subsolutions are piecewise constant, the partial differential equations \eqref{eq:fan-subs-pde1}, \eqref{eq:fan-subs-pde2} and inequality \eqref{eq:fan-subs-pde3} are equivalent to their corresponding Rankine-Hugoniot conditions \eqref{eq:alg-rh1}-\eqref{eq:alg-rh4}. In the same fashion one proves \eqref{eq:alg-additional}.

Finally, \eqref{eq:fan-subs-subs} is equivalent to \eqref{eq:alg-subs1} and \eqref{eq:alg-subs2} according to Lemma~\ref{lemma:app-eigenvalues}. This finishes the proof.
\end{proof}

\subsection{Proof of Thm.~\ref{thm:local-max-diss}} \label{subsec:rci-proof-of-main-thm}

Finally we are ready to prove Thm.~\ref{thm:local-max-diss}.

\begin{proof}[Proof of Thm.~\ref{thm:local-max-diss}]
Consider the pressure law $p(\rho)=\rho^2$, i.e. \eqref{eq:pressure} with $\gamma=2$. We observe that in this case the pressure potential defined in \eqref{eq:pressure-potential} reads $P(\rho)=\rho^2=p(\rho)$. 

We consider Riemann initial data with 
\begin{align*} 
	\rho_- &= 1, & \rho_+ &= 4, \\
	\vm_- &= \left(\begin{array}{c} 0 \\ \frac{3}{2}\sqrt{5} \end{array} \right), & \vm_+ &= \left(\begin{array}{c} 0 \\ 0 \end{array} \right).
\end{align*}
With classical techniques one can show that the one-dimensional self-similar solution $(\rho_\ts,\vm_\ts)$ to the initial data above consists of one shock. More precisely we have 
$$
	(\rho_\ts,\vm_\ts)(t,\vx) = \left\{ \begin{array}{ll}
		(\rho_-,\vm_-) & \text{ if } y < \sigma t, \\
		(\rho_+,\vm_+) & \text{ if } y > \sigma t, 
	\end{array} \right.
$$
where $\sigma= -\frac{\sqrt{5}}{2}$. Furthermore the left-hand side of the energy inequality reads 
\begin{align} 
	&\int_0^\infty \int_{\R^2} \left[\bigg(\frac{|\vm_\ts|^2}{2\rho_\ts} + P(\rho_\ts)\bigg) \partial_t \psi + \bigg(\frac{|\vm_\ts|^2}{2\rho_\ts} + P(\rho_\ts) + p(\rho_\ts)\bigg)\frac{\vm_\ts}{\rho_\ts}\cdot\Grad \psi \right]\dx\dt \notag\\
	&= \frac{1}{\sqrt{\sigma^2+1}} \int_{S_\ts} \bigg[-\sigma \bigg(\frac{|\vm_-|^2}{2\rho_-} + P(\rho_-) - \frac{|\vm_+|^2}{2\rho_+} - P(\rho_+)\bigg) \notag\\
	&\qquad\qquad + \bigg(\frac{|\vm_-|^2}{2\rho_-} + P(\rho_-) + p(\rho_-)\bigg)\frac{[\vm_-]_2}{\rho_-} - \bigg(\frac{|\vm_+|^2}{2\rho_+} + P(\rho_+) + p(\rho_+)\bigg)\frac{[\vm_+]_2}{\rho_+} \bigg] \psi \dS_{t,\vx} \notag\\
	&= \frac{1}{\sqrt{\sigma^2+1}} \int_{S_\ts} \frac{27}{4}\sqrt{5} \ \psi \dS_{t,\vx} \label{eq:energy-standard}
\end{align}
for all $\psi \in \Cc((0,\infty) \times \R^2;\R^+_0)$, where 
$$
	S_\ts:= \left\{ (t,\vx)\in (0,\infty)\times \R^2\,\big|\, y= \sigma t \right\}.
$$

Next we present an admissible fan subsolution to the data above. Let 
\begin{align*}
	\mu_0 &= \frac{-53750 \sqrt{5}+77 \sqrt{1141}-25102}{107500}, & \mu_1 &= -\frac{\sqrt{5}}{2}, \\
	\mu_2 &= \frac{77-125 \sqrt{5}}{250}, & \mu_3 &= \frac{-26875 \sqrt{5}+8316 \sqrt{1141}-12551}{53750}, 
\end{align*}
and
\begin{align*} 
	\rho_1 &= \frac{52}{25}, & \rho_2 &= \frac{319}{100}, & \rho_3 &= \frac{801}{200}, 
\end{align*}
\begin{align*} 
	\vm_1 &= \left(\begin{array}{c} 0 \\ \frac{3 \left(860000 \sqrt{5}+693 \sqrt{1141}-225918\right)}{2687500} \end{array} \right), \\
	\vm_2 &= \left(\begin{array}{c} 0 \\ \frac{27 \left(80625 \sqrt{5}+154 \sqrt{1141}-50204\right)}{5375000} \end{array} \right), \\
	\vm_3 &= \left(\begin{array}{c} 0 \\ \frac{-26875 \sqrt{5}+8316 \sqrt{1141}-12551}{10750000} \end{array} \right),
\end{align*}
\begin{align*} 
	\mU_1 &= \left(\begin{array}{cc} \frac{-72858555000 \sqrt{5}+8316 \sqrt{1141} (26875 \sqrt{5}+12551)-1272258135611}{288906250000} & 0 \\ 0 & \ast \end{array} \right), \\
	\mU_2 &= \left(\begin{array}{cc} \frac{-36429277500 \sqrt{5}+4158 \sqrt{1141} (26875 \sqrt{5}+12551)-295698403743}{144453125000} & 0 \\ 0 & \ast \end{array} \right), \\
	\mU_3 &= \left(\begin{array}{cc} \frac{-337308125 \sqrt{5}+8316 \sqrt{1141} (26875 \sqrt{5}+12551)-21181593711}{288906250000} & 0 \\ 0 & \ast \end{array} \right),
\end{align*}
where we wrote ``$\ast$'' for each lower right entry in order to keep the presentation short. Note that each lower right entry is equal to $-1$ times the upper left entry since the matrices are trace-less. Let moreover
\begin{align*}
	q_1 &= \frac{398}{43} , &
	q_2 &= 13 , &
	q_3 &= \frac{691}{43} , \\
	\vF_1 &= \left(\begin{array}{c} 0 \\ \frac{552}{25} \end{array} \right), & \vF_2 &= \left(\begin{array}{c} 0 \\ \frac{277}{100} \end{array} \right), & \vF_3 &= \left(\begin{array}{c} 0 \\ \frac{7}{25} \end{array} \right). 
\end{align*}
It is now straightforward to check the conditions \eqref{eq:alg-orderofspeeds}-\eqref{eq:alg-subs2} of Prop.~\ref{prop:algebraic}. As a consequence $(\rho,\ov{\vm},\ov{\mU},\ov{q},\ov{\vF})$ defined in \eqref{eq:fan-subs-pwc} is an admissible fan subsolution. In addition to that we compute in accordance with \eqref{eq:alg-additional} that 
\begin{align} 
	&\int_0^\infty \int_{\R^2} \bigg[\Big( \ov{q} + P(\rho) - p(\rho) \Big) \partial_t\psi + \ov{\vF} \cdot \Grad\psi \bigg] \dx\dt \notag \\
	&= \frac{1}{\sqrt{\mu_0^2 + 1}}\int_{S_0} \frac{74863000 \sqrt{5}+13937 \sqrt{1141}-167847142}{7396000} \ \psi \dS_{t,\vx} \label{eq:energy-wild} \\
	&\qquad + \frac{1}{\sqrt{\mu_1^2 + 1}}\int_{S_1} \frac{83033-8050 \sqrt{5}}{4300} \ \psi \dS_{t,\vx} \notag \\
	&\qquad + \frac{1}{\sqrt{\mu_2^2 + 1}}\int_{S_2} \frac{73863-33000 \sqrt{5}}{21500} \ \psi \dS_{t,\vx} \notag \\
	&\qquad + \frac{1}{\sqrt{\mu_3^2 + 1}}\int_{S_3} \frac{80625 \sqrt{5}-24948 \sqrt{1141}+684803}{2311250} \ \psi \dS_{t,\vx} \notag
\end{align}
for all test functions $\psi\in \Cc((0,\infty)\times \R^2;\R^+_0)$, where $S_i$ are defined in \eqref{eq:alg-Si}.

Now Prop.~\ref{prop:fan-subs=>inf-sol} yields infinitely many admissible weak solutions which satisfy \eqref{eq:energyofsol-vs-energyofsubsol}. Together with \eqref{eq:energy-standard} and \eqref{eq:energy-wild}, we infer for all test functions $\psi\in \Cc((0,\infty)\times \R^2;\R^+_0)$ that
\begin{align} 
	&\int_0^\infty \int_{\R^2} \left[\bigg(\frac{|\vm|^2}{2\rho} + P(\rho)\bigg) \partial_t \psi + \bigg(\frac{|\vm|^2}{2\rho} + P(\rho) + p(\rho)\bigg)\frac{\vm}{\rho}\cdot\Grad \psi \right]\dx\dt \notag\\
	&\geq \frac{1}{\sqrt{\mu_1^2 + 1}}\int_{S_1} \frac{83033-8050 \sqrt{5}}{4300} \ \psi \dS_{t,\vx} \notag \\
	&\geq \frac{1}{\sqrt{\sigma^2+1}} \int_{S_\ts} \frac{27}{4}\sqrt{5} \ \psi \dS_{t,\vx} \label{eq:final-ineq} \\ 
	&= \int_0^\infty \int_{\R^2} \left[\bigg(\frac{|\vm_\ts|^2}{2\rho_\ts} + P(\rho_\ts)\bigg) \partial_t \psi + \bigg(\frac{|\vm_\ts|^2}{2\rho_\ts} + P(\rho_\ts) + p(\rho_\ts)\bigg)\frac{\vm_\ts}{\rho_\ts}\cdot\Grad \psi \right]\dx\dt, \notag
\end{align}
because $\mu_1=\sigma$ and hence $S_1=S_\ts$, and because $\frac{83033-8050 \sqrt{5}}{4300} > \frac{151}{10} > \frac{27}{4}\sqrt{5}$. In addition, the inequality in \eqref{eq:final-ineq} is strict for all test functions $\psi\in \Cc((0,\infty)\times \R^2;\R^+_0)$ with the property $\supp(\psi)\cap S_\ts\neq \emptyset$.

This shows that the one-dimensional self-similar solution $(\rho_\ts,\vm_\ts)$ does not satisfy the local maximal dissipation criterion since we have constructed infinitely many additional admissible weak solution which emanate from the same initial data and satisfy \eqref{eq:D-ineq}. 
\end{proof}

\section*{Acknowledgements} 
The author acknowledges financial support from the Alexander von Humboldt foundation and also from the Deutsche Forschungsgemeinschaft (DFG, German Research Foundation) SPP 2410 \emph{Hyperbolic Balance Laws in Fluid Mechanics: Complexity, Scales, Randomness (CoScaRa)}, within the Project 525935467 \emph{Convex integration: towards a mathematical understanding of turbulence, Onsager conjectures and admissibility criteria}. Moreover the author would like to thank Eduard Feireisl who raised the question that is answered in Thm.~\ref{thm:local-max-diss}. The author is also very thankful to Daniel W. Boutros for many inspiring discussions. Finally the author wishes to thank the anonymous referees for their valuable suggestions and comments. 

\appendix

\section{Some Linear Algebra} 

We write $\sym{2}$ for the space of symmetric $2\times 2$-matrices. The eigenvalues of $\mM\in \sym{2}$ are denoted by $\lambda_{\min}(\mM)$ and $\lambda_{\max}(\mM)$ where $\lambda_{\min}(\mM)\leq \lambda_{\max}(\mM)$.

\begin{lemma} \label{lemma:app-computationoftrace} 
	For any $\mM\in \sym{2}$ it holds that 
	\begin{equation*}
		\tr(\mM) \big(\tr(\mM) \pm 2 [\mM]_{12}\big) - 2\det(\mM) = \big([\mM]_{11} \pm [\mM]_{12}\big)^2 + \big([\mM]_{22} \pm [\mM]_{12}\big)^2 \geq 0.
	\end{equation*}
\end{lemma}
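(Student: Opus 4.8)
The statement is a polynomial identity in the three entries of $\mM$, so the plan is simply to write everything out in coordinates and compare. I would introduce the abbreviations $a := [\mM]_{11}$, $b := [\mM]_{12} = [\mM]_{21}$ and $c := [\mM]_{22}$, so that $\tr(\mM) = a+c$ and $\det(\mM) = ac - b^2$. Both the left-hand and the right-hand expression then become explicit quadratic forms in $(a,b,c)$, and the only task is to check that they coincide.

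Concretely, the first step is to expand the left-hand side:
$$
\tr(\mM)\big(\tr(\mM) \pm 2[\mM]_{12}\big) - 2\det(\mM) = (a+c)^2 \pm 2b(a+c) - 2(ac - b^2) = a^2 + c^2 + 2b^2 \pm 2ab \pm 2bc .
$$
The second step is to expand the right-hand side:
$$
\big([\mM]_{11} \pm [\mM]_{12}\big)^2 + \big([\mM]_{22} \pm [\mM]_{12}\big)^2 = (a \pm b)^2 + (c \pm b)^2 = a^2 + c^2 + 2b^2 \pm 2ab \pm 2bc ,
$$
which matches the previous display term by term; this establishes the claimed equality (with consistent choice of sign throughout). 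The third and final step is to observe that the right-hand side, being a sum of two squares of real numbers, is manifestly nonnegative, which gives the last inequality in the statement.

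I do not expect any genuine obstacle here: the result is an elementary algebraic identity and the computation above is essentially the whole argument. The only point requiring a modicum of care is to keep the $\pm$ sign coherent on both sides (both occurrences of $[\mM]_{12}$ on the right carry the same sign as the single occurrence on the left), which is transparent once the substitution $a,b,c$ is made.
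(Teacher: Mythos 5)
Your proposal is correct and matches the paper's own argument: the paper likewise expands $\tr(\mM)\big(\tr(\mM)\pm 2[\mM]_{12}\big)-2\det(\mM)$ in the entries of $\mM$ and regroups it into the sum of two squares, which gives the nonnegativity at once. Expanding both sides to a common quadratic form, as you do, is just a cosmetic variant of the same computation.
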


\begin{proof}
We calculate
\begin{align*}
	&\tr(\mM) \big(\tr(\mM) \pm 2 [\mM]_{12}\big) - 2\det(\mM)\\
	&= \big([\mM]_{11} + [\mM]_{22} \big)^2 \pm 2 [\mM]_{11} [\mM]_{12} \pm 2 [\mM]_{22} [\mM]_{12} - 2[\mM]_{11}[\mM]_{22} +2 [\mM]_{12}^2 \\
	&= [\mM]_{11}^2 + [\mM]_{22}^2 \pm 2 [\mM]_{11} [\mM]_{12} \pm 2 [\mM]_{22} [\mM]_{12} +2 [\mM]_{12}^2 \\
	&=\big([\mM]_{11} \pm [\mM]_{12}\big)^2 + \big([\mM]_{22} \pm [\mM]_{12}\big)^2 .
\end{align*} 
\end{proof}

\begin{lemma} \label{lemma:app-trdet}
	For any $\mM\in \sym{2}$, we have 
	\begin{align*}
		\tr(\mM) &= \lambda_{\min}(\mM)+ \lambda_{\max}(\mM), & \det(\mM) &= \lambda_{\min}(\mM) \cdot \lambda_{\max}(\mM),
	\end{align*}
	and hence
	\begin{align*}
		\lambda_{\max}(\mM) \leq 0 \quad \Leftrightarrow \quad \tr(\mM)\leq 0\ \text{ and }\ \det(\mM)\geq 0, \\ 
		\lambda_{\max}(\mM) < 0 \quad \Leftrightarrow \quad \tr(\mM)< 0\ \text{ and }\ \det(\mM)> 0, \\
		\lambda_{\max}(\mM) = 0 \quad \Leftrightarrow \quad \tr(\mM)\leq 0\ \text{ and }\ \det(\mM)= 0.
	\end{align*} 
\end{lemma}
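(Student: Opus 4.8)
The plan is to first establish the two identities linking $\tr(\mM)$ and $\det(\mM)$ to the eigenvalues, and then to read off the three equivalences by an elementary sign analysis.

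First I would record the identities. Since $\mM\in\sym{2}$ is symmetric, its characteristic polynomial $\lambda\mapsto\det(\mM-\lambda\,\id)=\lambda^2-\tr(\mM)\,\lambda+\det(\mM)$ has two real roots, which by definition are $\lambda_{\min}(\mM)$ and $\lambda_{\max}(\mM)$ (listed with multiplicity). Consequently it factors over $\R$ as $(\lambda-\lambda_{\min}(\mM))(\lambda-\lambda_{\max}(\mM))$, and comparing the coefficient of $\lambda$ and the constant term gives $\tr(\mM)=\lambda_{\min}(\mM)+\lambda_{\max}(\mM)$ and $\det(\mM)=\lambda_{\min}(\mM)\,\lambda_{\max}(\mM)$, as claimed.

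Next I would deduce the equivalences from these identities together with the ordering $\lambda_{\min}(\mM)\le\lambda_{\max}(\mM)$. For the ``$\Rightarrow$'' directions: if $\lambda_{\max}(\mM)\le 0$ then $\lambda_{\min}(\mM)\le 0$ as well, so $\tr(\mM)\le 0$ and $\det(\mM)$, being a product of two non-positive reals, is $\ge 0$; the strict version and the ``$=0$'' version follow in the same way. For the ``$\Leftarrow$'' directions I would use that $\det(\mM)\ge 0$ forces $\lambda_{\min}(\mM)$ and $\lambda_{\max}(\mM)$ to share the same (weak) sign, so that the additional hypothesis $\tr(\mM)\le 0$ excludes them both being positive and hence yields $\lambda_{\max}(\mM)\le 0$; imposing the strict inequalities $\tr(\mM)<0$, $\det(\mM)>0$ upgrades this to $\lambda_{\max}(\mM)<0$; and if instead $\det(\mM)=0$ with $\tr(\mM)\le 0$, then one eigenvalue is $0$, and since the larger one is nonnegative in that situation while the sum is $\le 0$, it must be that $\lambda_{\max}(\mM)=0$.

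I do not expect a genuine obstacle here: the statement is a routine consequence of the quadratic formula and sign bookkeeping for a $2\times 2$ matrix. The only point that deserves a moment's attention is the backward implication, where $\det(\mM)\ge 0$ on its own does not determine the sign of the eigenvalues and must be combined with the information carried by $\tr(\mM)$.
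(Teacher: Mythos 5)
Your proof is correct. Note that the paper does not actually prove this lemma at all: it simply remarks that it ``is a well-known fact in linear algebra'' and skips the argument, so there is no authorial proof to compare against. Your route --- reading $\tr(\mM)=\lambda_{\min}(\mM)+\lambda_{\max}(\mM)$ and $\det(\mM)=\lambda_{\min}(\mM)\lambda_{\max}(\mM)$ off the factorization of the characteristic polynomial $\lambda^2-\tr(\mM)\lambda+\det(\mM)$ (the roots being real by symmetry of $\mM$), and then doing the sign bookkeeping with $\lambda_{\min}(\mM)\leq\lambda_{\max}(\mM)$ --- is exactly the standard argument the author is alluding to, and your handling of the backward implications, where $\det(\mM)\geq 0$ alone does not fix the sign and must be combined with the trace condition, is the only point requiring care and you treat it correctly.
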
 

Lemma~\ref{lemma:app-trdet} is a well-known fact in linear algebra, so we skip its proof.

In view of Lemma~\ref{lemma:app-euler} below, the following lemma provides some alternative representations of the convex hull $\sK^\co$. 

\renewcommand{\arraystretch}{1.3}
\begin{lemma} \label{lemma:app-eigenvalues} 
	Let $(\rho,\vm,\mU,q)\in \R^+\times \R^2 \times \sz\times \R$. Then the following assertions are equivalent: 
	\begin{align}
		&\bullet\ \lambda_{\max}\left(\frac{\vm\otimes\vm}{\rho} - \mU + (p(\rho) - q)\id\right)\leq 0, \label{eq:app-eigenvalues1} \\
		&\bullet\ \frac{[\vm]_1^2 + [\vm]_2^2}{2\rho} + p(\rho) - q + \sqrt{\left(\frac{[\vm]_1^2 - [\vm]_2^2}{2\rho} - [\mU]_{11} \right)^2 + \left( \frac{[\vm]_1 [\vm]_2}{\rho} - [\mU]_{12} \right)^2 } \leq 0 , \label{eq:app-eigenvalues2} \\ 
		&\bullet\ \left\{ \begin{array}{r} \frac{[\vm]_1^2 + [\vm]_2^2}{\rho} + 2(p(\rho) - q) \leq 0, \\ \Big( \frac{[\vm]_1^2}{\rho} - [\mU]_{11} + p(\rho) - q\Big) \Big( \frac{[\vm]_2^2}{\rho} + [\mU]_{11} + p(\rho) - q \Big) - \Big( \frac{[\vm]_1 [\vm]_2}{\rho} - [\mU]_{12} \Big)^2 \geq 0 . \end{array} \right. \label{eq:app-eigenvalues3}
	\end{align}
	The above claim is also true if one replaces all non-strict inequalities by strict inequalities in \eqref{eq:app-eigenvalues1}-\eqref{eq:app-eigenvalues3} (i.e. one replaces all ``$\leq$'' and ``$\geq$'' by ``$<$'' and ``$>$'' respectively).
\end{lemma}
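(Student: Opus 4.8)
The plan is to reduce the whole statement to the spectral theory of the single symmetric $2\times 2$ matrix
\[
\mM := \frac{\vm\otimes\vm}{\rho} - \mU + (p(\rho)-q)\id ,
\]
so that \eqref{eq:app-eigenvalues1} reads exactly $\lambda_{\max}(\mM)\leq 0$. First I would compute the entries of $\mM$ explicitly, crucially using that $\mU\in\sz$ is trace-less, i.e. $[\mU]_{22}=-[\mU]_{11}$:
\[
[\mM]_{11} = \frac{[\vm]_1^2}{\rho} - [\mU]_{11} + p(\rho)-q,\qquad
[\mM]_{22} = \frac{[\vm]_2^2}{\rho} + [\mU]_{11} + p(\rho)-q,\qquad
[\mM]_{12} = \frac{[\vm]_1[\vm]_2}{\rho} - [\mU]_{12}.
\]
From this, $\tr(\mM) = \frac{[\vm]_1^2+[\vm]_2^2}{\rho}+2(p(\rho)-q)$ and $\det(\mM)$ equals precisely the product of parentheses appearing in \eqref{eq:app-eigenvalues3}.

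For the equivalence \eqref{eq:app-eigenvalues1}$\Leftrightarrow$\eqref{eq:app-eigenvalues2} I would recall the closed form of the larger eigenvalue of a symmetric $2\times 2$ matrix,
\[
\lambda_{\max}(\mM) = \frac{\tr(\mM)}{2} + \sqrt{\Big(\tfrac{[\mM]_{11}-[\mM]_{22}}{2}\Big)^2 + [\mM]_{12}^2},
\]
and substitute the entries above. One checks $\tfrac{[\mM]_{11}-[\mM]_{22}}{2} = \tfrac{[\vm]_1^2-[\vm]_2^2}{2\rho} - [\mU]_{11}$ and $\tfrac{\tr(\mM)}{2} = \tfrac{[\vm]_1^2+[\vm]_2^2}{2\rho}+p(\rho)-q$, so the radical collapses to exactly the one in \eqref{eq:app-eigenvalues2} and $\lambda_{\max}(\mM)\leq 0$ becomes literally \eqref{eq:app-eigenvalues2}.

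For \eqref{eq:app-eigenvalues1}$\Leftrightarrow$\eqref{eq:app-eigenvalues3} I would simply invoke Lemma~\ref{lemma:app-trdet}: $\lambda_{\max}(\mM)\leq 0$ holds iff $\tr(\mM)\leq 0$ and $\det(\mM)\geq 0$, which by the entry computation above is precisely the pair of inequalities in \eqref{eq:app-eigenvalues3}. The strict-inequality version of the lemma then follows verbatim from the same identities, using the strict part of Lemma~\ref{lemma:app-trdet} for \eqref{eq:app-eigenvalues3} and the fact that $\lambda_{\max}(\mM)<0$ is the strict form of \eqref{eq:app-eigenvalues2}. I do not expect any genuine obstacle here; the only care needed is bookkeeping the trace-less condition on $\mU$ when forming $[\mM]_{22}$ and keeping the $\pm$ signs and strict/non-strict cases consistent throughout.
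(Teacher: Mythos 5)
Your proposal is correct and follows essentially the same route as the paper: it verifies \eqref{eq:app-eigenvalues1}$\Leftrightarrow$\eqref{eq:app-eigenvalues2} by the explicit eigenvalue formula for a symmetric $2\times 2$ matrix (the paper's ``straightforward computation''), and \eqref{eq:app-eigenvalues1}$\Leftrightarrow$\eqref{eq:app-eigenvalues3} via Lemma~\ref{lemma:app-trdet} together with the trace and determinant of $\frac{\vm\otimes\vm}{\rho}-\mU+(p(\rho)-q)\id$, with the strict case handled identically. Your entry computations, including the use of $[\mU]_{22}=-[\mU]_{11}$, match the paper's argument.
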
 
\renewcommand{\arraystretch}{1}

\begin{proof} 
A straightforward computation shows
\begin{align*}
	&\lambda_{\max}\left(\frac{\vm\otimes\vm}{\rho} - \mU + (p(\rho) - q)\id\right) \\
	&= \frac{[\vm]_1^2 + [\vm]_2^2}{2\rho} + p(\rho) - q + \sqrt{\left(\frac{[\vm]_1^2 - [\vm]_2^2}{2\rho} - [\mU]_{11} \right)^2 + \left( \frac{[\vm]_1 [\vm]_2}{\rho} - [\mU]_{12} \right)^2 } ,
\end{align*}
which immediately proves \eqref{eq:app-eigenvalues1} $\Leftrightarrow$ \eqref{eq:app-eigenvalues2}, both in the case of non-strict and strict inequalities.
	
Next according to Lemma~\ref{lemma:app-trdet}, $\lambda_{\max}\left(\frac{\vm\otimes\vm}{\rho} - \mU + (p(\rho) - q)\id\right) \leq 0$ is equivalent to 
\begin{equation*}
	\tr \left(\frac{\vm\otimes\vm}{\rho} - \mU + (p(\rho) - q)\id\right) \leq 0 , \qquad \text{ and } \qquad
	\det \left(\frac{\vm\otimes\vm}{\rho} - \mU + (p(\rho) - q)\id\right) \geq 0 . 
\end{equation*}
By computing trace and determinant of $\frac{\vm\otimes\vm}{\rho} - \mU + (p(\rho) - q)\id$, we obtain \eqref{eq:app-eigenvalues3}. The strict case follows analogously.
\end{proof}

\section{Convex Hulls in General} \label{app:convex} 

In this section we recall the definition of the convex hull. Most of the content can be also found in \cite[Sect.~A.5.1]{Markfelder}.

In this section we suppose $M \in \N$.

\begin{defn}[See {\cite[Defn.~A.5.1]{Markfelder}}] \label{defn:app-convex}
	\begin{itemize}
		\item The \emph{closed line segment} $[\vp,\vq]\subset \R^M$ between two points $\vp,\vq\in \R^M$ is defined by 
		$$
			[\vp,\vq]=\left\{\tau \vp + (1-\tau) \vq \,\Big|\, \tau\in [0,1]\right\}.
		$$
		
		\item A set $S\subset \R^M$ is called \emph{convex} if $[\vp,\vq]\subset S$ for all $\vp,\vq\in S$.
		
		\item Let $S\subset \R^M$ be closed and convex. A point $\vs\in S$ is called \emph{extreme point of $S$} if there are no two points $\vp,\vq\in S\setminus\{\vs\}$ with $\vs\in [\vp,\vq]$. The set of all extreme points of $S$ is denoted by $\ext(S)$.
		
		\item The \emph{convex hull $\sK^\co$} of a set $\sK\subset \R^M$ is the smallest convex set which contains $\sK$.
	\end{itemize}
\end{defn}

The following fact is well-known.

\begin{prop}[See e.g. {\cite[Prop.~A.5.2]{Markfelder}}] \label{prop:app-caratheodory}
	Let $\sK\subset \R^M$. Then 
	\begin{align*}
		\sK^\co = \bigg\{ \vp\in \R^M\, \Big|\, &\exists N\in \N, \exists (\tau_i,\vp_i)\in \R^+ \times \R^M \text{ for all }i=1,...,N \text{ such that } \\
		& \bullet \ \sum_{i=1}^N \tau_i = 1,  \\
		& \bullet \ \vp_i \in \sK \text{ for all }i=1,...,N, \text{ and } \\
		& \bullet \ \vp= \sum_{i=1}^N \tau_i \vp_i \bigg\} .
	\end{align*}
\end{prop}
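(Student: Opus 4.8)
The plan is to write $C$ for the set on the right-hand side of the claimed identity, i.e. the collection of all finite convex combinations $\sum_{i=1}^N \tau_i \vp_i$ with $N\in\N$, $\tau_i\in\R^+$, $\sum_{i=1}^N\tau_i=1$ and $\vp_i\in\sK$, and then to establish the two inclusions $\sK^\co\subset C$ and $C\subset\sK^\co$ separately.

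For the inclusion $\sK^\co\subset C$, I would use that by Definition~\ref{defn:app-convex} the convex hull $\sK^\co$ is the smallest convex set containing $\sK$, so it suffices to check that $C$ is convex and that $\sK\subset C$. The latter is immediate by choosing $N=1$, $\tau_1=1$, $\vp_1=\vp$. For convexity, take $\vp=\sum_{i=1}^N\tau_i\vp_i$ and $\vq=\sum_{j=1}^{N'}\sigma_j\vq_j$ in $C$ and $\lambda\in(0,1)$; then $\lambda\vp+(1-\lambda)\vq$ is the combination of the merged family of points $\{\vp_i\}\cup\{\vq_j\}$ with the strictly positive weights $\{\lambda\tau_i\}\cup\{(1-\lambda)\sigma_j\}$, which sum to one, so $\lambda\vp+(1-\lambda)\vq\in C$; the cases $\lambda\in\{0,1\}$ are trivial. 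Hence $C$ is convex and contains $\sK$, so $\sK^\co\subset C$.

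For the reverse inclusion $C\subset\sK^\co$, I would argue by induction on the length $N$ of a representation $\vp=\sum_{i=1}^N\tau_i\vp_i$. The case $N=1$ is just $\sK\subset\sK^\co$. Assuming the statement for $N$, let $\vp=\sum_{i=1}^{N+1}\tau_i\vp_i$ with $\tau_i\in\R^+$, $\sum_{i=1}^{N+1}\tau_i=1$ and $\vp_i\in\sK$. If $\tau_{N+1}=1$ then $\vp=\vp_{N+1}\in\sK\subset\sK^\co$. Otherwise set $\lambda:=1-\tau_{N+1}\in(0,1)$ and $\va:=\sum_{i=1}^N(\tau_i/\lambda)\vp_i$; since $\sum_{i=1}^N\tau_i=\lambda$, the weights $\tau_i/\lambda$ are strictly positive and sum to one, so $\va\in\sK^\co$ by the induction hypothesis. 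Because $\vp=\lambda\va+(1-\lambda)\vp_{N+1}\in[\va,\vp_{N+1}]$ and $\sK^\co$ is convex and contains both $\va$ and $\vp_{N+1}$, we get $\vp\in\sK^\co$. This closes the induction, and together with the first part proves $C=\sK^\co$.

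I do not expect a genuine obstacle here: this is the standard description of the convex hull as the set of all finite convex combinations, and the whole argument is elementary. The only points that call for a little care are the bookkeeping of degenerate cases — weights equal to zero when one merges two convex combinations, and the coefficient $\tau_{N+1}=1$ in the induction step, which forces one to shrink the representation rather than rescale it — and these are precisely the places flagged above; if one prefers, the first of them can be sidestepped entirely by temporarily allowing weights in $\R^+_0$, which does not change the set $C$.
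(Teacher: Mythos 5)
Your proof is correct: the two-inclusion argument (the set of finite convex combinations is convex and contains $\sK$, and conversely induction on the number of points shows every such combination lies in $\sK^\co$) is the standard proof of this fact, and your handling of the degenerate cases (zero weights, $\tau_{N+1}=1$) is sound. The paper itself gives no proof — it cites this as a well-known proposition from \cite[Prop.~A.5.2]{Markfelder} — and your argument is precisely the classical one such a reference supplies, so there is nothing further to compare.
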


The following lemma is a simple observation. 

\begin{lemma} \label{lemma:app-interior-convex} 
	Let $S\subset \R^M$ convex and $\vp\in S$, $\vq\in \interior{S}$, $\tau\in (0,1)$. Then 
	$$
		\tau \vp + (1-\tau) \vq \ \in\ \interior{S}.
	$$
\end{lemma}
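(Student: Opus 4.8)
The plan is to prove the elementary convexity fact in Lemma~\ref{lemma:app-interior-convex}: if $S\subset\R^M$ is convex, $\vp\in S$, $\vq\in\interior{S}$ and $\tau\in(0,1)$, then the point $\vz_0 := \tau\vp + (1-\tau)\vq$ lies in $\interior{S}$. The strategy is to exhibit an explicit open ball around $\vz_0$ that is contained in $S$, using the open ball around $\vq$ together with convexity.

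First I would use that $\vq\in\interior{S}$ to fix $r>0$ with $B_r(\vq)\subset S$, where $B_r(\vy)$ denotes the open Euclidean ball of radius $r$ centred at $\vy$. The claim will be that $B_{(1-\tau)r}(\vz_0)\subset S$. To see this, take any $\vw$ with $|\vw - \vz_0| < (1-\tau) r$ and write it as a convex combination of $\vp$ and a point near $\vq$: set
$$
	\vq' := \frac{\vw - \tau\vp}{1-\tau}.
$$
Then $\tau\vp + (1-\tau)\vq' = \vw$ by construction, and
$$
	|\vq' - \vq| = \frac{1}{1-\tau}\,\big|\,\vw - \tau\vp - (1-\tau)\vq\,\big| = \frac{1}{1-\tau}\,|\vw - \vz_0| < r,
$$
so $\vq'\in B_r(\vq)\subset S$. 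Since $\vp\in S$, $\vq'\in S$ and $\tau\in(0,1)$, convexity of $S$ gives $\vw = \tau\vp + (1-\tau)\vq' \in [\vp,\vq']\subset S$. As $\vw$ was an arbitrary point of $B_{(1-\tau)r}(\vz_0)$, we conclude $B_{(1-\tau)r}(\vz_0)\subset S$, hence $\vz_0\in\interior{S}$.

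There is no real obstacle here; the only points to be careful about are that $1-\tau>0$ (so the division defining $\vq'$ and the radius $(1-\tau)r>0$ make sense), which holds since $\tau\in(0,1)$, and that the convex combination coefficients $\tau$ and $1-\tau$ are exactly those appearing in Definition~\ref{defn:app-convex} of the closed line segment, so that $\vw\in[\vp,\vq']$. I would present the argument in essentially the two or three lines above, without further elaboration.
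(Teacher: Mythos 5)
Your argument is correct and is essentially the same as the paper's proof: both fix $r>0$ with $B_r(\vq)\subset S$ and show $B_{(1-\tau)r}(\tau\vp+(1-\tau)\vq)\subset S$ by writing an arbitrary point of that ball as $\tau\vp+(1-\tau)\vq'$ with $\vq'\in B_r(\vq)$ and invoking convexity. No gaps; nothing further to add.
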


\begin{proof}
	As $\vq\in\interior{S}$, there exists $r>0$ such that the ball $B_r(\vq)$ centered at $\vq$ with radius $r$ is contained in $S$, i.e. $B_r(\vq) \subset S$. For convenience we set $\vs:= \tau \vp + (1-\tau) \vq$. Let $\vu\in B_{(1-\tau)r}(\vs)$ and note that $(1-\tau)r>0$ by assumption. Then $\vu\in S$ because $\vu$ can be written as the convex combination of two points in $S$: 
	$$
		\vu = \tau \vp + (1-\tau) \frac{\vu-\tau \vp}{1-\tau},
	$$
	where $\left| \frac{\vu-\tau \vp}{1-\tau} - \vq \right| = \frac{1}{1-\tau} | \vu-\vs | < r$ and thus $\frac{\vu-\tau \vp}{1-\tau} \in B_r(\vq) \subset S$.
	
	So we have shown that $B_{(1-\tau)r}(\vs) \subset S$ and hence $\vs\in \interior{S}$ as desired.
\end{proof}

As a corollary of Lemma~\ref{lemma:app-interior-convex} we obtain the following statement.

\begin{cor} \label{cor:app-boundary-vs-boundaryofinterior}
	Let $S\subset \R^M$ convex and compact, and such that $\interior{S}\neq \emptyset$. Then $\closure{\interior{S}}= S$ and thus $\partial \interior{S} = \partial S$. 
\end{cor}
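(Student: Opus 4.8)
The statement to prove is Corollary~\ref{cor:app-boundary-vs-boundaryofinterior}: if $S\subset\R^M$ is convex and compact with $\interior{S}\neq\emptyset$, then $\closure{\interior{S}}=S$, and consequently $\partial\interior{S}=\partial S$.

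The plan is to prove the identity $\closure{\interior{S}}=S$ by a double inclusion, with the nontrivial direction being $S\subset\closure{\interior{S}}$. First, the inclusion $\closure{\interior{S}}\subset S$ is immediate: $\interior{S}\subset S$, and $S$ is closed (being compact), so taking closures preserves the inclusion. For the reverse inclusion, I would fix a point $\vq\in\interior{S}$ (which exists by hypothesis) and an arbitrary point $\vp\in S$. If $\vp=\vq$ there is nothing to show, so assume $\vp\neq\vq$. For each $\tau\in(0,1)$, Lemma~\ref{lemma:app-interior-convex} (applied with the roles of $\vp,\vq$ as stated there) gives $(1-\tau)\vp+\tau\vq\in\interior{S}$. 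Letting $\tau\to 0^+$, these points converge to $\vp$, so $\vp$ is a limit of points in $\interior{S}$, i.e. $\vp\in\closure{\interior{S}}$. This establishes $S\subset\closure{\interior{S}}$ and hence equality.

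For the final claim $\partial\interior{S}=\partial S$, I would use the general topological identity $\partial A=\closure{A}\setminus\interior{A}$ together with $\closure{\interior{S}}=S$ just proved. We get $\partial\interior{S}=\closure{\interior{S}}\setminus\interior{(\interior{S})}=S\setminus\interior{S}=\partial S$, where I also use that $\interior{(\interior{S})}=\interior{S}$ (the interior operator is idempotent) and that $\closure{S}=S$ since $S$ is closed, so $\partial S=\closure{S}\setminus\interior{S}=S\setminus\interior{S}$.

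I do not expect any real obstacle here: the corollary is essentially a packaging of Lemma~\ref{lemma:app-interior-convex}, and the only point requiring a modicum of care is making sure the parametrization in the limiting argument matches the hypotheses of that lemma (namely that one endpoint lies in $S$ and the other in $\interior{S}$, with coefficient strictly between $0$ and $1$), which is exactly what is available. Compactness is used only to guarantee $S$ is closed; convexity is used only through Lemma~\ref{lemma:app-interior-convex}; and the nonemptiness of $\interior{S}$ is what makes the limiting argument possible at all.
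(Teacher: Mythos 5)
Your proof is correct and follows essentially the same route as the paper: the easy inclusion via closedness of $S$, the reverse inclusion by joining an arbitrary $\vp\in S$ to a fixed $\vq\in\interior{S}$ and invoking Lemma~\ref{lemma:app-interior-convex} along a sequence of interior points converging to $\vp$ (the paper uses $\tau=\tfrac{1}{k}$), and the boundary identity via $\partial A=\closure{A}\setminus\interior{A}$. No gaps.
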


\begin{proof} 
Since $\interior{S}\subset S$, we have $\closure{\interior{S}}\subset \closure{S} = S$. For the converse let $\vp\in S$ and choose $\vq\in \interior{S}$ which exists by assumption. Define 
$$
	\vp_k := \left( 1 - \frac{1}{k} \right) \vp + \frac{1}{k} \vq \qquad \text{ for all } k\in \N. 
$$
Then obviously $\vp_k\to \vp$ as $k\to \infty$, and according to Lemma~\ref{lemma:app-interior-convex} we have $\vp_k\in \interior{S}$ for all $k\in \N$. Hence $\vp\in \closure{\interior{S}}$. 

Finally we have $\partial \interior{S} = \closure{\interior{S}} \setminus \interior{S} = S \setminus \interior{S} = \closure{S} \setminus \interior{S} = \partial S$.
\end{proof}

We finish this section with the following lemma. 

\begin{lemma} \label{lemma:app-distance-concave}
	Let $S\subset \R^M$ convex and compact. Then the map $\vp\mapsto \dist(\vp,\partial S)$ is concave on $S$. 
\end{lemma}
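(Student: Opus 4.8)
The plan is to show that the function $f(\vp) := \dist(\vp, \partial S)$ satisfies the defining inequality of concavity, namely $f(\tau \vp_1 + (1-\tau)\vp_2) \geq \tau f(\vp_1) + (1-\tau) f(\vp_2)$ for all $\vp_1, \vp_2 \in S$ and $\tau \in [0,1]$. The key geometric fact I would use is the following alternative description of $f$ on a convex compact set: since $S$ is convex and compact, for $\vp$ in the interior $\dist(\vp,\partial S)$ equals the largest radius $r$ such that the closed ball $\closure{B_r(\vp)}$ is contained in $S$, i.e. $f(\vp) = \sup\{r \geq 0 : \closure{B_r(\vp)} \subset S\}$, and this formula holds on all of $S$ with the convention that $f(\vp) = 0$ when $\vp \in \partial S$ (which is consistent, since $\partial S$ is precisely where no ball of positive radius fits, as $\interior S = S \setminus \partial S$).

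First I would fix $\vp_1, \vp_2 \in S$ and $\tau \in (0,1)$ (the cases $\tau \in \{0,1\}$ being trivial), set $r_i := f(\vp_i)$, and $\vp := \tau\vp_1 + (1-\tau)\vp_2$. If $r_1 = 0$ or $r_2 = 0$ the claimed inequality reads $f(\vp) \geq \tau r_1 + (1-\tau)r_2$ with at least one term vanishing, and one still needs $f(\vp) \geq$ the other term; this follows from the ball argument below with $r_1$ (say) replaced by $0$, so I would just run the general argument. For the general step, I would show that $\closure{B_{\tau r_1 + (1-\tau) r_2}(\vp)} \subset S$. Take any $\vu$ with $|\vu - \vp| \leq \tau r_1 + (1-\tau)r_2$, and write $\vu = \vp + \vw$ where $|\vw| \leq \tau r_1 + (1-\tau)r_2$. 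The natural splitting is $\vu = \tau(\vp_1 + \va_1) + (1-\tau)(\vp_2 + \va_2)$ where $\va_i$ are chosen proportional to $\vw$ with $|\va_i| \leq r_i$: concretely, if $\tau r_1 + (1-\tau) r_2 > 0$, set $\va_i := \frac{r_i}{\tau r_1 + (1-\tau) r_2}\vw$, so that $\tau \va_1 + (1-\tau)\va_2 = \vw$ and $|\va_i| = \frac{r_i |\vw|}{\tau r_1 + (1-\tau)r_2} \leq r_i$. Then $\vp_i + \va_i \in \closure{B_{r_i}(\vp_i)} \subset S$ by definition of $r_i = f(\vp_i)$ (here one uses that $\dist(\cdot,\partial S) = r_i$ means the closed ball of that radius lies in $S$, which holds for convex $S$), hence $\vu \in S$ by convexity of $S$. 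Since $\vu$ was arbitrary in that closed ball, $\closure{B_{\tau r_1 + (1-\tau)r_2}(\vp)} \subset S$, which gives $f(\vp) \geq \tau r_1 + (1-\tau)r_2$ as desired. The degenerate case $\tau r_1 + (1-\tau)r_2 = 0$ is immediate since then the right-hand side is $0 \leq f(\vp)$.

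The one point that needs a small justification — and which I expect to be the main (though modest) obstacle — is the equivalence $\dist(\vp,\partial S) = \sup\{r : \closure{B_r(\vp)} \subset S\}$ for convex compact $S$. One inclusion is trivial: if $\closure{B_r(\vp)} \subset S$ then every point at distance $< r$ from $\vp$ is in $S$, so $\vp \notin \partial S$ and in fact $\dist(\vp,\partial S) \geq r$. For the converse, if $\dist(\vp, \partial S) = r > 0$ then $\vp \in \interior S$ (using $S = \interior S \sqcup \partial S$ from compactness, or rather $\partial S = \closure S \setminus \interior S$), and I claim $B_r(\vp) \subset \interior S \subset S$: any point $\vq$ with $|\vq - \vp| < r$ cannot lie outside $\closure S$ since the segment from $\vp$ to $\vq$ would have to cross $\partial S$ at distance $< r$ from $\vp$, contradiction; and it cannot lie on $\partial S$ for the same reason, so $\vq \in \interior S$. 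Taking closures and using that $\closure{B_{r'}(\vp)} \subset B_r(\vp)$ for $r' < r$ handles the closed ball. This is the only place compactness and convexity both get used beyond the elementary splitting argument; everything else is routine. I would cite Corollary~\ref{cor:app-boundary-vs-boundaryofinterior} if needed to freely pass between $\partial S$ and $\partial \interior S$, though here it is not strictly necessary.
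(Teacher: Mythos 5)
Your proof is correct and follows essentially the same route as the paper's: both show that the closed ball of radius $\tau\,\dist(\vp_1,\partial S)+(1-\tau)\dist(\vp_2,\partial S)$ around the convex combination lies in $S$ by splitting each of its points as a convex combination of points in the two balls $\closure{B}_{\dist(\vp_i,\partial S)}(\vp_i)\subset S$, your vectors $\va_i$ being exactly the paper's $\widehat{\vp},\widehat{\vq}$ shifts. The only difference is that you spell out the elementary fact $\closure{B}_{\dist(\vp,\partial S)}(\vp)\subset S$, which the paper simply invokes ``by definition of the distance''; that extra justification is sound.
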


\begin{proof} 
Let $\tau\in [0,1]$ and $\vp,\vq\in S$. We have to show that 
\begin{equation} \label{eq:aux03} 
	\dist( \tau\vp + (1-\tau) \vq ,\partial S ) \geq \tau \dist (\vp,\partial S) + (1-\tau) \dist (\vq,\partial S) .
\end{equation}
Note that \eqref{eq:aux03} is obviously true if the right-hand side is $0$, so we may assume without loss of generality that $\tau \dist (\vp,\partial S) + (1-\tau) \dist (\vq,\partial S)>0$. By definition of the distance we know that both closed balls 
$$ 
	\closure{B}_{\dist(\vp,\partial S)} (\vp) \qquad \text{ and } \qquad \closure{B}_{\dist(\vq,\partial S)} (\vq)
$$
are contained in $S$. 

Let 
$$
	\vs\in \closure{B}_{\tau \dist (\vp,\partial S) + (1-\tau) \dist (\vq,\partial S)} (\tau\vp + (1-\tau) \vq).
$$
It is straightforward to check that the points
\begin{align*}
	\widehat{\vp} &:= \frac{\big(\vs - \tau \vp - (1-\tau) \vq\big) \dist (\vp,\partial S)}{\tau \dist (\vp,\partial S) + (1-\tau) \dist (\vq,\partial S)} + \vp , \\
	\widehat{\vq} &:= \frac{\big(\vs - \tau \vp - (1-\tau) \vq\big) \dist (\vq,\partial S)}{\tau \dist (\vp,\partial S) + (1-\tau) \dist (\vq,\partial S)} + \vq 
\end{align*}
satisfy $\widehat{\vp}\in \closure{B}_{\dist(\vp,\partial S)} (\vp) \subset S$ and $\widehat{\vq}\in \closure{B}_{\dist(\vq,\partial S)} (\vq)\subset S$, and that $\vs = \tau \widehat{\vp} + (1-\tau) \widehat{\vq}$. Thus by convexity of $S$ we have $\vs\in S$, so we have proven that
$$
	\closure{B}_{\tau \dist (\vp,\partial S) + (1-\tau) \dist (\vq,\partial S)} (\tau\vp + (1-\tau) \vq) \subset S.
$$
The latter leads to the desired relation \eqref{eq:aux03}.
\end{proof}

\section{The Convex Hull in the Context of the Compressible Euler Equations} 

For an incompressible convex integration approach applied to the compressible Euler system \eqref{eq:euler-d}, \eqref{eq:euler-m} which does not determine energy and energy flux, the constitutive set $\sK$ and its convex hull $\sK^\co$ read as follows.

\begin{lemma} \label{lemma:app-euler}
	Let $\rho\in \R^+$, $q\in \R$ fixed, and set 
	$$
		\sK := \left\{ (\vm,\mU) \in \R^2 \times \sz \, \Big| \, \frac{\vm\otimes \vm}{\rho} + p(\rho) \id = \mU + q \id \right\} .
	$$
	Then     
	$$
		\sK^\co := \left\{ (\vm,\mU) \in \R^2 \times \sz \, \Big| \, \lambda_{\max}\left(\frac{\vm\otimes \vm}{\rho} - \mU + (p(\rho)-q) \id \right) \leq 0 \right\} .
	$$
\end{lemma}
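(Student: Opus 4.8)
\textbf{Proof proposal for Lemma~\ref{lemma:app-euler}.}

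The plan is to prove the two inclusions $\sK^\co \subset S$ and $S \subset \sK^\co$ separately, where $S$ denotes the set on the right-hand side, i.e. $S := \{(\vm,\mU)\in\R^2\times\sz \mid \lambda_{\max}(\frac{\vm\otimes\vm}{\rho} - \mU + (p(\rho)-q)\id) \leq 0\}$. For the inclusion $\sK^\co \subset S$, I would first check that $\sK \subset S$: if $(\vm,\mU)\in\sK$, then $\frac{\vm\otimes\vm}{\rho} - \mU + (p(\rho)-q)\id = \mzero$, so its largest eigenvalue is $0 \leq 0$. Then I would argue that $S$ is convex, so it contains the convex hull of $\sK$. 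Convexity of $S$ follows from convexity of the map $(\vm,\mU)\mapsto \lambda_{\max}(\frac{\vm\otimes\vm}{\rho} - \mU + (p(\rho)-q)\id)$ on $\R^2\times\sz$, which is a composition: $(\vm,\mU)\mapsto \frac{\vm\otimes\vm}{\rho}-\mU+(p(\rho)-q)\id$ is componentwise affine in $\mU$ and has entries that are convex in $\vm$ (since $\vm\mapsto\frac{\vm\otimes\vm}{\rho}$ is matrix-convex), and $\lambda_{\max}$ is convex and monotone with respect to the positive semidefinite order on $\sym{2}$; this is exactly the kind of statement invoked later in the paper as \cite[Lemma~4.3.4]{Markfelder}, which I would cite. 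A sublevel set of a convex function is convex, giving $\sK^\co \subset S$.

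For the reverse inclusion $S \subset \sK^\co$, I would take $(\vm,\mU)\in S$ and exhibit it as a convex combination of points of $\sK$, using Prop.~\ref{prop:app-caratheodory}. Write $\mM := \frac{\vm\otimes\vm}{\rho} - \mU + (p(\rho)-q)\id \in \sym{2}$, so by assumption $\lambda_{\max}(\mM)\leq 0$, i.e. $\mM$ is negative semidefinite. The idea is to find $\vm_1,\vm_2\in\R^2$ and $\tau\in[0,1]$ with $\vm = \tau\vm_1 + (1-\tau)\vm_2$ and $\tau\frac{\vm_1\otimes\vm_1}{\rho} + (1-\tau)\frac{\vm_2\otimes\vm_2}{\rho} = \frac{\vm\otimes\vm}{\rho} - \mM$; setting $\mU_k := \frac{\vm_k\otimes\vm_k}{\rho} + (p(\rho)-q)\id$ then forces $\mU_k\in\sz$ automatically once one arranges $\mathrm{tr}(\frac{\vm_k\otimes\vm_k}{\rho} + (p(\rho)-q)\id) = 0$ — wait, more carefully: $\mU_k$ is defined to be the trace-free part, and one checks the combination reproduces $(\vm,\mU)$. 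Concretely, since $-\mM$ is positive semidefinite with $\mathrm{tr}(-\mM) = -\mathrm{tr}(\mM) \geq \frac{|\vm|^2}{\rho}$ is not automatic — rather I would decompose $\frac{\vm\otimes\vm}{\rho} - \mM$, which is positive semidefinite (sum of a PSD rank-one matrix and a PSD matrix), and use that any PSD $2\times2$ matrix $\mN$ with $\vm$ in its range and $\mN - \frac{\vm\otimes\vm}{\rho}$ PSD can be written as $\tau\frac{\vm_1\otimes\vm_1}{\rho} + (1-\tau)\frac{\vm_2\otimes\vm_2}{\rho}$ with barycenter $\vm$; this is the standard rank-one decomposition lemma underlying the De Lellis--Székelyhidi construction. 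I would either cite the corresponding statement in \cite{Markfelder} or \cite{DelSze09,DelSze10}, or spell out the explicit two-point formula.

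The main obstacle is the explicit rank-one decomposition in the reverse inclusion: producing $\vm_1,\vm_2,\tau$ so that both the first moment (equals $\vm$) and the second moment (equals $\frac{\vm\otimes\vm}{\rho}-\mM$, a prescribed PSD matrix dominating $\frac{\vm\otimes\vm}{\rho}$) match, while keeping $\mU_k$ in $\sz$. In dimension two this is elementary — one can diagonalize $-\mM$ and split along an eigendirection — but it requires a genuine (if short) computation, and the degenerate cases $\mM = \mzero$ (where $N=1$ suffices) and $\vm = \vzero$ should be handled. Everything else (convexity of $S$, the easy inclusion $\sK\subset S$, invoking Carathéodory) is routine. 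I would present the convexity direction in full and cite or sketch the decomposition, since an analogous statement is proved in \cite[Prop.~4.2.?]{Markfelder} referenced in the paper.
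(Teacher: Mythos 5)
Your overall strategy --- show $\sK\subset S$, show $S$ is convex (via convexity of $(\vm,\mU)\mapsto\lambda_{\max}(\cdot)$), and then prove the reverse inclusion by exhibiting each point of $S$ as a convex combination of points in $\sK$ --- is exactly what the cited reference \cite[Lemma~4.3.6]{Markfelder} does; the paper itself gives no argument beyond that citation. The forward inclusion is fine, though the phrase ``has entries that are convex in $\vm$'' is literally false (the off-diagonal entry $m_1m_2/\rho$ is not convex); the correct reason is the Loewner matrix-convexity plus monotonicity of $\lambda_{\max}$ that you mention in the parenthetical and delegate to \cite[Lemma~4.3.4]{Markfelder}.

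The reverse inclusion has a genuine gap. Because $\mU_i\in\sz$ is trace-free, membership $(\vm_i,\mU_i)\in\sK$ forces, by taking the trace of $\frac{\vm_i\otimes\vm_i}{\rho}+p(\rho)\id=\mU_i+q\id$, that $|\vm_i|^2=2\rho\bigl(q-p(\rho)\bigr)$ for \emph{every} endpoint: all the $\vm_i$ must lie on one fixed sphere, not merely satisfy an averaged trace condition. (Your aside ``$\mU_k$ is defined to be the trace-free part'' is incorrect --- $\mU_k$ is determined by $\vm_k$, and the constraint falls on $|\vm_k|$.) With this sphere constraint the two-point decomposition you propose cannot work in general. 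Take $\vm=\vzero$ and $\mM$ negative definite of rank $2$, e.g. $\mM=-e\,\id$ with $e:=q-p(\rho)>0$, so that $(\vzero,\mzero)\in S$. The barycenter condition $\tau\vm_1+(1-\tau)\vm_2=\vzero$ together with $|\vm_1|=|\vm_2|$ forces $\tau=\tfrac12$ and $\vm_2=-\vm_1$, whence $\tau\frac{\vm_1\otimes\vm_1}{\rho}+(1-\tau)\frac{\vm_2\otimes\vm_2}{\rho}=\frac{\vm_1\otimes\vm_1}{\rho}$ has rank $1$, but the target $\frac{\vm\otimes\vm}{\rho}-\mM=e\,\id$ has rank $2$. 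So strictly more than two atoms are needed in general; the cited proof instead uses a multi-point decomposition on the circle $|\vm|^2=2\rho(q-p(\rho))$ (a trigonometric-moment argument) or an identification of the extreme points of $S$ with $\sK$. Citing \cite{Markfelder} or \cite{DelSze10} closes the gap, but ``spelling out the explicit two-point formula,'' as you suggest, would not.
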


We refer to \cite[Lemma~4.3.6]{Markfelder} for a detailed proof of Lemma~\ref{lemma:app-euler} which is based on \cite[Lemma~3]{DelSze10}.

\section{$\Lambda$-Convex Hulls and the $H_N$-Condition} \label{app:Lconvex}

In this section we summarize the definitions and some properties of the $\Lambda$-convex hull and the $H_N$-condition. Most of the content can be found in \cite[Sect.~4.2]{Markfelder} and references therein.

In this section we suppose $M>1$, $\sK\subset \R^M$ a set and $\Lambda\subset \R^M$ a cone.

\subsection{Definition and Properties}

Let us first recall the definition of the $\Lambda$-convex hull.

\begin{defn}[See {\cite[Defn.~4.2.2]{Markfelder}}] \label{defn:app-Lconvex}
	\begin{itemize}
		\item A set $S\subset \R^M$ is called \emph{$\Lambda$-convex} if $[\vp,\vq]\subset S$ for all $\vp,\vq\in S$ with $\vp-\vq\in \Lambda$.
		
		\item The \emph{$\Lambda$-convex hull $\sK^\Lambda$} of $\sK$ is the smallest $\Lambda$-convex set which contains $\sK$.
	\end{itemize}
\end{defn}

The following is a simple observation.

\begin{prop}[See {\cite[Prop.~4.2.3]{Markfelder}}] \label{prop:app-Lconvex}
	\begin{itemize}
		\item Every convex set is $\Lambda$-convex.
		\item $\sK^\Lambda\subset \sK^\co$.
	\end{itemize}
\end{prop}

Next we recall the definition of the $H_N$-condition and the barycenter, see \cite[Sect.~4.2]{Markfelder} and references therein. 

\begin{defn}[See {\cite[Defn.~4.2.4]{Markfelder}}] \label{defn:app-hn} 
	Let $N\in \N$ and $(\tau_i,\vp_i)\in \R^+ \times \R^M$ for $i=1,...,N$. We say that the family of pairs $\big\{(\tau_i,\vp_i)\big\}_{i=1,...,N}$ satisfies the \emph{$H_N$-condition} if the following holds.
	\begin{itemize}
		\item If $N=1$, then $\tau_1=1$.
		\item If $N\geq 2$, then (after relabeling if necessary) $\vp_2-\vp_1 \in\Lambda$ and the family 
		\begin{equation*} 
			\left\{ \left( \tau_1 + \tau_2 , \frac{\tau_1}{\tau_1 + \tau_2} \vp_1 + \frac{\tau_2}{\tau_1 + \tau_2} \vp_2\right) \right\} \cup \big\{(\tau_i,\vp_i)\big\}_{i=3,...,N}
		\end{equation*}
		satisfies the $H_{N-1}$-condition.
	\end{itemize}
\end{defn} 

\begin{defn}[See {\cite[Defn.~4.2.5]{Markfelder}}] \label{defn:app-barycenter}
	Let $N\in \N$ and $(\tau_i,\vp_i)\in \R^+ \times \R^M$ for $i=1,...,N$ with $\sum_{i=1}^N \tau_i =1 $. Then 
	$$
	\vp := \sum_{i=1}^N \tau_i \vp_i
	$$
	is the \emph{barycenter} of the family of pairs $\big\{(\tau_i,\vp_i)\big\}_{i=1,...,N}$.
\end{defn}

The following proposition covers one of the most important properties of the $\Lambda$-convex hull.

\begin{prop}[See {\cite[Prop.~4.2.9]{Markfelder}}] \label{prop:app-laminates}
	It holds that 
	\begin{align*}
		\sK^\Lambda = \bigg\{ \vp\in \R^M\, \Big|\, &\exists N\in \N, \exists (\tau_i,\vp_i)\in \R^+ \times \R^M \text{ for all }i=1,...,N \text{ such that } \\
		& \bullet \ \text{the family } \big\{ (\tau_i,\vp_i)\big\}_{i=1,...,N} \text{ satisfies the $H_N$-condition, } \\
		& \bullet \ \vp_i \in \sK \text{ for all }i=1,...,N \text{ and } \\
		& \bullet \ \vp \text{ is the barycenter of the family }\big\{ (\tau_i,\vp_i)\big\}_{i=1,...,N}, \text{ i.e. } \vp= \sum_{i=1}^N \tau_i \vp_i \bigg\} .
	\end{align*}
\end{prop}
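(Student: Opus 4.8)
Write $\mathcal{L}$ for the set on the right-hand side, i.e.\ the collection of all barycenters (Defn.~\ref{defn:app-barycenter}) of finite families $\{(\tau_i,\vp_i)\}_{i=1,\dots,N}$ with $\vp_i\in\sK$ satisfying the $H_N$-condition (Defn.~\ref{defn:app-hn}); note that the $H_N$-condition automatically forces $\sum_i\tau_i=1$, since the recursion bottoms out at $\tau_1=1$ and each collapse step preserves the total weight. The plan is to prove $\mathcal{L}\subset\sK^\Lambda$ and $\sK^\Lambda\subset\mathcal{L}$ separately. The inclusion $\sK\subset\mathcal{L}$ is immediate — for $\vp\in\sK$ the one-element family $\{(1,\vp)\}$ satisfies $H_1$ and has barycenter $\vp$ — and it will be used in the second step.

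For $\mathcal{L}\subset\sK^\Lambda$ I would prove, by induction on $N$, the slightly stronger assertion: \emph{if $S\subset\R^M$ is $\Lambda$-convex, $\{(\tau_i,\vp_i)\}_{i=1,\dots,N}$ satisfies the $H_N$-condition, and $\vp_i\in S$ for all $i$, then $\sum_i\tau_i\vp_i\in S$}. The case $N=1$ is trivial. For $N\geq2$, relabel (as allowed by Defn.~\ref{defn:app-hn}) so that $\vp_2-\vp_1\in\Lambda$ and the collapsed family $\{(\tau_1+\tau_2,\vp_a)\}\cup\{(\tau_i,\vp_i)\}_{i=3,\dots,N}$ satisfies $H_{N-1}$, where $\vp_a:=\tfrac{\tau_1}{\tau_1+\tau_2}\vp_1+\tfrac{\tau_2}{\tau_1+\tau_2}\vp_2$. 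Since $\tfrac{\tau_1}{\tau_1+\tau_2}\in(0,1)$ and $[\vp_1,\vp_2]\subset S$ by $\Lambda$-convexity, we have $\vp_a\in S$; the induction hypothesis applied to the collapsed family (all of whose points lie in $S$) then gives that its barycenter lies in $S$, and that barycenter equals $\sum_{i=1}^N\tau_i\vp_i$. Applying this with $S=\sK^\Lambda$, which is $\Lambda$-convex and contains $\sK$, yields $\mathcal{L}\subset\sK^\Lambda$.

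For the reverse inclusion it suffices to show that $\mathcal{L}$ itself is $\Lambda$-convex; since $\mathcal{L}\supset\sK$ and $\sK^\Lambda$ is by definition the smallest $\Lambda$-convex set containing $\sK$, the inclusion $\sK^\Lambda\subset\mathcal{L}$ follows. This is the technical heart of the argument, and I expect the main obstacle to be purely one of bookkeeping: concatenating two laminates requires rescaling their weights, so the rescaled pieces are no longer $H_N$-families on their own. I would therefore isolate the following gluing lemma, proved by induction on $N$: \emph{for $\lambda\in(0,1)$, if $\{(\sigma_i,\vp_i)\}_{i=1,\dots,N}$ satisfies $H_N$ with barycenter $\vp$, and $\mathcal{G}=\{(\rho_j,\vs_j)\}_{j=1,\dots,N'}$ has $\sum_j\rho_j=1-\lambda$ and is such that $\{(\lambda,\vp)\}\cup\mathcal{G}$ satisfies $H_{N'+1}$, then $\{(\lambda\sigma_i,\vp_i)\}_{i=1,\dots,N}\cup\mathcal{G}$ satisfies $H_{N+N'}$}. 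The base case $N=1$ is exactly the hypothesis; the step mirrors the recursive structure of the $H_N$-condition: the first collapse prescribed for $\{(\sigma_i,\vp_i)\}$ (legal because the relevant difference lies in $\Lambda$ and persists, with positive weights, in the $\lambda$-scaled family) reduces $\{(\lambda\sigma_i,\vp_i)\}\cup\mathcal{G}$ to the family associated with the collapsed $(N-1)$-element laminate, to which the induction hypothesis applies.

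Finally I would deduce $\Lambda$-convexity of $\mathcal{L}$. Given $\vp,\vq\in\mathcal{L}$ with $\vp-\vq\in\Lambda$ and $\lambda\in(0,1)$, choose representing families $\{(\sigma_i,\vp_i)\}_{i=1,\dots,N}$ and $\{(\rho_j,\vq_j)\}_{j=1,\dots,N'}$, satisfying $H_N$ and $H_{N'}$, with barycenters $\vp,\vq$ and all points in $\sK$. First apply the gluing lemma with the $\vq$-family in the role of the $H$-family, $\{(\lambda,\vp)\}$ in the role of $\mathcal{G}$, and scaling factor $1-\lambda$ — its hypothesis, that $\{(1-\lambda,\vq),(\lambda,\vp)\}$ satisfies $H_2$, holds since $\vp-\vq\in\Lambda$ — to conclude that $\{((1-\lambda)\rho_j,\vq_j)\}_{j}\cup\{(\lambda,\vp)\}$ satisfies $H_{N'+1}$. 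Then apply the gluing lemma a second time with the $\vp$-family in the role of the $H$-family, $\mathcal{G}=\{((1-\lambda)\rho_j,\vq_j)\}_j$, and scaling factor $\lambda$, obtaining that $\{(\lambda\sigma_i,\vp_i)\}_i\cup\{((1-\lambda)\rho_j,\vq_j)\}_j$ satisfies $H_{N+N'}$. This last family has all points in $\sK$, weights summing to $1$, and barycenter $\lambda\vp+(1-\lambda)\vq$, so $\lambda\vp+(1-\lambda)\vq\in\mathcal{L}$. Hence $\mathcal{L}$ is $\Lambda$-convex, which together with the first step completes the proof that $\sK^\Lambda=\mathcal{L}$.
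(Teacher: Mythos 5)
Your argument is correct. Note that the paper itself does not prove Prop.~\ref{prop:app-laminates} -- it only cites \cite[Prop.~4.2.9]{Markfelder} -- so there is no in-paper proof to diverge from; your write-up is the standard double inclusion, carried out completely. The inclusion $\mathcal{L}\subset\sK^\Lambda$ via induction on $N$ (collapsing the distinguished $\Lambda$-pair inside an arbitrary $\Lambda$-convex superset $S\supset\sK$) is exactly right, and your observation that the $H_N$-condition forces $\sum_i\tau_i=1$ correctly reconciles the barycenter requirement with Defn.~\ref{defn:app-barycenter}. For the reverse inclusion, the gluing lemma is precisely the bookkeeping device needed: its induction step faithfully mirrors the recursion in Defn.~\ref{defn:app-hn} (the $\lambda$-scaled collapse of the first pair reproduces the $\lambda$-scaled collapsed family joined with $\mathcal{G}$, so the induction hypothesis applies), and the two successive applications -- first attaching $\{(\lambda,\vp)\}$ to the rescaled $\vq$-family using $\vp-\vq\in\Lambda$ for the $H_2$ hypothesis, then expanding $\vp$ by its own representing family -- yield an $H_{N+N'}$-family in $\sK$ with barycenter $\lambda\vp+(1-\lambda)\vq$, establishing $\Lambda$-convexity of $\mathcal{L}$ and hence $\sK^\Lambda\subset\mathcal{L}$ by minimality.
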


\subsection{Complete Wave Cones}

\begin{defn}[See {\cite[Defn.~4.2.12]{Markfelder}}] \label{defn:app-complete-wc}
	The wave cone $\Lambda$ is called \emph{complete with respect to $\sK$} if $\vp-\vq \in \Lambda$ for all $\vp,\vq\in \sK$.
\end{defn}

In the case where the wave cone is complete, the $\Lambda$-convex hull has the following important property.

\begin{prop}[See {\cite[Cor.~4.2.13]{Markfelder}}] \label{prop:app-complete-wc}
	If $\Lambda$ is complete with respect to $\sK$, then $\sK^\Lambda=\sK^\co$.
\end{prop}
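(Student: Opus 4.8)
The plan is to deduce the identity from the laminate description of $\sK^\Lambda$ together with Carath\'eodory's theorem, the point being that \emph{completeness} is exactly what is needed to upgrade ``$\Lambda$-convex combinations'' to arbitrary convex combinations. One inclusion, $\sK^\Lambda \subseteq \sK^\co$, is immediate from Prop.~\ref{prop:app-Lconvex} and requires no hypothesis, so all the work goes into the reverse inclusion $\sK^\co \subseteq \sK^\Lambda$.

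To prove $\sK^\co \subseteq \sK^\Lambda$ I would take an arbitrary $\vp \in \sK^\co$ and, using Prop.~\ref{prop:app-caratheodory}, write it as a finite convex combination $\vp = \sum_{i=1}^N \tau_i \vp_i$ with $\vp_i \in \sK$, $\tau_i \in \R^+$ and $\sum_{i=1}^N \tau_i = 1$. Then I would show $\vp \in \sK^\Lambda$ by induction on $N$. For $N=1$ this is trivial since $\vp = \vp_1 \in \sK \subseteq \sK^\Lambda$. For $N \geq 2$, set
$$
	\vq := \frac{1}{1-\tau_1}\sum_{i=2}^N \tau_i \vp_i ,
$$
so that $\vq$ is a convex combination of the $N-1$ points $\vp_2,\dots,\vp_N \in \sK$, hence $\vq \in \sK^\Lambda$ by the induction hypothesis; moreover $\vp = \tau_1 \vp_1 + (1-\tau_1)\vq$ lies on the segment $[\vp_1,\vq]$. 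Since $\vp_1 \in \sK \subseteq \sK^\Lambda$ and $\vq \in \sK^\Lambda$, the $\Lambda$-convexity of $\sK^\Lambda$ (Defn.~\ref{defn:app-Lconvex}) yields $[\vp_1,\vq] \subseteq \sK^\Lambda$, and therefore $\vp \in \sK^\Lambda$, \emph{provided} $\vp_1 - \vq \in \Lambda$. But
$$
	\vp_1 - \vq = \sum_{i=2}^N \frac{\tau_i}{1-\tau_1}\,(\vp_1 - \vp_i),
$$
and each $\vp_1 - \vp_i$ lies in $\Lambda$ because $\Lambda$ is complete with respect to $\sK$ (Defn.~\ref{defn:app-complete-wc}); since $\Lambda$ is a convex cone, the convex combination on the right again belongs to $\Lambda$. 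This closes the induction and gives $\sK^\co \subseteq \sK^\Lambda$, hence $\sK^\Lambda = \sK^\co$.

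The routine parts are the easy inclusion (Prop.~\ref{prop:app-Lconvex}) and the Carath\'eodory representation. The one step that carries the content, and where I expect the only subtlety, is the verification $\vp_1 - \vq \in \Lambda$: it is here that completeness enters, through the identity above expressing $\vp_1 - \vq$ as a convex combination of differences of points of $\sK$, all of which are $\Lambda$-directions by completeness. (Equivalently, one could phrase the whole argument as: any two points of $\sK^\Lambda$ differ by an element of $\Lambda$, since by the laminate representation of Prop.~\ref{prop:app-laminates} both are convex combinations of points of $\sK$; hence $\sK^\Lambda$ is in fact convex, and being a convex set containing $\sK$ it must contain $\sK^\co$.)
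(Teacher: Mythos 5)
The inclusion $\sK^\Lambda\subset\sK^\co$ and the Carath\'eodory reduction are fine, but your induction step contains a genuine gap: you justify $\vp_1-\vq\in\Lambda$ by saying that $\Lambda$ is a \emph{convex} cone, and convexity of $\Lambda$ is assumed nowhere. In Defn.~\ref{defn:app-complete-wc} and throughout the paper $\Lambda$ is only a cone, and in the application that motivates the statement (the set $\Lambda$ in \eqref{eq:defn-euler-wave-cone}, a union of linear subspaces) it is genuinely non-convex. Completeness only gives $\vp_i-\vp_j\in\Lambda$ for points of $\sK$; a convex combination of such differences need not lie in $\Lambda$. Concretely, take $\sK=\{\vp_1,\vp_2,\vp_3\}$ affinely independent and $\Lambda$ the union of the three lines spanned by $\vp_2-\vp_1$, $\vp_3-\vp_1$, $\vp_3-\vp_2$: then $\Lambda$ is complete with respect to $\sK$ and the proposition does hold ($\sK^\Lambda$ is the full triangle), but for $\vq$ in the interior of $[\vp_2,\vp_3]$ one has $\vp_1-\vq\notin\Lambda$, so your induction already breaks at $N=3$. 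The same objection defeats the parenthetical variant: two points of $\sK^\Lambda$ need \emph{not} differ by an element of $\Lambda$ (generic interior points of the triangle), so $\sK^\Lambda$ cannot be shown convex that way.

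The statement is true without any convexity of $\Lambda$, but the argument has to be arranged differently, essentially as in the cited Cor.~4.2.13. One clean repair: prove by induction on $N$ that whenever $\vec{r}_1,\dots,\vec{r}_N\in\sK^\Lambda$ satisfy $\vec{r}_i-\vec{r}_j\in\Lambda$ for all $i,j$, every convex combination $\sum_{i=1}^N\sigma_i\vec{r}_i$ lies in $\sK^\Lambda$. The case $N=2$ is $\Lambda$-convexity of $\sK^\Lambda$; for $N\geq 3$ replace each $\vec{r}_i$, $i\leq N-1$, by $\vq_i:=(1-\sigma_N)\vec{r}_i+\sigma_N\vec{r}_N\in[\vec{r}_i,\vec{r}_N]\subset\sK^\Lambda$, note that $\vq_i-\vq_j=(1-\sigma_N)(\vec{r}_i-\vec{r}_j)\in\Lambda$ (only the cone property is used), and observe that the original barycenter equals $\sum_{i=1}^{N-1}\frac{\sigma_i}{1-\sigma_N}\vq_i$, so the induction hypothesis applies. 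Completeness lets you start this scheme from points of $\sK$, which together with Prop.~\ref{prop:app-caratheodory} gives $\sK^\co\subset\sK^\Lambda$; equivalently one can phrase the same construction as building an explicit $H_N$-family with endpoints in $\sK$ and invoking Prop.~\ref{prop:app-laminates}. The moral: the intermediate points must always be chosen on segments parallel to differences $\vp_i-\vp_j$ of the original points, rather than comparing $\vp_1$ with the barycenter of the remaining ones.
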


\printbibliography[heading=bibintoc]

\end{document}